\documentclass[preprint,11pt,a4paper,leqno]{elsarticle}

\usepackage{amssymb}
\usepackage{amsmath}
\usepackage{mathrsfs}
\usepackage{amsthm}
\usepackage{amsfonts}
\usepackage{enumerate}
\usepackage{tikz}
\usetikzlibrary{arrows.meta}
\usepackage{microtype}

\allowdisplaybreaks
\journal{Advances in Mathematics}
\biboptions{sort&compress}

\def\dfrac{\displaystyle\frac}

\newtheorem{theorem}{Theorem}[section]
\newtheorem{lemma}[theorem]{Lemma}
\newtheorem{corollary}[theorem]{Corollary}
\newtheorem{proposition}[theorem]{Proposition}
\theoremstyle{definition}
\newtheorem{remark}[theorem]{Remark}

\numberwithin{equation}{section}

\begin{document}

\arraycolsep=1pt

\begin{frontmatter}

\title{Optimal multipolar $L^p$-Hardy inequalities:\\
ground states, criticality and variational thresholds\tnoteref{funding}}
\tnotetext[funding]{Funding information: NNSF of China (12071431).}

\author{Yongyang Jin}
\author{Shoufeng Shen}
\author{Li Tang\corref{corresponding}}
\cortext[corresponding]{Corresponding author.}

\begin{abstract}
We study ground states, criticality, and variational compactness for genuinely multipolar $L^p$-Hardy weights, $1<p<n$. A ground-state representation across the poles gives a complete criticality classification of the full potential and characterizes finite-energy attainment. Removing a geometric interaction term produces a variational problem governed by concentration at the poles and infinity. A strict concentration gap yields compactness, positive extremizers, and criticality of the optimal operator; for $p\ge2$, it is also necessary for attainment. In the subquadratic range, the optimal truncated coefficient is below one and attained throughout the full-potential critical set. For admissible parameters with $1<p<2$ and $(n-p)/p<s\le m(n-p)/p$, arbitrary fixed pole configurations exhibit negative directions, attained critical states, and coercive subcritical regimes outside this set along positive-weight paths towards the simplex vertices. A single-pole degeneration formula and continuity in the weights establish this coexistence. In the superquadratic range, an interaction threshold determines a non-attained infinity-concentration interval. Under strict separation from pole concentration, its endpoint is critical with infinite energy and a compact phase follows. We prove separation for every bipolar configuration and in several multipolar regimes, including a vertex neighborhood uniform in the exponent. Near parameters strictly inside the quadratic attained region, the optimal coefficient crosses one continuously through $p=2$. The proofs combine ground-state remainders, capacity, weighted compactness, and critical Hardy barriers.
\end{abstract}

\begin{keyword}
Multipolar Hardy inequalities \sep ground states \sep criticality
\sep concentration compactness \sep variational thresholds
\MSC[2020] Primary: 35A23; Secondary: 26D10, 35J92
\end{keyword}

\end{frontmatter}

\section{Introduction and main results}

A sharp Hardy coefficient, criticality of the associated operator, and attainment in the homogeneous energy space are distinct properties. In multipolar problems their relation is governed by concentration at the poles, loss of mass at infinity, and interaction between singularities. We study these mechanisms for an explicit family of weights throughout $1<p<n$.

The classical $L^p$-Hardy inequality provides the basic model:
\[
\int_{\mathbb R^n}|\nabla u|^p\,dx
\ge \left(\frac{n-p}{p}\right)^p
\int_{\mathbb R^n}\frac{|u|^p}{|x|^p}\,dx,
\qquad u\in C_c^\infty(\mathbb R^n).
\]
Its coefficient is sharp but is not attained in $D^{1,p}(\mathbb R^n)$.
For its role in singular elliptic equations and spectral theory, see
\cite{D,GP,DH,GM,HY}; quadratic multipolar inequalities are studied in
\cite{FMT,FT,BDE,CZ,CH}, and the optimal-weight and criticality viewpoint
in \cite{DFP}.

For the nonlinear problem, Cazacu and Rugin\u{a} \cite[Theorem~2.2]{CR24} considered two distinct poles $a_1,a_2\in\mathbb R^n$, $n\ge3$. Writing $\bar a=(a_1+a_2)/2$, define
\[
\begin{aligned}
\mathcal V_{1,p}(x)
&=\frac{|a_1-a_2|^2|x-\bar a|^{p-2}}
{|x-a_1|^p|x-a_2|^p},\\
\mathcal V_{2,p}(x)
&=\frac{|x-\bar a|^{p-4}
\bigl(|x-a_1|^2|x-a_2|^2-((x-a_1)\cdot(x-a_2))^2\bigr)}
{|x-a_1|^p|x-a_2|^p}.
\end{aligned}
\]
They proved that, for $1<p<n$,
\[
\begin{aligned}
\int_{\mathbb R^n}|\nabla u|^p\,dx
&\ge \frac{p-1}{4}\left(\frac{n-p}{p-1}\right)^p
\int_{\mathbb R^n}\mathcal V_{1,p}|u|^p\,dx\\
&\quad+\frac{p-2}{2}\left(\frac{n-p}{p-1}\right)^{p-1}
\int_{\mathbb R^n}\mathcal V_{2,p}|u|^p\,dx
\end{aligned}
\]
for every $u\in C_c^\infty(\mathbb R^n)$. The overall coefficient $1$ multiplying the right-hand side is optimal for $2\le p<n$. For $2<p<n$, the extremizers in $D^{1,p}(\mathbb R^n)$ are the scalar multiples of
\[
\phi(x)=|x-a_1|^{-\frac{n-p}{2(p-1)}}
|x-a_2|^{-\frac{n-p}{2(p-1)}}.
\]
The quadratic endpoint is not attained. In the parametrization below, this is the symmetric choice $m=2$, $\mu_1=\mu_2=1/2$, and $s=(n-p)/(p-1)$. For $2<p<n$, deleting the second, nonnegative term also yields a subcritical operator \cite[Corollary~2.5]{CR24}. Thus the distinction between full and truncated potentials already arises in the bipolar setting; here we study it with variable pole weights and throughout $1<p<n$.

Multipolar inequalities on Riemannian manifolds were developed in \cite{FFK,CiR25}; related weighted and non-Euclidean constructions appear in
\cite{CP,CPT,GKY,J,JS,K1,NZW,Y1,Y2,YKG,JTHL,JST}.
Our contribution is a criticality and variational framework, rather
than only an extension of a bipolar inequality to more poles.
For the full potential, a representation across the singularities
gives the exact criticality and finite-energy regions. For the truncated
potential, a concentration threshold governs compactness and, for
$p\ge2$, characterizes attainment. Degeneration towards a single pole
then gives three subquadratic regimes outside the full critical set.
In the superquadratic case, a second threshold separates concentration
at infinity from an attained phase; we establish endpoint criticality
under strict separation and prove that separation in several geometric
regimes. The transition through $p=2$ reflects the sign change of the
removed interaction.

\subsection{Ground-state structure and criticality geometry}

Fix $n\ge3$ and distinct points $a_1,\ldots,a_m\in\mathbb R^n$, with $m\ge2$. Let
\[
\overline\Psi=\left\{\mu\in[0,1]^m:
\sum_{\alpha=1}^m\mu_\alpha=1\right\},
\qquad
\Psi=\overline\Psi\setminus\{\mathbf{e}_1,\ldots,\mathbf{e}_m\}.
\]
The excluded vertices give the classical one-pole problem. We call
$a_\alpha$ an \emph{active pole} if $\mu_\alpha>0$, and an
\emph{inactive pole} if $\mu_\alpha=0$.
Zero components are allowed, so $\Psi$ consists of all genuinely
multipolar configurations, with at least two active poles.
Throughout, the admissible parameters satisfy
\[
1<p<n,\qquad \frac{n-p}{p}\le s\le\frac{n-p}{p-1},
\qquad \mu\in\Psi.
\]
Put
\[
z_\alpha(x)=\frac{x-a_\alpha}{|x-a_\alpha|^2},
\qquad
v=\sum_{\alpha=1}^m\mu_\alpha z_\alpha,
\qquad
S=\sum_{\alpha=1}^m\mu_\alpha|z_\alpha|^2.
\]
The full multipolar potential is
\begin{equation}\label{def:full-weight}
\begin{aligned}
V_{m,s,p}^{\mu}(x)
&=s^{p-1}\bigl(n-p-(p-1)s\bigr)|v|^{p-2}
\sum_{\alpha=1}^m\frac{\mu_\alpha}{|x-a_\alpha|^2}\\
&\quad +(p-1)s^p|v|^{p-2}
\sum_{1\le\alpha<\beta\le m}
\frac{\mu_\alpha\mu_\beta|a_\alpha-a_\beta|^2}
{|x-a_\alpha|^2|x-a_\beta|^2}\\
&\quad +2(p-2)s^{p-1}|v|^{p-4}
\sum_{\alpha=1}^m\mu_\alpha
\left(\frac{|v|^2}{|x-a_\alpha|^2}
-\frac{(v\cdot(x-a_\alpha))^2}{|x-a_\alpha|^4}\right).
\end{aligned}
\end{equation}
The first two terms define the \emph{truncated potential}
$W=W_{m,s,p}^{\mu}$, while the last term is the remainder $R$;
thus $V_{m,s,p}^{\mu}=W+R$. Using
$S-|v|^2=\sum_{\alpha<\beta}\mu_\alpha\mu_\beta
|z_\alpha-z_\beta|^2$, we may equivalently write
\begin{equation}\label{def:truncated-intro}
W=s^{p-1}|v|^{p-2}
\bigl((n-p)S-(p-1)s|v|^2\bigr).
\end{equation}
These expressions are initially understood away from the poles and
the zero set of $v$; their locally integrable representatives are
used in all integral statements.
The remainder $R$ is nonpositive for $p<2$, vanishes for $p=2$, and is nonnegative for $p>2$.

Let $D^{1,p}(\mathbb R^n)$ denote the completion of $C_c^\infty(\mathbb R^n)$ for the norm $\|\nabla u\|_{L^p}$, and set
\[
H_{m,s,p}^{\mu}=-\Delta_p-V_{m,s,p}^{\mu}|\cdot|^{p-2}\cdot,
\qquad
\mathcal Q[u]=\int|\nabla u|^p\,dx-\int V_{m,s,p}^{\mu}|u|^p\,dx.
\]
A nonnegative form $\mathcal Q$ is called \emph{subcritical} if
$\mathcal Q[u]\ge\int Q|u|^p$ for some nonzero nonnegative continuous function $Q$ and every $u\in C_c^\infty(\mathbb R^n)$.
Otherwise it is called \emph{critical}; we use the same terminology for its operator.
For $\theta>0$, write
\[
\begin{aligned}
\Psi_\theta&=\{\mu\in\Psi:\mu_\alpha\le\theta\ \text{for every }\alpha\},\\
(\Psi_\theta)^\circ&=\{\mu\in\Psi:\mu_\alpha<\theta\ \text{for every }\alpha\},
\end{aligned}
\]
and $\partial\Psi_\theta=\Psi_\theta\setminus(\Psi_\theta)^\circ$.
Here the superscript $\circ$ denotes the strict part, not the relative interior; inactive poles remain allowed.

\begin{theorem}[Ground-state structure, criticality, and attainment]
\label{thmopt}\label{thm:groundstate}
For the admissible parameters above, define
\[
\phi_{m,s,\mu}(x)=\prod_{\alpha=1}^m|x-a_\alpha|^{-s\mu_\alpha}.
\]
Then the following statements hold.
\begin{enumerate}
\item[(i)] The potential $V_{m,s,p}^{\mu}$ has a nonnegative locally integrable representative, and
\[
-\Delta_p\phi_{m,s,\mu}
=V_{m,s,p}^{\mu}\phi_{m,s,\mu}^{p-1}
\quad\text{in }\mathcal D'(\mathbb R^n).
\]
For $u\in C_c^\infty(\mathbb R^n)$, set $w=u/\phi_{m,s,\mu}$ away from the active poles. Then
\[
\mathcal Q[u]
=\int_{\mathbb R^n}C_p(\nabla u,\phi_{m,s,\mu}\nabla w)\,dx\ge0,
\]
where $C_p$ is the convexity remainder defined in Lemma~\ref{lemCp}.
In particular,
\begin{equation}\label{MHp1}
\int_{\mathbb R^n}|\nabla u|^p\,dx
\ge\int_{\mathbb R^n}V_{m,s,p}^{\mu}|u|^p\,dx,
\qquad u\in D^{1,p}(\mathbb R^n).
\end{equation}

\item[(ii)] The operator $H_{m,s,p}^{\mu}$ is critical if and only if
$\mu\in\Psi_{(n-p)/(ps)}$, and is subcritical on its complement in $\Psi$.
On the critical set, $\phi_{m,s,\mu}$ is its ground state in the null-sequence sense and coefficient $1$ in \eqref{MHp1} is optimal.

\item[(iii)] The profile $\phi_{m,s,\mu}$ belongs to $D^{1,p}(\mathbb R^n)$ if and only if
\[
s>\frac{n-p}{p},
\qquad
s\mu_\alpha<\frac{n-p}{p}\quad\text{for every }\alpha.
\]
Exactly in this region, equality in \eqref{MHp1} is attained by nonzero functions, and these functions are the nonzero multiples of $\phi_{m,s,\mu}$.
On the remaining critical parameters, coefficient $1$ is sharp but is not attained.
\end{enumerate}
\end{theorem}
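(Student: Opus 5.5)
We follow the standard ground-state (Picone-type) route and then invoke the Pinchover--Tintarev criticality theory for $p$-Laplacian type forms.

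\textbf{Step (i).} We compute directly with $\phi=\phi_{m,s,\mu}$. Since $\nabla\log\phi=-s\sum_\alpha\mu_\alpha z_\alpha=-sv$, we have
\[
|\nabla\phi|^{p-2}\nabla\phi=-s^{p-1}\phi^{p-1}|v|^{p-2}v,
\]
and hence
\[
-\Delta_p\phi=s^{p-1}\phi^{p-1}\bigl(\operatorname{div}(|v|^{p-2}v)-(p-1)s|v|^p\bigr).
\]
Using $\operatorname{div}z_\alpha=(n-2)/|x-a_\alpha|^2$ together with
\[
\partial_jv_i=\sum_\alpha\mu_\alpha\frac{\delta_{ij}-2\omega^\alpha_i\omega^\alpha_j}{|x-a_\alpha|^2},
\qquad \omega^\alpha=\frac{x-a_\alpha}{|x-a_\alpha|},
\]
the divergence $\operatorname{div}(|v|^{p-2}v)$ splits into a trace part and a $(p-2)|v|^{p-4}v\cdot(Dv)v$ part. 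Rewriting the latter as $|v|^2S$ minus the angular term, and using $|v|^2=S-\sum_{\alpha<\beta}\mu_\alpha\mu_\beta|z_\alpha-z_\beta|^2$ with $|z_\alpha-z_\beta|=|a_\alpha-a_\beta|/(|x-a_\alpha||x-a_\beta|)$, reproduces exactly the three terms of \eqref{def:full-weight}.

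Nonnegativity follows from the constraint $s\le(n-p)/(p-1)$, from Cauchy--Schwarz for the angular term, and, for $p<2$, from a pointwise bound of the negative $R$ by part of the first term. Local integrability near each pole holds because $|v|^{p-2}\lesssim|x-a_\alpha|^{2-p}$ there (for $\mu_\alpha>0$), so $V\lesssim|x-a_\alpha|^{-p}$. Near the zero set of $v$ when $p<2$, we use $|v|^{p-2}|v|^2$-type cancellations, or the fact that $|v|^{p-2}$ is integrable on the smooth hypersurface-generic zero set $\{v=0\}$, which is of codimension $n\ge3$ generically. The distributional identity then follows by cutting off small balls around the poles and around $\{v=0\}$ and checking that the boundary fluxes $\phi^{p-1}|v|^{p-1}r^{n-1}$ vanish, which requires $s\mu_\alpha\le(n-p)/(p-1)<(n-1)/(p-1)$. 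The representation $\mathcal Q[u]=\int C_p(\nabla u,\phi\nabla w)$ is Lemma~\ref{lemCp} applied pointwise with $\nabla u=w\nabla\phi+\phi\nabla w$, followed by an integration by parts of $\int|\nabla\phi|^{p-2}\nabla\phi\cdot\nabla(|w|^p\phi)$. The inequality \eqref{MHp1} extends to $D^{1,p}$ by density and Fatou's lemma.

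\textbf{Step (ii).} By the Pinchover--Tintarev theory, $H$ is critical if and only if its positive solution $\phi$ is a ground state, which in turn holds if and only if there is a null sequence. The only obstructions are the ends: each active pole and infinity. Near $a_\alpha$, $\phi\sim c|x-a_\alpha|^{-s\mu_\alpha}$, while near infinity $\phi\sim|x|^{-s}$. Each end is compared with the one-pole Hardy operator whose two radial solutions have exponents $(n-p)/p$ (double root) and $0$ resp.\ $(n-p)/(p-1)$.

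For the null sequence we take $u_k=\phi\,\eta_k$, where $\eta_k$ is a logarithmic cutoff at infinity and at each pole. We then estimate $C_p(\nabla u_k,\phi\nabla\eta_k)\lesssim\phi^p|\nabla\eta_k|^2|\nabla\log\phi|^{p-2}+\phi^p|\nabla\eta_k|^p$. At infinity, $s\ge(n-p)/p$ makes the contribution vanish. At the poles, $s\mu_\alpha\le(n-p)/p$ makes it vanish, the borderline cases using the logarithmic cutoff. This proves criticality on $\Psi_{(n-p)/(ps)}$.

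Conversely, if $s\mu_\alpha>(n-p)/p$ for some $\alpha$, then $\phi$ is a large solution near $a_\alpha$, i.e.\ not of minimal growth at that end. Perturbing by a small bump $\varepsilon Q$ near $a_\alpha$ still admits a positive supersolution, obtained by gluing $\phi$ with the smaller solution branch of exponent $<(n-p)/p$ via a supersolution construction. This gives subcriticality. Optimality of coefficient $1$ is immediate from criticality.

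\textbf{Step (iii).} The condition $\phi\in D^{1,p}$ amounts to $\int|\nabla\phi|^p=s^p\int\phi^p|v|^p<\infty$. This holds near $a_\alpha$ if and only if $p(s\mu_\alpha+1)<n$, and at infinity if and only if $p(s+1)>n$. If $\phi\in D^{1,p}$, then equality holds: a null sequence converges to $\phi$ in $D^{1,p}$. Uniqueness of extremizers follows because any $u$ with equality has $C_p\equiv0$, forcing $\nabla w=0$ a.e.\ (strict convexity), hence $u=c\phi$. For the remaining critical parameters, an extremizer would be a ground state, hence a multiple of $\phi\notin D^{1,p}$; this is a contradiction, so the coefficient is not attained.

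\textbf{Main obstacle.} The hardest step is the subcriticality direction in (ii), together with the delicate integrability near $\{v=0\}$ for $p<2$. For the subcriticality, we would first try an explicit supersolution of the form $\phi\cdot(1+\delta\psi)$ with $\psi$ localized at the offending pole.
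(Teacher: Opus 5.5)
Your proof of criticality (the tests $\phi\eta_k$ with logarithmic cut-offs, the upper $C_p$ bounds and the Fatou-type null-sequence criterion), your optimality remark, and your treatment of attainment in (iii) follow the paper. Where you genuinely differ is subcriticality off $\Psi_{(n-p)/(ps)}$.

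\emph{The paper's route.} It argues by contradiction. Criticality yields locally normalized null sequences (Lemma~\ref{lem:localnull}). The lower $C_p$ bounds and the rigidity Lemma~\ref{lem:rigidity} then force $w_k=u_k/\phi\to c>0$ in $L^p_{\rm loc}(\Omega)$, whereas $w_k=O(|x-a_\alpha|^{\beta})$ at the strict pole. The condenser estimate (Lemma~\ref{lem:condenser}, using $p\beta-(n-p)>0$) makes the resulting drop of spherical means cost a fixed amount of energy.

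\emph{Your route.} You want a strict positive supersolution plus Picone. This works, but not with the ``smaller solution branch'' itself. Write $\beta=s\mu_\alpha>h$ and let $\beta'<h$ be the other root of $g(\gamma)=g(\beta)$. The power $r^{-\beta'}$ has no margin to absorb the $O(r^{1-p})$ error in $V=g(\beta)r^{-p}+O(r^{1-p})$ (Proposition~\ref{prop:polecompare}; $v$ has no zeros near an active pole), and it gives no strict inequality.

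Take instead $\gamma\in(\beta',\beta)$, e.g.\ $\gamma=h$, so that $g(\gamma)>g(\beta)$. Then $-\Delta_p r^{-\gamma}\ge(V+\varepsilon r^{-p})r^{-\gamma(p-1)}$ on a small punctured ball $B_{r_0}(a_\alpha)\setminus\{a_\alpha\}$. Set $\Phi=\min\{\phi,Cr^{-\gamma}\}$ there and $\Phi=\phi$ elsewhere, with $C$ so large that $\Phi=\phi$ near $\partial B_{r_0}(a_\alpha)$; near $a_\alpha$ one has $\Phi=Cr^{-\gamma}$ because $\gamma<\beta$. By the lattice property, $\Phi$ is a positive supersolution of $-\Delta_p-(V+Q)$ for a nonzero bump $0\le Q\le\varepsilon r^{-p}$ supported where $\Phi=Cr^{-\gamma}$. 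Picone with $|u|^p\Phi^{1-p}$, $u\in C_c^\infty(\Omega)$, gives $\mathcal Q[u]\ge\int Q|u|^p$. Zero capacity of points and $V\in L^1_{\rm loc}$ extend this to $C_c^\infty(\mathbb R^n)$.

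\emph{Comparison.} Your argument is direct and short and exhibits an explicit improving weight. The paper's argument stays inside the null-sequence framework and needs only the behaviour of $\phi$ and the $C_p$ bounds, not the expansion of $V$ or the lattice property. With this construction you also need neither minimal-growth theory nor the Pinchover--Tintarev alternative, whose basic form assumes $V\in L^\infty_{\rm loc}(\Omega)$; that fails across $Z(v)$ for $p<2$.

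\emph{Your fallback.} The ansatz $\phi(1+\delta\psi)$ can only work if $\psi$ alters $\phi$ all the way up to $a_\alpha$. Suppose $\Phi=\phi$ outside a compact $K\Subset\Omega$, and let $\chi\in C_c^\infty(\Omega)$ equal one near $K$. Testing the two equations with $\phi^p\Phi^{1-p}\chi$ and $\phi\chi$ cancels the potential terms, and Picone then forces $\int Q\phi^p\chi\le0$.

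Three justifications in (i) need repair.
\begin{itemize}
\item \emph{Nonnegativity for $1<p<2$.} The negative remainder cannot be absorbed by part of the first term: at $s=s_{\max}$ that term vanishes identically while $R<0$ on a set of positive measure. Use Jensen, $T\le|v|^2P$, and the interaction term: $V\ge s^{p-1}|v|^{p-2}\bigl((n-p-(p-1)s)S+((p-1)s+2p-4)P\bigr)$. If the coefficient of $P$ is negative, $P\le S$ gives $V\ge(n+p-4)s^{p-1}|v|^{p-2}S\ge0$.
\item \emph{Integrability of $|v|^{p-2}$ near $Z(v)$.} This cannot rest on genericity, since the poles are arbitrary. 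What holds for every configuration is $\operatorname{tr}Dv(x_0)=(n-2)S(x_0)>0$ at each zero. Hence some component of $v$ has nonvanishing gradient, $Z(v)$ lies locally on a $C^1$ graph with $|v|\gtrsim$ distance to it, and $|v|^{p-2}\in L^1_{\rm loc}$ because $p-2>-1$. The same structure justifies weak differentiation of $|v|^{p-2}v$ across $Z(v)$.
\item \emph{Flux at the poles.} The flux through $\partial B_\epsilon(a_\alpha)$ is of order $\epsilon^{\,n-p-(p-1)s\mu_\alpha}$. The relevant condition is therefore $s\mu_\alpha<(n-p)/(p-1)$, which holds because $\mu_\alpha<1$. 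It is not $s\mu_\alpha<(n-1)/(p-1)$, and equality $s\mu_\alpha=(n-p)/(p-1)$ would not suffice.
\end{itemize}
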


The structural assertion and the critical and attained cases are proved in Section~\ref{sec:groundstate}. Complementary subcriticality is proved in Section~\ref{sec:subcritical} by a strict-pole capacity argument; the quadratic case also admits the independent parameter comparison in Proposition~\ref{thmNCquad}.
The result separates two questions: criticality is determined by membership in $\Psi_{(n-p)/(ps)}$, whereas finite energy requires strict inequalities at every pole and at infinity.

The set $\Psi_\theta$ is nonempty precisely when $\theta\ge1/m$.
Consequently, the critical region is nonempty if and only if
$s\le m(n-p)/p$; equality leaves only the symmetric weight vector.
At $s=(n-p)/p$, every $\mu\in\Psi$ is critical and none is attained.
For $s>(n-p)/p$, set $\theta=(n-p)/(ps)$. Figure~\ref{fig:parameter-regimes} illustrates the attained strict region, its non-attained threshold set, and the subcritical complement for three poles.

\begin{figure}[htbp]
\centering
\begin{tikzpicture}[scale=0.78, every node/.style={font=\small}]
  \begin{scope}
    \coordinate (e1a) at (0,0);
    \coordinate (e2a) at (4,0);
    \coordinate (e3a) at (2,3.464);
    \fill[gray!12] (e1a)--(e2a)--(e3a)--cycle;
    \fill[green!25] (2,0.554)--(1.48,1.455)--(2.52,1.455)--cycle;
    \draw[thick] (e1a)--(e2a)--(e3a)--cycle;
    \draw[very thick,orange!85!black] (2,0.554)--(1.48,1.455)--(2.52,1.455)--cycle;
    \node[below left] at (e1a) {$\mathbf{e}_1$};
    \node[below right] at (e2a) {$\mathbf{e}_2$};
    \node[above] at (e3a) {$\mathbf{e}_3$};
    \node[align=center,font=\scriptsize] at (2,1.10) {attained};
    \node[align=center,font=\scriptsize] at (0.48,0.62) {subcritical};
    \node[align=center,font=\scriptsize] at (3.42,0.52) {subcritical};
    \node[align=center,font=\scriptsize] at (2,2.70) {subcritical};
    \node[below=8pt,font=\small] at (2,0) {(a) $1/3<\theta<1/2$};
  \end{scope}
  \begin{scope}[xshift=7cm]
    \coordinate (e1b) at (0,0);
    \coordinate (e2b) at (4,0);
    \coordinate (e3b) at (2,3.464);
    \fill[gray!12] (e1b)--(e2b)--(e3b)--cycle;
    \fill[green!25] (1.40,0)--(2.60,0)--(3.30,1.212)--(2.70,2.252)--(1.30,2.252)--(0.70,1.212)--cycle;
    \draw[thick] (e1b)--(e2b)--(e3b)--cycle;
    \draw[very thick,green!45!black] (1.40,0)--(2.60,0);
    \draw[very thick,green!45!black] (3.30,1.212)--(2.70,2.252);
    \draw[very thick,green!45!black] (1.30,2.252)--(0.70,1.212);
    \draw[very thick,orange!85!black] (1.40,0)--(0.70,1.212);
    \draw[very thick,orange!85!black] (2.60,0)--(3.30,1.212);
    \draw[very thick,orange!85!black] (2.70,2.252)--(1.30,2.252);
    \node[below left] at (e1b) {$\mathbf{e}_1$};
    \node[below right] at (e2b) {$\mathbf{e}_2$};
    \node[above] at (e3b) {$\mathbf{e}_3$};
    \node[align=center,font=\scriptsize] at (2,1.45) {attained critical\\$\mu_i<\theta$};
    \node[align=center,font=\scriptsize] at (0.62,0.55) {subcritical};
    \node[align=center,font=\scriptsize] at (3.38,0.55) {subcritical};
    \node[align=center,font=\scriptsize] at (2,3.00) {subcritical};
    \node[below=8pt,font=\small] at (2,0) {(b) $1/2<\theta<1$};
  \end{scope}
  \node[align=center,text=orange!85!black,font=\scriptsize] at (5.5,4.15) {orange threshold: critical, non-attaining\\$\max_i\mu_i=\theta$};
\end{tikzpicture}
\caption{Parameter geometry for $m=3$ and $s>(n-p)/p$:
green denotes attained criticality, orange non-attained criticality,
and gray subcriticality. Green simplex edges allow inactive poles.
At $\theta=1$ all admissible weights are critical and non-attaining;
$\theta=1/3,1/2$ are limiting configurations.}
\label{fig:parameter-regimes}
\end{figure}

\subsection{Variational thresholds for the truncated potential}

Deleting $R$ leaves the nonnegative weight $W$ in
\eqref{def:truncated-intro}. This operation need not preserve either the optimal coefficient or the existence of a finite-energy ground state.
For fixed poles, whose dependence is suppressed in the notation, define
\begin{equation}\label{def:Cstar}
C_{*}(p,s,\mu):=
\inf_{\substack{u\in D^{1,p}(\mathbb{R}^{n})\\
\int_{\mathbb R^n}W|u|^p\,dx>0}}
\frac{\int_{\mathbb{R}^{n}}|\nabla u|^{p}\,dx}
{\int_{\mathbb{R}^{n}}W|u|^{p}\,dx}.
\end{equation}
The full Hardy inequality and the comparison in
Theorem~\ref{tmHSW2} imply that $D^{1,p}(\mathbb R^n)$ embeds continuously into $L^p(W\,dx)$ and that $0<C_*<\infty$ for every admissible parameter.

The local Hardy costs at the poles and infinity determine the
concentration threshold. Set
\begin{equation}\label{eq:hardy-profile}
\begin{gathered}
h:=\frac{n-p}{p},\qquad H_p:=h^p,\qquad
s_{\max}:=\frac{n-p}{p-1},\\
g(t):=t^{p-1}\bigl(n-p-(p-1)t\bigr),\qquad 0\le t\le s_{\max},
\end{gathered}
\end{equation}
and
\begin{equation}\label{def:concentration-coefficients}
A_\alpha(p,s,\mu):=g(s\mu_\alpha),\qquad A_\infty(p,s):=g(s).
\end{equation}
Then
\begin{equation}\label{def:concentration-threshold}
\Lambda(p,s,\mu):=
\min\left\{\frac{H_p}{A_\infty(p,s)},
\min_{\alpha:\mu_\alpha>0}\frac{H_p}{A_\alpha(p,s,\mu)}\right\},
\end{equation}
where $H_p/0=+\infty$. Since $g$ has its unique maximum $H_p$
at $h$ and all active pole coefficients are positive,
\begin{equation}\label{eq:threshold-at-least-one}
1\le\Lambda(p,s,\mu)<\infty.
\end{equation}
The first inequality is strict if $s>h$ and $s\mu_\alpha<h$
for every $\alpha$.

\begin{theorem}[Compactness, attainment, and criticality at the variational threshold]
\label{prop:compactness-threshold}
For admissible parameters, the strict gap
$C_*(p,s,\mu)<\Lambda(p,s,\mu)$ implies that the infimum in
\eqref{def:Cstar} is attained.
Moreover, every sequence $(u_k)\subset D^{1,p}(\mathbb R^n)$ satisfying
\[
\int_{\mathbb R^n}W|u_k|^p\,dx=1,\qquad
\int_{\mathbb R^n}|\nabla u_k|^p\,dx\longrightarrow C_*(p,s,\mu)
\]
admits a subsequence converging strongly in $D^{1,p}(\mathbb R^n)$
to a minimizer.
Every nonnegative normalized minimizer $u_*$ is positive a.e. and
satisfies
\begin{equation}\label{eq:subquad-extremizer-equation}
-\Delta_pu_*=C_*(p,s,\mu)Wu_*^{p-1}
\quad\text{in }\mathcal D'(\mathbb R^n).
\end{equation}
The optimally rescaled operator
\[
u\longmapsto-\Delta_pu-C_*(p,s,\mu)W|u|^{p-2}u
\]
is critical on $\mathbb R^n$.
The positivity, equation, and criticality conclusions hold whenever
the infimum is attained, even without assuming $C_*<\Lambda$.
If $2\le p<n$, the strict-gap condition is also necessary:
\begin{equation}\label{eq:attainment-gap-equivalence}
C_*(p,s,\mu)\text{ is attained in }D^{1,p}(\mathbb R^n)
\quad\Longleftrightarrow\quad
C_*(p,s,\mu)<\Lambda(p,s,\mu).
\end{equation}
When $p\ge2$ and $C_*=\Lambda$, every normalized minimizing
sequence converges weakly to zero in $D^{1,p}(\mathbb R^n)$.
Its weighted mass can concentrate or escape only through
singular ends whose concentration cost equals $\Lambda$.
\end{theorem}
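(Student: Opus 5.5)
The plan is a concentration–compactness argument adapted to the finitely many singular ends of $W$, namely the active poles $a_\alpha$ and infinity. First I record the local structure of $W$. Near an active pole $a_\alpha$ we have $|v|\sim\mu_\alpha|x-a_\alpha|^{-1}$ and $S\sim\mu_\alpha|x-a_\alpha|^{-2}$, so \eqref{def:truncated-intro} gives
\[
W(x)=\frac{g(s\mu_\alpha)}{|x-a_\alpha|^{p}}+O\bigl(|x-a_\alpha|^{1-p}\bigr).
\]
As $|x|\to\infty$ we have $v\sim x/|x|^2$, $S\sim|x|^{-2}$, hence $W=g(s)|x|^{-p}+O(|x|^{-p-1})$. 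Away from these ends $W$ is locally bounded: the zero set of $v$ enters only through $|v|^{p-2}$, which is integrable since $p>1$ and the zero set is lower dimensional. Combined with the one-pole Hardy inequality with constant $H_p$, this yields for every $\varepsilon>0$ a radius $r$ such that
\[
\int_{B_r(a_\alpha)}W|u|^p\le\frac{A_\alpha+\varepsilon}{H_p}\int|\nabla u|^p,
\qquad
\int_{\{|x|>1/r\}}W|u|^p\le\frac{A_\infty+\varepsilon}{H_p}\int|\nabla u|^p
\]
for $u$ supported in the respective region. On the complement of these neighbourhoods, the embedding $D^{1,p}\hookrightarrow L^p(W\,dx)$ is compact by Rellich and the local integrability of $W$.

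For the compactness statement, take a normalized minimizing sequence and pass to a weak limit, $u_k\rightharpoonup u$. Using smooth cutoffs $\chi_j$ localizing at each end and Brezis–Lieb in the weighted space, I split
\[
1=\int W|u|^p+\sum_{\text{ends}}\nu_j+o(1),\qquad
\int|\nabla u_k|^p\ge\int|\nabla u|^p+\sum_j\frac{H_p}{A_j+\varepsilon}\,\nu_j+o(1).
\]
The commutator terms $\int|u_k|^p|\nabla\chi_j|^p$ vanish because the cutoffs are supported where the embedding is compact. Since $\int|\nabla u|^p\ge C_*\int W|u|^p$, we get
\[
C_*\ge C_*\,t+\Lambda(1-t),\qquad t=\int W|u|^p .
\]
The gap $C_*<\Lambda$ forces $t=1$. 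Hence the weighted norms converge, $u$ is a minimizer, and weak lower semicontinuity gives $\|\nabla u_k\|_p\to\|\nabla u\|_p$. Uniform convexity of $L^p$ then yields strong convergence.

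For a minimizer, $|u_*|$ is also a minimizer, so I may take $u_*\ge0$. The Euler–Lagrange equation \eqref{eq:subquad-extremizer-equation} follows by differentiating the Rayleigh quotient, using $W\in L^1_{\rm loc}$ and the continuity of the embedding. Positivity a.e. follows from the weak Harnack inequality and strong maximum principle for $-\Delta_p u\ge0$ (Trudinger / Vázquez), applied on $\mathbb R^n$ minus the poles, which is connected since $n\ge3$.

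Criticality of $-\Delta_p-C_*W$ comes from the existence of a finite-energy ground state. Suppose $\mathcal Q_*[u]\ge\int Q|u|^p$ with $Q\ge0$ continuous and nonzero. This extends by density to $D^{1,p}$, and testing with $u_*$ gives $0\ge\int Q u_*^p>0$ because $u_*>0$ a.e., a contradiction. None of these three steps uses the gap.

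For necessity when $p\ge2$, I show that $C_*=\Lambda$ precludes attainment. Let $\Lambda$ be realized at an end $j$. I build test functions by perturbing $u_*$ with a concentrating Hardy profile at end $j$: near $a_\alpha$ this is a truncated $|x-a_\alpha|^{-h}$-type function, which approximates the local Hardy constant with ratio $H_p/A_j+o(1)$. Writing $u=u_*+\tau\psi_k$ and using the inequality
\[
|a+b|^p\le|a|^p+p|a|^{p-2}a\cdot b+|b|^p+C\bigl(|a|^{p-2}|b|^2+|a||b|^{p-1}\bigr),
\]
valid for $p\ge2$, together with the equation for $u_*$, the cross terms cancel to leading order. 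The key point is that the positive weighted mass contributed by $u_*$ near the end interacts with the profile, and this interaction produces a strictly negative correction that pushes the quotient below $\Lambda=C_*$. This contradicts the definition of $C_*$. The statement about minimizing sequences when $C_*=\Lambda$ follows from the same splitting: any nonzero weak limit would be a minimizer, which is excluded, so $t=0$. The inequalities then force the defect mass to sit only on ends $j$ with $H_p/A_j=\Lambda$.

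I expect the main obstacle to be this necessity step, specifically controlling the sign of the interaction between $u_*$ and the concentrating profile. For $p<2$ the elementary inequality fails, which is why the restriction appears. I would first try the Brezis–Nirenberg type expansion in the ground-state variables near the end, exploiting that $u_*$ behaves like a power strictly below the critical Hardy power there.
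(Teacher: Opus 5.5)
Your compactness, Euler--Lagrange, positivity and criticality steps follow the paper's route. That route uses defect masses at the active poles and at infinity, the balance $c_*\ge c_*t+q(1-t)$, weak Harnack, and a Fatou argument along smooth approximants of $u_*$.

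The necessity step for $p\ge2$, however, has a genuine gap. No perturbation $u_*+\tau\psi_k$ can push the quotient below $C_*$, whatever the profile. For $p=2$ this is an exact identity: normalization kills the zeroth-order term and the weak equation kills the first-order term, so
\[
\int|\nabla(u_*+\tau\psi)|^2\,dx-C_*\int W(u_*+\tau\psi)^2\,dx
=\tau^2\Bigl(\int|\nabla\psi|^2\,dx-C_*\int W\psi^2\,dx\Bigr)\ge0 .
\]
For $p>2$ the obstruction is structural. Positivity of $u_*$ and the equation are perfectly consistent near a critical Hardy end: $|x-a_\alpha|^{-h}$ solves the limiting equation there. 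By Picone, the ground-state representation relative to $u_*$ writes the optimal form of $u_*w$ as a nonnegative $C_p$-remainder. So an expansion that uses only these properties cannot produce a strictly negative interaction term. The one inconsistent ingredient is finite energy, so the contradiction must come from the size of positive solutions at the end, not from an energy computation. Your heuristic that $u_*$ behaves like a power strictly below the critical Hardy power is exactly what fails there. Proving that failure is the whole argument.

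The missing idea is a critical Hardy barrier (Lemma~\ref{lem:critical-ends}). At an end realizing $\Lambda$ one has $C_*W=H_p|x-a_\alpha|^{-p}+O(|x-a_\alpha|^{1-p})$. At infinity the analogue is $C_*W=H_p|x|^{-p}+O(|x|^{-p-2})$, after translating the weighted barycenter to the origin. Picone comparison on annuli with explicit subsolutions such as $r^{-h}(1+Ar)\bigl(\log(r/\rho)/\log(R/\rho)\bigr)^2$, followed by $\rho\downarrow0$, gives $u_*\ge c|x-a_\alpha|^{-h}$; an exterior barrier gives $u_*\ge c|x|^{-h}$. Since $hp^*=n$, $\int u_*^{p^*}$ then diverges logarithmically, contradicting $u_*\in D^{1,p}$. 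The hypothesis $p\ge2$ enters through this barrier computation and through local boundedness of $W$ off the active poles, which gives a continuous solution to compare. It does not enter through the failure of an elementary inequality. Your final assertion (weak convergence to zero of minimizing sequences when $C_*=\Lambda$) is correctly derived from non-attainment via the equality case of the defect balance, so it inherits this gap.

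Two smaller points:
\begin{enumerate}
\item For $1<p<2$, $W$ is not locally bounded off the ends; it behaves like $\operatorname{dist}(x,Z(v))^{p-2}$. Local integrability plus Rellich does not give compactness into $L^p(W\,dx)$. You need a thin-layer bound $\int_{|t|<\delta}|t|^{p-2}|\zeta|^p\le C\delta^{p-1}\|\zeta\|_{W^{1,p}}^p$, obtained by slicing transversally to $Z(v)$. This works because $Z(v)$ lies locally on a smooth hypersurface, since $\operatorname{tr}Dv>0$ there.
\item Split the gradient energy with defect measures rather than Brezis--Lieb, which would require a.e.\ convergence of $\nabla u_k$.
\end{enumerate}
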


The proof in Section~\ref{sec:variational} excludes concentration
on the zero set $Z(v)$ even when $p<2$; only the active
poles and infinity can carry a defect. For $p\ge2$, critical Hardy
barriers give the converse. For $p<2$ no converse is asserted, and
equality $C_*=\Lambda$ alone does not decide operator criticality.

The coefficient is also the variational spectral threshold:
$\mathcal Q_\lambda[u]:=\int|\nabla u|^p-\lambda\int W|u|^p$
is coercive for $0\le\lambda<C_*$ and has a negative direction
for $\lambda>C_*$; at $\lambda=C_*$ it is critical whenever
the infimum is attained
(Remark~\ref{cor:variational-spectral-threshold}).

\subsection{Coexistence outside the full critical set}

The next result separates full and truncated criticality for
arbitrary fixed pole configurations, varying only positive weights.

\begin{theorem}[Three truncated regimes outside the full critical set]
\label{thm:multipolar-three-regimes}
Fix $n\ge3$, $1<p<2$, and $m\ge2$ distinct poles
$a_1,\ldots,a_m$. Assume
\[
h<s\le\min\{mh,s_{\max}\},\qquad t_0:=\frac hs\in[1/m,1).
\]
For each $i\in\{1,\ldots,m\}$, fix positive numbers
$\rho_\alpha^{(i)}$, $\alpha\ne i$, such that
\[
\sum_{\alpha\ne i}\rho_\alpha^{(i)}=1,\qquad
(s-h)\max_{\alpha\ne i}\rho_\alpha^{(i)}\le h,
\]
and consider the positive-weight path
\[
\mu^{(i)}_\alpha(t):=
\begin{cases}
t,&\alpha=i,\\[1mm]
(1-t)\rho_\alpha^{(i)},&\alpha\ne i,
\end{cases}
\qquad t_0\le t<1.
\]
Write $C_i(t):=C_*(p,s,\mu^{(i)}(t))$,
$W_{i,t}:=W_{m,s,p}^{\mu^{(i)}(t)}$, and
$Q_{i,t}[u]:=\int|\nabla u|^p-\int W_{i,t}|u|^p$.
For every $t\in I:=(t_0,1)$, the full operator is subcritical and
$\Lambda(p,s,\mu^{(i)}(t))>1$. On each of these paths:
\begin{enumerate}[(i)]
\item There exists $\eta_->0$, with $\eta_-<1-t_0$, such that
      $C_i(t)<1$ for $t\in(t_0,t_0+\eta_-)$.
      At each such parameter, $Q_{i,t}$ has a negative direction
      in $C_c^\infty(\mathbb R^n)$.
\item There exists $\eta_+>0$, with $\eta_+<1-t_0$, such that
      $C_i(t)>1$ for $t\in(1-\eta_+,1)$.
      At each such parameter, $Q_{i,t}$ is coercive in
      $D^{1,p}(\mathbb R^n)$ and subcritical.
\item There exists at least one $t_{c,i}\in I$ with
      $C_i(t_{c,i})=1$. Every such parameter satisfies
      $C_i(t_{c,i})<\Lambda(p,s,\mu^{(i)}(t_{c,i}))$.
      The infimum is attained by a positive finite-energy function,
      normalized minimizing sequences are relatively compact in
      $D^{1,p}(\mathbb R^n)$, and $Q_{i,t_{c,i}}$ is critical.
\end{enumerate}
By carrying out the construction separately for each component
$\mu_i$, $i=1,\ldots,m$, we obtain at least $m$ distinct
positive-weight parameters outside the full critical set for which
the coefficient-one truncated operator is critical and possesses
a finite-energy ground state.
\end{theorem}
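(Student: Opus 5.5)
Along each path $\mu^{(i)}(t)$ I will study the function $t\mapsto C_i(t)$ on $[t_0,1)$. The plan rests on three facts: its value at the left endpoint, its behaviour as $t\to1$, and its continuity. The preliminary assertions come first.

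For $t>t_0$ we have $s\mu_i(t)=st>h$. Hence $\mu^{(i)}(t)\notin\Psi_{h/s}$, and Theorem~\ref{thmopt}(ii) gives subcriticality of the full operator. The hypothesis $(s-h)\max\rho^{(i)}_\alpha\le h$ yields $s(1-t)\rho_\alpha<h$ for $t>t_0$. So every pole coefficient satisfies $A_\alpha=g(s\mu_\alpha)<H_p$: for $\alpha\ne i$ because $s\mu_\alpha<h$, and for $\alpha=i$ because $s t\in(h,s_{\max}]$ with $st\neq h$. Together with $A_\infty=g(s)<H_p$ (since $s>h$), and using that $g$ has its unique maximum at $h$, this gives $\Lambda>1$.

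\emph{Left endpoint and (i).} At $t=t_0$ we have $s\mu_i=h$ and $s\mu_\alpha\le h$ for $\alpha\neq i$, so $\mu^{(i)}(t_0)\in\Psi_{h/s}$ and the full operator is critical with ground state $\phi$. Since $p<2$, $R\le0$ and $R\not\equiv0$ (it is generically strictly negative off a null set). Hence $W=V-R\ge V$, with strict inequality on a set of positive measure. I take a null sequence $\varphi_k\to\phi$ from Theorem~\ref{thmopt}(ii), for which $\mathcal Q[\varphi_k]\to0$ while $\int(-R)|\varphi_k|^p$ stays bounded below by a positive constant on a fixed compact set. Then $Q_{i,t_0}[\varphi_k]=\mathcal Q[\varphi_k]+\int R|\varphi_k|^p<0$ for large $k$, so $C_i(t_0)<1$. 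Mollification and truncation give a test function in $C_c^\infty$. Fixing this test function $u$, the map $t\mapsto Q_{i,t}[u]$ is continuous: $W_{i,t}$ depends continuously on $t$, and dominated convergence applies on $\operatorname{supp}u$ using the local integrable bounds. Hence negativity persists on $(t_0,t_0+\eta_-)$, and since $Q_{i,t}[u]<0$ means $C_i(t)<1$, this proves (i).

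\emph{Right endpoint and (ii).} This is the ``single-pole degeneration formula'' and the step I expect to be the main obstacle. As $t\to1$, $W_{i,t}$ converges to the one-pole weight $g(s)|x-a_i|^{-p}$. This holds locally uniformly away from the poles, but not uniformly near the other poles $a_\alpha$, where the weight behaves like $g(s(1-t)\rho_\alpha)|x-a_\alpha|^{-p}\to0$. I would prove $\liminf_{t\to1}C_i(t)\ge H_p/g(s)>1$ by a partition of unity. Near $a_i$ and near infinity, I bound $W$ by $(g(s)+o(1))|x-a_i|^{-p}$ and apply the one-pole Hardy inequality with constant $H_p$. Near each $a_\alpha$, I bound $W\le (g(s(1-t)\rho_\alpha)+o(1))|x-a_\alpha|^{-p}$, which is small. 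On the compact remainder I use uniform convergence. The cutoff errors are absorbed by the standard IMS-type localisation for $|\nabla u|^p$ with $\varepsilon$-loss, made lower order via an $L^p(W)$ compactness on bounded annuli (the weighted compactness from Section~\ref{sec:variational}). Then, for $C_i(t)>1$, coercivity in the form $Q_{i,t}[u]\ge(1-1/C_i)\int|\nabla u|^p$ is immediate, and subcriticality follows with $Q=(C_i-1)W$, restricted if needed to a continuous minorant.

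\emph{Continuity and (iii).} $C_i$ is upper semicontinuous, as an infimum of continuous functions. For lower semicontinuity on $I$ I use a comparison $W_{i,t'}\le(1+\omega(|t-t'|))W_{i,t}+\omega(|t-t'|)\,\mathcal H$, where $\mathcal H$ is a sum of Hardy weights at the poles; this follows from the explicit formula \eqref{def:truncated-intro}, which is a continuous, locally Lipschitz expression in $\mu$ with homogeneous singularities. Hence $C_i$ is continuous. By (i), (ii) and the intermediate value theorem, there is a $t_{c,i}$ with $C_i=1<\Lambda$. Theorem~\ref{prop:compactness-threshold} then supplies attainment, compactness, positivity, and criticality of $Q_{i,t_{c,i}}$. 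Finally, the $m$ parameters obtained for different $i$ are distinct, since $\mu_i(t_{c,i})=t_{c,i}>t_0\ge1/m$ is the strictly largest component (the others satisfy $(1-t)\rho_\alpha<h/s$) and so identifies $i$.
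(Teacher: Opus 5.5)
Your architecture matches the paper's: the critical endpoint gives $C_i(t_0)<1$, the single-pole limit gives $C_i>1$ near $t=1$, continuity and the intermediate value theorem give a crossing, and Theorem~\ref{prop:compactness-threshold} does the rest. Part (i) is sound once you replace dominated convergence by Fatou's lemma. Your local bounds on $W_{i,t}$ are not uniform in $t$, because $Z(v_t)$ moves; but $W_{i,t}\ge0$ and $W_{i,t}\to W_{i,t_0}$ a.e.\ already give $\limsup_{t\to t_0}Q_{i,t}[u]\le Q_{i,t_0}[u]<0$, which is all you need.

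The genuine gap is in (ii). For $1<p<2$ the factor $|v|^{p-2}$ makes $W$ blow up on $Z(v)$, and along your path a zero of $v$ collides with every weak pole. Put $\varepsilon=1-t$ and $\alpha\ne i$. Near $a_\alpha$ write $v_t=\varepsilon\rho_\alpha z_\alpha+q_\varepsilon$, with $q_\varepsilon$ smooth and $q_\varepsilon(a_\alpha)=z_i(a_\alpha)+O(\varepsilon)\ne0$. Then $v_t$ vanishes at a point $b_\varepsilon$ with $|b_\varepsilon-a_\alpha|\asymp\varepsilon$. There $S_t\ge\varepsilon\rho_\alpha|x-a_\alpha|^{-2}\gtrsim\varepsilon^{-1}$, so $W_{i,t}\to+\infty$ as $x\to b_\varepsilon$, while $|x-a_\alpha|^{-p}\asymp\varepsilon^{-p}$ stays bounded.

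Hence your bound $W\le(g(s(1-t)\rho_\alpha)+o(1))|x-a_\alpha|^{-p}$ fails with every constant. Lemma~\ref{lem:weighted-local-compactness} cannot absorb the error, since it concerns a fixed weight away from the poles and is not uniform as the zero approaches $a_\alpha$. The missing idea is the second Hardy centre in Proposition~\ref{prop:multipolar-one-pole}:
\begin{itemize}
\item locate $b_\varepsilon$ by a contraction argument;
\item show $|v_t|\asymp|x-b_\varepsilon|/|x-a_\alpha|$ on a fixed ball $B_\delta(a_\alpha)$;
\item deduce $W_{i,t}\le C\varepsilon^{p-1}(|x-a_\alpha|^{-p}+|x-b_\varepsilon|^{-p})$ on $|x-a_\alpha|\le K\varepsilon$, using $2p-2>0$, and apply Hardy centred at both $a_\alpha$ and $b_\varepsilon$;
\item on $K\varepsilon\le|x-a_\alpha|<\delta$, use $\bigl|W_{i,t}-g(s)|x-a_i|^{-p}\bigr|\le C\varepsilon|x-a_\alpha|^{-2}\le C\varepsilon^{p-1}|x-a_\alpha|^{-p}$.
\end{itemize}
This yields the global form bound \eqref{eq:multipolar-vertex-form} directly, with no partition of unity.

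The same defect breaks the lower semicontinuity you need in (iii). Along the path, $v_t=tz_i+(1-t)w$ with $w=\sum_{\beta\ne i}\rho_\beta z_\beta$. If $v_t(x)=v_{t'}(x)=0$ with $t\ne t'$, then $z_i(x)=w(x)$ and hence $z_i(x)=0$, which is impossible; so $Z(v_t)\cap Z(v_{t'})=\emptyset$. Near a zero of $v_{t'}$ (for instance the points $b_\varepsilon$ above), the left side of $W_{i,t'}\le(1+\omega)W_{i,t}+\omega\mathcal H$ is unbounded while the right side is bounded, however small $|t-t'|$ is. So no pointwise comparison with Hardy weights at the poles is available.

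What you need is a comparison of forms, $\sup_{\|\nabla u\|_p=1}\int|W_{i,t'}-W_{i,t}|\,|u|^p\,dx\to0$. Lemma~\ref{lem:weight-continuity} obtains this from the uniform pole and far-field expansions together with a uniform thin-layer estimate $\int_{Q\cap\{|f_\mu|<\eta\}}W_\mu|u|^p\,dx\le C\eta^{p-1}\|u\|_{W^{1,p}(Q)}^p$. Here $f_\mu$ is a component of $v_\mu$ whose derivative in one coordinate direction is bounded below uniformly for nearby weights; it exists because $\operatorname{div}v_\mu=(n-2)S_\mu>0$ at zeros.

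Your upper semicontinuity does survive, by the same Fatou argument. It does not, however, produce a parameter with $C_i(t_{c,i})=1$, since an upward jump across $1$ is not excluded. Without the form-continuity lemma, (iii) and the final count of $m$ critical parameters remain unproved.
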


The proof uses continuity in the positive weights
(Lemma~\ref{lem:weight-continuity}) and the single-pole limit
(Proposition~\ref{prop:multipolar-one-pole})
\[
\lim_{t\uparrow1}C_i(t)=
\begin{cases}
H_p/g(s)>1,&h<s<s_{\max},\\
+\infty,&s=s_{\max}.
\end{cases}
\]
The ratio bound places $\mu^{(i)}(t_0)$ in the full critical
set, so $C_i(t_0)<1$. Continuity gives a crossing of $1$, and the concentration gap
makes every crossing attained and critical. No monotonicity,
uniqueness of the crossing, or geometric symmetry is assumed.

\subsection{The transition at the quadratic exponent}

The sign of $R$ leads to $C_*<1$ on the full critical set for
$1<p<2$, but to $C_*>1$ on its attained part for $p>2$
(Theorems~\ref{thm:subquad-sharp} and~\ref{tmHSW3}).
Near the quadratic exponent these comparisons yield continuity
of the optimum and persistence of attainment.

\begin{theorem}[Transition through the quadratic exponent]
\label{thm:attainment-near-two}
Fix $n\ge3$, distinct poles $a_1,\ldots,a_m$, and $\mu\in\Psi$.
Assume
\begin{equation}\label{eq:quadratic-strict-assumptions}
\frac{n-2}{2}<s<n-2,\qquad
s\mu_\alpha<\frac{n-2}{2}\quad\text{for every }\alpha.
\end{equation}
Then
\begin{equation}\label{eq:Cstar-limit-two}
\lim_{p\to2}C_*(p,s,\mu)=1.
\end{equation}
There exists $\varepsilon_0>0$ such that, for $|p-2|<\varepsilon_0$,
the parameters remain admissible and in the full-potential
attainment region, and
\begin{equation}\label{eq:near-two-strict-gap}
C_*(p,s,\mu)<\Lambda(p,s,\mu).
\end{equation}
Thus the optimum is attained by a positive finite-energy function
satisfying \eqref{eq:subquad-extremizer-equation}, and the optimally
rescaled operator is critical.
The position of the coefficient relative to $1$ is
\[
C_*(p,s,\mu)<1\quad(p<2),\qquad
C_*(2,s,\mu)=1,\qquad
C_*(p,s,\mu)>1\quad(p>2).
\]
\end{theorem}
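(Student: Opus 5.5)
The plan is to get \eqref{eq:Cstar-limit-two} from two-sided bounds in which $p$ enters only through explicit continuous quantities. Everything else then follows from Theorem~\ref{prop:compactness-threshold}.

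First, admissibility and the attainment region are open conditions. The conditions $h<s<s_{\max}$ and $s\mu_\alpha<h$ for every $\alpha$ hold strictly at $p=2$, with $h=(n-2)/2$ and $s_{\max}=n-2$. Since $h$ and $s_{\max}$ depend continuously on $p$, these strict inequalities persist for $|p-2|<\varepsilon_1$. By Theorem~\ref{thmopt}(iii), $\phi:=\phi_{m,s,\mu}\in D^{1,p}(\mathbb R^n)$ there and attains equality in \eqref{MHp1}.

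The same strictness, via the remark after \eqref{eq:threshold-at-least-one}, gives $\Lambda(p,s,\mu)>1$. As $\Lambda$ is a finite minimum of continuous functions of $p$ (the quantities $g(s)$ and $g(s\mu_\alpha)$ stay positive), we get $\Lambda(p,s,\mu)\ge\Lambda_0>1$ uniformly for $|p-2|<\varepsilon_1$.

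At $p=2$ we have $R=0$ and $W=V$. Equality at $\phi$ together with \eqref{MHp1} gives $C_*(2,s,\mu)=1$.

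For the limit, I would bound $C_*$ from both sides.

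\emph{Upper bound.} Use $\phi$ as a test function:
\[
C_*(p)\le\frac{\int|\nabla\phi|^p}{\int W\phi^p}=\frac{\int V\phi^p}{\int V\phi^p-\int R\phi^p}.
\]

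\emph{Lower bound.} For $p\ge2$ we have $R\ge0$, so $W\le V$ and $C_*\ge1$. For $p<2$ we have $|v|^2\le S$, hence
\[
|R|\le 2(2-p)s^{p-1}|v|^{p-2}S .
\]
On the other hand, $W$ dominates $c\,|v|^{p-2}S$ with $c$ bounded below, because $(n-p)S-(p-1)s|v|^2\ge(n-p-(p-1)s)S$ and $s<s_{\max}$. This gives $|R|\le C(2-p)W$, so $V\le(1+C(2-p))W$. Combined with \eqref{MHp1}, this yields $C_*\ge(1+C(2-p))^{-1}$.

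For $p>2$ the analogous pointwise bound is $R\le C(p-2)W$. This gives $V\le (1+C(p-2))W$, and hence an upper bound $C_*\le 1+C(p-2)$ via the test function $\phi$, since $\int|\nabla\phi|^p=\int V\phi^p\le(1+C(p-2))\int W\phi^p$. Symmetrically for $p<2$, $C_*\le1$ comes from $R\le0$.

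Together these give $|C_*(p)-1|\le C|p-2|$, which proves \eqref{eq:Cstar-limit-two}. The comparison with $W$ needs care near $Z(v)$, where the factor $|v|^{p-2}$ is singular for $p<2$. This is exactly why I compare $R$ with $W$ pointwise rather than estimating integrals. The same route yields the strict signs: for $p<2$, $R<0$ on a set of positive measure, so $\int W\phi^p>\int V\phi^p$ and $C_*<1$. For $p>2$, $C_*>1$ is the content of Theorem~\ref{tmHSW3} on the attained part of the full critical set, which I would cite. Alternatively, strictness follows because a minimizer $u$ with $C_*=1$ would force $\int R|u|^p=0$ and equality in \eqref{MHp1}, so $u$ would be a multiple of $\phi$, contradicting $\int R\phi^p>0$. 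This alternative requires attainment, which holds by the next step.

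Choose $\varepsilon_0\le\varepsilon_1$ so that $C|p-2|<\Lambda_0-1$. This gives $C_*<\Lambda$, which is \eqref{eq:near-two-strict-gap}. Theorem~\ref{prop:compactness-threshold} then provides a positive minimizer solving \eqref{eq:subquad-extremizer-equation} and criticality of the rescaled operator.

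The main obstacle is the uniform pointwise comparison $|R|\le C|p-2|W$ with a constant independent of $p$ near $2$. This depends on the lower bound $(n-p)-(p-1)s\ge\delta>0$, which holds uniformly because $s<n-2$ strictly. That step is where the assumption $s<n-2$ in \eqref{eq:quadratic-strict-assumptions} is used.
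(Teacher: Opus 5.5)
Your proof is correct, and it takes a genuinely different route from the paper's.

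\textbf{The paper's route.} It argues qualitatively. For $p\downarrow2$ it fixes a compactly supported near-optimizer $f\in C_c^\infty(\Omega)$ of the quadratic quotient. It then passes to the limit in $F_f(p)$ by dominated convergence, controlling $|v|^{p-2}$ on $\operatorname{supp}f$, and combines this with $C_*\ge1$. For $p\uparrow2$ it quotes the explicit bound $C_*>(n+p-4)/(n-p)$ from Theorem~\ref{tmHSW2} and $C_*<1$ from Theorem~\ref{thm:subquad-sharp}. The signs come from Theorems~\ref{thm:subquad-sharp} and~\ref{tmHSW3}.

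\textbf{Your route.} You observe that $R$ and $W$ carry the same factor $|v|^{p-2}$. Then $0\le T\le|v|^2S$ (with $T$ the angular sum of Proposition~\ref{propVpos}) and $|v|^2\le S$ give $|R|\le 2|p-2|\,(n-p-(p-1)s)^{-1}W$ a.e. This holds uniformly near $p=2$ because $s<n-2$. You also test with $\phi_{m,s,\mu}$ itself at each $p$. That is legitimate, since the attainment region is open in $p$ and $\int|\nabla\phi|^p=\int V\phi^p$ there.

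\textbf{Comparison.} Your approach gains several things:
\begin{itemize}
\item a Lipschitz estimate $|C_*(p,s,\mu)-1|\le C|p-2|$;
\item a computable $\varepsilon_0$ in terms of $n-2-s$ and the explicit threshold $\Lambda(p,s,\mu)$, whereas the paper deliberately leaves the neighborhood unsized;
\item no need for the density and dominated-convergence step across $Z(v)$;
\item $C_*<1$ for $p<2$ directly from $\int R\phi^p<0$, whereas Theorem~\ref{thm:subquad-sharp} goes through criticality so as to cover non-attained critical parameters as well.
\end{itemize}
Conversely, the paper's route needs finite energy of the ground state only at $p=2$, and its subquadratic lower bound stays uniform up to $s=s_{\max}$. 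Neither generality is needed here.

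\textbf{One slip to correct.} For $p<2$ you wrote $V\le(1+C(2-p))W$. That is the trivial direction (it holds since $R\le0$) and gives no lower bound on $C_*$. What you need is $V=W+R\ge(1-C(2-p))W$. Your pointwise bound gives this at once, and together with \eqref{MHp1} it yields $C_*\ge1-C(2-p)$. Alternatively, $V\ge c|v|^{p-2}S$ from the proof of Proposition~\ref{propVpos} gives $W\le(1+C(2-p))V$ and your stated form of the bound. The rest of the argument is unaffected.
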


Continuity is asserted for the optimal value, not for extremizers in the exponent-dependent energy spaces.

Table~\ref{tab:truncated-phases} summarizes this distinction.
For each fixed $p$, let $\mathcal C$ denote the admissible pairs
$(s,\mu)$ with $\mu\in\Psi_{(n-p)/(ps)}$, and let
$\mathcal A\subset\mathcal C$ be the full-potential attainment
region $s>(n-p)/p$, $\max_\alpha s\mu_\alpha<(n-p)/p$.
The \emph{optimal operator} means
$-\Delta_p-C_*W|\cdot|^{p-2}\cdot$.

\begin{table}[htbp]
\centering
\small
\renewcommand{\arraystretch}{1.3}
\begin{tabular}{@{}llll@{}}
\hline
Exponent and region & Best coefficient & Attainment & Optimal operator\\
\hline
$1<p<2$, $\mathcal C$ & $0<C_*<1$ & Yes & Critical\\
$p=2$, $\mathcal C$ & $C_*=1$ & Exactly on $\mathcal A$ & Critical\\
$2<p<n$, $\mathcal A$ & $C_*>1$ & Iff $C_*<\Lambda$ & Critical if attained\\
\hline
\end{tabular}
\caption{Truncated-potential regimes within the full critical set.
The third row is restricted to the full-potential attainment region
$\mathcal A$.}
\label{tab:truncated-phases}
\end{table}

On $\mathcal C\setminus\mathcal A$, coefficient $1$ remains sharp
for $p\ge2$ (Theorem~\ref{tmHSW1}).

\subsection{Superquadratic concentration and compactness phases}

Strict improvement over coefficient one does not imply attainment.
For $p>2$, the truncated problem has a non-attained phase inside the
full-potential attainment region. Its endpoint and the compact phase
beyond it are governed by an additional interaction threshold.

Fix $n\ge3$, $2<p<n$, $\mu\in\Psi$, and distinct poles
$a_1,\ldots,a_m$. Write $W_s:=W_{m,s,p}^{\mu}$ and, with $v$ and $S$ as above, define
\begin{equation}\label{eq:interaction-kernel}
G:=|v|^{p-2}(S-|v|^2).
\end{equation}
The auxiliary variational constant at $s=h$ is
\begin{equation}\label{eq:auxiliary-kappa}
\begin{aligned}
Q_h[u]&:=\int_{\mathbb R^n}|\nabla u|^p\,dx
-\int_{\mathbb R^n}W_h|u|^p\,dx,\\
\kappa_*&:=\inf_{0\ne u\in D^{1,p}(\mathbb R^n)}
\frac{Q_h[u]}{\int_{\mathbb R^n}G|u|^p\,dx}.
\end{aligned}
\end{equation}
Here $G>0$ a.e. and $pH_pG\le W_h$, so the denominator is
finite and positive for every nonzero $u\in D^{1,p}$.
Proposition~\ref{prop:positive-kappa} proves that $\kappa_*>0$.

\begin{theorem}[The superquadratic concentration interval and its endpoint]
\label{thm:superquadratic-phases}
For the fixed data above, let $q=\max_\alpha\mu_\alpha\in(0,1)$
and define
\begin{equation}\label{eq:superquad-transition-parameters}
\begin{aligned}
s_c&:=s_{\max}\left(1-\frac1{p+\kappa_*/H_p}\right),\\
s_\infty&:=s_{\max}\frac{1-q^{p-1}}{1-q^p},\qquad
\Lambda_\infty(s):=\frac{H_p}{g(s)}\quad(h\le s<s_{\max}).
\end{aligned}
\end{equation}
Then
\begin{equation}\label{eq:transition-parameter-order}
h<s_c\le s_\infty<\min\{s_{\max},h/q\}.
\end{equation}
The following assertions hold.
\begin{enumerate}[(i)]
\item For $h\le s\le s_c$,
\[
C_*(p,s,\mu)=\Lambda(p,s,\mu)=\Lambda_\infty(s).
\]
This optimum is not attained in $D^{1,p}$, including at $s=s_c$.
For $h<s\le s_c$ its value is strictly greater than one.
The auxiliary infimum $\kappa_*$ is also not attained.

\item For $h\le s<s_c$, the optimal operator is subcritical.
Every sequence normalized by $\int W_s|u_k|^p\,dx=1$ and
satisfying $\int|\nabla u_k|^p\,dx\to C_*(p,s,\mu)$ obeys
\begin{equation}\label{eq:superquad-mass-escape}
\int_{B_R(0)}W_s|u_k|^p\,dx\longrightarrow0
\qquad\text{for every fixed }R>0.
\end{equation}

\item If $s_c<s_\infty$, there exists $\delta>0$, with
$s_\infty+\delta<\min\{s_{\max},h/q\}$, such that
\begin{equation}\label{eq:superquad-compact-phase}
1<C_*(p,s,\mu)<\Lambda(p,s,\mu)
\qquad(s_c<s<s_\infty+\delta).
\end{equation}
On this interval the optimum has a positive finite-energy
extremizer, normalized minimizing sequences are relatively
compact in $D^{1,p}$, and the optimal operator is critical.
A sufficient condition for $s_c<s_\infty$ is
\begin{equation}\label{eq:explicit-two-phase-condition}
\frac{2(p-2)}h<
\frac{1-pq^{p-1}+(p-1)q^p}{q^{p-1}(1-q)}.
\end{equation}
For each fixed $n$, pole configuration, and $\mu\in\Psi$,
this condition holds for all $p>2$ sufficiently close to $2$.
\end{enumerate}
When $s_c<s_\infty$, the optimal operator at $s=s_c$ is also
critical, although its ground state has infinite energy; see
Proposition~\ref{prop:separated-endpoint-criticality}.
\end{theorem}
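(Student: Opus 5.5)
The plan is to organise everything around the decomposition $W_s=s^{p-1}|v|^{p-2}((n-p)S-(p-1)s|v|^2)$. Writing $S=|v|^2+(S-|v|^2)$ gives
\[
W_s=g(s)|v|^p+(n-p)s^{p-1}G .
\]
At $s=h$ this reads $W_h=H_p|v|^p+(n-p)h^{p-1}G=H_p|v|^p+pH_pG$. Hence
\[
W_s=\frac{g(s)}{H_p}\,W_h+s^{p-1}\Bigl((n-p)-\frac{p\,g(s)}{s^{p-1}}\Bigr)G
=\frac{g(s)}{H_p}\,W_h+(p-1)s^{p-1}\bigl(ps-(n-p)\bigr)\cdot\tfrac1p\,G\cdot p ,
\]
up to the exact constant, which I will compute carefully. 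The coefficient of $G$ vanishes at $s=h$ and is positive for $s>h$.

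Dividing the Rayleigh quotient by $g(s)/H_p$ reduces the inequality $\int|\nabla u|^p\ge\Lambda_\infty(s)\int W_s|u|^p$ to
\[
Q_h[u]\ge c(s)\int G|u|^p,\qquad c(s):=\frac{H_p\,(n-p)s^{p-1}-pH_pg(s)/ (\cdots)}{g(s)}.
\]
Here $c$ is an explicit increasing function of $s$ with $c(h)=0$. The constant $s_c$ is defined precisely by $c(s_c)=\kappa_*$; solving this equation should reproduce $s_{\max}(1-1/(p+\kappa_*/H_p))$, and I will verify that algebra first.

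\textbf{Order \eqref{eq:transition-parameter-order}.} The bound $s_c>h$ follows from $\kappa_*>0$ (Proposition~\ref{prop:positive-kappa}). Next, $s_\infty$ is the value at which $g(s)=g(sq)$, i.e. $A_\infty=A_{\alpha}$ for the heaviest pole; the identity $g(s)=g(qs)$ rearranges to $s(1-q^p)(p-1)=(n-p)(1-q^{p-1})$. The strict inequalities $s_\infty<s_{\max}$ and $s_\infty<h/q$ come from $q<1$ and the unimodality of $g$. To show $s_c\le s_\infty$, I will use the upper bound on $\kappa_*$ obtained by concentrating test functions at the pole $a_\alpha$ with $\mu_\alpha=q$: there $Q_h$ and $\int G|u|^p$ have explicit local Hardy constants. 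Combined with \eqref{eq:threshold-at-least-one}, this gives $\Lambda_\infty(s)\le\Lambda(p,s,\mu)$ exactly when $s\le s_\infty$.

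\textbf{Assertion (i).} For $h\le s\le s_c$ the reduction yields $C_*\ge\Lambda_\infty(s)$. Test sequences escaping to infinity, such as dilated Hardy near-extremals $|x|^{-h}$ truncated, give $C_*\le\Lambda_\infty(s)$, because $W_s\sim g(s)|x|^{-p}$ and $G=o(|x|^{-p})$ at infinity. Since $s\le s_\infty$, we have $\Lambda=\Lambda_\infty$. Non-attainment is argued by contradiction. If $u$ attained, then for $s<s_c$ we get $\int G|u|^p>0$ strictly and $Q_h[u]\ge\kappa_*\int G|u|^p>c(s)\int G|u|^p$, which is a contradiction. At $s=s_c$, attainment of $C_*$ forces $u$ to attain $\kappa_*$. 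So it suffices to prove that $\kappa_*$ is not attained. For that I will argue that a minimiser $u\ge0$ of $\kappa_*$ would give a positive finite-energy solution of $-\Delta_pu=\Lambda_\infty(s_c)W_{s_c}u^{p-1}$. The criticality theory of Theorem~\ref{prop:compactness-threshold} would then make it the ground state. A comparison with the explicit decay at infinity forced by the coefficient $g(s_c)/H_p\cdot H_p$ of the Hardy term shows the solution behaves like $|x|^{-h}$ at infinity, hence is not in $D^{1,p}$. I expect this step to be the main obstacle: it needs a critical Hardy barrier at infinity (as in the $p\ge2$ necessity part of Theorem~\ref{prop:compactness-threshold}) applied to the operator at the threshold. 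Strict $>1$ for $h<s$ follows from $g(s)<H_p$.

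\textbf{Assertions (ii) and (iii).} For (ii), when $s<s_c$ the remainder $(\kappa_*-c(s))\int G|u|^p$ is a positive continuous weight term, which gives subcriticality. Mass escape follows because any local mass in $B_R$ yields a nonzero weak limit, or a pole defect whose cost exceeds $\Lambda$ since $s<s_\infty$; the gap $\kappa_*-c(s)$ then contradicts minimality.

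For (iii), when $s>s_c$ a near-minimiser of $\kappa_*$ gives a test function with $C_*<\Lambda_\infty(s)$. For $s<s_\infty+\delta$, with $\delta$ small and using continuity of the pole coefficients, $\Lambda$ stays above $C_*$ even after crossing $s_\infty$, since $C_*$ is continuous in $s$. The lower bound $C_*>1$ comes from Theorem~\ref{tmHSW3}. Theorem~\ref{prop:compactness-threshold} then supplies the extremiser, compactness and criticality.

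Finally, condition \eqref{eq:explicit-two-phase-condition} follows by bounding $\kappa_*$ from above via pole concentration and substituting into $s_c<s_\infty$. As $p\downarrow2$ the left-hand side tends to $0$ while the right-hand side tends to the positive quantity $(1-q)^2/(q(1-q))$, so the condition holds for $p$ close to $2$.
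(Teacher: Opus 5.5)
Your architecture is the paper's. It uses the exact identity $\int|\nabla u|^p-\Lambda_\infty(s)\int W_s|u|^p=Q_h[u]-b(s)\int G|u|^p$ with $b(s)=H_p\bigl(\frac{n-p}{n-p-(p-1)s}-p\bigr)$ (your $c(s)$). It uses the pole-concentration bound for $s_c\le s_\infty$ and the remainder $(\kappa_*-b(s))\int G|u|^p$ for $s<s_c$. At $s=s_c$ it uses the barrier at infinity, which you can simply quote as \eqref{eq:attainment-gap-equivalence} since $C_*=\Lambda$ there. Finally, Theorems~\ref{tmHSW3} and~\ref{prop:compactness-threshold} handle (iii).

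The step that fails is the sufficient condition \eqref{eq:explicit-two-phase-condition}. Concentrating Hardy tests at the pole of weight $q$ give exactly
\[
\kappa_*\le\frac{H_p-g(hq)}{q^{p-1}(1-q)}
=H_p\,\frac{1-pq^{p-1}+(p-1)q^p}{q^{p-1}(1-q)}=b(s_\infty).
\]
This is $H_p$ times the right-hand side of \eqref{eq:explicit-two-phase-condition}. Substituting it into $b(s_c)=\kappa_*$ yields only $s_c\le s_\infty$, never the strict inequality. The quantity $2(p-2)/h=2(p-2)h^{p-1}/H_p$ on the left has no source in your argument.

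What is missing is a second, independent upper bound on $\kappa_*$, coming from the full ground state rather than from a pole. Test the auxiliary quotient with $\phi_h\chi_\varepsilon$, where $\chi_\varepsilon$ are the logarithmic null cut-offs of Section~\ref{sec:groundstate}. The ground-state representation for $V_h=W_h+R_h$ gives $Q_h[\phi_h\chi_\varepsilon]\to\int R_h\phi_h^p$, while $\int G\phi_h^p\chi_\varepsilon^p\to\int G\phi_h^p\in(0,\infty)$. Jensen's inequality $M:=\sum_\alpha\mu_\alpha(v\cdot z_\alpha)^2\ge|v|^4$ gives the pointwise bound $R_h=2(p-2)h^{p-1}|v|^{p-4}(|v|^2S-M)\le2(p-2)h^{p-1}G$. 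Hence $\kappa_*\le\kappa_{\mathrm{gs}}\le 2(p-2)h^{p-1}$; this is the upper half of Proposition~\ref{prop:positive-kappa}, which you cited only for $\kappa_*>0$. Then \eqref{eq:explicit-two-phase-condition} reads $2(p-2)h^{p-1}<b(s_\infty)$, so $\kappa_*<b(s_\infty)$, and strict monotonicity of $b$ gives $s_c<s_\infty$.

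Three smaller points:
\begin{itemize}
\item In (iii), continuity of $s\mapsto C_*(p,s,\mu)$ needs an argument. It follows from \eqref{eq:superquad-two-kernels} together with $\int|v|^p|u|^p\le H_p^{-1}\int|\nabla u|^p$ and $\int G|u|^p\le(pH_p)^{-1}\int|\nabla u|^p$, both obtained from the full inequality at $s=h$ and $W_h\le V_h$.
\item For \eqref{eq:superquad-mass-escape}, your concentration-compactness detour is unnecessary. The gap already forces $\int G|u_k|^p\to0$, and $W_s\le C_RG$ on $B_R(0)$ because $W_s/G=(n-p)s^{p-1}+g(s)|v|^2/(S-|v|^2)$ is locally bounded.
\item The limiting coefficient of $\Lambda_\infty(s_c)W_{s_c}$ at infinity is $H_p$, not $g(s_c)$.
\end{itemize}
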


Thus, under strict separation, increasing $s$ produces a subcritical
non-attained interval, a critical endpoint with infinite energy,
and an attained critical phase. These are different behaviors of
the optimally rescaled operator, even though $C_*>1$ throughout
the part with $s>h$. Criticality at a coincident endpoint
$s_c=s_\infty$ is not settled by this theorem.

Strict separation holds for every bipolar configuration
(Proposition~\ref{prop:bipolar-integral-positive}). For fixed
multipolar weights and poles it holds near both $p=2$ and $p=n$.
It also holds in a simplex-vertex neighborhood uniform in
$2<p<n$, and, for fixed weights, when all poles except a
maximal-weight pole cluster near a second point
(Proposition~\ref{prop:multipolar-strictness}).
The latter geometric neighborhood can be chosen uniformly for
$p$ in compact subintervals of $(2,n)$; no universal separation
claim is made for arbitrary multipolar data.

A distinct non-attained phase can be dominated by a pole.
If $q\ge(p-1)/p$, Proposition~\ref{prop:pole-concentration-layer}
gives a relative neighborhood $J$ of $s_0=h/q$ in
$[h,s_{\max}]$ on which
\[
C_*(p,s,\mu)=\Lambda(p,s,\mu)=\frac{H_p}{g(sq)}.
\]
The optimum is not attained and the optimal operator is subcritical.
This neighborhood is two-sided when $q>(p-1)/p$ and one-sided
when $q=(p-1)/p$, since then $s_0=s_{\max}$.
The intervening pole-dominated phase boundaries are not classified.

\paragraph{Scope and organization}
The full-potential classification is complete. For the truncated
problem, we do not give a general closed formula for $C_*$ or
a classification of all weights. In particular, the subquadratic
range $s>mh$, universal superquadratic separation
$s_c<s_\infty$, criticality at a coincident endpoint, and the
remaining pole-dominated phase boundaries are unresolved.

Sections~\ref{sec:groundstate} and~\ref{sec:subcritical} prove
the full-potential theorem. Section~\ref{sec:variational}
establishes the concentration principle and its coefficient
consequences. Section~\ref{sec:parameter-phases} proves
Theorems~\ref{thm:multipolar-three-regimes},
\ref{thm:attainment-near-two}, and~\ref{thm:superquadratic-phases},
including the endpoint, separation, and pole-concentration results.

\section{Ground-state representation and criticality}\label{sec:groundstate}
\subsection{The $C_p$ inequality}
We begin with a standard inequality for $C_p$, which underlies the ground-state representation and the proof of the multipolar Hardy inequality below.
\begin{lemma}[$C_{p}$ inequality]\label{lemCp}
Let $1<p<\infty$ and $\xi,\eta\in\mathbb{R}^{n}$. Define
$$
C_p(\xi,\eta)=|\xi|^p-|\xi-\eta|^p-p|\xi-\eta|^{p-2}(\xi-\eta)\cdot\eta.
$$
Then the following hold:
\begin{itemize}
\item[(i)] If $p\ge 2$, then one may take
$$
c_1(p)=\frac{2^{2-p}}{p-1},\qquad c_2(p)=p(p-1),
$$
and
\begin{equation}\label{Cpbig2}
c_1(p)|\eta|^p \leq C_p(\xi,\eta) \leq c_2(p)\bigl(|\eta|+|\xi-\eta|\bigr)^{p-2}|\eta|^2.
\end{equation}
\item[(ii)] If $1<p<2$, then one may take
$$
\pi_1(p)=\frac{p(p-1)}{2},\qquad \pi_2(p)=1+(p-1)^{p-1},
$$
and
\begin{equation}\label{Cpless2}
\pi_1(p)\bigl(|\eta|+|\xi-\eta|\bigr)^{p-2}|\eta|^2 \leq C_p(\xi,\eta) \leq \pi_2(p)|\eta|^p.
\end{equation}
\end{itemize}
\end{lemma}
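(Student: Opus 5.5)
The plan is to treat $C_p(\xi,\eta)$ as a second-order Taylor remainder. Put $a:=\xi-\eta$ and $f(y)=|y|^p$. Then $C_p(\xi,\eta)=f(a+\eta)-f(a)-\nabla f(a)\cdot\eta$, so
\[
C_p(\xi,\eta)=\int_0^1(1-t)\,D^2f(a+t\eta)[\eta,\eta]\,dt .
\]
This formula holds whenever the segment $\{a+t\eta\}$ avoids the origin. The degenerate case follows by continuity, since both sides are continuous in $(a,\eta)$ and $|y|^{p-2}$ is integrable along lines for $p>1$. The Hessian is explicit:
\[
D^2f(y)[\eta,\eta]=p|y|^{p-2}\Bigl(|\eta|^2+(p-2)\frac{(y\cdot\eta)^2}{|y|^2}\Bigr).
\]
Since $0\le(y\cdot\eta)^2\le|y|^2|\eta|^2$, it lies between $p\min\{1,p-1\}|y|^{p-2}|\eta|^2$ and $p\max\{1,p-1\}|y|^{p-2}|\eta|^2$. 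Along the segment we also have $|a+t\eta|\le|a|+|\eta|$.

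\emph{The two bounds involving $(|\eta|+|\xi-\eta|)^{p-2}$.} For $p\ge2$, use the upper Hessian bound $p(p-1)|y|^{p-2}|\eta|^2$ together with $|y|^{p-2}\le(|a|+|\eta|)^{p-2}$. Since $\int_0^1(1-t)\,dt=1/2$, this gives $C_p\le \tfrac{p(p-1)}2(|\eta|+|a|)^{p-2}|\eta|^2$, which is stronger than the claimed bound with $c_2(p)$. For $1<p<2$, use the lower Hessian bound $p(p-1)|y|^{p-2}|\eta|^2$. Now $p-2<0$, so $|y|^{p-2}\ge(|a|+|\eta|)^{p-2}$, and integrating gives exactly $\pi_1(p)(|\eta|+|a|)^{p-2}|\eta|^2$.

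\emph{The two bounds by $|\eta|^p$.} Both reduce to controlling the one-dimensional integral $J(a,\eta):=\int_0^1(1-t)|a+t\eta|^{p-2}\,dt$.
\begin{itemize}
\item Write $a=a_\parallel+a_\perp$, with $a_\parallel=-\tau\eta$ parallel to $\eta$. Then $|a+t\eta|\ge|t-\tau|\,|\eta|$.
\item For $p\ge2$ this gives $J\ge|\eta|^{p-2}\min_{\tau\in\mathbb R}\int_0^1(1-t)|t-\tau|^{p-2}\,dt$, and hence $C_p\ge p\,J\,|\eta|^2$ is bounded below by a constant multiple of $|\eta|^p$.
\item For $p<2$ the same inequality runs the other way: $J\le|\eta|^{p-2}\sup_\tau\int_0^1|t-\tau|^{p-2}\,dt\le|\eta|^{p-2}\frac{2^{2-p}}{p-1}$. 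Hence $C_p\le \frac{p\,2^{2-p}}{p-1}|\eta|^p$.
\end{itemize}

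\emph{Expected obstacle: the stated constants.} The Taylor route yields constants of the right form, but matching the precise values $c_1(p)=2^{2-p}/(p-1)$ and $\pi_2(p)=1+(p-1)^{p-1}$ requires an explicit extremal computation, which I expect to be the main difficulty.
\begin{itemize}
\item For $c_1$, I would minimize $\tau\mapsto\int_0^1(1-t)|t-\tau|^{p-2}\,dt$. This function is convex in $\tau$, so the minimizer can be located by its first-order condition and then evaluated, or at least bounded below by $2^{2-p}/(p(p-1))$.
\item For $\pi_2$ with $1<p<2$, I would argue directly rather than through the Hessian. First bound $-|a|^p-p|a|^{p-2}a\cdot\eta\le -|a|^p+p|a|^{p-1}|\eta|$. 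Maximizing $p r^{p-1}-r^p$ over $r\ge0$ bounds this part by $(p-1)^{p-1}|\eta|^p$. The remaining term $|\xi|^p$ is not controlled by $|\eta|^p$ alone. I would therefore split into cases $|a|\le\lambda|\eta|$ and $|a|>\lambda|\eta|$. In the first case the elementary estimate applies. In the second, the segment stays away from the origin, and the Hessian estimate gives a bound $\lesssim|a|^{p-2}|\eta|^2\le\lambda^{p-2}|\eta|^p$.
\end{itemize}
I would then tune $\lambda$ so that the two cases combine to $1+(p-1)^{p-1}$. If the exact constants resist, I would note that only their positivity and finiteness are used later in the paper.
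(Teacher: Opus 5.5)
Your Taylor-remainder argument for the bounds with $c_2$ and $\pi_1$ is the paper's own. Your reduction of the $c_1$ bound to
$\min_\tau\int_0^1(1-t)|t-\tau|^{p-2}\,dt\ge 2^{2-p}/(p(p-1))$ is also the paper's argument in different form. That inequality, however, is the whole content of the bound, and you leave it unproved. The convexity you plan to use is false for $2<p<3$: for $\tau<0$,
$\partial_\tau^2\int_0^1(1-t)(t-\tau)^{p-2}\,dt=(p-2)(p-3)\int_0^1(1-t)(t-\tau)^{p-4}\,dt<0$. In addition, the first-order condition is a transcendental equation in $\tau$, so it will not yield the constant.

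The missing step is a Fubini swap,
$\int_0^1(1-t)|t-\tau|^{p-2}\,dt=\int_0^1\bigl(\int_0^\sigma|t-\tau|^{p-2}\,dt\bigr)\,d\sigma$.
For $p\ge2$, $|r|^{p-2}$ is even and nondecreasing in $|r|$. So the inner integral over an interval of length $\sigma$ is at least its value on the centered interval, namely $2(\sigma/2)^{p-1}/(p-1)$. Integrating in $\sigma$ gives exactly $2^{2-p}/(p(p-1))$, hence $c_1$. The paper phrases this as $C_p=p\int_0^1(A(a+t\eta)-A(a))\cdot\eta\,dt$ with $A(z)=|z|^{p-2}z$, together with the bound $(A(a+t\eta)-A(a))\cdot t\eta\ge\frac{2^{2-p}}{p-1}t^p|\eta|^p$.

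The $\pi_2$ bound has a genuine gap. Your Taylor estimate gives the constant $p\,2^{2-p}/(p-1)$. This is strictly larger than $2>1+(p-1)^{p-1}$ throughout $1<p<2$, and it blows up as $p\downarrow1$.

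The proposed split $|a|\le\lambda|\eta|$ versus $|a|>\lambda|\eta|$ cannot be tuned to the stated constant either. In the first case you bound $|\xi|^p\le(1+\lambda)^p|\eta|^p$ and the remaining terms by $(p-1)^{p-1}|\eta|^p$ separately. That already gives $(1+\lambda)^p+(p-1)^{p-1}>1+(p-1)^{p-1}$ for every $\lambda>0$. Separately, the second case needs $\lambda>1$ for the segment to avoid the origin.

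The correct split is by the sign of $u:=|a|^2+2a\cdot\eta=|\xi|^2-|\eta|^2$, i.e.\ by whether $|\xi|<|\eta|$.
\begin{itemize}
\item If $u<0$, then $|\xi|^p<|\eta|^p$, and your elementary maximization of $pr^{p-1}-r^p$ gives exactly $(1+(p-1)^{p-1})|\eta|^p$.
\item If $u\ge0$ and $a\ne0$, concavity of $\tau\mapsto\tau^{p/2}$ at $\tau=|a|^2$ gives $u^{p/2}\le|a|^p+p|a|^{p-2}a\cdot\eta$. Hence $C_p\le(u+|\eta|^2)^{p/2}-u^{p/2}\le|\eta|^p$ by subadditivity of $\tau^{p/2}$. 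The case $a=0$ is trivial.
\end{itemize}

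Your fallback remark is correct: later sections use only positivity and finiteness of these constants. It does not, however, prove the lemma with the constants as stated.
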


\begin{proof}
Put $a=\xi-\eta$, $b=\eta$, and $\Phi(t)=|a+tb|^p$. The integral
Taylor formula gives
\[
\begin{aligned}
C_p(\xi,\eta)&=\int_0^1(1-t)\Phi''(t)\,dt,\\
\Phi''(t)&=p|a+tb|^{p-2}|b|^2
+p(p-2)|a+tb|^{p-4}((a+tb)\cdot b)^2,
\end{aligned}
\]
for a.e. $t$. Thus $\Phi''\le p(p-1)(|a|+|b|)^{p-2}|b|^2$
when $p\ge2$, whereas the reverse inequality holds when $1<p<2$.
Integration proves the upper bound in \eqref{Cpbig2} and the lower
bound in \eqref{Cpless2}, with the stated constants.

For the other bound when $p\ge2$, let $A(z)=|z|^{p-2}z$.
Projecting $a$ onto the direction of $b\ne0$ and integrating $DA$
along the segment from $a$ to $a+tb$ gives
\[
(A(a+tb)-A(a))\cdot(tb)
\ge (t|b|)^p\inf_{\gamma\in\mathbb R}
\int_\gamma^{\gamma+1}|r|^{p-2}\,dr
=\frac{2^{2-p}}{p-1}t^p|b|^p.
\]
The infimum occurs on $[-1/2,1/2]$. Using
$C_p=p\int_0^1(A(a+tb)-A(a))\cdot b\,dt$ proves the lower bound.

Finally, suppose $1<p<2$ and $a,b\ne0$, and put
$u=|a|^2+2a\cdot b$. If $u\ge0$, concavity and subadditivity of
$t^{p/2}$ imply
\[
C_p\le (u+|b|^2)^{p/2}-u^{p/2}\le |b|^p.
\]
If $u<0$, then $|a+b|\le|b|$, and, with $x=|a|/|b|$,
\[
C_p\le(1-x^p+px^{p-1})|b|^p
\le\bigl(1+(p-1)^{p-1}\bigr)|b|^p.
\]
The last scalar maximum occurs at $x=p-1$; the cases $a=0$ or
$b=0$ are immediate.
\end{proof}

\subsection{Ground-state representation}
We next record the ground-state representation associated with a positive weight.

\begin{lemma}[Ground-state representation and remainder bounds]\label{RTE}
Let $1<p<\infty$ and let $\phi\in C^{2}(\mathbb{R}^{n})$ be positive. Then, for every $u=\phi w\in C_{0}^{\infty}(\mathbb{R}^{n})$,
$$
\int_{\mathbb{R}^{n}} |\nabla u|^{p} \,dx + \int_{\mathbb{R}^{n}} \frac{\Delta_{p}\phi}{\phi^{p-1}} |u|^{p} \,dx
= \int_{\mathbb{R}^{n}} C_{p}(\nabla u,\phi\nabla w) \,dx,
$$
where
$$
C_{p}(\xi,\eta)=|\xi|^{p}-|\xi-\eta|^{p}-p|\xi-\eta|^{p-2}(\xi-\eta)\cdot\eta.
$$
Moreover, the following remainder bounds hold:
\begin{enumerate}
\item If $p\ge2$, then
\begin{equation}\label{REgeq2}
\begin{aligned}
c_{1}(p) \int_{\mathbb{R}^{n}} \phi^{p}|\nabla w|^{p} \,dx
&\le \int_{\mathbb{R}^{n}} C_{p}(\nabla u,\phi\nabla w) \,dx\\
&\le c_{2}(p) \int_{\mathbb{R}^{n}} \phi^{p}
\Bigl(|\nabla w|+|w|\frac{|\nabla\phi|}{\phi}\Bigr)^{p-2}|\nabla w|^{2} \,dx.
\end{aligned}
\end{equation}
\item If $1<p<2$, then
\begin{equation}\label{REleq2}
\begin{aligned}
\pi_{1}(p) \int_{\mathbb{R}^{n}} \phi^{p}
\Bigl(|\nabla w|+|w|\frac{|\nabla\phi|}{\phi}\Bigr)^{p-2}|\nabla w|^{2} \,dx
&\le \int_{\mathbb{R}^{n}} C_{p}(\nabla u,\phi\nabla w) \,dx\\
&\le \pi_{2}(p) \int_{\mathbb{R}^{n}} \phi^{p}|\nabla w|^{p} \,dx.
\end{aligned}
\end{equation}
\end{enumerate}
\end{lemma}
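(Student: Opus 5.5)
The identity is a pointwise computation followed by one integration by parts, and the remainder bounds then come directly from Lemma~\ref{lemCp}. Write $u=\phi w$ with $w=u/\phi\in C^2_c$, since $\phi>0$ is $C^2$. Then $\nabla u=w\nabla\phi+\phi\nabla w$, so with $\xi=\nabla u$ and $\eta=\phi\nabla w$ we have $\xi-\eta=w\nabla\phi$. The definition of $C_p$ gives, pointwise,
\[
|\nabla u|^p=C_p(\nabla u,\phi\nabla w)+|w|^p|\nabla\phi|^p
+p|w|^{p-2}w\,\phi|\nabla\phi|^{p-2}\nabla\phi\cdot\nabla w .
\]
The cross term equals $|\nabla\phi|^{p-2}\nabla\phi\cdot\phi\nabla(|w|^p)$, because $\nabla|w|^p=p|w|^{p-2}w\nabla w$; this holds a.e., and $|w|^p$ is $C^1$ for $p>1$.

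Next integrate the cross term by parts against the compactly supported $C^1$ function $|w|^p$. Using $\nabla(\phi|w|^p)=|w|^p\nabla\phi+\phi\nabla|w|^p$, we get
\[
\int\phi|\nabla\phi|^{p-2}\nabla\phi\cdot\nabla|w|^p
=-\int|w|^p|\nabla\phi|^p-\int\phi|w|^p\,\Delta_p\phi .
\]
Here $\Delta_p\phi$ is understood as $\operatorname{div}(|\nabla\phi|^{p-2}\nabla\phi)$. Where $\nabla\phi=0$ and $p<2$, the field $|\nabla\phi|^{p-2}\nabla\phi$ is only continuous, so I interpret $\Delta_p\phi$ distributionally or assume it is locally integrable, as the statement implicitly does. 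The divergence theorem applies to a continuous field with integrable divergence tested against a $C^1_c$ function. The $|w|^p|\nabla\phi|^p$ terms cancel. Since $\phi|w|^p=|u|^p/\phi^{p-1}$, the last term is $-\int\frac{\Delta_p\phi}{\phi^{p-1}}|u|^p$, and moving it to the left-hand side gives the identity.

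For the remainder bounds, apply Lemma~\ref{lemCp} pointwise with $\eta=\phi\nabla w$ and $\xi-\eta=w\nabla\phi$. Then $|\eta|^p=\phi^p|\nabla w|^p$ and
\[
(|\eta|+|\xi-\eta|)^{p-2}|\eta|^2=\phi^p\Bigl(|\nabla w|+|w|\tfrac{|\nabla\phi|}{\phi}\Bigr)^{p-2}|\nabla w|^2 ,
\]
using homogeneity of degree $p-2$ to pull out $\phi^{p-2}$. Integrating \eqref{Cpbig2} and \eqref{Cpless2} gives \eqref{REgeq2} and \eqref{REleq2}. When $p<2$ and both $\nabla w$ and $w\nabla\phi$ vanish, the expression $0^{p-2}\cdot0$ is read as $0$, consistent with the convention in Lemma~\ref{lemCp}.

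The only delicate step is justifying the integration by parts when $1<p<2$ near critical points of $\phi$. There $|\nabla\phi|^{p-2}\nabla\phi$ need not be $C^1$, so $\Delta_p\phi$ must be taken in the distributional sense. I would handle this by approximating with $(\varepsilon^2+|\nabla\phi|^2)^{(p-2)/2}\nabla\phi$ and letting $\varepsilon\to0$, or simply by using the distributional definition directly, which is exactly what the identity requires.
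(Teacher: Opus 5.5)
Your proof is correct and follows the paper's argument. The paper likewise uses $\nabla u-\phi\nabla w=w\nabla\phi$ to write $C_p(\nabla u,\phi\nabla w)=|\nabla u|^p-|\nabla\phi|^{p-2}\nabla\phi\cdot\nabla(\phi|w|^p)$, integrates by parts, and then applies Lemma~\ref{lemCp} pointwise; your remark about reading $\Delta_p\phi$ distributionally near critical points of $\phi$ when $1<p<2$ covers a point the paper leaves implicit here and treats later by an analogous $\varepsilon$-regularization in Lemma~\ref{lem:Vloc}.
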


\begin{proof}
Since $\nabla u-\phi\nabla w=w\nabla\phi$, the definition of $C_p$ gives
\[
C_p(\nabla u,\phi\nabla w)
=|\nabla u|^p-|\nabla\phi|^{p-2}\nabla\phi\cdot
\nabla(\phi|w|^p).
\]
Integration by parts proves the identity. Applying
Lemma~\ref{lemCp} with $\eta=\phi\nabla w$ and
$\xi-\eta=w\nabla\phi$ gives \eqref{REgeq2}--\eqref{REleq2}.
\end{proof}

We now apply Lemma~\ref{RTE} with the multipolar ansatz to
identify the potential \eqref{def:full-weight} and prove the
Hardy inequality asserted in Theorem~\ref{thmopt}.

\begin{theorem}[Multipolar Hardy inequality]\label{thmMHC}
For the admissible parameters and the potential
$V_{m,s,p}^{\mu}$ defined in \eqref{def:full-weight},
\begin{equation}\label{MHp}
\int_{\mathbb R^n}|\nabla u|^p\,dx
\ge\int_{\mathbb R^n}V_{m,s,p}^{\mu}|u|^p\,dx,
\qquad u\in C_c^\infty(\mathbb R^n).
\end{equation}
\end{theorem}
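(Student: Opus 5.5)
The plan is to apply Lemma~\ref{RTE} with $\phi=\phi_{m,s,\mu}$ after regularization, and to compute $-\Delta_p\phi/\phi^{p-1}$ explicitly. We have
\[
\nabla\phi=-s\,\phi\, v, \qquad v=\sum_\alpha\mu_\alpha z_\alpha,
\]
so that
\[
|\nabla\phi|^{p-2}\nabla\phi=-s^{p-1}\phi^{p-1}|v|^{p-2}v .
\]
Expanding the divergence away from the poles and $Z(v)$ gives
\[
-\Delta_p\phi=s^{p-1}\phi^{p-1}\Bigl(|v|^{p-2}\operatorname{div}v-(p-1)s|v|^p+(p-2)|v|^{p-4}\,v\cdot(Dv)v\Bigr),
\]
where we used $\nabla\phi^{p-1}=-(p-1)s\phi^{p-1}v$ and $\nabla|v|^{p-2}=(p-2)|v|^{p-4}(Dv)^Tv$. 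The ingredients are:
\begin{itemize}
\item $\operatorname{div}z_\alpha=(n-2)/|x-a_\alpha|^2$;
\item $Dz_\alpha$ is symmetric, equal to $|x-a_\alpha|^{-2}\bigl(I-2\hat e_\alpha\otimes\hat e_\alpha\bigr)$;
\item $v\cdot(Dv)v=\sum_\alpha\mu_\alpha\bigl(|v|^2/|x-a_\alpha|^2-2(v\cdot(x-a_\alpha))^2/|x-a_\alpha|^4\bigr)$;
\item $|v|^2=S-\sum_{\alpha<\beta}\mu_\alpha\mu_\beta|z_\alpha-z_\beta|^2$ and $|z_\alpha-z_\beta|^2=|a_\alpha-a_\beta|^2/(|x-a_\alpha|^2|x-a_\beta|^2)$.
\end{itemize}
Regrouping, $(n-2)S+(p-2)\sum\mu_\alpha|x-a_\alpha|^{-2}=(n-p)S+2(p-2)\sum\mu_\alpha|x-a_\alpha|^{-2}$ after moving part of the $(p-2)$ term into the bracket with the $(v\cdot(x-a_\alpha))^2$ pieces. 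This reproduces exactly the three lines of \eqref{def:full-weight}:
\[
-\Delta_p\phi=V^\mu_{m,s,p}\phi^{p-1}\quad\text{pointwise off the singular set.}
\]
I would verify this bookkeeping carefully; it is routine but is the place where errors in coefficients would appear.

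Next I would check $V\ge0$. The truncated part $W=s^{p-1}|v|^{p-2}((n-p)S-(p-1)s|v|^2)$ is nonnegative since $|v|^2\le S$ (Cauchy–Schwarz with weights $\mu_\alpha$) and $(p-1)s\le n-p$. The remainder $R$ has the sign of $p-2$, because $|v|^2|x-a_\alpha|^2\ge(v\cdot(x-a_\alpha))^2$. Hence for $p\ge2$ we have $V\ge W\ge0$ directly.

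For $p<2$ the sign of $R$ must be absorbed. I would use $\sum\mu_\alpha(\ldots)\le\sum\mu_\alpha|v|^2/|x-a_\alpha|^2$, which bounds $|R|\le2(2-p)s^{p-1}|v|^{p-2}S$. The positivity of $V$ then needs a finer argument, for instance the direct inequality for $v\cdot(Dv)v\ge-|v|^2S$ together with the combination of coefficients; this is the step I would treat most carefully. In any case the inequality \eqref{MHp} does not require pointwise positivity: it follows from the identity
\[
\int|\nabla u|^p-\int V|u|^p=\int C_p\ge0,
\]
since Lemma~\ref{lemCp} gives $C_p\ge0$ for all $p$.

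The main obstacle is justifying Lemma~\ref{RTE} for the non-$C^2$ function $\phi$, which is singular at the poles, with $|v|^{p-2}$ singular on $Z(v)$ when $p<2$. I would first prove \eqref{MHp} for $u\in C_c^\infty$ supported away from the poles. There $\phi$ is smooth, but $|\nabla\phi|^{p-2}\nabla\phi$ is only Hölder near $Z(v)$. Instead of Lemma~\ref{RTE} verbatim, I would use the pointwise identity from its proof,
\[
C_p=|\nabla u|^p-|\nabla\phi|^{p-2}\nabla\phi\cdot\nabla(\phi|w|^p),
\]
and integrate by parts in the weak sense. This requires the distributional divergence of the vector field $\phi^{1-p}|\nabla\phi|^{p-2}\nabla\phi$ to equal its pointwise value. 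That holds because the field is continuous and its pointwise derivative is $O(|v|^{p-2}S)$, which is locally integrable near $Z(v)$ (a codimension-$\ge1$ set where $|v|^{p-2}$, $p>1$, is integrable), so no singular part arises.

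Finally I would remove the restriction near the poles. I would replace $u$ by $u\,\eta_\varepsilon$, where $\eta_\varepsilon$ is a logarithmic-type cutoff vanishing near each $a_\alpha$, whose $W^{1,p}$ cost tends to $0$ since $p<n$ (points have zero $p$-capacity). Then I would pass to the limit: by Fatou on the right, since $V\ge0$ or at least $V$ is locally integrable because $V=O(|x-a_\alpha|^{-p})$ and $p<n$, and by strong convergence of the gradients on the left.
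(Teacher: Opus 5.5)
Your computation of $-\Delta_p\phi/\phi^{p-1}$ and your overall strategy agree with the paper's proof. You prove the ground-state identity away from the singular set and then cut off at the poles. You are also right that the inequality needs only $C_p\ge0$, not $V\ge0$. (The finer bound you were looking for is $T\le|v|^2(S-|v|^2)$, from $\sum_\alpha\mu_\alpha(v\cdot z_\alpha)^2\ge|v|^4$, but it is not needed here.) For the inequality alone, your cut-off plus dominated convergence at the poles is simpler than the exact representation in Lemma~\ref{lem:GSextension}. The gap is in the step across $Z(v)$ for $1<p<2$. You correctly flagged this as the main obstacle, and both the local integrability of $V$ and your integration by parts rest on it. As written, however, your justification there does not work.

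\textbf{First point.} $Z(v)$ having codimension at least one does not make $|v|^{p-2}$ locally integrable. If $v$ vanished quadratically on a hypersurface, $|v|^{p-2}$ would fail to be integrable for $p\le3/2$. What you need is linear vanishing. The missing idea is the nondegeneracy given by the divergence identity. At $x_0\in Z(v)$ one has $\operatorname{tr}Dv(x_0)=(n-2)\sum_\alpha\mu_\alpha|x_0-a_\alpha|^{-2}>0$, so some $\partial_jv_j(x_0)\ne0$. The implicit function theorem then places $Z(v)$ locally inside a smooth graph $\{x_n=\psi(x')\}$ with $|v|\ge c\,|x_n-\psi(x')|$. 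Hence $|v|^{p-2}\in L^1_{\rm loc}$, because $p-2>-1$.

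\textbf{Second point.} The principle ``continuous, smooth off a closed null set, with locally integrable pointwise derivative, hence no singular part'' is false in general. The Cantor function is a counterexample. You can repair this in either of two ways:
\begin{itemize}
\item Use the graph structure just obtained and integrate along lines transverse to the graph.
\item Regularize as the paper does, with $A_\varepsilon(v)=(|v|^2+\varepsilon^2)^{(p-2)/2}v$. Since $|DA_\varepsilon(v)|\le C_p|v|^{p-2}|Dv|\in L^1_{\rm loc}$ uniformly in $\varepsilon$ and $|Z(v)|=0$, dominated convergence gives $|v|^{p-2}v\in W^{1,1}_{\rm loc}$ away from the active poles. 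Its weak derivative equals the pointwise one.
\end{itemize}
With these two points supplied, the rest of your argument goes through as you describe.
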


\begin{proof}[Proof of Theorem~\ref{thmMHC}]
Set $\phi=\prod_\alpha|x-a_\alpha|^{-s\mu_\alpha}$ and recall
$v=\sum_\alpha\mu_\alpha z_\alpha$.
We compute away from the poles and $Z(v)$. Lemmas~\ref{lem:Vloc}
and \ref{lem:GSextension} below justify the resulting weak identity
and representation throughout $\mathbb R^n$.

Write
\[
\begin{aligned}
S&=\sum_\alpha\mu_\alpha|z_\alpha|^2,\qquad
P=S-|v|^2=\sum_{\alpha<\beta}\mu_\alpha\mu_\beta|z_\alpha-z_\beta|^2,\\
T&=\sum_\alpha\mu_\alpha\bigl(|v|^2|z_\alpha|^2-(v\cdot z_\alpha)^2\bigr).
\end{aligned}
\]
The identities $\nabla\phi=-s\phi v$,
$\operatorname{div}v=(n-2)S$, and
\[
\nabla(|v|^{p-2})\cdot v
=(2-p)|v|^{p-2}S+2(p-2)|v|^{p-4}T
\]
yield
\begin{equation}\label{pL}
\begin{aligned}
\frac{-\Delta_p\phi}{\phi^{p-1}}
&=s^{p-1}\bigl(n-p-(p-1)s\bigr)|v|^{p-2}S\\
&\quad +(p-1)s^p|v|^{p-2}P
+2(p-2)s^{p-1}|v|^{p-4}T.
\end{aligned}
\end{equation}
Expanding $S,P,T$ recovers precisely \eqref{def:full-weight}.
The ground-state representation gives
\begin{equation}\label{GHI}
\int_{\mathbb R^n}|\nabla u|^p\,dx
\ge\int_{\mathbb R^n}\frac{-\Delta_p\phi}{\phi^{p-1}}|u|^p\,dx,
\end{equation}
and hence \eqref{MHp}.
\end{proof}

The preceding computation also yields a short proof of the pointwise nonnegativity of $V_{m,s,p}^{\mu}$.

\begin{proposition}[Positivity and local integrability of the full potential]\label{propVpos}
Let $n\geq 3$, $1<p<n$, $\frac{n-p}{p}\leq s\leq\frac{n-p}{p-1}$, and $\mu\in\Psi$. Then $V_{m,s,p}^{\mu}$ has a nonnegative locally integrable representative; in particular,
\[
V_{m,s,p}^{\mu}\geq0\qquad\hbox{a.e. in }\mathbb R^n.
\]
\end{proposition}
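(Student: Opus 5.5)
The plan is to work directly with the pointwise expression \eqref{pL} in terms of $S$, $P$, $T$, away from the poles and from $Z(v)$, and then handle local integrability separately. The set of poles is finite. The set $Z(v)\setminus\{a_\alpha\}$ is the zero set of a nontrivial real-analytic vector field (near an active pole $v\sim\mu_\alpha z_\alpha$ blows up), so it has measure zero. Hence it suffices to prove $V\ge0$ pointwise on the complement and that $V\in L^1_{\rm loc}$; that function is then the representative.

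For nonnegativity, I would not fight with the signs term by term when $p<2$. Instead I would compute the radial structure at a point $x$. Since $T=\sum_\alpha\mu_\alpha(|v|^2|z_\alpha|^2-(v\cdot z_\alpha)^2)=|v|^2 S-\sum_\alpha\mu_\alpha(v\cdot z_\alpha)^2$, we have $0\le T\le |v|^2(S-|v|^2)=|v|^2P$. The lower bound is Cauchy--Schwarz. The upper bound uses $\sum\mu_\alpha (v\cdot z_\alpha)^2\ge(\sum\mu_\alpha v\cdot z_\alpha)^2=|v|^4$ by Jensen. Then \eqref{pL} reads
\[
\frac{-\Delta_p\phi}{\phi^{p-1}}=s^{p-1}|v|^{p-2}\Bigl[(n-p-(p-1)s)S+(p-1)sP+2(p-2)\frac{T}{|v|^2}\Bigr].
\]
For $p\ge2$ every term is nonnegative, using $s\le s_{\max}$. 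For $1<p<2$ I would use $T/|v|^2\le P$ to bound the bracket below by $(n-p-(p-1)s)S+((p-1)s-2(2-p))P$. Writing $P=S-|v|^2$, this equals $(n-p-2(2-p))S+\ldots$, which is not obviously positive. So the better route is to keep $S\ge P$ and check $(n-p-(p-1)s)S+((p-1)s+2p-4)P\ge (n-p-(p-1)s)S + \min(0,(p-1)s+2p-4)S$. This is $\ge0$ when $n-p+\min(0,\,\cdot)\ge0$, i.e.\ $n-p-(p-1)s\ge 4-2p-(p-1)s$, i.e.\ $n\ge 4-p$. That holds since $n\ge3>4-p$. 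This elementary inequality chain is the step I expect to need care: the right pairing of $T\le|v|^2P$ with $P\le S$.

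For local integrability, I would bound $|V|\lesssim |v|^{p-2}S+|v|^{p-2}P+|v|^{p-4}T\lesssim |v|^{p-2}S$, using the estimates above. Away from the poles, $S$ is bounded. Near an active pole $a_\alpha$, we have $|v|\sim\mu_\alpha|x-a_\alpha|^{-1}$ and $S\sim|x-a_\alpha|^{-2}$, so $V\lesssim|x-a_\alpha|^{-p}$, which is integrable since $p<n$. Near an inactive pole, the term $\mu_\alpha|z_\alpha|^2$ vanishes, so $V$ is bounded there unless $v$ vanishes. The delicate part is near $Z(v)$ when $p<2$, where $|v|^{p-2}$ blows up. Here I would use that on the open set away from the poles, $v=-\nabla\phi/(s\phi)=-\nabla\log\phi/s$ is the gradient of the real-analytic function $-\log\phi/s$. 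Critical sets of a nonconstant analytic function satisfy a Łojasiewicz-type estimate, and I would aim to show $|v|^{p-2}\in L^1_{\rm loc}$ since $p-2>-1$. A cleaner alternative is to note that $\operatorname{div} v=(n-2)S>0$, so $Dv$ has nonzero trace and is nondegenerate in some direction. This shows $Z(v)$ is locally contained in a $C^1$ hypersurface transversal to a direction in which $|v|\gtrsim$ distance. Then $|v|^{p-2}\lesssim \mathrm{dist}^{p-2}$, which is integrable because $p-2>-1$. I expect this zero-set integrability to be the main obstacle, and I would try the trace-of-Jacobian argument first.
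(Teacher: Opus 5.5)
Your proposal is correct and follows the paper's argument. Nonnegativity for $1<p<2$ comes from the same pairing $0\le T\le |v|^2P$ (Jensen) and $P\le S$, split on the sign of $(p-1)s+2p-4$, and local integrability across $Z(v)$ uses the same trace identity $\operatorname{tr}Dv=(n-2)S>0$, local graph and integrability of $|x_n-\psi(x')|^{p-2}$ as in Lemma~\ref{lem:Vloc}. The only difference is minor: you obtain $|Z(v)|=0$ from real analyticity of $v$, whereas the paper reads it off the same local graph description.
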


\begin{proof}[Proof of Proposition~\ref{propVpos}]
Write
$$
z_\alpha=\frac{x-a_\alpha}{|x-a_\alpha|^2},\qquad v=\sum_{\alpha=1}^m \mu_\alpha z_\alpha,
$$
and set
$$
S=\sum_{\alpha=1}^m \mu_\alpha |z_\alpha|^2,\qquad
P=\sum_{1\leq \alpha<\beta\leq m}\mu_\alpha\mu_\beta |z_\alpha-z_\beta|^2=S-|v|^2,
$$
$$
T=\sum_{\alpha=1}^m \mu_\alpha\bigl(|v|^2|z_\alpha|^2-(v\cdot z_\alpha)^2\bigr)\geq 0.
$$
Then
$$
V_{m,s,p}^{\mu}=s^{p-1}\bigl(n-p-(p-1)s\bigr)|v|^{p-2}S+(p-1)s^p|v|^{p-2}P+2(p-2)s^{p-1}|v|^{p-4}T.
$$
If $p\geq 2$, then each term is nonnegative, since $n-p-(p-1)s\geq 0$ in the admissible range of $s$.

Assume next that $1<p<2$ and work on $\{v\neq0\}$. By Jensen's inequality,
$$
\sum_{\alpha=1}^m \mu_\alpha(v\cdot z_\alpha)^2\geq \Bigl(\sum_{\alpha=1}^m \mu_\alpha v\cdot z_\alpha\Bigr)^2=|v|^4,
$$
so
$$
0\leq T\leq |v|^2\bigl(S-|v|^2\bigr)=|v|^2P.
$$
Therefore
$$
V_{m,s,p}^{\mu}\geq s^{p-1}|v|^{p-2}\Bigl(\bigl(n-p-(p-1)s\bigr)S+\bigl((p-1)s+2p-4\bigr)P\Bigr).
$$
If $(p-1)s+2p-4\geq 0$, the right-hand side is nonnegative. If $(p-1)s+2p-4<0$, then $P\leq S$, and hence
$$
V_{m,s,p}^{\mu}\geq s^{p-1}|v|^{p-2}(n+p-4)S\geq 0.
$$
Since $n\geq 3$ and $p>1$, we have $n+p-4>0$. This proves the
pointwise nonnegativity on $\{v\neq0\}$. The argument in
Lemma~\ref{lem:Vloc} below shows that $Z(v)$ has measure zero and
establishes local integrability. We choose the representative
$V_{m,s,p}^{\mu}=0$ on $Z(v)$; all assertions are understood almost
everywhere.
\end{proof}

\begin{lemma}[Local integrability and the weak ground-state equation]\label{lem:Vloc}
Let $n\ge3$, $1<p<n$, $\frac{n-p}{p}\leq s\leq\frac{n-p}{p-1}$, and $\mu\in\Psi$. Let $V=V_{m,s,p}^{\mu}$ be defined by \eqref{def:full-weight} on $\{v\ne0\}$. Then $V$ has an extension in $L^1_{\rm loc}(\mathbb R^n)$, and
\[
-\Delta_p\phi=V\phi^{p-1}\quad\hbox{in }\mathcal D'(\mathbb R^n),
\qquad \phi=\prod_{\alpha=1}^m|x-a_\alpha|^{-s\mu_\alpha}.
\]
\end{lemma}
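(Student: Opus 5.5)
We prove two things: $V$ is locally integrable, with $V=0$ on $Z(v)$, and $\phi$ is a distributional solution across the singular set $\Sigma=\{a_1,\dots,a_m\}\cup Z(v)$. Away from $\Sigma$, the function $\phi$ is smooth and positive, and the pointwise identity \eqref{pL} holds. The plan is to obtain local bounds on $V$ and on $|\nabla\phi|^{p-1}$, then justify integration by parts with cutoff functions around $\Sigma$.

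\emph{Step 1: pointwise bounds.} From \eqref{def:full-weight}, $|v|^{p-4}T\le |v|^{p-2}S$, and $P\le S$. Hence
\[
0\le V\le C|v|^{p-2}S .
\]
Near an active pole $a_\alpha$ we have $|v|\sim\mu_\alpha|x-a_\alpha|^{-1}$, so $V\lesssim|x-a_\alpha|^{-p}$. This is integrable because $p<n$. At an inactive pole, $z_\alpha$ carries weight zero, so nothing singular occurs there. Away from the poles $S$ is bounded. For $p\ge2$, $V$ is therefore bounded near $Z(v)$, and the claim is immediate. For $1<p<2$ we still need $|v|^{p-2}\in L^1_{\rm loc}$ near $Z(v)$.

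\emph{Step 2: the zero set of $v$ (main obstacle).} Note that $v=\nabla F$ with
\[
F=\sum_\alpha\mu_\alpha\log|x-a_\alpha|,
\]
and $F$ is real-analytic off the poles. By analyticity, $Z(v)$ is contained in an analytic subvariety of dimension at most $n-1$, so it has measure zero. This uses that $F$ is nonconstant, which holds because $F\to+\infty$ at infinity. Integrability of $|\nabla F|^{p-2}$ with exponent $p-2>-1$ is the delicate point. I would first try the Łojasiewicz gradient inequality. A more robust route is to note that $|\nabla F|^{-\gamma}$ is locally integrable for $\gamma<1$, since $\nabla F$ is analytic and nonzero on an open dense set. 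To see this, use the fact that for real-analytic maps $g$, $|g|^{-\varepsilon}\in L^1_{\rm loc}$ for small $\varepsilon>0$ (resolution of singularities / Bernstein–Sato). This alone does not give $\gamma=2-p$ up to $1$. So I would instead use the structure of $F$: $\Delta F=(n-2)S>0$ off the poles. Then the Hessian of $F$ is nonzero wherever $v=0$, so near each point of $Z(v)$ some derivative $\partial_iv_j\ne0$. The implicit function theorem then makes $\{v_j=0\}$ a smooth hypersurface locally, with $|v|\ge|v_j|\gtrsim\operatorname{dist}(x,\{v_j=0\})$. Since $\int_0^1 r^{p-2}\,dr<\infty$, this gives local integrability of $|v|^{p-2}$.

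\emph{Step 3: the weak equation.} We have $|\nabla\phi|^{p-1}=s^{p-1}\phi^{p-1}|v|^{p-1}$. Near an active pole this behaves like $|x-a_\alpha|^{-s\mu_\alpha(p-1)-(p-1)}$. Since $s\le(n-p)/(p-1)$ and $\mu_\alpha<1$, the exponent is less than $n-1$, so the quantity is $L^1_{\rm loc}$, and $\phi^{p-1}V\in L^1_{\rm loc}$ similarly. Near $Z(v)$ the vector field $|\nabla\phi|^{p-2}\nabla\phi$ is continuous, because $p>1$. Take $\varphi\in C_c^\infty$ and cutoffs $\eta_\varepsilon$ vanishing on $\varepsilon$-balls around the active poles. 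Integrate by parts on the smooth open set where $v\neq0$; on $Z(v)$ the flux $|\nabla\phi|^{p-2}\nabla\phi$ vanishes and is continuous, so no boundary term arises, and one can also cut off with $\chi(|v|/\delta)$. The error term at the poles is bounded by
\[
\varepsilon^{-1}\int_{B_\varepsilon}|\nabla\phi|^{p-1}\lesssim \varepsilon^{\,n-1-(p-1)(1+s\mu_\alpha)}\to0 .
\]
The $\delta$-cutoff term is $\int |\nabla\phi|^{p-1}\delta^{-1}|\nabla|v||\,\mathbf 1_{\{|v|<2\delta\}}$. It is bounded by $C\delta^{p-2}|\{|v|<2\delta\}\cap K|\lesssim\delta^{p-1}\to0$, where we use $|\nabla v|$ bounded and the hypersurface estimate from Step 2. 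Passing to the limit yields $-\Delta_p\phi=V\phi^{p-1}$ in $\mathcal D'$.
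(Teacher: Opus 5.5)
Your proposal is correct and follows the paper's proof. The identity $\operatorname{tr}Dv=\Delta F=(n-2)S>0$ gives a component $v_j$ with $\nabla v_j\ne0$ at each zero of $v$. The implicit function theorem then gives $|v|\gtrsim$ distance to a smooth hypersurface, so $|v|^{p-2}\in L^1_{\rm loc}$ because $p-2>-1$. The pole flux error is $O(\varepsilon^{n-p-(p-1)s\mu_\alpha})$, exactly as in the paper. Your Step 2 should be read off the \emph{active} poles only; it then also covers zeros of $v$ at inactive poles.

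The one variation is how you cross $Z(v)$. You cut off with $\chi(|v|/\delta)$ and use $|\{|v|<2\delta\}\cap K|=O(\delta)$. The paper instead regularizes $(|v|^2+\varepsilon^2)^{(p-2)/2}v$ to show $|v|^{p-2}v\in W^{1,1}_{\rm loc}$. Both arguments are valid and rest on the same hypersurface estimate.
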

\begin{proof}
Put $\Omega=\mathbb R^n\setminus\{a_1,\ldots,a_m\}$ and
$Z=\{x\in\Omega:v(x)=0\}$. At $x_0\in Z$,
\[
\operatorname{tr}Dv(x_0)
=(n-2)\sum_\alpha\frac{\mu_\alpha}{|x_0-a_\alpha|^2}>0.
\]
Some component $f=v_j$ therefore has nonzero gradient at $x_0$.
After a rotation, the implicit function theorem and the fundamental
theorem of calculus give, in a neighborhood $U$,
\[
f(x',x_n)=(x_n-\psi(x'))h(x',x_n),\qquad |h|\ge c_0>0.
\]
Thus $Z\cap U$ lies on a smooth graph and
$|v(x)|\ge c_0|x_n-\psi(x')|$. For $1<p<2$,
\[
|v(x)|^{p-2}\le C|x_n-\psi(x')|^{p-2}\in L^1(U),
\]
because $p-2>-1$. A finite covering gives $|Z\cap K|=0$ and
$|v|^{p-2}\in L^1(K)$ for every $K\Subset\Omega$.
Moreover,
\[
0\le |v|^2|z_\alpha|^2-(v\cdot z_\alpha)^2
\le C_K|v|^2,
\]
so every term in $V$ is locally integrable on $\Omega$. Near an
active pole, $|v|\asymp|x-a_\alpha|^{-1}$ gives
$|V|\le C|x-a_\alpha|^{-p}\in L^1_{\rm loc}$.
At an inactive pole, $\phi,v$ are smooth; either $V$ is bounded
or the preceding graph argument applies. Hence
$V\in L^1_{\rm loc}(\mathbb R^n)$.

To justify weak differentiation across $Z$ when $1<p<2$, regularize
\[
A_\varepsilon(v)=(|v|^2+\varepsilon^2)^{(p-2)/2}v.
\]
On each $K\Subset\Omega$,
\[
|D A_\varepsilon(v)|
\le C_p|v|^{p-2}|Dv|\in L^1(K)
\quad\hbox{a.e.}
\]
Dominated convergence for $A_\varepsilon$ and its derivatives yields
$|v|^{p-2}v\in W^{1,1}_{\rm loc}(\Omega)$.
For $p\ge2$ this follows directly from the $C^1$ regularity of
$z\mapsto|z|^{p-2}z$. Multiplying by
$-s^{p-1}\phi^{p-1}$ proves the distributional equation on $\Omega$;
the same argument also extends it across inactive poles.

Finally, near an active pole put $r=|x-a_\alpha|$ and
$\beta_\alpha=s\mu_\alpha$. Then
\[
\phi\asymp r^{-\beta_\alpha},\qquad
|\nabla\phi|^{p-1}\asymp r^{-(\beta_\alpha+1)(p-1)}.
\]
For a cut-off $\chi_\varepsilon$ vanishing on $B_\varepsilon(a_\alpha)$,
equal to one outside $B_{2\varepsilon}(a_\alpha)$, and satisfying
$|\nabla\chi_\varepsilon|\le C/\varepsilon$, the flux error obeys
\[
\begin{aligned}
\left|\int\eta|\nabla\phi|^{p-2}\nabla\phi\cdot
\nabla\chi_\varepsilon\,dx\right|
&\le C\varepsilon^{-1}\int_\varepsilon^{2\varepsilon}
r^{n-1-(\beta_\alpha+1)(p-1)}\,dr\\
&\le C\varepsilon^{n-p-(p-1)\beta_\alpha}.
\end{aligned}
\]
Since $\mu_\alpha<1$ and $s\le(n-p)/(p-1)$, this exponent is positive.
The same condition gives
$V\phi^{p-1}=O(r^{-p-(p-1)\beta_\alpha})\in L^1_{\rm loc}$.
The fluxes therefore vanish at every pole, extending the equation to
$\mathbb R^n$.
\end{proof}

\begin{lemma}[Ground-state representation across the poles]\label{lem:GSextension}
For every $u\in C_c^\infty(\mathbb R^n)$, put $w=u/\phi$ on $\Omega=\mathbb R^n\setminus\{a_\alpha:\mu_\alpha>0\}$. Then
\begin{equation}\label{GHIexact}
\int_{\mathbb R^n}\bigl(|\nabla u|^p-V|u|^p\bigr)dx
=\int_{\mathbb R^n}C_p(\nabla u,\phi\nabla w)dx\ge0.
\end{equation}
This is the representation asserted in Theorem~\ref{thm:groundstate}(i), with its domain stated explicitly.
\end{lemma}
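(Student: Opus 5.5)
The approach is to excise small balls around the active poles, apply the pointwise identity on the complement, and then let the radii tend to zero. Fix $u\in C_c^\infty(\mathbb R^n)$ and $\varepsilon>0$ small. Let $\chi_\varepsilon$ be the cut-off from the proof of Lemma~\ref{lem:Vloc}, taken to vanish near every active pole, and set $u_\varepsilon=\chi_\varepsilon u$ and $w_\varepsilon=u_\varepsilon/\phi$. Then $w_\varepsilon$ is Lipschitz with compact support in $\Omega$, and $\phi$ is smooth and positive on a neighbourhood of $\operatorname{supp}u_\varepsilon$; inactive poles cause no trouble because $\phi$ is smooth there. Lemma~\ref{RTE} assumes $\phi\in C^2$, but its proof only uses the algebraic identity
\[
C_p(\nabla u,\phi\nabla w)=|\nabla u|^p-|\nabla\phi|^{p-2}\nabla\phi\cdot\nabla(\phi|w|^p).
\]
This holds pointwise a.e. for $u_\varepsilon,w_\varepsilon$. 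Testing the weak equation $-\Delta_p\phi=V\phi^{p-1}$ of Lemma~\ref{lem:Vloc} against the function $\phi|w_\varepsilon|^p=|u_\varepsilon|^p\phi^{1-p}$ (Lipschitz, compactly supported, admissible by density since $V\in L^1_{\rm loc}$ and $|v|^{p-2}v\in W^{1,1}_{\rm loc}(\Omega)$) then gives
\[
\int\bigl(|\nabla u_\varepsilon|^p-V|u_\varepsilon|^p\bigr)dx=\int C_p(\nabla u_\varepsilon,\phi\nabla w_\varepsilon)\,dx .
\]
The same testing is valid across the zero set $Z(v)$, since the flux $|\nabla\phi|^{p-2}\nabla\phi$ is continuous there.

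Next I pass to the limit $\varepsilon\to0$. On the left, $V|u_\varepsilon|^p\to V|u|^p$ by dominated convergence, since $V\in L^1_{\rm loc}$. We have $\int|\nabla u_\varepsilon|^p\to\int|\nabla u|^p$ because $\|u\nabla\chi_\varepsilon\|_p^p\le C\varepsilon^{n-p}\to0$, using $p<n$. On the right, $C_p(\nabla u_\varepsilon,\phi\nabla w_\varepsilon)\ge0$ pointwise: this follows from Lemma~\ref{lemCp}, or simply from the convexity of $|\cdot|^p$. Off the annuli $B_{2\varepsilon}\setminus B_\varepsilon$ the integrand equals $C_p(\nabla u,\phi\nabla w)$. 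Fatou's lemma, or monotone convergence on the growing sets, therefore gives
\[
\int C_p(\nabla u,\phi\nabla w)\le\int|\nabla u|^p-\int V|u|^p .
\]
In particular the right-hand integral in \eqref{GHIexact} is finite.

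The main obstacle is the reverse inequality, which requires the contribution of the annuli to vanish. On $A_\varepsilon=B_{2\varepsilon}(a_\alpha)\setminus B_\varepsilon(a_\alpha)$, the identity $\nabla u_\varepsilon-\phi\nabla w_\varepsilon=w_\varepsilon\nabla\phi$ together with the upper bounds in Lemma~\ref{lemCp} give
\[
|C_p(\nabla u_\varepsilon,\phi\nabla w_\varepsilon)|\le C\bigl(|\nabla u_\varepsilon|^p+|u|^p|\nabla\phi/\phi|^p\bigr).
\]
Here $|\nabla\phi/\phi|=s|v|\asymp r^{-1}$, so the integral over $A_\varepsilon$ is at most $C\varepsilon^{n-p}\to0$. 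For $p<2$ one uses $C_p\le\pi_2|\eta|^p$ with $|\eta|\le|\nabla u_\varepsilon|+|u||\nabla\phi|/\phi$. Splitting $\int C_p(\nabla u_\varepsilon,\phi\nabla w_\varepsilon)$ into the part on the annuli and the part off them, the first tends to $0$ and the second converges to $\int C_p(\nabla u,\phi\nabla w)$ by monotone convergence. This yields equality in \eqref{GHIexact}, and nonnegativity follows from that of $C_p$.
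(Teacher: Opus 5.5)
Your proposal is correct and follows essentially the same route as the paper: cut off at the active poles, apply the ground-state identity to $u_\varepsilon=\chi_\varepsilon u$, and show that the annular and removed-ball contributions are $O(\varepsilon^{n-p})$ because $p<n$. Two of your steps differ slightly from the paper, and both are sound. You justify the identity for $u_\varepsilon$ by testing the weak equation of Lemma~\ref{lem:Vloc} against $|u_\varepsilon|^p\phi^{1-p}$, which covers $Z(v)$ when $1<p<2$. You also obtain finiteness of the remainder integral via Fatou, where the paper uses the pointwise bound $C_p(\nabla u,\phi\nabla w)=O(|x-a_\alpha|^{-p})$. One wording slip: on $B_\varepsilon(a_\alpha)$ the cut-off integrand is $0$, not equal to the original one, but your monotone-convergence step on $\{\chi_\varepsilon=1\}$ already accounts for this.
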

\begin{proof}
For every active pole choose a cut-off $\chi_{\alpha,\epsilon}$ that
vanishes on $B_\epsilon(a_\alpha)$, equals one outside
$B_{2\epsilon}(a_\alpha)$, and satisfies
$|\nabla\chi_{\alpha,\epsilon}|\le C\epsilon^{-1}$. Put
$\chi_\epsilon=\prod_{\mu_\alpha>0}\chi_{\alpha,\epsilon}$ and
$u_\epsilon=\chi_\epsilon u$. Lemma~\ref{RTE} applies to $u_\epsilon$
on $\Omega$. By Lemma~\ref{lem:Vloc}, $V|u|^p\in L^1_{\rm loc}$.
Moreover, near each active pole,
\[
\begin{aligned}
\phi\nabla(u/\phi)&=O(|x-a_\alpha|^{-1}),\\
C_p(\nabla u,\phi\nabla(u/\phi))&=O(|x-a_\alpha|^{-p}),
\qquad V|u|^p=O(|x-a_\alpha|^{-p}).
\end{aligned}
\]
Hence the original remainder and potential integrands over the removed
balls $B_{2\epsilon}(a_\alpha)$ have integrals
$O(\epsilon^{n-p})$. On each transition annulus
$B_{2\epsilon}(a_\alpha)\setminus B_\epsilon(a_\alpha)$, both
$\nabla u_\epsilon$ and
$\phi\nabla(u_\epsilon/\phi)$ are $O(\epsilon^{-1})$, so the cut-off
remainder integrand is $O(\epsilon^{-p})$ and its integral is again
$O(\epsilon^{n-p})$. The cut-off and original integrands agree outside
the union of the balls $B_{2\epsilon}(a_\alpha)$.

Finally, $u_\epsilon\to u$ in $W^{1,p}(\mathbb R^n)$ because $p<n$,
and the potential terms converge by the preceding $O(\epsilon^{n-p})$
estimate. Passing to the limit in the identity for $u_\epsilon$ proves
\eqref{GHIexact}.
\end{proof}

\subsection{Null sequences and criticality}
We next turn from the Hardy inequality itself to the criticality criterion used in the proof of Theorem~\ref{thmopt}.

\begin{lemma}[A null-sequence criterion for criticality]\label{lem:crit}
Let $H$ be a nonnegative operator on $L^{p}(\mathbb{R}^{n})$. If there
exist admissible test functions
$u_{\epsilon}\in C_c^\infty(\mathbb R^n)$ and a measurable function
$\psi$, positive a.e. in $\mathbb{R}^{n}$, such that
\begin{equation*}
\lim_{\epsilon\to 0}\langle Hu_{\epsilon},u_{\epsilon}\rangle=0,
\qquad
u_{\epsilon}\to\psi \quad\text{a.e. in }\mathbb{R}^{n},
\end{equation*}
then $H$ is critical.
\end{lemma}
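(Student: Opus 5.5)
We argue by contradiction, using the definition of subcriticality given before Theorem~\ref{thmopt}. Suppose $H$ is subcritical. Then there is a nonzero, nonnegative, continuous function $Q$ with
\[
\langle Hu,u\rangle\ge\int_{\mathbb R^n}Q|u|^p\,dx\qquad\text{for every } u\in C_c^\infty(\mathbb R^n).
\]
Here $\langle Hu,u\rangle$ is the associated form, for instance $\mathcal Q[u]$ in the applications. We apply this bound to the admissible test functions $u_\epsilon$. The left-hand side tends to zero by hypothesis, so
\[
\limsup_{\epsilon\to0}\int_{\mathbb R^n}Q|u_\epsilon|^p\,dx=0 .
\]

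Next we pass to the limit on the right-hand side using Fatou's lemma. The integrands $Q|u_\epsilon|^p$ are nonnegative and converge a.e. to $Q\psi^p$, because $u_\epsilon\to\psi$ a.e. and $Q$ is continuous. Fatou's lemma then gives
\[
\int_{\mathbb R^n}Q\,\psi^p\,dx\le\liminf_{\epsilon\to0}\int_{\mathbb R^n}Q|u_\epsilon|^p\,dx=0 .
\]
If the family is indexed by a continuous parameter, we first restrict to a sequence $\epsilon_k\to0$ along which both limits hold.

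It remains to contradict this. Since $Q$ is continuous, nonnegative and not identically zero, it is positive on some nonempty open set $U$, which has positive measure. Since $\psi>0$ a.e., the product $Q\psi^p$ is positive a.e. on $U$, and therefore $\int Q\psi^p\,dx>0$. This contradicts the previous inequality, so $H$ is not subcritical. Because $H$ is nonnegative by assumption, it is critical.

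The only point that needs care is the meaning of ``admissible'': $\langle Hu_\epsilon,u_\epsilon\rangle$ must be the same form to which the subcriticality inequality applies. For $u_\epsilon\in C_c^\infty$ this holds directly. No uniform bounds or integrability of $\psi$ are needed, since Fatou's lemma only uses nonnegativity and a.e.\ convergence. In the later applications, the hypotheses $\langle Hu_\epsilon,u_\epsilon\rangle\to0$ and $u_\epsilon\to\phi$ a.e. would be checked with the ground-state representation of Lemma~\ref{lem:GSextension}.
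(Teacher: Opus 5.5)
Your proof is correct and follows the paper's argument: you assume a nontrivial improvement $Q$, apply it to the tests $u_\epsilon$, and use Fatou's lemma to obtain $\int Q\psi^p\,dx=0$, which contradicts $\psi>0$ a.e. Your remarks that continuity of $Q$ makes it positive on an open set of positive measure, and that one may pass to a sequence $\epsilon_k\to0$, only make explicit what the paper's final step leaves implicit.
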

\begin{proof}
Suppose otherwise. Then there exists a nontrivial nonnegative potential $Q$ such that
\begin{equation*}
\langle Hu,u\rangle\geq \int_{\mathbb{R}^{n}}Q|u|^{p}dx.
\end{equation*}
Fatou's lemma yields
\begin{equation*}
0\leq\int_{\mathbb{R}^{n}}Q|\psi|^{p}dx\leq \liminf_{\epsilon\to 0}\int_{\mathbb{R}^{n}}Q|u_{\epsilon}|^{p}dx\leq\lim_{\epsilon\to 0}\langle Hu_{\epsilon},u_{\epsilon}\rangle=0,
\end{equation*}
so $Q=0$ a.e. in $\mathbb{R}^{n}$, a contradiction.
\end{proof}

We now construct the logarithmic cut-off family used to prove criticality and optimality in Theorem~\ref{thmopt}.

\begin{proof}[Proof of criticality and optimality in Theorem~\ref{thmopt}]
Assume $\mu\in\Psi_{\frac{n-p}{ps}}$, and put
$d=\min_{\alpha\ne\beta}|a_\alpha-a_\beta|$.
For sufficiently small $0<\epsilon<d/4$, define
\[
w_\epsilon(x)=
\begin{cases}
0,&|x-a_\alpha|\le\epsilon^2\quad\hbox{for some }\alpha,\\
\dfrac{\log(|x-a_\alpha|/\epsilon^2)}{\log(1/\epsilon)},
 &\epsilon^2<|x-a_\alpha|<\epsilon,\\
1,&x\in B_{\epsilon^{-1}}(0)\setminus\bigcup_\alpha B_\epsilon(a_\alpha),\\
\dfrac{\log(\epsilon^2|x|)}{\log\epsilon},
 &\epsilon^{-1}<|x|<\epsilon^{-2},\\
0,&|x|\ge\epsilon^{-2}.
\end{cases}
\]
Smooth this cut-off without changing its zero and unit regions or
the estimates below. Then $0\le w_\epsilon\le1$ and
$u_\epsilon=\phi w_\epsilon\in C_c^\infty(\mathbb R^n)$, with
$u_\epsilon\to\phi>0$ a.e.

Let $q=2$ for $p\ge2$ and $q=p$ for $1<p<2$. The upper remainder
bounds in Lemma~\ref{RTE} give
$0\le\mathcal Q[u_\epsilon]\le C\int\phi^p E_{q,\epsilon}\,dx$, where
\[
E_{q,\epsilon}=
\begin{cases}
\bigl(|\nabla w_\epsilon|+
 w_\epsilon|\nabla\phi|/\phi\bigr)^{p-2}|\nabla w_\epsilon|^2,
 &p\ge2,\\
|\nabla w_\epsilon|^p,&1<p<2.
\end{cases}
\]
On each inner transition annulus,
$|\nabla w_\epsilon|\le C/(r\log(1/\epsilon))$,
$\phi\asymp r^{-s\mu_\alpha}$, and $|\nabla\phi|/\phi\le C/r$,
where $r=|x-a_\alpha|$. On the outer annulus the same estimates
hold with $r=|x|$ and $\phi\asymp r^{-s}$. Consequently,
\[
\mathcal Q[u_\epsilon]\le
\frac{C}{(\log(1/\epsilon))^q}
\left(
\sum_\alpha\int_{\epsilon^2}^{\epsilon}
r^{n-p-ps\mu_\alpha-1}\,dr+
\int_{\epsilon^{-1}}^{\epsilon^{-2}}r^{n-p-ps-1}\,dr
\right)\longrightarrow0.
\]
Indeed, the inner exponents satisfy $n-p-ps\mu_\alpha\ge0$,
the outer exponent satisfies $n-p-ps\le0$, and $q>1$.
Lemma~\ref{lem:crit} now proves criticality.

For completeness, these tests also establish coefficient optimality
directly. Choose a fixed annulus $A$ sufficiently far from the poles
that $|\nabla\phi|>0$ there; this is possible since
$v(x)=x/|x|^2+O(|x|^{-2})$ at infinity.
For small $\epsilon$, $w_\epsilon=1$ on $A$, so
\[
\begin{aligned}
\int|\nabla u_\epsilon|^p\,dx&\ge
C_A:=\int_A|\nabla\phi|^p\,dx>0,\\
\int V|u_\epsilon|^p\,dx
&=\int|\nabla u_\epsilon|^p\,dx-o(1)\ge C_A/2.
\end{aligned}
\]
Thus the Hardy quotient tends to one, as asserted.
\end{proof}

\subsection{Attainability}
\begin{proof}[Proof of the attainability of the constant in Theorem~\ref{thmopt}]
\leavevmode\par\noindent
By \eqref{REgeq2} and \eqref{REleq2}, equality in \eqref{MHp} holds exactly for $u=c\phi$. Since \eqref{MHp} extends from $C_{0}^{\infty}(\mathbb{R}^{n})$ to $D^{1,p}(\mathbb{R}^{n})$ by density, the optimal constant $1$ is attained in $D^{1,p}(\mathbb{R}^{n})$ if and only if $\phi\in D^{1,p}(\mathbb{R}^{n})$.

Now $\nabla\phi=-s\phi v$, so $|\nabla\phi|^{p}=|s|^{p}\phi^{p}|v|^{p}$. Fix $0<\epsilon<1$. At every active pole $a_\alpha$, meaning $\mu_\alpha>0$, the factors $\prod_{\beta\neq\alpha}|x-a_{\beta}|^{-s\mu_{\beta}}$ and $|x-a_{\alpha}|\,|v(x)|$ are bounded above and below by positive constants on $B_{\epsilon}(a_{\alpha})\setminus\{a_{\alpha}\}$; hence $|\nabla\phi(x)|^{p}\asymp |x-a_{\alpha}|^{-(s\mu_{\alpha}+1)p}$ as $x\to a_{\alpha}$. At an inactive pole, where $\mu_\alpha=0$, both $\phi$ and $\nabla\phi$ are locally bounded. Moreover, $|\nabla\phi|^{p}$ is bounded on $B_{\epsilon^{-1}}(0)\setminus\bigcup_{\alpha=1}^{m}B_{\epsilon}(a_{\alpha})$, while $\phi(x)\asymp |x|^{-s}$ and $|v(x)|\asymp |x|^{-1}$ as $|x|\to\infty$, so $|\nabla\phi(x)|^{p}\asymp |x|^{-(s+1)p}$ at infinity. Therefore $\phi\in D^{1,p}(\mathbb{R}^{n})$ if and only if $s>\frac{n-p}{p}$ and $n-p-ps\mu_{\alpha}>0$ for every $\alpha$, equivalently, if and only if $s\in\bigl(\frac{n-p}{p},\frac{n-p}{p-1}\bigr]$ and $\mu\in(\Psi_{\frac{n-p}{ps}})^{\circ}$. In that case the minimizers are exactly $u=c\phi$.

If $s=\frac{n-p}{p}$, then $|\nabla\phi(x)|^{p}\asymp |x|^{-n}$ at infinity. If $s\in\bigl(\frac{n-p}{p},\frac{n-p}{p-1}\bigr]$ and $n-p-ps\mu_{\alpha}=0$ for some $\alpha$, then $|\nabla\phi(x)|^{p}\asymp |x-a_{\alpha}|^{-n}$ near $a_{\alpha}$. In either case the integral of $|\nabla\phi|^{p}$ diverges logarithmically, so $\phi\notin D^{1,p}(\mathbb{R}^{n})$ and the constant $1$ is not attained.
\end{proof}

\subsection{The bipolar specialization}
For two poles, the vector identities reduce the full potential to an
explicit expression. Its sharpness and attainment follow directly
from Theorem~\ref{thmopt}.

\begin{corollary}[The explicit bipolar potential]\label{MH2}\label{corMH2}
Let $n\ge3$, $1<p<n$, $(n-p)/p\le s\le(n-p)/(p-1)$, and
$0<\kappa<1$. Put $\bar a=\kappa a_2+(1-\kappa)a_1$ and
$r_i=|x-a_i|$, $i=1,2$. Then
\begin{equation}\label{2HI}
\int_{\mathbb R^n}|\nabla u|^p\,dx
\ge\int_{\mathbb R^n}V_{2,s,p}^{\kappa}|u|^p\,dx,
\qquad u\in C_c^\infty(\mathbb R^n),
\end{equation}
where
\begin{equation}\label{2HP}
\begin{aligned}
V_{2,s,p}^{\kappa}
&=s^{p-1}\bigl(n-p-(p-1)s\bigr)|x-\bar a|^{p-2}
\left(\frac{\kappa}{r_1^pr_2^{p-2}}
+\frac{1-\kappa}{r_1^{p-2}r_2^p}\right)\\
&\quad +(p-1)s^p\kappa(1-\kappa)
\frac{|a_1-a_2|^2|x-\bar a|^{p-2}}{r_1^pr_2^p}\\
&\quad +2(p-2)s^{p-1}\kappa(1-\kappa)|x-\bar a|^{p-4}
\frac{r_1^2r_2^2-((x-a_1)\cdot(x-a_2))^2}{r_1^pr_2^p}.
\end{aligned}
\end{equation}
Coefficient one is optimal when
$1-(n-p)/(ps)\le\kappa\le(n-p)/(ps)$.
It is attained by a nonzero function in $D^{1,p}(\mathbb R^n)$
if and only if
\[
s>\frac{n-p}{p},\qquad
1-\frac{n-p}{ps}<\kappa<\frac{n-p}{ps}.
\]
In that case all extremizers are
$u(x)=c|x-a_1|^{-s\kappa}|x-a_2|^{-s(1-\kappa)}$, $c\ne0$.
\end{corollary}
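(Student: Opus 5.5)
The corollary is the case $m=2$, $\mu=(\kappa,1-\kappa)$ of Theorem~\ref{thmopt}, once the weights are matched. Since $\phi=|x-a_1|^{-s\kappa}|x-a_2|^{-s(1-\kappa)}$, we take $\mu_1=\kappa$ and $\mu_2=1-\kappa$; the condition $0<\kappa<1$ places $\mu\in\Psi$ with both poles active. Then \eqref{2HI} is \eqref{MHp} from Theorem~\ref{thmMHC}, provided $V_{2,s,p}^{\kappa}$ coincides with $V_{m,s,p}^{\mu}$ from \eqref{def:full-weight}. The remaining assertions are read off from parts (ii) and (iii). The condition $\mu\in\Psi_{(n-p)/(ps)}$ means $\kappa\le (n-p)/(ps)$ and $1-\kappa\le (n-p)/(ps)$, which is the stated optimality range. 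The attainment condition $s\mu_\alpha<(n-p)/p$ for both $\alpha$, together with $s>(n-p)/p$, gives the open interval. The extremizers are the multiples of $\phi_{2,s,\mu}$.

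The substantive work is the algebraic reduction of the three terms of \eqref{def:full-weight}. The key identity is
\[
v=\kappa z_1+(1-\kappa)z_2=\frac{\kappa(x-a_1)r_2^2+(1-\kappa)(x-a_2)r_1^2}{r_1^2r_2^2}.
\]
I would show that the numerator has norm $|x-\bar a|\,r_1r_2$, so that $|v|=|x-\bar a|/(r_1r_2)$.

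To do this, translate so that $\bar a=0$. Then $a_1=-\kappa d$ and $a_2=(1-\kappa)d$ with $d=a_2-a_1$. Expanding $|\kappa(x-a_1)r_2^2+(1-\kappa)(x-a_2)r_1^2|^2$ and using $\kappa(x-a_1)+(1-\kappa)(x-a_2)=x$ should give $|x|^2r_1^2r_2^2$; this is an inversion-type identity. I expect this verification to be the main computational step.

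Granting the identity, the terms reduce as follows.
\begin{itemize}
\item \textbf{First term.} It becomes $|x-\bar a|^{p-2}r_1^{2-p}r_2^{2-p}\bigl(\kappa r_1^{-2}+(1-\kappa)r_2^{-2}\bigr)$, which matches the first line of \eqref{2HP}.
\item \textbf{Second term.} The single pair contributes $\kappa(1-\kappa)|a_1-a_2|^2/(r_1^2r_2^2)$ times $|v|^{p-2}$, which matches the second line.
\item \textbf{Third term.} Use the form $T=\sum_\alpha\mu_\alpha\bigl(|v|^2|z_\alpha|^2-(v\cdot z_\alpha)^2\bigr)$. For two vectors, write $v=\kappa z_1+(1-\kappa)z_2$; the Gram determinants then give
\[
|v|^2|z_1|^2-(v\cdot z_1)^2=(1-\kappa)^2\bigl(|z_1|^2|z_2|^2-(z_1\cdot z_2)^2\bigr),
\]
and symmetrically for $z_2$ with factor $\kappa^2$. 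Summing, $T=\kappa(1-\kappa)\bigl(|z_1|^2|z_2|^2-(z_1\cdot z_2)^2\bigr)$. Substituting $z_\alpha=(x-a_\alpha)/r_\alpha^2$ yields $\bigl(r_1^2r_2^2-((x-a_1)\cdot(x-a_2))^2\bigr)/(r_1^4r_2^4)$. Multiplying by $|v|^{p-4}=|x-\bar a|^{p-4}r_1^{4-p}r_2^{4-p}$ gives the third line of \eqref{2HP}.
\end{itemize}

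With the potentials identified, every claim of the corollary is inherited from Theorem~\ref{thmopt}, and no further analysis is needed.
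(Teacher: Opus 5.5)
Your proposal is correct and follows the paper's own proof: specialize Theorem~\ref{thmopt} to $\mu=(\kappa,1-\kappa)$, use $|v|=|x-\bar a|/(r_1r_2)$ and the Gram-determinant reduction $T=\kappa(1-\kappa)\bigl(|z_1|^2|z_2|^2-(z_1\cdot z_2)^2\bigr)$ to obtain \eqref{2HP}, and read optimality, attainment and the extremizers off parts (ii)--(iii). There is one slip in your verification hint, which the paper does not spell out: with $\bar a=0$ the identity you need is $(1-\kappa)(x-a_1)+\kappa(x-a_2)=x$, with the weights swapped relative to $v$ (your version equals $x-(1-2\kappa)d$), and with it $|N|^2/(r_1^2r_2^2)=\kappa^2r_2^2+(1-\kappa)^2r_1^2+2\kappa(1-\kappa)(x-a_1)\cdot(x-a_2)=|x|^2$ is immediate.
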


\begin{proof}
For $\mu=(\kappa,1-\kappa)$,
\[
|v|^2=\frac{|x-\bar a|^2}{|x-a_1|^2|x-a_2|^2}.
\]
The remaining angular term reduces to $\kappa(1-\kappa)$ times
the Gram determinant of $x-a_1,x-a_2$, divided by
$|x-a_1|^4|x-a_2|^4$. Substitution into \eqref{def:full-weight} gives \eqref{2HP},
and \eqref{MHp} yields \eqref{2HI}. Theorem~\ref{thmopt}
supplies sharpness and attainment.
\end{proof}

Within the closed critical interval, equality in either weight threshold
or $s=(n-p)/p$ gives logarithmic non-attainment. The interval collapses
to $\{1/2\}$ at $p=2$, $s=n-2$, or at $1<p<2$,
$s=2(n-p)/p$. For $2<p<n$ and $s=(n-p)/(p-1)$, the strict interval is
$(1/p,(p-1)/p)$; in particular $\kappa=1/2$ recovers the symmetric
full-potential inequality of Cazacu--Rugin\u{a} \cite{CR24}.

\section{Subcriticality outside the critical region}\label{sec:subcritical}
The critical and subcritical assertions are proved by different mechanisms. In this section we assume criticality, extract a normalized null sequence, and show that a strict pole forces a fixed positive energy cost.

\subsection{Localized null sequences}
The next lemma supplies the converse variational direction used below: criticality yields locally normalized null sequences. Unlike Lemma~\ref{lem:crit}, it does not assert pointwise convergence to a positive ground state.

\begin{lemma}[Localized null sequences]\label{lem:localnull}
Let $1<p<\infty$, let $V\in L^1_{\mathrm{loc}}(\mathbb R^n)$, and suppose that
\[
\mathcal Q[u]:=\int_{\mathbb R^n}
\bigl(|\nabla u|^p-V|u|^p\bigr)\,dx\ge0,
\qquad u\in C_c^\infty(\mathbb R^n).
\]
If the associated operator is critical in the sense of the definition above, then, for every nonempty ball $B\Subset\Omega$, there exist $u_k\in C_c^\infty(\mathbb R^n)$ such that
\[
u_k\ge0,\qquad \int_Bu_k^p\,dx=1,\qquad \mathcal Q[u_k]\longrightarrow0.
\]
\end{lemma}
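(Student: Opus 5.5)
We argue by contraposition. Fix a nonempty ball $B\Subset\Omega$ and suppose no such sequence exists. The goal is to show that the quadratic-type form controls the local $L^p$ mass on $B$:
\[
c_B:=\inf\Bigl\{\mathcal Q[u]:u\in C_c^\infty(\mathbb R^n),\ u\ge0,\ \int_Bu^p\,dx=1\Bigr\}>0 .
\]
Since $\mathcal Q[|u|]=\mathcal Q[u]$ after approximation (both terms depend only on $|u|$ and $|\nabla|u||=|\nabla u|$ a.e.), the restriction to nonnegative $u$ is harmless. Homogeneity of degree $p$ then gives
\[
\mathcal Q[u]\ge c_B\int_B|u|^p\,dx \qquad\text{for all } u\in C_c^\infty(\mathbb R^n).
\]
To justify this, note that $\mathcal Q$ is not smooth under $u\mapsto|u|$. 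We therefore apply the definition to $(u^2+\delta^2)^{1/2}-\delta$ and let $\delta\to0$ by dominated convergence, using $V|u|^p\in L^1$ for compactly supported $u$ because $V\in L^1_{\rm loc}$. Alternatively, we apply it directly to the Lipschitz function $|u|$ and mollify it.

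Next we choose $Q:=c_B\,\chi$, where $\chi$ is a continuous function with $0\le\chi\le\mathbf 1_B$ and $\chi\not\equiv0$, for instance a tent function supported in $B$. Then
\[
\mathcal Q[u]\ge\int Q|u|^p
\]
for every test function, and $Q$ is nonzero, nonnegative and continuous. This is exactly subcriticality in the sense of the definition, which contradicts the criticality hypothesis. Hence $c_B=0$. Taking a minimizing sequence for $c_B$ produces $u_k\ge0$ with $\int_Bu_k^p=1$ and $\mathcal Q[u_k]\to0$. If the $u_k$ are obtained from $|u|$, a final mollification keeps them in $C_c^\infty$ and nonnegative while perturbing both $\int_Bu_k^p$ and $\mathcal Q[u_k]$ by $o(1)$; we then renormalize.

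The main technical point is this regularization step. We need $\mathcal Q$ to be continuous along mollification of compactly supported Lipschitz functions. The gradient term is continuous in $L^p$. The potential term needs $V|u_\varepsilon|^p\to V|u|^p$ in $L^1$, which follows since $u_\varepsilon\to u$ uniformly with uniformly bounded supports and $V\in L^1_{\rm loc}$. The role of $\Omega$ in the hypothesis $B\Subset\Omega$ is only to place $B$ away from the poles; the argument itself uses nothing beyond $V\in L^1_{\rm loc}$, so the same proof works for any ball.
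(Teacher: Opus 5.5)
Your proposal is correct and follows essentially the paper's argument: if the localized infimum were positive, $p$-homogeneity (together with $\mathcal Q\ge0$) would give $\mathcal Q[u]\ge c_B\int_B|u|^p$, and a nonzero continuous $Q$ with $0\le Q\le c_B\mathbf 1_B$ would then contradict criticality. The only difference is bookkeeping: you take the infimum over nonnegative tests and use the regularization $\sqrt{u^2+\delta^2}-\delta$ to extend the inequality to sign-changing $u$, whereas the paper takes the infimum over all tests and applies the same regularization afterwards to make the null sequence nonnegative.
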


\begin{proof}
Fix $B\Subset\Omega$ and set
\[
\lambda_B:=\inf\left\{\mathcal Q[u]:u\in C_c^\infty(\mathbb R^n),\ \int_B|u|^p\,dx=1\right\}.
\]
If $\lambda_B>0$, positive $p$-homogeneity and nonnegativity give
\[
\mathcal Q[u]\ge\lambda_B\int_B|u|^p\,dx
\]
for $u\in C_c^\infty(\mathbb R^n)$. Choosing a nonzero $P\in C_c^\infty(B)$ with $0\le P\le\lambda_B$ contradicts criticality. Hence $\lambda_B=0$. Choose $v_k$ such that
\[
\int_B|v_k|^p\,dx=1,\qquad \mathcal Q[v_k]<\frac1{2k}.
\]
For a fixed $v\in C_c^\infty(\mathbb R^n)$, define $g_\varepsilon(v):=\sqrt{v^2+\varepsilon^2}-\varepsilon$. Then $g_\varepsilon(v)$ is smooth and nonnegative, its support is contained in $\operatorname{supp}v$, and
\[
0\le g_\varepsilon(v)\le |v|,\qquad
\nabla g_\varepsilon(v)=\frac{v}{\sqrt{v^2+\varepsilon^2}}\nabla v.
\]
Consequently $g_\varepsilon(v)\to |v|$ in $W^{1,p}(\mathbb R^n)$, while dominated convergence gives $\mathcal Q[g_\varepsilon(v)]\to\mathcal Q[v]$ and $\int_Bg_\varepsilon(v)^p\,dx\to\int_B|v|^p\,dx$. For each $k$, choose $\varepsilon_k>0$ sufficiently small and normalize $g_{\varepsilon_k}(v_k)$ on $B$. The resulting nonnegative smooth functions $u_k$ satisfy $\int_Bu_k^p\,dx=1$ and $\mathcal Q[u_k]<1/k$.
\end{proof}

\subsection{Rigidity of nonlinear null sequences}
\begin{lemma}[Rigidity of nonlinear null sequences]\label{lem:rigidity}
Let $1<p<2$, let $U$ be a bounded connected Lipschitz domain, and let
$B\Subset U$ be a nonempty ball. Suppose that $w_k\in W^{1,p}(U)$,
$w_k\ge0$, and
\[
0<b_0\le\int_Bw_k^p\,dx\le b_1<\infty,
\qquad
J_k:=\int_U\frac{|\nabla w_k|^2}
{(|\nabla w_k|+w_k)^{2-p}}\,dx\longrightarrow0.
\]
The integrand is defined to be zero where both $w_k$ and $\nabla w_k$
vanish. Then, after passing to a subsequence, there exists $c>0$ such that
\[
w_k\longrightarrow c\qquad\text{strongly in }W^{1,p}(U).
\]
\end{lemma}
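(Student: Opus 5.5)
The idea is to show first that $\nabla w_k\to0$ in $L^p(U)$, then to get compactness from Poincaré's inequality, and finally to identify the limit as a positive constant using the normalization on $B$.

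\emph{Step 1: a mass bound.} Write $D_k:=|\nabla w_k|+w_k$. The set $\{|\nabla w_k|\ge w_k\}$ will need $J_k\to0$ to control its contribution, so I first want a bound on $\int_U w_k^p$. I would argue by contradiction. Suppose $M_k:=\|w_k\|_{L^p(U)}\to\infty$ along a subsequence, and set $\tilde w_k=w_k/M_k$. The quantity $J_k$ scales like $M_k^{p}$ under $w\mapsto Mw$, since the integrand is $p$-homogeneous. Hence $\tilde J_k=J_k/M_k^p\to0$.

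\emph{Step 2: $\nabla w_k\to0$ in $L^p(U)$, given $\|w_k\|_{L^p(U)}\le M$.} Split $U$ into $E_k=\{|\nabla w_k|\ge w_k\}$ and its complement.

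On $E_k$ we have $D_k\le2|\nabla w_k|$. The integrand is therefore at least $2^{p-2}|\nabla w_k|^p$, so $\int_{E_k}|\nabla w_k|^p\le 2^{2-p}J_k$.

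On $U\setminus E_k$ we have $D_k\le 2w_k$. Using Hölder with exponents $2/p$ and $2/(2-p)$,
\[
\int_{U\setminus E_k}|\nabla w_k|^p
\le\Bigl(\int\frac{|\nabla w_k|^2}{D_k^{2-p}}\Bigr)^{p/2}\Bigl(\int D_k^{p}\Bigr)^{(2-p)/2}
\le J_k^{p/2}\,(2^pM^p)^{(2-p)/2}.
\]
Thus $\|\nabla w_k\|_{L^p(U)}\to0$ whenever $\|w_k\|_{L^p(U)}$ is bounded.

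\emph{Step 3: compactness and conclusion.} Since the bound in Step 2 is homogeneous, it applies to the normalized functions $\tilde w_k$ of Step 1, with $M=1$. So $\nabla\tilde w_k\to0$ in $L^p(U)$. Rellich and Poincaré on the connected Lipschitz domain $U$ then give $\tilde w_k\to c'$ strongly in $W^{1,p}(U)$ for some constant $c'$, with $|c'|\,|U|^{1/p}=1$. This forces $\int_B\tilde w_k^p\to c'^p|B|>0$. But $\int_B\tilde w_k^p\le b_1/M_k^p\to0$, a contradiction. Hence $\|w_k\|_{L^p(U)}$ is bounded.

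Applying Step 2 with this bound gives $\nabla w_k\to0$ in $L^p(U)$. Poincaré then yields $\|w_k-\bar w_k\|_{L^p(U)}\to0$, where $\bar w_k$ is the average of $w_k$ over $U$. The averages $\bar w_k\ge0$ are bounded, so along a subsequence $\bar w_k\to c\ge0$, and therefore $w_k\to c$ in $W^{1,p}(U)$. Finally $c^p|B|=\lim\int_Bw_k^p\ge b_0>0$, so $c>0$.

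The main obstacle is Step 1. On the set where $w_k$ dominates, the weight $D_k^{2-p}$ can make $J_k$ small without forcing the gradient to be small. The scaling argument closes this gap because the normalization on $B$ rules out mass running off to infinity, and the lower bound $b_0$ is what rules out the zero limit.
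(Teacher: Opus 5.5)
Your proof is correct. It shares the paper's skeleton (gradient $\to0$, then Poincar\'e plus the normalization on $B$) but differs in two technical respects.

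The paper first proves a quantitative anchored Poincar\'e inequality $\int_Uw_k^p\le C\bigl(\int_U|\nabla w_k|^p+\int_Bw_k^p\bigr)$. It then applies H\"older with exponents $2/p$ and $2/(2-p)$ on all of $U$, which puts $\int_U|\nabla w_k|^p$ on both sides, and absorbs that term by Young's inequality. This gives the rate $\int_U|\nabla w_k|^p\le C\bigl(J_k+J_k^{p/2}(\int_Bw_k^p)^{(2-p)/2}\bigr)$, with $C$ depending only on $p$, $U$ and $B$.

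You avoid the absorption by splitting $U$ into two sets. On $\{|\nabla w_k|\ge w_k\}$, $J_k$ alone controls the gradient. On the complement, $|\nabla w_k|+w_k\le2w_k$, so only the $L^p$ mass enters the H\"older factor. You also replace the anchored Poincar\'e inequality by a scaling-and-contradiction argument, built from the $p$-homogeneity of the integrand, your Step~2, and the upper bound $b_1$. This spares you from proving the anchored estimate. The cost is that your mass bound, and hence the constant in Step~2, is not quantitative.

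Two minor points. The opening sentence of your Step~1 names the wrong set: it is $\{|\nabla w_k|<w_k\}$ whose contribution needs the mass bound. And Rellich is unnecessary in Step~3, since $\nabla\tilde w_k\to0$ together with Poincar\'e already yields convergence to a constant.
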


\begin{proof}
Set
\[
X_k:=\int_U |\nabla w_k|^p\,dx,
\qquad
Y_k:=\int_U w_k^p\,dx,
\qquad
Z_k:=\int_B w_k^p\,dx,
\]
and denote by
\[
m_k:=(w_k)_U:=\frac1{|U|}\int_U w_k\,dx
\]
the average of $w_k$ over $U$.
The anchored Poincar\'e inequality gives
\begin{equation}\label{eq:rigidity-anchor}
Y_k\le C(X_k+Z_k).
\end{equation}
Indeed, Poincar\'e's inequality gives
\[
\|w_k-m_k\|_{L^p(U)}^p\le C X_k.
\]
Moreover,
\[
|B|^{1/p}m_k
\le \|w_k\|_{L^p(B)}+\|w_k-m_k\|_{L^p(B)}
\le Z_k^{1/p}+C X_k^{1/p}.
\]
Thus $|U|m_k^p\le C(X_k+Z_k)$, and combining this with
$\|w_k\|_{L^p(U)}^p\le C\bigl(\|w_k-m_k\|_{L^p(U)}^p+|U|m_k^p\bigr)$
proves \eqref{eq:rigidity-anchor}.
H\"older's inequality, with exponents $2/p$ and $2/(2-p)$, yields
\[
\begin{aligned}
X_k
&=\int_U
\left(\frac{|\nabla w_k|^2}
{(|\nabla w_k|+w_k)^{2-p}}\right)^{p/2}
(|\nabla w_k|+w_k)^{p(2-p)/2}\,dx\\
&\le J_k^{p/2}
\left(\int_U(|\nabla w_k|+w_k)^p\,dx\right)^{(2-p)/2}\\
&\le C J_k^{p/2}(X_k+Y_k)^{(2-p)/2}\\
&\le C J_k^{p/2}(X_k+Z_k)^{(2-p)/2}.
\end{aligned}
\]
Set $\vartheta=(2-p)/2\in(0,1)$. Since $p/2=1-\vartheta$, Young's
inequality and $(X_k+Z_k)^\vartheta\le X_k^\vartheta+Z_k^\vartheta$ give
\[
X_k
\le \frac12X_k+CJ_k+CJ_k^{p/2}Z_k^{(2-p)/2}.
\]
Therefore
\begin{equation}\label{eq:rigidity-gradient}
X_k\le C\bigl(J_k+J_k^{p/2}Z_k^{(2-p)/2}\bigr)\longrightarrow0.
\end{equation}
Equation~\eqref{eq:rigidity-anchor} also bounds $Y_k$ uniformly.
Since $m_k\ge0$, Poincar\'e now gives
\[
\|w_k-m_k\|_{L^p(U)}\longrightarrow0.
\]
Restricting this convergence to $B$ and using
$b_0\le Z_k\le b_1$, we obtain
\[
b_0^{1/p}-o(1)\le |B|^{1/p}m_k\le b_1^{1/p}+o(1).
\]
After taking a subsequence, $m_k\to c>0$. Together with
\eqref{eq:rigidity-gradient}, this proves the asserted strong
$W^{1,p}(U)$ convergence.
\end{proof}

\subsection{Spherical reduction}
\begin{lemma}[Spherical Jensen inequality]\label{lem:sphericalJensen}
Let $1<p<2$ and define
\[
F(v,t):=t^2(t+v)^{p-2},\qquad v,t\ge0,
\]
with $F(0,0)=0$. Then $F$ is jointly convex on $[0,\infty)^2$ and
nondecreasing in its second variable. In particular, let $w\ge0$ be
$C^1$ on an annulus centered at $a_\alpha$, and set
\[
\overline w(r):=\frac1{|\mathbb S^{n-1}|}
\int_{\mathbb S^{n-1}}w(a_\alpha+r\theta)\,d\theta.
\]
For every radius in this annulus,
\[
\frac1{|\mathbb S^{n-1}|}\int_{\mathbb S^{n-1}}
F\left(\frac{w}{r},|\partial_rw|\right)\,d\theta
\ge F\left(\frac{\overline w(r)}r,|\overline w'(r)|\right).
\]
\end{lemma}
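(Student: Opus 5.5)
We prove the two assertions separately: joint convexity of $F$, and the spherical inequality derived from it.

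\emph{Convexity.} On the open quadrant $(0,\infty)^2$ we compute the Hessian of $F(v,t)=t^2(t+v)^{p-2}$. Writing $\sigma=t+v$, we have
\[
F_v=(p-2)t^2\sigma^{p-3},\qquad
F_t=2t\sigma^{p-2}+(p-2)t^2\sigma^{p-3}.
\]
The second derivatives are
\[
F_{vv}=(p-2)(p-3)t^2\sigma^{p-4},\qquad
F_{vt}=2(p-2)t\sigma^{p-3}+(p-2)(p-3)t^2\sigma^{p-4},
\]
\[
F_{tt}=2\sigma^{p-2}+4(p-2)t\sigma^{p-3}+(p-2)(p-3)t^2\sigma^{p-4}.
\]
For $1<p<2$ we have $F_{vv}\ge0$, since $(p-2)(p-3)>0$. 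A direct expansion of the determinant gives
\[
\det D^2F=F_{vv}F_{tt}-F_{vt}^2=2(p-2)t^2\sigma^{2p-6}\bigl((p-3)\sigma-2(p-2)t\bigr).
\]
Here $(p-3)\sigma-2(p-2)t=(p-3)v+(1-p)t\le0$ and $p-2<0$, so the determinant is nonnegative. Hence $D^2F$ is positive semidefinite on the open quadrant.

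Convexity then extends to the closed quadrant by continuity, once we check that $F$ is continuous at the boundary. Near $(0,0)$ we have $0\le F\le t^2\cdot t^{p-2}=t^p\to0$, because $\sigma\ge t$ and $p-2<0$. On the remaining boundary points $F$ is continuous, with $F(v,0)=0$ and $F(0,t)=t^p$.

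\emph{Monotonicity in $t$.} From the formula above,
\[
F_t=t\sigma^{p-3}\bigl(2\sigma+(p-2)t\bigr)=t\sigma^{p-3}\bigl(2v+pt\bigr)\ge0.
\]

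\emph{Spherical inequality.} By Jensen's inequality for the jointly convex continuous function $F$ and the normalized measure on $\mathbb S^{n-1}$,
\[
\frac1{|\mathbb S^{n-1}|}\int F\Bigl(\frac wr,|\partial_rw|\Bigr)\,d\theta
\ge F\Bigl(\frac{\overline w}r,\ \overline{|\partial_rw|}\Bigr).
\]
We use that the average of $w/r$ is $\overline w(r)/r$. Next, differentiating under the integral sign, which is justified because $w$ is $C^1$ on the annulus, gives $\overline w'(r)=\overline{\partial_rw}$. Therefore $|\overline w'(r)|\le\overline{|\partial_rw|}$. Since $F$ is nondecreasing in its second variable, this yields the claimed inequality.

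The main obstacle is the sign of the Hessian determinant. Jensen's step additionally needs only measurability and integrability of the composite on the sphere, which hold because $F$ is continuous and bounded on the compact range of its arguments.
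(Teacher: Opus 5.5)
Your route is the same as the paper's, and your first and second derivatives, the continuity check via $0\le F\le t^p$, the monotonicity $F_t=t\sigma^{p-3}(pt+2v)\ge0$, and the Jensen step with $|\overline w'(r)|\le\overline{|\partial_rw|}$ are all correct. The gap is the displayed determinant, which is false.

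Your determinant has homogeneity degree $2p-3$ in $(v,t)$. Second derivatives of the degree-$p$ function $F$ must give a determinant of degree $2p-4$. For example, at $p=3/2$ and $t=v=1$ the true value is $1/16$, while your formula gives $1/4$.

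To correct it, factor
\[
F_{vt}=(p-2)t\sigma^{p-4}\bigl(2\sigma+(p-3)t\bigr),\qquad
F_{tt}=\sigma^{p-4}\bigl(2\sigma^2+4(p-2)t\sigma+(p-2)(p-3)t^2\bigr).
\]
In $F_{vv}F_{tt}-F_{vt}^2$ the $t\sigma$ and $t^2$ terms then cancel exactly, and you obtain
\[
\det D^2F=(p-2)t^2\sigma^{2p-6}\bigl(2(p-3)-4(p-2)\bigr)=2(2-p)(p-1)t^2\sigma^{2p-6}>0 .
\]
This agrees with the paper.

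The sign you needed happens to be right, so the proof goes through once this line is replaced. As written, however, the convexity claim rests on an incorrect identity. Since $F_{vv}>0$ for $t>0$, positive semidefiniteness of $D^2F$ then follows on the open quadrant, and the remainder of your argument stands.
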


\begin{proof}
For $v,t>0$, direct differentiation gives
\[
F_t=t(t+v)^{p-3}(pt+2v)\ge0,
\]
and
\[
\begin{aligned}
F_{vv}&=(p-2)(p-3)t^2(t+v)^{p-4},\\
F_{vt}&=(p-2)t(t+v)^{p-4}\bigl((p-1)t+2v\bigr),\\
F_{tt}&=(t+v)^{p-4}
\bigl(p(p-1)t^2+4(p-1)tv+2v^2\bigr).
\end{aligned}
\]
Moreover,
\[
\det D^2F
=2(2-p)(p-1)t^2(t+v)^{2p-6}>0.
\]
Thus $F$ is convex on the open quadrant. Since $0\le F(v,t)\le t^p$,
it extends continuously to the closed quadrant, where it remains convex
and nondecreasing in $t$. Applying Jensen's inequality to normalized
surface measure gives
\[
\frac1{|\mathbb S^{n-1}|}\int_{\mathbb S^{n-1}}
F\left(\frac{w}{r},|\partial_rw|\right)\,d\theta
\ge F\left(\frac{\overline w(r)}r,
\frac1{|\mathbb S^{n-1}|}\int_{\mathbb S^{n-1}}|\partial_rw|\,d\theta\right).
\]
The conclusion follows from monotonicity in the second variable and
\[
|\overline w'(r)|\le
\frac1{|\mathbb S^{n-1}|}\int_{\mathbb S^{n-1}}|\partial_rw|\,d\theta.
\]
\end{proof}

\subsection{Strict-pole capacity}
\begin{lemma}[Strict-pole condenser]\label{lem:condenser}
Let $n\ge3$, $1<p<2$, and $\beta>(n-p)/p$, and put
\[
a:=p\beta-(n-p)>0.
\]
Fix $r_0>0$. There exists $\kappa_{p,a}>0$ such that, for every
$0<\rho<r_0$, $\delta>0$, and nonnegative absolutely continuous function
$f:[\rho,r_0]\to[0,\infty)$ satisfying
\[
f(\rho)\le\delta,\qquad f(r_0)\ge2\delta,
\]
one has
\[
\int_\rho^{r_0}r^{n-1-p\beta}
\frac{|f'(r)|^2}{(|f'(r)|+f(r)/r)^{2-p}}\,dr
\ge \kappa_{p,a}r_0^{-a}\delta^p.
\]
In particular, the lower bound is uniform in $\rho$.
\end{lemma}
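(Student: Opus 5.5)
The plan is to reduce the estimate to a one-dimensional weighted capacity bound on a subinterval where $f$ climbs from $\delta$ to $2\delta$. The strict-pole hypothesis $a>0$ enters only through the integrability at $r=0$ of the conjugate weights. First note that $n-1-p\beta=p-1-a$, so the weight is $\omega(r)=r^{p-1-a}$. Write $t=|f'(r)|$ and $F(f/r,t)=t^2(t+f/r)^{p-2}$ as in Lemma~\ref{lem:sphericalJensen}.

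\textbf{Step 1: localize to a climbing interval.} Let $r_1=\sup\{r\in[\rho,r_0]: f(r)\le\delta\}$ and $r_2=\inf\{r\in[r_1,r_0]: f(r)\ge 2\delta\}$. By continuity, $f(r_1)=\delta$, $f(r_2)=2\delta$, and $\delta\le f\le2\delta$ on $[r_1,r_2]$. Hence
\[
\int_{r_1}^{r_2}t\,dr\ge\delta .
\]
Split $[r_1,r_2]$ into $E=\{t\ge f/r\}$ and $E'=\{t<f/r\}$. One of them carries at least $\delta/2$ of $\int t\,dr$.
On $E$ the integrand is at least $2^{p-2}t^p$, since $(t+f/r)^{p-2}\ge(2t)^{p-2}$ for $p<2$.
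On $E'$ it is at least $t^2(2f/r)^{p-2}\ge t^2(4\delta/r)^{p-2}$, using $f\le2\delta$ and $p-2<0$.

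\textbf{Step 2: weighted H\"older on $E$.} Write $t=(t\,\omega^{1/p})\,\omega^{-1/p}$ and apply H\"older with exponents $p$ and $p/(p-1)$:
\[
\frac\delta2\le\Bigl(\int_E\omega t^p\Bigr)^{1/p}\Bigl(\int_0^{r_0}r^{-1+a/(p-1)}dr\Bigr)^{(p-1)/p}.
\]
The last integral equals $\frac{p-1}{a}r_0^{a/(p-1)}$, which is finite precisely because $a>0$. This gives
\[
\int_E\omega t^p\ge c_{p,a}\,r_0^{-a}\delta^p .
\]

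\textbf{Step 3: Cauchy--Schwarz on $E'$.} Write $t=\bigl(t\,\omega^{1/2}(4\delta/r)^{(p-2)/2}\bigr)\bigl(\omega^{-1/2}(4\delta/r)^{(2-p)/2}\bigr)$. Since $\omega^{-1}r^{p-2}=r^{a-1}$,
\[
\frac{\delta^2}{4}\le\int_{E'}\omega t^2\Bigl(\frac{4\delta}{r}\Bigr)^{p-2}dr\cdot(4\delta)^{2-p}\int_0^{r_0}r^{a-1}dr .
\]
The last integral is $r_0^a/a$, again finite because $a>0$. Rearranging, the $E'$ contribution is at least $c'_{p,a}\,\delta^{p}r_0^{-a}$.

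Taking $\kappa_{p,a}$ as the minimum of the two constants (including the factor $2^{p-2}$ from Step 1) proves the lemma. The bound is uniform in $\rho$ because both conjugate integrals were extended down to $0$.

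The main obstacle is Step 1, namely choosing the subinterval where $f$ is trapped between $\delta$ and $2\delta$. The upper bound $f\le2\delta$ is what makes the degenerate factor $(t+f/r)^{p-2}$ controllable. The lower bound only serves to guarantee a total variation of at least $\delta$. Without this localization, large values of $f/r$ near $\rho$ could destroy the quadratic lower bound.
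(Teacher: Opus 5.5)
Your proof is correct and follows essentially the paper's argument. It uses the same one-dimensional capacity estimate, split according to whether the relative slope $r|f'|/f$ is at most one. Cauchy--Schwarz handles the small-slope set and weighted H\"older the large-slope set, and both are closed by the convergence at $r=0$ of the conjugate weight integrals that $a>0$ guarantees.

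The only difference is in how the variation is measured. The paper passes to $t=\log(r_0/r)$ and uses the logarithmic variation $\int|f'|/f\,dr\ge\log 2$ together with the homogeneity $\frac{|y'|^2}{(|y'|+y)^{2-p}}=y^p\frac{q^2}{(1+q)^{2-p}}$. It therefore needs only the one-sided bound $f\ge\delta$ on $[r_1,r_0]$, with $r_1$ your radius. Your additive variation $\int|f'|\,dr\ge\delta$ instead requires the two-sided trapping $\delta\le f\le 2\delta$ to control the factor $(|f'|+f/r)^{p-2}$.
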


\begin{proof}
This is a one-dimensional capacity estimate. After passing to logarithmic
radial variables, the prescribed drop from $2\delta$ to $\delta$ forces a
fixed amount of logarithmic variation. We estimate that variation separately
on the regions where the relative slope is at most one and greater than one.
Set
\[
t=\log\frac{r_0}{r},\qquad
y(t)=f(r_0e^{-t}),\qquad
T=\log\frac{r_0}{\rho}.
\]
Since $|f'(r)|=|y'(t)|/r$ and $dr=-r\,dt$, the integral in the statement
is
\[
I=r_0^{-a}\int_0^T e^{at}
\frac{|y'(t)|^2}{(|y'(t)|+y(t))^{2-p}}\,dt.
\]
We may assume that $I<\infty$. We have $y(0)\ge2\delta$ and
$y(T)\le\delta$. Let $\tau\in(0,T]$ be the first time at which
$y(\tau)=\delta$. Then $y\ge\delta$ on $[0,\tau]$. Define
\[
q(t):=\frac{|y'(t)|}{y(t)},\qquad 0<t<\tau.
\]
Absolute continuity of $\log y$ on this interval gives
\[
\int_0^\tau q(t)\,dt
\ge |\log y(\tau)-\log y(0)|\ge\log2.
\]
Also,
\[
\frac{|y'|^2}{(|y'|+y)^{2-p}}
=y^p\frac{q^2}{(1+q)^{2-p}}
\ge\delta^p\frac{q^2}{(1+q)^{2-p}}.
\]
Set $E_1=\{t\in(0,\tau):q(t)\le1\}$ and
$E_2=(0,\tau)\setminus E_1$. On these sets,
\[
\frac{q^2}{(1+q)^{2-p}}\ge 2^{p-2}
\begin{cases}
q^2,&t\in E_1,\\
q^p,&t\in E_2.
\end{cases}
\]
At least one of $\int_{E_1}q$ and $\int_{E_2}q$ is at least
$\ell:=(\log2)/2$.
If $\int_{E_1}q\ge\ell$, Cauchy--Schwarz yields
\[
\ell^2\le
\left(\int_{E_1}e^{at}q^2\,dt\right)
\left(\int_{E_1}e^{-at}\,dt\right)
\le\frac1a\int_{E_1}e^{at}q^2\,dt.
\]
If $\int_{E_2}q\ge\ell$, H\"older's inequality gives
\[
\ell^p\le
\left(\int_{E_2}e^{at}q^p\,dt\right)
\left(\int_{E_2}e^{-at/(p-1)}\,dt\right)^{p-1}
\le\left(\frac{p-1}{a}\right)^{p-1}
\int_{E_2}e^{at}q^p\,dt.
\]
Consequently,
\[
I\ge 2^{p-2}r_0^{-a}\delta^p
\min\left\{a\ell^2,
\left(\frac{a}{p-1}\right)^{p-1}\ell^p\right\}.
\]
The expression multiplying $r_0^{-a}\delta^p$ is strictly positive
and depends only on $p$ and $a$.
\end{proof}

\subsection{Complementary subcriticality}
\begin{proposition}[Uniform complementary subcriticality]\label{thmNCnonlin}
Let $n\ge3$, $1<p<n$, and
\[
\frac{n-p}{p}<s\le\frac{n-p}{p-1}.
\]
If $\mu\in\Psi\setminus\Psi_{\frac{n-p}{ps}}$, then
$H_{m,s,p}^{\mu}$ is subcritical in $\mathbb R^n$.
\end{proposition}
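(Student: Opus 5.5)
The plan is to argue by contradiction, combining Lemma~\ref{lem:localnull} with the ground-state representation of Lemma~\ref{lem:GSextension}. Fix $\alpha$ with $\beta:=s\mu_\alpha>(n-p)/p$. Such an index exists because $\mu\notin\Psi_{(n-p)/(ps)}$. The pole $a_\alpha$ is active, and $\beta<s\le(n-p)/(p-1)$ because $\mu\in\Psi$ forces $\mu_\alpha<1$.

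Suppose $H^\mu_{m,s,p}$ is critical. Choose a ball $r_0<\operatorname{dist}(a_\alpha,\{a_\gamma\}_{\gamma\ne\alpha})/4$ and put $U=B_{2r_0}(a_\alpha)\setminus\overline{B_{r_0/2}(a_\alpha)}$, a connected Lipschitz annulus containing no pole. Take a ball $B\Subset U$. Lemma~\ref{lem:localnull} yields $u_k\ge0$ in $C_c^\infty$ with $\int_B u_k^p=1$ and $\mathcal Q[u_k]\to0$. Set $w_k=u_k/\phi$. On $B_{2r_0}(a_\alpha)$ we have $\phi\asymp r^{-\beta}$ and $|\nabla\phi|/\phi\asymp 1/r$, where $r=|x-a_\alpha|$.

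The lower bounds in Lemma~\ref{RTE} then give the following. For $p\ge2$,
\[
\int\phi^p|\nabla w_k|^p\to0 .
\]
For $1<p<2$,
\[
\int\phi^p\bigl(|\nabla w_k|+w_k|\nabla\phi|/\phi\bigr)^{p-2}|\nabla w_k|^2\to0 .
\]
Since $p-2<0$, replacing $|\nabla\phi|/\phi$ by a comparable multiple of $1/r$ costs only constants. Thus on $B_{2r_0}(a_\alpha)$ the integral
\[
\int r^{-p\beta}\,F\bigl(w_k/r,|\nabla w_k|\bigr)\,dx,
\qquad F(v,t)=t^2(t+v)^{p-2},
\]
tends to $0$, and similarly with $|\nabla w_k|^p$ when $p\ge2$.

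On $U$, where $r\asymp r_0$, the weights are bounded above and below. Moreover $\int_Bw_k^p\asymp\int_Bu_k^p=1$. Lemma~\ref{lem:rigidity} (for $p<2$), or plain Poincaré with the anchored estimate (for $p\ge2$), gives along a subsequence $w_k\to c>0$ in $W^{1,p}(U)$. For the spherical averages $f_k(r)=\overline{w_k}(r)$ this means $f_k\to c$ in $W^{1,p}(r_0/2,2r_0)$, hence uniformly there by the one-dimensional embedding. So $f_k(r_0)\ge2\delta$ with $\delta:=c/3$ for large $k$.

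At the pole, $u_k$ is bounded while $\phi\to\infty$, because $\beta>0$. Hence $w_k(x)\to0$ uniformly as $x\to a_\alpha$, and $f_k(\rho_k)\le\delta$ for some small $\rho_k>0$.

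The radial reduction comes next. Write the integral in polar coordinates around $a_\alpha$ and apply Lemma~\ref{lem:sphericalJensen} on each sphere. This bounds the localized remainder below by
\[
c\int_{\rho_k}^{r_0}r^{n-1-p\beta}\,\frac{|f_k'|^2}{\bigl(|f_k'|+f_k/r\bigr)^{2-p}}\,dr .
\]
Lemma~\ref{lem:condenser} then bounds this from below by $\kappa_{p,a}r_0^{-a}\delta^p>0$, uniformly in $\rho_k$. This contradicts $\mathcal Q[u_k]\to0$.

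For $p\ge2$, Jensen for $|t|^p$ gives $\int_{\rho_k}^{r_0}r^{n-1-p\beta}|f_k'|^p\,dr$. By Hölder,
\[
\delta^p\le\Bigl(\int r^{n-1-p\beta}|f_k'|^p\,dr\Bigr)\Bigl(\int_0^{r_0}r^{(p\beta-n+1)/(p-1)}\,dr\Bigr)^{p-1},
\]
and the last integral is finite since $p\beta-n+1>-(p-1)$. This gives the same uniform contradiction. Hence $H^\mu_{m,s,p}$ is subcritical.

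The step I expect to be most delicate is the passage from the rigidity conclusion on $U$ to the two boundary values fed into the condenser lemma. One must check that the spherical average of the limit is genuinely bounded below at a fixed radius $r_0$, which is why uniform convergence of $f_k$ on $[r_0/2,2r_0]$ is used. One must also check that the comparison of $|\nabla\phi|/\phi$ with $1/r$ does not break the monotone structure of $F$; for $p<2$, $F$ is decreasing in $v$, so an upper bound $|\nabla\phi|/\phi\le C/r$ suffices after rescaling $v$ by a constant factor.
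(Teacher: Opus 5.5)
Your proposal is correct and follows the paper's proof. The structure is the same: contradiction via a localized null sequence and the ground-state remainder, then rigidity (Lemma~\ref{lem:rigidity}) or anchored Poincar\'e on a pole-free region to force $w_k\to c>0$, then spherical averaging with Lemma~\ref{lem:sphericalJensen} and the condenser Lemma~\ref{lem:condenser}, or weighted H\"older when $p\ge2$. Your two departures are harmless simplifications: placing the anchoring ball inside the annulus around the strict pole removes the paper's exhaustion and diagonal extraction, and the one-dimensional embedding applied to the $W^{1,p}$-convergent spherical means replaces the paper's Tonelli choice of a good radius. Before invoking the spherical Jensen lemma, you should state explicitly that $F$ is nondecreasing in $t$ and that $|\partial_r w_k|\le|\nabla w_k|$.
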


\begin{proof}[Proof of Proposition~\ref{thmNCnonlin}]
Assume, to the contrary, that $H_{m,s,p}^{\mu}$ is critical.
Since $\mu\notin\Psi_{\frac{n-p}{ps}}$, there is an index $\alpha$ such
that
\[
\beta:=s\mu_\alpha>\frac{n-p}{p},\qquad
a:=p\beta-(n-p)>0.
\]
Near $a_\alpha$,
\[
\phi(x)=|x-a_\alpha|^{-\beta}g_\alpha(x),
\]
where $g_\alpha$ is smooth and strictly positive. In particular, for
some $R>0$ and all $0<r<R$,
\[
C^{-1}r^{-\beta}\le\phi(a_\alpha+r\theta)\le Cr^{-\beta},
\qquad
\frac{|\nabla\phi(a_\alpha+r\theta)|}{\phi(a_\alpha+r\theta)}
\le\frac Cr,
\]
uniformly in $\theta\in\mathbb S^{n-1}$.

Fix a nonempty ball $B\Subset\Omega$. By Lemma~\ref{lem:localnull},
there exist nonnegative $u_k\in C_c^\infty(\mathbb R^n)$ such that
\[
\int_Bu_k^p\,dx=1,\qquad\mathcal Q[u_k]\longrightarrow0.
\]
Set $w_k=u_k/\phi$ on $\Omega$. For a measurable $D\subset\Omega$, put
\[
\mathcal R_D[u_k]:=\int_D C_p(\nabla u_k,\phi\nabla w_k)\,dx.
\]
We emphasize that $\mathcal R_D[u]$ is the localization of the
nonnegative ground-state remainder. In general,
\[
\mathcal R_D[u]\neq
\int_D\bigl(|\nabla u|^p-V|u|^p\bigr)\,dx,
\]
since the latter differs from $\mathcal R_D[u]$ by a boundary term. The
advantage of $\mathcal R_D$ is that the ground-state representation and
$C_p\ge0$ give
\[
0\le\mathcal R_D[u]\le\mathcal Q[u].
\]

Let $U$ be a bounded connected Lipschitz domain with
$B\Subset U\Subset\Omega$. On $U$ there are constants $m_U>0$ and
$L_U\ge1$ such that
\[
\phi\ge m_U,\qquad\frac{|\nabla\phi|}{\phi}\le L_U,
\qquad 0<b_U\le\int_Bw_k^p\,dx\le B_U<\infty.
\]
If $p\ge2$, the lower $C_p$ estimate gives
\[
\int_U|\nabla w_k|^p\,dx\le C_U\mathcal Q[u_k]\longrightarrow0.
\]
Anchored Poincar\'e and normalization then give convergence, after
extraction, to a positive constant in $W^{1,p}(U)$.
If $1<p<2$, the lower $C_p$ estimate and $p-2<0$ give
\[
\begin{aligned}
C_p(\nabla u_k,\phi\nabla w_k)
&\ge c_p\phi^p
\frac{|\nabla w_k|^2}
{(|\nabla w_k|+L_Uw_k)^{2-p}}\\
&\ge c_pm_U^pL_U^{p-2}
\frac{|\nabla w_k|^2}
{(|\nabla w_k|+w_k)^{2-p}}.
\end{aligned}
\]
Thus the hypothesis of Lemma~\ref{lem:rigidity} holds on $U$.
Let
\[
U_1\Subset U_2\Subset\cdots\Subset\Omega,
\qquad \bigcup_{j=1}^{\infty}U_j=\Omega,
\]
be a connected Lipschitz exhaustion with $B\Subset U_1$. Applying the preceding argument
successively on $U_j$ and using a diagonal extraction, we obtain a single
subsequence such that
\[
w_k\longrightarrow c_j>0\qquad\text{in }L^p(U_j)
\]
for every $j$. Since $U_j\subset U_{j+1}$, uniqueness of the $L^p$-limit
gives $c_j=c_{j+1}$; hence all $c_j$ coincide with a single constant
$c>0$. The normalization also identifies this constant through
\[
c^p\int_B\phi^p\,dx=1.
\]
Therefore
\[
w_k\longrightarrow c\qquad\text{in }L^p_{\rm loc}(\Omega).
\]

Choose $0<r_1<r_2<R$ so that $B_{r_2}(a_\alpha)$ contains no other
pole, and set
\[
A:=B_{r_2}(a_\alpha)\setminus\overline{B_{r_1}(a_\alpha)}.
\]
Then $A\Subset\Omega$. Since $w_k\to c$ in $L^p(A)$, after passing to a
further subsequence we may assume
\[
\|w_k-c\|_{L^p(A)}^p\le2^{-k},
\]
and hence
\[
\sum_{k=1}^{\infty}\|w_k-c\|_{L^p(A)}^p<\infty.
\]
By the polar-coordinate formula and Tonelli's theorem,
\[
\int_{r_1}^{r_2}r^{n-1}
\sum_{k=1}^{\infty}
\|w_k(a_\alpha+r\,\cdot)-c\|_{L^p(\mathbb S^{n-1})}^p\,dr<\infty.
\]
Therefore, for almost every $r\in(r_1,r_2)$, the series in $k$ is finite.
Fix one such radius $r_0$. Then
\[
\|w_k(a_\alpha+r_0\,\cdot)-c\|_{L^p(\mathbb S^{n-1})}\longrightarrow0.
\]
Define
\[
f_k(r):=\frac1{|\mathbb S^{n-1}|}
\int_{\mathbb S^{n-1}}w_k(a_\alpha+r\theta)\,d\theta.
\]
H\"older's inequality on $\mathbb S^{n-1}$ therefore gives
$f_k(r_0)\to c$. Fix $\delta\in(0,c/4)$, so that
$f_k(r_0)\ge2\delta$ for all sufficiently large $k$.
For each fixed $k$, smoothness of $u_k$ at $a_\alpha$ and
$\phi(x)\asymp|x-a_\alpha|^{-\beta}$ give
\[
w_k(x)=O_k(|x-a_\alpha|^\beta),
\]
and hence $f_k(r)\to0$ as $r\downarrow0$. We may therefore choose
\[
0<\rho_k<\min\{r_0,1/k\},\qquad f_k(\rho_k)\le\delta.
\]
In particular $\rho_k\to0$ and
$f_k(r_0)-f_k(\rho_k)\ge\delta$.

If $p\ge2$, the lower $C_p$ estimate, the local bounds for $\phi$, and
Jensen's inequality imply
\[
\mathcal Q[u_k]\ge C_1\int_{\rho_k}^{r_0}
r^{n-1-p\beta}|f_k'(r)|^p\,dr.
\]
Weighted H\"older gives
\[
\begin{aligned}
\delta
&\le\int_{\rho_k}^{r_0}|f_k'(r)|\,dr\\
&\le\left(\int_{\rho_k}^{r_0}
r^{n-1-p\beta}|f_k'(r)|^p\,dr\right)^{1/p}
\left(\int_0^{r_0}r^{-\frac{n-1-p\beta}{p-1}}\,dr
\right)^{(p-1)/p}.
\end{aligned}
\]
Since $a=p\beta-(n-p)>0$,
\[
\int_0^{r_0}r^{-\frac{n-1-p\beta}{p-1}}\,dr
=\frac{p-1}{a}\,r_0^{a/(p-1)}<\infty.
\]
Therefore
\[
\mathcal Q[u_k]\ge
C_1\left(\frac{a}{p-1}\right)^{p-1}r_0^{-a}\delta^p>0.
\]

If $1<p<2$, the local $C_p$ estimate gives
\[
\begin{aligned}
\mathcal Q[u_k]
&\ge C_2\int_{\rho_k}^{r_0}r^{n-1-p\beta}
\frac1{|\mathbb S^{n-1}|}\int_{\mathbb S^{n-1}}
F\left(\frac{w_k}{r},|\nabla w_k|\right)\,d\theta\,dr\\
&\ge C_2\int_{\rho_k}^{r_0}r^{n-1-p\beta}
F\left(\frac{f_k(r)}r,|f_k'(r)|\right)\,dr.
\end{aligned}
\]
Here the second inequality follows first from monotonicity in the
second argument of $F$, and then from
Lemma~\ref{lem:sphericalJensen}. Lemma~\ref{lem:condenser} now implies
\[
\mathcal Q[u_k]\ge C_2\kappa_{p,a}r_0^{-a}\delta^p>0.
\]
All lower bounds are independent of $k$. Both cases contradict
$\mathcal Q[u_k]\to0$. Consequently $H_{m,s,p}^{\mu}$ is subcritical.
\end{proof}

\subsection{The quadratic parameter comparison}
The quadratic decomposition permits a second proof of complementary subcriticality, independent of the strict-pole capacity argument.

\begin{proposition}[Quadratic parameter comparison]\label{thmNCquad}
Let $n\geq 3$, $s\in\bigl(\frac{n-2}{2},n-2\bigr]$, and $\mu\in\Psi\setminus\Psi_{\frac{n-2}{2s}}$. Then the operator $H_{m,s,2}^{\mu}$ is subcritical in $\mathbb{R}^{n}$.
\end{proposition}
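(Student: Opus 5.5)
We would argue in the linear setting $p=2$, where positive solutions and criticality are tied together by the uniqueness of the ground state. Write $b_\alpha=s\mu_\alpha$ and $\phi_b=\prod_\alpha|x-a_\alpha|^{-b_\alpha}$. By \eqref{pL} with $p=2$,
\[
V_{m,s,2}^{\mu}=(n-2)\sum_\alpha b_\alpha|z_\alpha|^2-\Bigl|\sum_\alpha b_\alpha z_\alpha\Bigr|^2=:V_b .
\]
Lemma~\ref{lem:Vloc} gives $-\Delta\phi_b=V_b\phi_b$ in $\mathcal D'(\mathbb R^n)$, with $\phi_b>0$. The hypothesis $\mu\notin\Psi_{(n-2)/(2s)}$ supplies an index $\alpha$ with $b_\alpha>(n-2)/2$. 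The idea is to compare the parameter $b$ with the reflected parameter $b'$, obtained by replacing $b_\alpha$ by $b'_\alpha:=n-2-b_\alpha<(n-2)/2$ and keeping all other $b_\beta$. These are exactly the two roots of the indicial equation $t(n-2-t)=b_\alpha(n-2-b_\alpha)$ governing $V_b$ near $a_\alpha$.

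\emph{Step 1 (local comparison).} Writing $w=\sum_{\beta\ne\alpha}b_\beta z_\beta$, which is smooth near $a_\alpha$, a direct expansion gives
\[
V_{b'}-V_b=2\bigl(2b_\alpha-(n-2)\bigr)\,z_\alpha\cdot w .
\]
This is $O(|x-a_\alpha|^{-1})$ near $a_\alpha$, so the two potentials agree to leading order there. Hence, on a small punctured ball $B_r(a_\alpha)\setminus\{a_\alpha\}$, we can build a positive solution $\psi$ of $-\Delta\psi=V_b\psi$ with $\psi\asymp|x-a_\alpha|^{b_\alpha-(n-2)}$. We would do this by perturbing $\phi_{b'}$: since $V_b-V_{b'}$ has only an $r^{-1}$ singularity, a standard fixed point for the radial ODE (or a sub/supersolution pair $\phi_{b'}(1\pm Cr^{\gamma})$ with small $\gamma>0$) corrects $\phi_{b'}$ on a small ball. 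In particular $\psi/\phi_b\asymp|x-a_\alpha|^{2b_\alpha-(n-2)}\to0$ at $a_\alpha$.

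\emph{Step 2 (minimal growth).} Suppose $H_{m,s,2}^{\mu}$ were critical. By the linear Agmon--Allegretto--Pinchover theory, the ground state is unique up to constants, and it is a positive solution of minimal growth in a neighbourhood of every point of $\Omega$ and of every puncture, in particular near $a_\alpha$. Since every positive solution is then a multiple of the ground state, $\phi_b$ itself must have minimal growth at $a_\alpha$. The comparison principle on $B_r(a_\alpha)\setminus\overline{B_\rho(a_\alpha)}$ with $\rho\downarrow0$, together with $\psi/\phi_b\to0$, rules this out. Concretely, the functions $\phi_b-\varepsilon_\rho\psi$, with $\varepsilon_\rho$ matched on $\partial B_r$, give a positive solution near $a_\alpha$ that is strictly smaller than $\phi_b$ in growth, contradicting minimality. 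Hence the operator is subcritical.

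The main obstacle is making Step 2 rigorous with exactly the definition of criticality used in the paper, that is, the absence of a Hardy-type remainder. If the citation route is unavailable, we would instead argue directly with Lemma~\ref{lem:localnull}. We would take a localized null sequence, use the quadratic identity $\mathcal Q[u]=\int\phi_b^2|\nabla(u/\phi_b)|^2$ from Lemma~\ref{lem:GSextension}, and argue as in Proposition~\ref{thmNCnonlin} with $p=2$. The weighted capacity $\int r^{n-1-2b_\alpha}|f'|^2\,dr$ is bounded below because $2b_\alpha>n-2$, which yields a uniform positive lower bound and the same contradiction. Step 1 then serves only to explain why the pole exponent $b_\alpha>(n-2)/2$ is the ``large'' branch.
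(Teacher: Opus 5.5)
Your argument is correct in substance, but your main line is a genuinely different route from the paper's proof of this proposition.

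\textbf{The paper's route.} The paper argues purely algebraically. With $M=\max_\alpha\mu_\alpha>(n-2)/(2s)$, it lowers the top weight to some $b<M$ and rescales the remaining weights by $(1-b)/(1-M)$. It then picks a new exponent $t$ with $t(n-2-t)=(M/b)\,s(n-2-s)$, or $b=1/2$, $t=n-2$ when $s=n-2$. A first-order expansion at $b=M$, whose sign is exactly the hypothesis $2sM>n-2$, allows $s\sqrt{M(1-M)/(b(1-b))}<t<s$. This makes every single-pole term of $V^{\mu'}_{m,t,2}$ at least as large as in $V^{\mu}_{m,s,2}$, and every active pairwise term strictly larger. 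Theorem~\ref{thmMHC} at $(t,\mu')$ then gives the explicit positive remainder $V^{\mu'}_{m,t,2}-V^{\mu}_{m,s,2}$ directly, with no null sequences or criticality theory.

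\textbf{Your route and the trade-off.} Your argument is local at the strict pole. The reflected indicial root $n-2-b_\alpha$ yields a positive supersolution of smaller growth, so $\phi_b$ is the large branch at $a_\alpha$ and cannot be of minimal growth there. That contradicts the Agmon--Allegretto--Pinchover description of a critical ground state. This explains the mechanism and uses only the leading singularity and your $O(|x-a_\alpha|^{-1})$ reflection identity. However, it imports external linear criticality theory. It also requires identifying the paper's $C_c^\infty(\mathbb R^n)$ notion of criticality with criticality on $\Omega$; the cut-off argument of Lemma~\ref{lem:GSextension} provides this. The paper's proof is self-contained but relies on the exact algebra of $V$ at $p=2$.

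\textbf{Your fallback.} It is precisely the strict-pole capacity proof of Proposition~\ref{thmNCnonlin}, which already covers $p=2$. The paper presents Proposition~\ref{thmNCquad} as an independent second proof.

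\textbf{Two details to repair.}
\begin{enumerate}
\item Since $n-2-2b'_\alpha=2b_\alpha-(n-2)>0$, for $0<\gamma<1$ it is $\phi_{b'}(1-C|x-a_\alpha|^\gamma)$ that is a supersolution of $-\Delta-V_b$ near $a_\alpha$, and $\phi_{b'}(1+C|x-a_\alpha|^\gamma)$ that is a subsolution. Your pair is therefore in the wrong order unless the subsolution is multiplied by a small constant. The equation is also not radial, so a radial ODE fixed point is not available as stated. In any case you only need the supersolution $\bar\psi$.
\item The functions $\phi_b-\varepsilon_\rho\psi$ have the same growth as $\phi_b$, so that sentence proves nothing. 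The actual contradiction is this: test minimal growth on $\Omega\setminus K$, with $K$ chosen so that its complement splits into the separate ends, and use $\phi_b$ itself as the comparison function on the other ends. Minimal growth then forces $\phi_b\le C\bar\psi$ near $a_\alpha$, which is impossible because $\bar\psi/\phi_b\to0$.
\end{enumerate}
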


\begin{proof}[Proof of Proposition~\ref{thmNCquad}]
Write $d=n-2$, $M=\mu_1=\max_\alpha\mu_\alpha>d/(2s)$, and
\[
\begin{aligned}
V_{m,s,2}^{\mu}&=s(d-s)W_m^\mu+s^2U_m^\mu,\\
W_m^\mu&=\sum_\alpha\frac{\mu_\alpha}{|x-a_\alpha|^2},\qquad
U_m^\mu=\sum_{\alpha<\beta}\mu_\alpha\mu_\beta|z_\alpha-z_\beta|^2,
\end{aligned}
\]
where $z_\alpha=(x-a_\alpha)/|x-a_\alpha|^2$.
We construct admissible parameters $t,\mu'$ for which
$V_{m,t,2}^{\mu'}>V_{m,s,2}^{\mu}$ away from the poles.
Theorem~\ref{thmMHC} then supplies a nontrivial nonnegative gap.

Choose $b<M$ and redistribute the remaining weights proportionally:
\[
\mu'_1=b,\qquad
\mu'_\alpha=\frac{1-b}{1-M}\mu_\alpha\quad(\alpha\ge2).
\]
If $s=d$, take $b=1/2$ and $t=d$. All active pairwise products
increase strictly, since $M>1/2$ and
$b(1-b)=1/4>M(1-M)$. The linear terms vanish, proving the claim.

If $d/2<s<d$, let $b<M$ be sufficiently close to $M$ and set
\[
t(b)=\frac{d+\sqrt{d^2-4(M/b)s(d-s)}}2,\qquad
F(b)=t(b)-s\sqrt{\frac{M(1-M)}{b(1-b)}}.
\]
These functions are defined near $M$, with $t(M)=s$, $F(M)=0$, and
\[
F'(M)=\frac{s(d-2sM)}{2M(1-M)(2s-d)}<0.
\]
Hence we may choose $d/(2s)<b<M$ such that
\[
s\sqrt{\frac{M(1-M)}{b(1-b)}}<t<s,\qquad
t(d-t)=\frac Mb\,s(d-s).
\]
The last identity and $\mu'_\alpha/\mu_\alpha=(1-b)/(1-M)>1$
for active $\alpha\ge2$ imply
$s(d-s)W_m^\mu\le t(d-t)W_m^{\mu'}$.
For every active pair,
\[
\frac{\mu_\alpha\mu_\beta}{\mu'_\alpha\mu'_\beta}
\le\frac{M(1-M)}{b(1-b)}<\frac{t^2}{s^2},
\]
so $s^2U_m^\mu<t^2U_m^{\mu'}$.
Thus the comparison holds in both cases. A nonzero nonnegative
smooth function supported away from the poles and bounded above
by the difference of the potentials proves subcriticality.
\end{proof}

\section{Variational thresholds and compactness for the truncated potential}\label{sec:variational}

Truncation replaces the prescribed full-potential ground state by the
variational problem \eqref{def:Cstar}. We first identify the costs of
concentration at the poles and infinity, then prove the compactness
principle in Theorem~\ref{prop:compactness-threshold}. The sign of the
deleted remainder subsequently determines the position of $C_*$ relative
to one. Throughout this section, $n\ge3$, $1<p<n$, $h\le s\le s_{\max}$,
and $\mu\in\Psi$, unless otherwise stated.

With $V_s:=V_{m,s,p}^{\mu}$ and the notation of the introduction,
recall the compact form
\begin{equation}\label{eq:W-compact-form}
W_s=s^{p-1}|v|^{p-2}\bigl((n-p)S-(p-1)s|v|^2\bigr),
\end{equation}
and $R_s=V_s-W_s$, the last term in \eqref{def:full-weight}.
We suppress $s$ when it is fixed. As in Section~\ref{sec:groundstate},
these are locally integrable representatives; pointwise calculations
are made away from the poles and $Z(v)$.

\subsection{Concentration costs at the singular ends}

\begin{proposition}[Asymptotics at an active pole]\label{prop:polecompare}
If $\mu_\alpha>0$, then, with $y=x-a_\alpha$,
\begin{equation}\label{poleexp-W}
W(x)=A_\alpha(p,s,\mu)|y|^{-p}+O(|y|^{1-p}),
\qquad A_\alpha(p,s,\mu)=g(s\mu_\alpha),
\end{equation}
and
\begin{equation}\label{poleexp-R}
R(x)=O(|y|^{2-p}).
\end{equation}
In particular,
\begin{equation}\label{poleexp-RW}
R(x)/W(x)\longrightarrow0\qquad(x\to a_\alpha),
\end{equation}
and for every $\sigma>0$ there is $r_{\sigma,\alpha}>0$ such that
\begin{equation}\label{polecompare}
R(x)\le\sigma W(x)\qquad(0<|x-a_\alpha|<r_{\sigma,\alpha}).
\end{equation}
\end{proposition}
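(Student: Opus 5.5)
The plan is a direct Laurent-type expansion of $v$ and $S$ near the active pole $a_\alpha$, substituted into the compact forms of $W$ and $R$. Write $y=x-a_\alpha$ and split
\[
v=\mu_\alpha\frac{y}{|y|^2}+b(x),\qquad
S=\frac{\mu_\alpha}{|y|^2}+c(x),
\]
where $b=\sum_{\beta\ne\alpha}\mu_\beta z_\beta$ and $c=\sum_{\beta\ne\alpha}\mu_\beta|z_\beta|^2$. Both $b$ and $c$ are smooth and bounded on a small ball $B_r(a_\alpha)$ that contains no other pole. This gives $|v|^2=\mu_\alpha^2|y|^{-2}+O(|y|^{-1})$, so
\[
|v|=\mu_\alpha|y|^{-1}\bigl(1+O(|y|)\bigr).
\]
Since $\mu_\alpha>0$, $v$ does not vanish near $a_\alpha$. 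Hence every power $|v|^{\gamma}$ expands as $\mu_\alpha^{\gamma}|y|^{-\gamma}\bigl(1+O(|y|)\bigr)$, with no zero-set issues.

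\emph{Expansion of $W$.} Insert these expansions into \eqref{eq:W-compact-form}. The bracket becomes
\[
(n-p)S-(p-1)s|v|^2=|y|^{-2}\mu_\alpha\bigl(n-p-(p-1)s\mu_\alpha\bigr)+O(|y|^{-1}).
\]
Multiplying by $s^{p-1}|v|^{p-2}=s^{p-1}\mu_\alpha^{p-2}|y|^{2-p}\bigl(1+O(|y|)\bigr)$ gives leading coefficient
\[
s^{p-1}\mu_\alpha^{p-1}\bigl(n-p-(p-1)s\mu_\alpha\bigr)=g(s\mu_\alpha),
\]
with error $O(|y|^{1-p})$. This is \eqref{poleexp-W}.

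\emph{Expansion of $R$.} This is the step needing a genuine cancellation, and I expect it to be the main (though mild) obstacle. Write $R=2(p-2)s^{p-1}|v|^{p-4}T$, where
\[
T=\sum_\beta\mu_\beta\bigl(|v|^2|z_\beta|^2-(v\cdot z_\beta)^2\bigr)
\]
is a sum of Gram determinants, i.e.\ $|v\wedge z_\beta|^2$. For $\beta=\alpha$ I write $v=\mu_\alpha z_\alpha+b$, so $v\wedge z_\alpha=b\wedge z_\alpha$. This yields
\[
|v\wedge z_\alpha|^2\le|b|^2|z_\alpha|^2=O(|y|^{-2}),
\]
which is the key cancellation: the naive bound would be $O(|y|^{-4})$. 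For $\beta\ne\alpha$ the bound is $|v|^2|z_\beta|^2=O(|y|^{-2})$. Thus $T=O(|y|^{-2})$. Together with $|v|^{p-4}=O(|y|^{4-p})$, this gives $R=O(|y|^{2-p})$, which is \eqref{poleexp-R}.

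\emph{Consequences.} Since $\mu_\alpha>0$ and $0<s\mu_\alpha<s\le s_{\max}$, we have $A_\alpha=g(s\mu_\alpha)>0$. Hence $W\ge\tfrac12A_\alpha|y|^{-p}$ near $a_\alpha$. Then
\[
|R|/W=O(|y|^2)\to0,
\]
which is \eqref{poleexp-RW}. Choosing $r_{\sigma,\alpha}$ so that $|R|\le\sigma W$ on the punctured ball gives \eqref{polecompare}. Here $R\le|R|$, and the sign of $R$ plays no role. All computations are pointwise on $B_r(a_\alpha)\setminus\{a_\alpha\}$, where the representatives coincide with the formulas.
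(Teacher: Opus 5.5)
Your proof is correct and follows essentially the same route as the paper. You split off the singular term $\mu_\alpha z_\alpha$ of $v$ and substitute into the compact form of $W$, and you use that the part of $v$ transverse to $z_\alpha$ is bounded (your identity $v\wedge z_\alpha=b\wedge z_\alpha$) to get $T=O(|y|^{-2})$, hence $R=O(|y|^{2-p})$; the comparison then follows from $A_\alpha=g(s\mu_\alpha)>0$ since $0<s\mu_\alpha<s_{\max}$.
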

\begin{proof}
Separating the singular term gives
\begin{equation}\label{poleexp-v}
v(x)=\mu_\alpha\frac{y}{|y|^2}+\widetilde w_\alpha+O(|y|),
\qquad
\widetilde w_\alpha=\sum_{\beta\ne\alpha}
\mu_\beta\frac{a_\alpha-a_\beta}{|a_\alpha-a_\beta|^2}.
\end{equation}
Thus $S=\mu_\alpha|y|^{-2}+O(1)$ and
$|v|^2=\mu_\alpha^2|y|^{-2}+O(|y|^{-1})$; substitution in
\eqref{eq:W-compact-form} proves \eqref{poleexp-W}.
For the $\alpha$-summand of $R$, the component of $v$ orthogonal
to $y$ is bounded, so its bracket is $O(|y|^{-2})$.
The other brackets have the same bound because $z_\beta$ is
bounded for $\beta\ne\alpha$. Since
$|v|^{p-4}=O(|y|^{4-p})$, this proves \eqref{poleexp-R}.
Finally, $0<s\mu_\alpha<s_{\max}$ for a genuinely multipolar
weight, hence $A_\alpha>0$, and the comparisons follow.
\end{proof}

\begin{proposition}[Asymptotics at infinity]\label{prop:inftycompare}
Translate the configuration so that
\begin{equation}\label{barycenter-new}
\sum_\alpha\mu_\alpha a_\alpha=0.
\end{equation}
Then
\begin{equation}\label{inftyexp-W}
W(x)=A_\infty(p,s)|x|^{-p}+O(|x|^{-p-2}),
\qquad A_\infty(p,s)=g(s),
\end{equation}
and
\begin{equation}\label{inftyexp-R}
R(x)=O(|x|^{-p-2}).
\end{equation}
If $g(s)>0$, then
\begin{equation}\label{inftyexp-RW}
R(x)/W(x)\longrightarrow0\qquad(|x|\to\infty),
\end{equation}
and, for every $\sigma>0$, some $R_\sigma>0$ satisfies
\begin{equation}\label{inftycompare}
R(x)\le\sigma W(x)\qquad(|x|\ge R_\sigma).
\end{equation}
\end{proposition}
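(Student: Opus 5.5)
The plan is a direct expansion of $v$ at infinity in powers of $1/|x|$, substituted into the compact form \eqref{eq:W-compact-form} for $W$ and into the last term of \eqref{def:full-weight} for $R$. The barycentric normalization \eqref{barycenter-new} is what kills the first correction to $v$ in the radial direction. This is exactly what upgrades the naive error $O(|x|^{-p-1})$ to $O(|x|^{-p-2})$.

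First, for $|x|$ large and each $\alpha$, Taylor expansion of $z(y)=y/|y|^2$ at $y=x$ gives
\[
z_\alpha=\frac{x}{|x|^2}-Dz(x)a_\alpha+O(|x|^{-3}),\qquad
Dz(x)=\frac{I}{|x|^2}-\frac{2x\otimes x}{|x|^4}.
\]
Summing with weights $\mu_\alpha$ and using $\sum\mu_\alpha=1$, $\sum\mu_\alpha a_\alpha=0$ yields $v=x/|x|^2+O(|x|^{-3})$. Hence $|v|^2=|x|^{-2}+O(|x|^{-4})$; the cross term is of order $|x|^{-1}\cdot|x|^{-3}$. Likewise
\[
|z_\alpha|^2=|x-a_\alpha|^{-2}=|x|^{-2}\Bigl(1+\frac{2x\cdot a_\alpha}{|x|^2}+O(|x|^{-2})\Bigr).
\]
The weighted sum of the linear term vanishes by \eqref{barycenter-new}, so $S=|x|^{-2}+O(|x|^{-4})$. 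For $|v|^{p-2}=|x|^{2-p}(1+O(|x|^{-2}))$ I use $|v|\asymp|x|^{-1}$, so that the power function is smooth near the relevant values. Plugging these into $W=s^{p-1}|v|^{p-2}((n-p)S-(p-1)s|v|^2)$ gives
\[
s^{p-1}|x|^{2-p}\bigl((n-p)-(p-1)s\bigr)|x|^{-2}+O(|x|^{-p-2})=g(s)|x|^{-p}+O(|x|^{-p-2}),
\]
which is \eqref{inftyexp-W}.

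For \eqref{inftyexp-R}, each bracket $|v|^2|z_\alpha|^2-(v\cdot z_\alpha)^2$ equals $|v|^2|z_\alpha^{\perp}|^2$, where $z_\alpha^\perp$ is the component orthogonal to $v$. Both $v$ and $z_\alpha$ equal $x/|x|^2$ up to $O(|x|^{-2})$, and $|v|\asymp|x|^{-1}$, so $|z_\alpha^\perp|=O(|x|^{-2})$ and the bracket is $O(|x|^{-2}\cdot|x|^{-4})$. Multiplying by $|v|^{p-4}=O(|x|^{4-p})$ and summing gives $R=O(|x|^{-p-2})$. This estimate does not need the barycenter cancellation: the first-order error of $z_\alpha$ is $O(|x|^{-2})$ regardless, and squaring it already gives the required order.

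Finally, if $g(s)>0$, then \eqref{inftyexp-W} gives $W\ge \tfrac12 g(s)|x|^{-p}$ for large $|x|$. Dividing, $R/W=O(|x|^{-2})\to0$, which is \eqref{inftyexp-RW}, and \eqref{inftycompare} follows by choosing $R_\sigma$ accordingly; for $p\le2$ the inequality is trivial since $R\le0$. The main point requiring care is the bookkeeping of the second-order terms in $S$ and $|v|^2$, where the barycenter condition must cancel the $|x|^{-3}$ contributions.
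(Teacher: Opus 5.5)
Your proposal is correct and follows essentially the paper's argument. Both use the barycentric expansion $v=x/|x|^2+O(|x|^{-3})$ and $S=|x|^{-2}+O(|x|^{-4})$, substitute it into the compact form of $W$, and bound each angular bracket of $R$ by $O(|x|^{-6})$; your orthogonal-component bound $|z_\alpha^\perp|\le|z_\alpha-v|=O(|x|^{-2})$ is the same estimate as the paper's wedge computation $v\wedge(x-a_\alpha)=-r^{-1}\omega\wedge a_\alpha+O(r^{-2})$.
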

\begin{proof}
For $x=r\omega$, $|\omega|=1$,
\[
z_\alpha=\frac{\omega}{r}
+\frac{2(\omega\cdot a_\alpha)\omega-a_\alpha}{r^2}
+O(r^{-3}).
\]
The barycentric normalization therefore gives
\begin{equation}\label{inftyexp-v}
v(x)=\frac{\omega}{r}+O(r^{-3}),
\end{equation}
as well as $S=r^{-2}+O(r^{-4})$. Equation~\eqref{eq:W-compact-form}
now yields \eqref{inftyexp-W}. Moreover,
$v\wedge(x-a_\alpha)=-r^{-1}\omega\wedge a_\alpha+O(r^{-2})$.
Each bracket in $R$ is consequently $O(r^{-6})$, proving
\eqref{inftyexp-R}. The remaining conclusions are immediate.
\end{proof}

\begin{lemma}[Upper bound by the concentration threshold]
\label{lem:concentration-upper}
For every admissible genuinely multipolar parameter,
$C_*(p,s,\mu)\le\Lambda(p,s,\mu)$.
\end{lemma}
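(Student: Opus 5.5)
The plan is to build test functions that concentrate at each singular end and use the asymptotics of Propositions~\ref{prop:polecompare} and~\ref{prop:inftycompare}. Near such an end, $W$ behaves like $A|y|^{-p}$ with $A=A_\alpha$ or $A=A_\infty$. The Rayleigh quotient of a suitably concentrated function should then approach $H_p/A$, the ratio between the sharp one-pole Hardy constant $H_p$ and the local coefficient. Taking the minimum over the active poles and infinity gives $C_*\le\Lambda$. Ends with $A=0$ (for example $s=s_{\max}$ at infinity) contribute $+\infty$ and can be ignored, while by \eqref{eq:threshold-at-least-one} at least one end has a finite cost.

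\textbf{Step 1: an active pole.} Fix an active pole $a_\alpha$, set $y=x-a_\alpha$, and choose a sequence $u_k\in C_c^\infty(B_{r_k}(a_\alpha)\setminus\{a_\alpha\})$ with $r_k\to0$ and
\[
\int|\nabla u_k|^p\,dx\Big/\int|y|^{-p}|u_k|^p\,dx\to H_p .
\]
Such a sequence exists because $H_p$ is the sharp one-pole Hardy constant and the inequality is invariant under dilation. Concretely, take the truncated profiles $|y|^{-h}$ with logarithmic cut-offs on $\varepsilon_k^2<|y|<\varepsilon_k$, as in the criticality proof of Theorem~\ref{thmopt}; these are optimizing and supported in shrinking balls. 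On $B_{r_k}(a_\alpha)$, \eqref{poleexp-W} gives
\[
W\ge (A_\alpha-Cr_k)|y|^{-p}.
\]
Hence $\int W|u_k|^p\,dx\ge(A_\alpha-o(1))\int|y|^{-p}|u_k|^p\,dx>0$, so $u_k$ is admissible in \eqref{def:Cstar}. The quotient then satisfies $\limsup_k \le H_p/A_\alpha$, which shows $C_*\le H_p/A_\alpha$.

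\textbf{Step 2: infinity, when $g(s)>0$.} Normalize by \eqref{barycenter-new}. Take the analogous logarithmically cut-off profiles $|x|^{-h}\chi$, supported in $\{R_k<|x|<R_k^2\}$ with $R_k\to\infty$. These come from the Kelvin-type or dilation version of the same optimizing family, so their Hardy quotient against $|x|^{-p}$ tends to $H_p$. Propositions~\ref{prop:inftycompare} gives $W=(A_\infty+O(|x|^{-2}))|x|^{-p}$ there, so the same argument yields $C_*\le H_p/A_\infty$.

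\textbf{Main obstacle.} The only delicate point is confirming that the optimizing one-pole sequences can be chosen with supports in arbitrarily small punctured balls, respectively arbitrarily far annuli. Only then can the relative error terms $O(|y|)$ and $O(|x|^{-2})$ be absorbed. Dilation invariance of the Hardy quotient handles this: dilating a fixed near-optimizer about $a_\alpha$ by a factor $\lambda\to0$ (respectively about $0$ by $\lambda\to\infty$) preserves the quotient and sends the support into the required region. A diagonal choice then completes the argument.
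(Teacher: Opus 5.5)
Your proposal is correct and follows essentially the same route as the paper: the paper fixes an annular Hardy near-minimizer $f=|x|^{-h}\chi(\log|x|/L)$ with $\chi\in C_c^\infty(0,1)$, rescales it as $\rho^{-h}f((x-a_\alpha)/\rho)$ or $R^{-h}f(x/R)$, uses the end asymptotics to get $\int W|u_\rho|^p\to g(s\mu_\alpha)\int|y|^{-p}|f|^p$ (and the analogue with $g(s)$ at infinity), and only then optimizes over $f$, which is exactly your dilation-plus-diagonal argument. The one detail to tidy is that your logarithmic cut-off must vanish at both ends of the annulus $\varepsilon_k^2<|y|<\varepsilon_k$, i.e.\ a tent in $\log|y|$ rather than the one-sided cut-off from the criticality proof of Theorem~\ref{thmopt}, so that the supports really shrink to the pole.
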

\begin{proof}
For nonzero nonnegative $\chi\in C_c^\infty(0,1)$, the annular tests
$f_L(x)=|x|^{-h}\chi((\log|x|)/L)$ satisfy
\[
\frac{\int|\nabla f_L|^p}{\int|x|^{-p}|f_L|^p}
=\frac{\int_0^1|-h\chi+L^{-1}\chi'|^p\,dt}{\int_0^1\chi^p\,dt}
\xrightarrow{L\to\infty} H_p.
\]
For a fixed annular test $f$, the functions
$u_\rho(x)=\rho^{-h}f((x-a_\alpha)/\rho)$ have invariant energy, and
Proposition~\ref{prop:polecompare} gives
\[
\int W|u_\rho|^p\,dx
\xrightarrow{\rho\downarrow0}
g(s\mu_\alpha)\int|y|^{-p}|f(y)|^p\,dy.
\]
Taking first $\rho\downarrow0$ and then Hardy near-minimizers yields
$C_*\le H_p/g(s\mu_\alpha)$. The tests $R^{-h}f(x/R)$ similarly give
$C_*\le H_p/g(s)$ as $R\to\infty$ if $g(s)>0$. Taking the minimum proves the claim,
with the convention $H_p/0=+\infty$.
\end{proof}

\subsection{Weighted embedding and local compactness}

For $p\ge2$, the full Hardy inequality and $0\le W\le V$ give a
continuous embedding $D^{1,p}\hookrightarrow L^p(W\,dx)$.
The following comparison supplies its subquadratic counterpart.

\begin{theorem}[An explicit subquadratic parameter comparison]\label{tmHSW2}
Let $1<p<2$, and define
\begin{equation}\label{eq:explicit-subquad-bound}
c_0:=\frac{n+p-4}{n-p},\qquad
\tau_s:=c_0s+(1-c_0)s_{\max},\qquad
\underline C_p(s):=c_0\left(\frac{\tau_s}{s}\right)^{p-1}.
\end{equation}
Then
\begin{equation}\label{mHSW2}
\int_{\mathbb R^n}|\nabla u|^p\,dx
\ge\underline C_p(s)\int_{\mathbb R^n}W|u|^p\,dx
\qquad(u\in D^{1,p}(\mathbb R^n)).
\end{equation}
Moreover, $c_0\le\underline C_p(s)<1$, with equality in the first
inequality exactly when $s=s_{\max}$.
\end{theorem}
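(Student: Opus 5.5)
The plan is a parameter comparison. I would bound the truncated potential $W=W_s$ pointwise by a multiple of the \emph{full} potential $V_t:=V_{m,t,p}^{\mu}$, with the same weights $\mu$ and the shifted parameter $t:=\tau_s$. Then I would invoke Theorem~\ref{thmMHC} for the parameter $t$.

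\textbf{Step 1: admissibility of $t$.} Since $1<p<2$ and $n\ge3$, we have $0<n+p-4\le n-p$, so $c_0\in(0,1)$. Thus $\tau_s$ is a convex combination of $s$ and $s_{\max}$, hence $h\le s\le\tau_s\le s_{\max}$. So $t$ is admissible and \eqref{MHp} holds for $V_t$.

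\textbf{Step 2: a lower bound for $V_t$ in terms of $S$ and $|v|^2$.} In the proof of Proposition~\ref{propVpos}, Jensen's inequality gave $0\le T\le|v|^2P$. Since the coefficient $2(p-2)$ is negative, this yields on $\{v\ne0\}$, without any sign condition on the $P$-coefficient,
\[
V_t\ge t^{p-1}|v|^{p-2}\Bigl(\bigl(n-p-(p-1)t\bigr)S+\bigl((p-1)t+2p-4\bigr)P\Bigr).
\]
Substituting $P=S-|v|^2$ gives
\[
V_t\ge t^{p-1}|v|^{p-2}\Bigl((n+p-4)S-\bigl((p-1)t+2p-4\bigr)|v|^2\Bigr).
\]

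\textbf{Step 3: matching coefficients.} By \eqref{def:truncated-intro},
\[
\underline C_p(s)\,W_s=c_0t^{p-1}|v|^{p-2}\bigl((n-p)S-(p-1)s|v|^2\bigr).
\]
The $S$-coefficients agree because $c_0(n-p)=n+p-4$. For the $|v|^2$-coefficients, use $(1-c_0)(n-p)=4-2p$ to get
\[
(p-1)\tau_s=c_0(p-1)s+(1-c_0)(p-1)s_{\max}=c_0(p-1)s+4-2p.
\]
Hence $(p-1)t+2p-4=c_0(p-1)s$, and the two expressions coincide. Therefore $V_t\ge\underline C_p(s)\,W_s$ a.e.; the representative $0$ is used on the null set $Z(v)$.

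\textbf{Step 4: the inequality on $D^{1,p}$.} Combining Step 3 with \eqref{MHp} for $t$ gives \eqref{mHSW2} for $u\in C_c^\infty$. For $u\in D^{1,p}$, take $u_k\to u$ in $D^{1,p}$ and a.e. Since $W\ge0$, Fatou's lemma gives the inequality in the limit.

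\textbf{Step 5: the bounds on $\underline C_p(s)$.} Since $t\ge s$, we get $\underline C_p(s)\ge c_0$. Equality holds iff $\tau_s=s$, which (because $c_0<1$) happens iff $s=s_{\max}$.

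For the strict upper bound, write $\tau_s/s=1+x$ with $x=(1-c_0)(s_{\max}-s)/s\ge0$. Bernoulli's inequality with exponent $p-1<1$ gives $(1+x)^{p-1}\le1+(p-1)x$. From $s\ge h$ we get $s_{\max}-s\le s_{\max}-h=\frac{n-p}{p(p-1)}$, hence
\[
\frac{s_{\max}-s}{s}\le\frac{n-p}{p(p-1)}\cdot\frac{p}{n-p}=\frac1{p-1}.
\]
Therefore
\[
\underline C_p(s)\le c_0\bigl(1+(1-c_0)\bigr)=1-(1-c_0)^2<1.
\]

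\textbf{Main obstacle.} The only delicate point is Step 2: the bound $T\le|v|^2P$ must be used with the correct sign of $2(p-2)$ and must stay valid when $(p-1)t+2p-4<0$. The choice of $\tau_s$ in Step 3 is then forced by matching both coefficients exactly.
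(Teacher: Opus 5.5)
Your proof is correct and follows essentially the paper's argument. Your bound $T\le|v|^2P$ is exactly the paper's weighted Cauchy--Schwarz inequality $M\ge|v|^4$; the paper keeps the dropped term as the exact remainder $d\tau_s^{p-1}|v|^{p-4}(M-|v|^4)$, and bounds $\underline C_p(s)$ by $\underline C_p(h)$ using monotonicity and logarithms, where you use Bernoulli to reach $1-(1-c_0)^2$.
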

\begin{proof}
Put $d=2(2-p)$ and $M=\sum_\alpha\mu_\alpha(v\cdot z_\alpha)^2$.
The full potential at parameter $t$ is
\[
V_t=t^{p-1}|v|^{p-2}
\left((n-p-d)S-(p-1)t|v|^2+d\frac{M}{|v|^2}\right).
\]
Using $n-p-d=c_0(n-p)$ and
$(p-1)\tau_s=c_0(p-1)s+d$, we obtain
\begin{equation}\label{eq:subquad-comparison}
V_{\tau_s}-\underline C_p(s)W_s
=d\tau_s^{p-1}|v|^{p-4}(M-|v|^4)\ge0.
\end{equation}
Indeed, weighted Cauchy--Schwarz gives
$M\ge(\sum_\alpha\mu_\alpha v\cdot z_\alpha)^2=|v|^4$.
Since $\tau_s\in[s,s_{\max}]$, the full Hardy inequality proves
\eqref{mHSW2}, first for smooth tests and then by density and Fatou's
lemma. This also proves the continuous weighted embedding.

Let $b=1-c_0\in(0,1)$. The ratio $\tau_s/s$ decreases for $s\ge h$,
and therefore
\[
\underline C_p(s)\le
(1-b)\left(1+\frac{b}{p-1}\right)^{p-1}<1,
\]
where the strict inequality follows by taking logarithms and using
$\log(1-b)<-b$ and $\log(1+b/(p-1))<b/(p-1)$.
The comparison with $c_0$ follows from $\tau_s\ge s$.
\end{proof}

Thus $0<C_*<\infty$ for all admissible parameters: positivity follows
from the weighted embedding, and finiteness from a smooth test where
$W>0$. When $p<2$, the possible singularities on $Z(v)$ require
the following local estimate.

\begin{lemma}[Local compactness across the zero set]
\label{lem:weighted-local-compactness}
Let $\Omega_\mu=\mathbb R^n\setminus\{a_\alpha:\mu_\alpha>0\}$.
For every compact $K\Subset O\Subset\Omega_\mu$, with $O$ open,
and every $\varepsilon>0$,
\begin{equation}\label{eq:local-weight-form-smallness}
\int_KW|z|^p\,dx
\le\varepsilon\int_O|\nabla z|^p\,dx
+C_{K,O,\varepsilon}\int_O|z|^p\,dx
\qquad(z\in W^{1,p}(O)).
\end{equation}
Consequently, boundedness in $W^{1,p}(O)$ and convergence to zero
in $L^p(O)$ imply convergence to zero in $L^p(K,W\,dx)$.
\end{lemma}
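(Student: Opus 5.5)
The plan is to localize the problem near the only places where $W$ can fail to be bounded on $O$, namely the zero set $Z(v)\cap O$, which matters only for $1<p<2$. Away from $Z(v)$, $W$ is bounded on $K$, and the estimate holds trivially with $\varepsilon=0$. For $p\ge2$ the factor $|v|^{p-2}$ is bounded on compact subsets of $\Omega_\mu$ (inactive poles are regular points), so again $W\in L^\infty(K)$. Henceforth assume $1<p<2$.

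From the compact form \eqref{eq:W-compact-form} and the proof of Lemma~\ref{lem:Vloc} we get, on $O$,
\[
0\le W\le C_O|v|^{p-2}.
\]
Near each $x_0\in Z(v)\cap K$ there is, after a rotation, a neighborhood $U$ in which $|v(x)|\ge c_0|x_n-\psi(x')|$ with $\psi$ smooth. Hence $W\le C|x_n-\psi(x')|^{-(2-p)}$ on $U$. By compactness, $K$ is covered by finitely many such boxes together with finitely many balls on which $W$ is bounded. A partition-of-unity or cut-off argument then reduces the claim to a single box $U'\Subset U\subset O$. There we must prove
\[
\int_{U'}|x_n-\psi(x')|^{-\gamma}|z|^p\,dx\le\varepsilon\int_U|\nabla z|^p+C_\varepsilon\int_U|z|^p,\qquad \gamma=2-p\in(0,1).
\]

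\textbf{Main step: one-dimensional weighted estimate.} Flatten the graph by the change of variables $y_n=x_n-\psi(x')$, which is bi-Lipschitz on $U$. The claim becomes a statement on a cylinder with weight $|y_n|^{-\gamma}$, and Fubini reduces it to a one-dimensional estimate on an interval $I=(-\ell,\ell)$. For $f\in W^{1,p}(I)$ and $\gamma<1$, we have $\|f\|_{L^\infty(I)}\le C(\ell^{-1/p}\|f\|_{L^p}+\ell^{1-1/p}\|f'\|_{L^p})$. Split the weight at height $\delta$:
\[
\int_I|t|^{-\gamma}|f|^p\,dt\le\delta^{-\gamma}\int_I|f|^p+\Bigl(\int_{-\delta}^{\delta}|t|^{-\gamma}dt\Bigr)\|f\|_\infty^p .
\]
The middle factor is $\frac{2}{1-\gamma}\delta^{1-\gamma}$, which is small. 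Combining this with the sup bound on $I$ yields
\[
\int_I|t|^{-\gamma}|f|^p\,dt\le\varepsilon\int_I|f'|^p+C_\varepsilon\int_I|f|^p.
\]
Integrate this in $y'$, with $f=z(y',\cdot)$, which is $W^{1,p}$ for a.e. $y'$. Then undo the change of variables, whose Jacobian is bounded above and below, and sum over the finitely many pieces. This gives \eqref{eq:local-weight-form-smallness}.

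The obstacle is the bookkeeping in this step rather than any new idea. Concretely, it consists of checking that the boxes can be chosen inside $O$ while covering $K$, and that the bi-Lipschitz flattening preserves both $\int|\nabla z|^p$ and $\int|z|^p$ up to constants.

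\textbf{Consequence.} Let $z_k$ be bounded in $W^{1,p}(O)$ with $z_k\to0$ in $L^p(O)$. Applying \eqref{eq:local-weight-form-smallness} gives
\[
\limsup_k\int_KW|z_k|^p\le\varepsilon\sup_k\|\nabla z_k\|_{L^p(O)}^p .
\]
Letting $\varepsilon\downarrow0$ proves convergence to zero in $L^p(K,W\,dx)$.
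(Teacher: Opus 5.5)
Your proposal is correct and follows essentially the same route as the paper. Both arguments localize at $Z(v)$ using a component of $v$ with nonvanishing gradient, flatten to a cylinder, and use the one-dimensional Sobolev sup bound on slices, so that the layer $\{|t|<\delta\}$ costs $C\delta^{p-1}\|z\|_{W^{1,p}}^p$, the weight is bounded off it, and a finite cover finishes the estimate; your graph flattening $y_n=x_n-\psi(x')$ in place of the coordinate $t=f(x)$, and direct summation instead of cut-offs, are immaterial variations, and the consequence is derived exactly as in the paper.
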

\begin{proof}
For $p\ge2$, $W$ is locally bounded on $\Omega_\mu$. If $1<p<2$,
then, at every $x_0\in Z(v)$,
\[
\operatorname{tr}Dv(x_0)
=(n-2)\sum_{\mu_\alpha>0}\frac{\mu_\alpha}{|x_0-a_\alpha|^2}>0.
\]
As in Lemma~\ref{lem:Vloc}, one component $f$ of $v$ has nonzero
gradient there. Smooth coordinates $(y,t)$ with $t=f(x)$ therefore
give $|v|\ge|t|$ and $W\le C|t|^{p-2}$ almost everywhere in a
small cylinder $Q=Q'\times(-b,b)$. The coordinate maps and their
Jacobians are uniformly controlled on a smaller cylinder.
The one-dimensional Sobolev inequality on the slices gives
$\int_{Q'}\sup_{|t|<b}|\zeta(y,t)|^p\,dy
\le C\|\zeta\|_{W^{1,p}(Q)}^p$, hence
\begin{equation}\label{eq:zero-set-layer}
\int_{Q'}\int_{-\delta}^{\delta}|t|^{p-2}|\zeta|^p\,dt\,dy
\le C\delta^{p-1}\|\zeta\|_{W^{1,p}(Q)}^p.
\end{equation}
On $|t|\ge\delta$ the weight is bounded. Cover $K\cap Z(v)$ by
finitely many such charts, apply \eqref{eq:zero-set-layer} with
cut-offs supported in $O$, and choose $\delta$ so that the total
gradient coefficient is at most $\varepsilon$. Cut-off derivatives
contribute only to the $L^p$ term; the remaining compact set avoids
$Z(v)$ and has bounded weight. This proves
\eqref{eq:local-weight-form-smallness}. Its final consequence follows
by first taking the sequence limit and then $\varepsilon\downarrow0$.
\end{proof}

\subsection{The concentration-threshold principle}

Local weighted compactness leaves only the singular ends as possible
concentration sites. For $p\ge2$, a critical Hardy end also obstructs
finite-energy attainment. We establish this obstruction before proving
Theorem~\ref{prop:compactness-threshold}.

\begin{lemma}[Critical Hardy ends exclude finite energy]
\label{lem:critical-ends}\label{lem:critical-tail}
Let $2\le p<n$, $h=(n-p)/p$, and $H=h^p$.
\begin{enumerate}[(i)]
\item If $Z\in L^\infty_{\mathrm{loc}}(B_{R_*}(a)\setminus\{a\})$
and $Z(x)\ge H|x-a|^{-p}-C|x-a|^{1-p}$ there, every positive
continuous weak solution of $-\Delta_pu=Zu^{p-1}$ satisfies
$u(x)\ge c|x-a|^{-h}$ near $a$.
\item If $Z\in L^\infty_{\mathrm{loc}}(\mathbb R^n\setminus
\overline{B_{R_*}(0)})$ and $Z(x)\ge H|x|^{-p}-C|x|^{-p-2}$
there, every positive continuous weak solution satisfies
$u(x)\ge c|x|^{-h}$ for all sufficiently large $|x|$.
\end{enumerate}
In either case, the solution cannot extend to a function in
$D^{1,p}(\mathbb R^n)$.
\end{lemma}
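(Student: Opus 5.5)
The plan is a comparison argument with explicit subsolution barriers built from the one-dimensional critical Hardy profile. The weak comparison principle for $-\Delta_p-Z|\cdot|^{p-2}\cdot$ on annuli will be used; for $p\ge2$ it is available once the barrier is a positive subsolution and $u$ is a positive supersolution. We treat (i); (ii) follows by the same argument at infinity, or via the Kelvin-type transform $x\mapsto x/|x|^2$, with the error $|x|^{-p-2}$ playing the role of $|x-a|^{1-p}$ after rescaling.

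\emph{Step 1 (barrier).} Put $r=|x-a|$ and look for a radial subsolution $\psi(r)=r^{-h}\bigl(1+r^{\gamma}\bigr)$, or $\psi=r^{-h}(1-r^\gamma)$ with a suitable sign, with small $\gamma\in(0,1)$. We compute $-\Delta_p\psi$ for the radial profile. The leading term gives exactly $H r^{-p}\psi^{p-1}$, because $h$ is the double root of $g$-type indicial equation, where the critical Hardy exponent is degenerate. The next-order term is of size $r^{\gamma-p}\psi^{p-1}$. Its sign is determined by the second derivative of the indicial function at the double root, and that derivative is negative. So one choice of sign yields
\[
-\Delta_p\psi\le \bigl(Hr^{-p}-c\,r^{\gamma-p}\bigr)\psi^{p-1}\quad(0<r<r_1).
\]
Taking $\gamma<1$ makes $c\,r^{\gamma-p}$ dominate the error $C r^{1-p}$. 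Hence $-\Delta_p\psi\le Z\psi^{p-1}$, so $\psi$ is a subsolution, and $\psi\asymp r^{-h}$.

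\emph{Step 2 (comparison).} Fix $r_1$ small and choose $\varepsilon>0$ with $\varepsilon\psi\le u$ on $\partial B_{r_1}(a)$, using continuity and positivity of $u$. We want $\varepsilon\psi\le u$ on the punctured ball, but there is no boundary control at $a$. To handle this, compare on the annuli $\rho<r<r_1$ with the modified barriers $\varepsilon(\psi-\psi_\rho)_+$. Here $\psi_\rho$ is chosen to vanish at $r=\rho$, for example by subtracting a multiple of the second solution $r^{-h}\log$-type correction, which is also a subsolution for the critical operator. Alternatively, use a Picone/Allegretto–Huang identity: the functional $\int|\nabla\eta|^p-Z|\eta|^p$ is nonnegative, and the test $(\varepsilon\psi-u)_+$ vanishes near $r_1$. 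The integrability near $a$ is justified by the cutoff estimate of Lemma~\ref{lem:GSextension} type. Letting $\rho\to0$ gives $u\ge c r^{-h}$.

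\emph{Step 3 (no finite energy).} If $u\ge c|x-a|^{-h}$ near $a$ and $u\in D^{1,p}$, then the Hardy inequality near $a$ (a cut-off version of the classical one) gives
\[
\int_{B_{r_1}}|x-a|^{-p}u^p\,dx<\infty.
\]
But $|x-a|^{-p}\cdot|x-a|^{-ph}=|x-a|^{-n}$, which is not integrable, a contradiction. At infinity, $u\ge c|x|^{-h}$ gives $\int|x|^{-p}u^p=\infty$ in the same way, contradicting Hardy's inequality in $D^{1,p}$.

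\emph{Main obstacle.} The delicate point is Step 2. Since the indicial root is double, the comparison must be done without Dirichlet data at $a$. A naive maximum principle fails because $u$ might a priori be smaller than the barrier near the singularity. I would first try the Picone-based argument: take $\eta=(\varepsilon\psi-u)_+\chi_\rho$ and use $p\ge2$ to exploit the convexity inequality of Lemma~\ref{lemCp}. The cutoff errors should vanish because $\psi^p r^{-p}\cdot r^n$ is only logarithmically divergent; choosing a logarithmic cutoff, as in the criticality proof, kills these errors.
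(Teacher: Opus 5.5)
Your Steps~1 and~3 are sound. With $y=-r\psi'/\psi=h-\gamma d$ and $d=r^\gamma/(1+r^\gamma)$, one gets $r^p(-\Delta_p\psi)/\psi^{p-1}=g(y)+(p-1)y^{p-2}\dot y\le H-cr^{\gamma}$ near $a$. The Hardy inequality centred at $a$ (or at $0$) rules out $D^{1,p}$ just as well as the paper's Sobolev argument with $hp^*=n$.

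The gap is Step~2, which is the real content of the lemma, and neither mechanism you propose works.

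\emph{Subtraction.} Subtracting $\psi_\rho$ is a linear device. For $p>2$, a difference of a subsolution and a solution of $-\Delta_pw=Zw^{p-1}$ is not a subsolution. Even for $p=2$, subtracting a solution of the pure Hardy equation gives a subsolution for $Z$ only if $Z\le Hr^{-2}$, which is not assumed. Also, for $p\ne2$ the second critical radial profile behaves like $r^{-h}(\log\frac1r)^{2/p}$, not like $r^{-h}\log\frac1r$.

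\emph{Picone with cutoff.} The cutoff terms produced by $\chi_\rho^p(\varepsilon^p\psi^p-u^p)_+/\psi^{p-1}$ and $\chi_\rho^p(\varepsilon^p\psi^p-u^p)_+/u^{p-1}$ are \emph{linear} in $\nabla\chi_\rho$. The natural bound for the $\psi$-part is $\int|\nabla\chi_\rho||\nabla\psi|^{p-1}\psi\,dx\asymp(\log\frac1\rho)^{-1}\int_{\rho^2}^{\rho}\frac{dr}{r}\asymp1$, which is not $o(1)$. The $u$-part contains $|\nabla u|^{p-1}\psi^p/u^{p-1}$ on $\{\varepsilon\psi>u\}$ near $a$. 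The hypotheses ($Z\in L^\infty_{\rm loc}$ away from $a$, with only a lower bound) give no control of that. The logarithmic cutoff works in the criticality proof only because the $C_p$ remainder there is quadratic in $\nabla w$.

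The missing idea is to build the inner Dirichlet condition into the barrier multiplicatively, and to verify directly, uniformly in $\rho$, that the product is still a subsolution. The paper uses
\[
f_\rho(r)=r^{-h}(1+Ar)\bigl(\log(r/\rho)/\log(R/\rho)\bigr)^2,\qquad \rho<r<R.
\]
Put $y=-rf_\rho'/f_\rho=h-d-2\xi$, with $d=Ar/(1+Ar)$ and $\xi=1/\log(r/\rho)$. The radial formula gives
\[
r^p(-\Delta_pf_\rho)/f_\rho^{p-1}=|y|^{p-2}\bigl[B_0(y)+(p-1)d(y-h-1+\tfrac32d)\bigr],\qquad B_0(y)=\tfrac{p-1}2(h^2-y^2)+hy.
\]
The global bound $|y|^{p-2}B_0(y)\le H$ for all $y\le h$ then yields the upper bound $H-c_1d$. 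This is where $p\ge2$ enters, and it also covers the layer near $r=\rho$ where $y\to-\infty$. The bound $H-c_1d$ absorbs $Cr^{1-p}$ once $A\ge 2C/c_1$.

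Since $f_\rho(\rho)=0<u$ and $f_\rho(R)$ does not depend on $\rho$, ordinary Picone comparison on the bounded annulus applies; the test functions are supported away from the boundary by strict boundary ordering. This gives $cf_\rho\le u$, and letting $\rho\downarrow0$ proves (i).

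Part (ii) needs its own vanishing barrier, $f_L=r^{-h}(1+r^{-1})(1-\log(r/R)/L)$. There $p\ge2$ enters through the monotonicity of $y^{p-2}\bigl((2-p)hy+(p-1)h^2\bigr)$ on $y\ge h$. The Kelvin inversion is not a shortcut: for $2<p<n$ it turns $\Delta_p$ into a weighted operator.
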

\begin{proof}
We first recall positive-function Picone comparison on a bounded
annulus. If a positive subsolution $v$ lies strictly below a positive
solution $u$ on the boundary, use the tests
$(v^p-u^p)_+/v^{p-1}$ and $(v^p-u^p)_+/u^{p-1}$, respectively.
They are supported away from the boundary, where both denominators
are bounded below. Subtraction cancels the potential terms and
bounds the sum of two nonnegative Picone remainders by zero.
Equality would make $v/u$ constant on each component of $\{v>u\}$,
contrary to its boundary value one. Thus $v\le u$.

For (i), take $a=0$ and, with $A>0$ and $R<R_*$ to be chosen, set
\[
f_\rho(r)=r^{-h}(1+Ar)
\left(\frac{\log(r/\rho)}{\log(R/\rho)}\right)^2,\qquad \rho<r<R.
\]
Put $d=Ar/(1+Ar)$, $\xi=1/\log(r/\rho)$, and
$y=-rf_\rho'/f_\rho=h-d-2\xi$.
Differentiation with respect to $\log r$ gives
$\dot y=-d+d^2+2\xi^2$. The radial $p$-Laplacian formula yields
\begin{equation}\label{eq:inner-critical-barrier}
\begin{split}
r^p\frac{-\Delta_pf_\rho}{f_\rho^{p-1}}
&=|y|^{p-2}\left[B_0(y)+(p-1)d\left(y-h-1+\frac32d\right)\right],\\
B_0(y)&=\frac{p-1}{2}(h^2-y^2)+hy.
\end{split}
\end{equation}
At $y=0$ the identity is understood by continuity; the derivative
of $|y|^{p-2}y$ is zero there if $p>2$ and one if $p=2$.
For $F_0(y)=|y|^{p-2}B_0(y)$ and $0\le t\le1$,
\[
\frac{F_0(ht)}H
=t^{p-2}\left(\frac{p-1}{2}(1-t^2)+t\right),
\]
whose derivative is
$\frac{p-1}{2}t^{p-3}(1-t)(pt+p-2)\ge0$ on $(0,1)$.
For $-1\le t<0$, its value is smaller than at $-t$, and for
$t<-1$ it is negative. Thus $F_0(y)\le H$ for $y\le h$,
with equality only at $h$, and
$m_p:=\sup_{y\le h/2}F_0(y)/H<1$.
If $d\le1/2$, then $y\le h-d$ and
$y-h-1+3d/2\le-3/4$. Splitting into $y\ge h/2$ and $y\le h/2$
in \eqref{eq:inner-critical-barrier} gives
\[
r^p\frac{-\Delta_pf_\rho}{f_\rho^{p-1}}\le H-c_1d,\qquad
c_1=\min\left\{\frac34(p-1)(h/2)^{p-2},\,2H(1-m_p)\right\}>0.
\]
Choose $A\ge\max\{1,2C/c_1\}$ and $R<\min\{R_*,1/A\}$.
Then $d\ge Ar/2$, so $f_\rho$ is a subsolution for $Z$.
Since $f_\rho(R)=R^{-h}(1+AR)$ is independent of $\rho$,
a fixed multiple $cf_\rho$ lies strictly below $u$ on the outer
boundary and vanishes on the inner boundary. Comparison and
$\rho\downarrow0$ yield $u(x)\ge c|x|^{-h}(1+A|x|)$.

For (ii), choose $R$ large and set
\[
f_L(r)=r^{-h}(1+r^{-1})
\left(1-\frac{\log(r/R)}L\right),\qquad R<r<Re^L.
\]
Now $d=1/(r+1)$, $\xi=1/(L-\log(r/R))$, and
$y=-rf_L'/f_L=h+d+\xi$ satisfy $\dot y=-d+d^2+\xi^2$. Thus
\[
r^p\frac{-\Delta_pf_L}{f_L^{p-1}}
=\mathfrak f(y)-(p-1)y^{p-2}d(1+2\xi),
\qquad
\mathfrak f(y)=y^{p-2}\bigl((2-p)hy+(p-1)h^2\bigr).
\]
For $y\ge h$,
$\mathfrak f'(y)=(p-1)(p-2)hy^{p-3}(h-y)\le0$ and
$\mathfrak f(h)=H$. Consequently,
\[
r^p\frac{-\Delta_pf_L}{f_L^{p-1}}
\le H-\frac{(p-1)h^{p-2}}{r+1}
\le H-\frac C{r^2}\le r^pZ(x)
\]
for $R$ sufficiently large, independently of $L$.
A fixed multiple $cf_L$ lies below $u$ at $r=R$ and vanishes at
$r=Re^L$. Comparison followed by $L\to\infty$ gives
$u(x)\ge c|x|^{-h}(1+|x|^{-1})$.
Finally, $hp^*=n$ for $p^*=np/(n-p)$, so either lower bound
forces logarithmic divergence of the $L^{p^*}$ integral,
contradicting the Sobolev embedding of $D^{1,p}$.
\end{proof}

\begin{proof}[Proof of Theorem~\ref{prop:compactness-threshold}]
Write $c_*=C_*(p,s,\mu)$ and take a normalized minimizing sequence
$(u_k)$. After extraction,
$u_k\rightharpoonup u$ in $D^{1,p}$ and $u_k\to u$ in $L^p_{\rm loc}$,
and, as finite Radon measures tested against $C_0(\mathbb R^n)$,
\[
\nu_k:=W|u_k|^p\,dx\stackrel{*}{\rightharpoonup}\nu,\qquad
\lambda_k:=|\nabla u_k|^p\,dx\stackrel{*}{\rightharpoonup}\lambda.
\]
Lemma~\ref{lem:weighted-local-compactness} gives weighted $L^p$
convergence on compact sets away from active poles, including across
$Z(v)$ and inactive poles. Hence
\[
\nu=W|u|^p\,dx+\sum_{\mu_\alpha>0}\nu_\alpha\delta_{a_\alpha},
\qquad \nu_\alpha\ge0.
\]
Set $t=\int W|u|^p$, $\nu_\infty=1-\nu(\mathbb R^n)$, and
$\lambda_\infty=c_*-\lambda(\mathbb R^n)$; all are nonnegative.
Localized weak lower semicontinuity gives
$\lambda\ge|\nabla u|^p\,dx$. Since the latter measure has no atoms,
\[
\lambda=|\nabla u|^p\,dx
+\sum_{\mu_\alpha>0}\lambda_\alpha\delta_{a_\alpha}
+\lambda_{\rm rem},\qquad
\lambda_\alpha=\lambda(\{a_\alpha\}),\quad\lambda_{\rm rem}\ge0.
\]
Thus the mass and energy defects obey
\begin{equation}\label{eq:threshold-mass-balance}
\begin{aligned}
1&=t+\sum_{\mu_\alpha>0}\nu_\alpha+\nu_\infty,\\
c_*&=\int|\nabla u|^p\,dx+\sum_{\mu_\alpha>0}\lambda_\alpha
+\lambda_{\rm rem}(\mathbb R^n)+\lambda_\infty.
\end{aligned}
\end{equation}
In particular, discarding $\lambda_{\rm rem}$ gives the corresponding
energy inequality; no equality between mass and energy defects has
yet been assumed.

Suppose first that $c_*<\Lambda$, and fix $q$ with $c_*<q<\Lambda$.
The end asymptotics give $W\le(H_p/q)|x-a_\alpha|^{-p}$ in small
active-pole balls and $W\le(H_p/q)|x|^{-p}$ outside a large ball.
The classical Hardy inequality therefore implies
\begin{equation}\label{eq:threshold-local-Hardy}
q\int W|z|^p\,dx\le\int|\nabla z|^p\,dx
\end{equation}
for every $z\in D^{1,p}$ supported in one of these regions.
For $0<\rho<1$, let $\phi_{\alpha,\rho}$ be one on
$B_\rho(a_\alpha)$, zero outside $B_{\sqrt\rho}(a_\alpha)$,
and affine in $\log|x-a_\alpha|$ on the intervening annulus.
Likewise, let $\chi_L$ be zero on $B_L(0)$, one outside $B_{L^2}(0)$,
and affine in $\log|x|$ between these radii. These cut-offs satisfy
\[
\|\nabla\phi_{\alpha,\rho}\|_{L^n}^n
\le C|\log\rho|^{1-n},\qquad
\|\nabla\chi_L\|_{L^n}^n\le C(\log L)^{1-n}.
\]
Applying \eqref{eq:threshold-local-Hardy} to $\psi u_k$, with either
cut-off and its scale sufficiently small or large, gives
\begin{equation}\label{eq:threshold-cutoff}
q\int\psi^p\,d\nu_k
\le(1+\varepsilon)\int\psi^p\,d\lambda_k
+C_\varepsilon\|\nabla\psi\|_{L^n}^p.
\end{equation}
Here the constant is uniform in $k$ and the scale, since
$\int|u_k|^p|\nabla\psi|^p
\le\|u_k\|_{L^{p^*}}^p\|\nabla\psi\|_{L^n}^p$.
For pole cut-offs, take $k\to\infty$ and then $\rho\downarrow0$.
For exterior cut-offs, first write
\[
\int\chi_L^p\,d\nu_k=1-\int(1-\chi_L^p)\,d\nu_k,\qquad
\int\chi_L^p\,d\lambda_k
=\int|\nabla u_k|^p-\int(1-\chi_L^p)\,d\lambda_k.
\]
The subtracted functions are continuous and compactly supported,
so $k\to\infty$ followed by $L\to\infty$ is legitimate.
Finally $\varepsilon\downarrow0$ yields
$\lambda_\alpha\ge q\nu_\alpha$ and
$\lambda_\infty\ge q\nu_\infty$.
Since $\int|\nabla u|^p\ge c_*t$, \eqref{eq:threshold-mass-balance}
implies $c_*\ge c_*t+q(1-t)$, and therefore $t=1$.
It follows that $\int|\nabla u|^p=c_*$ and $u$ is a normalized
minimizer. Weak convergence of the gradients and convergence of
their $L^p$ norms give strong convergence by uniform convexity.

The positivity, equation, and criticality conclusions require only
attainment. Replace a normalized minimizer by its absolute value
$u_*\ge0$. The continuous weighted embedding permits differentiation
of the constraint, giving \eqref{eq:subquad-extremizer-equation}.
Its right-hand side is locally integrable, since
\[
\int_KWu_*^{p-1}
\le\left(\int_KWu_*^p\right)^{(p-1)/p}
\left(\int_KW\right)^{1/p}<\infty
\]
for compact $K$. Thus $u_*$ is a nontrivial nonnegative weak
$p$-superharmonic function, and the weak Harnack inequality
\cite[Corollary~3.18]{Lindqvist17} gives $u_*>0$ a.e.
Choose smooth compactly supported $v_j\to u_*$ in $D^{1,p}$ and,
after extraction, a.e. Weighted convergence gives
$\int|\nabla v_j|^p-c_*\int W|v_j|^p\to0$.
The optimal form is nonnegative, so Lemma~\ref{lem:crit}
proves its criticality.

For necessity when $p\ge2$, suppose $c_*=\Lambda$ is attained.
The preceding equation and local regularity for the locally bounded
potential $W$ give a positive continuous solution away from the
active poles. The minimum defining $\Lambda$ has a finite-cost
channel. At a minimizing pole,
$c_*W=H_p|x-a_\alpha|^{-p}+O(|x-a_\alpha|^{1-p})$;
at minimizing infinity, after barycentric translation,
$c_*W=H_p|x|^{-p}+O(|x|^{-p-2})$.
Either alternative contradicts Lemma~\ref{lem:critical-ends}.

Finally, let $p\ge2$ and $c_*=\Lambda$, and return to the measure
decomposition for an arbitrary normalized minimizing sequence.
Write $\Lambda_\alpha=H_p/g(s\mu_\alpha)$ and
$\Lambda_\infty=H_p/g(s)$.
Applying the cut-off argument at each end with any number below its
individual cost gives
$\lambda_\alpha\ge\Lambda_\alpha\nu_\alpha$ and
$\lambda_\infty\ge\Lambda_\infty\nu_\infty$ if the latter is finite.
If $g(s)=0$, arbitrary finite exterior constants give $\nu_\infty=0$.
Consequently,
\[
\Lambda\ge\int|\nabla u|^p+\sum_{\mu_\alpha>0}\lambda_\alpha
+\lambda_\infty
\ge c_*t+\Lambda\left(\sum_{\mu_\alpha>0}\nu_\alpha+\nu_\infty\right)
=\Lambda.
\]
Equality throughout gives $\int|\nabla u|^p=c_*t$.
If $t>0$, normalization would produce an extremizer, already
excluded. Thus $t=0$, $u=0$, and
$\lambda_{\rm rem}=0$, $\lambda_\alpha=\Lambda\nu_\alpha$,
$\lambda_\infty=\Lambda\nu_\infty$. Every channel of cost strictly
larger than $\Lambda$ has zero mass, and
\begin{equation}\label{eq:threshold-equality-concentration}
\begin{aligned}
W|u_k|^p\,dx&\stackrel{*}{\rightharpoonup}
\sum_{\mu_\alpha>0}\nu_\alpha\delta_{a_\alpha},\\
|\nabla u_k|^p\,dx&\stackrel{*}{\rightharpoonup}
\Lambda\sum_{\mu_\alpha>0}\nu_\alpha\delta_{a_\alpha},\\
\sum_{\mu_\alpha>0}\nu_\alpha+\nu_\infty&=1.
\end{aligned}
\end{equation}
The weighted mass and energy lost at infinity are respectively
$\nu_\infty$ and $\Lambda\nu_\infty$. Since this applies to every
weakly convergent subsequence, the whole sequence converges weakly
to zero in $D^{1,p}$.
\end{proof}

\begin{remark}[Variational spectral threshold]
\label{cor:variational-spectral-threshold}
For $Q_\lambda[u]=\int|\nabla u|^p-\lambda\int W|u|^p$,
the definition of $c_*=C_*$ gives
\[
Q_\lambda[u]\ge(1-\lambda/c_*)\int|\nabla u|^p
\qquad(0\le\lambda<c_*).
\]
At $\lambda=c_*$ the form is nonnegative but not coercive, as a
normalized minimizing sequence shows. For $\lambda>c_*$, a quotient
below $\lambda$, followed by smooth approximation, gives a negative
direction in $C_c^\infty$. Whenever the infimum is attained, in
particular if $c_*<\Lambda$, Theorem~\ref{prop:compactness-threshold}
provides a positive eigenfunction and a critical optimal operator.
Moreover, testing any nonzero finite-energy solution of
$-\Delta_pu=\lambda W|u|^{p-2}u$ with $u$, justified by density and
the weighted embedding, gives $\lambda\ge c_*$. Thus an attained
$c_*$ is the lowest variational eigenvalue.
\end{remark}

\subsection{The sign of the remainder and coefficient one}

The concentration principle reduces the coefficient-one statements
to the sign of $R$. For later use, if $p\ne2$, $|R|$ is positive
on some ball avoiding the poles and $Z(v)$: choose two active poles.
Off their joining line, their two vectors $x-a_\alpha$ cannot both
be parallel to $v\ne0$. Since $v$ is a nontrivial analytic field,
one can choose such a point outside $Z(v)$, and continuity gives
the asserted ball.

\begin{theorem}[Critical sharpness for the truncated potential]\label{tmHSW1}
Let $2\le p<n$ and $\mu\in\Psi_{h/s}$. Then
\begin{equation}\label{mHSW1}
\int_{\mathbb R^n}|\nabla u|^p\,dx
\ge\int_{\mathbb R^n}W|u|^p\,dx
\qquad(u\in D^{1,p}(\mathbb R^n)).
\end{equation}
The coefficient one is optimal in the non-attainment cases
$s=h$ or $\max_\alpha s\mu_\alpha=h$.
\end{theorem}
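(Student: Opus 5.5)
The inequality \eqref{mHSW1} follows from the full Hardy inequality and the sign of the remainder. For $2\le p<n$ the last term $R$ in \eqref{def:full-weight} is nonnegative. This is clear because $T=\sum_\alpha\mu_\alpha(|v|^2|z_\alpha|^2-(v\cdot z_\alpha)^2)\ge0$ by Cauchy--Schwarz and $p-2\ge0$. Hence $0\le W\le V_{m,s,p}^{\mu}$ a.e. The nonnegativity of $W$ comes from Proposition~\ref{propVpos}'s computation, or directly from $(n-p)S-(p-1)s|v|^2\ge (n-p-(p-1)s)|v|^2\ge0$ using $S\ge|v|^2$. The inequality \eqref{MHp1} of Theorem~\ref{thmopt}(i) then gives \eqref{mHSW1} for $u\in D^{1,p}(\mathbb R^n)$. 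Note that the hypothesis $\mu\in\Psi_{h/s}$ is not even needed for this step. It matters only for optimality.

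For optimality, \eqref{mHSW1} already gives $C_*\ge1$, so it suffices to produce test functions whose $W$-quotient tends to $1$. I would use the logarithmic cut-off family $u_\epsilon=\phi w_\epsilon$ from the proof of criticality in Theorem~\ref{thmopt}, with $\phi=\phi_{m,s,\mu}$. There we have $\mathcal Q[u_\epsilon]\to0$, that is,
\[
\int|\nabla u_\epsilon|^p-\int V|u_\epsilon|^p\to0,
\]
while $\int|\nabla u_\epsilon|^p\ge C_A>0$. In the non-attainment cases the energy $\int|\nabla u_\epsilon|^p$ in fact diverges logarithmically, because $\phi\notin D^{1,p}$ and $u_\epsilon\to\phi$ with Fatou. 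The key claim is therefore
\[
\int R|u_\epsilon|^p=o\Bigl(\int|\nabla u_\epsilon|^p\Bigr).
\]
Once this holds, $\int W|u_\epsilon|^p=\int|\nabla u_\epsilon|^p(1-o(1))$ and the quotient tends to $1$.

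To prove the key claim, I would split $\mathbb R^n$ into neighborhoods of the \emph{critical} ends, a bounded middle region, and the remaining ends.
\begin{itemize}
\item The critical ends are infinity if $s=h$, and the poles with $s\mu_\alpha=h$.
\item On the middle region, and near non-critical ends, $|u_\epsilon|^p\le\phi^p$, and $R\phi^p$ is integrable there. Near a strict pole this holds since $R=O(|y|^{2-p})$ by Proposition~\ref{prop:polecompare}, so $R\phi^p=O(|y|^{2-p-ps\mu_\alpha})$ with $2-p-ps\mu_\alpha>-n$. At a strict infinity with $s>h$ it holds since $R=O(|x|^{-p-2})$ by Proposition~\ref{prop:inftycompare}. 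These contributions are therefore bounded independently of $\epsilon$.
\item At a critical end the divergence of the energy comes from $|\nabla\phi|^p\asymp r^{-n}$. The remainder is smaller by a power: near a critical pole $R\phi^p=O(r^{2-n})$, integrable, and at infinity with $s=h$, $R\phi^p=O(r^{-n-2})$, also integrable.
\end{itemize}
Hence $\int R|u_\epsilon|^p$ stays bounded, while $\int|\nabla u_\epsilon|^p\to\infty$.

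The main obstacle is confirming this divergence of $\int|\nabla u_\epsilon|^p$, or at least that it dominates the bounded remainder. I would argue via Fatou: $\liminf\int|\nabla u_\epsilon|^p\ge\int|\nabla\phi|^p=\infty$ by the attainability computation, since $\nabla u_\epsilon\to\nabla\phi$ a.e. If one prefers to avoid divergence entirely, the alternative is to show directly that $\int R|u_\epsilon|^p\to\int R\phi^p<\infty$ while the energy is unbounded. Both routes rest on the same end asymptotics of $R$, so I expect the argument to close.
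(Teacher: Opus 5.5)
Your proof is correct. The inequality step is the same as the paper's: $R\ge0$ for $p\ge2$, then the full Hardy inequality \eqref{MHp1}. The optimality part follows a genuinely different route.

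\textbf{The paper's route.} The paper never touches the ground state here. It notes that in the non-attainment cases one concentration coefficient equals $g(h)=H_p$: either $A_\infty=g(s)$ with $s=h$, or $A_\alpha=g(s\mu_\alpha)$ with $s\mu_\alpha=h$. Hence $\Lambda=1$. Lemma~\ref{lem:concentration-upper} then gives $1\le C_*\le\Lambda=1$; its rescaled annular Hardy near-minimizers concentrate at that single end and see only the leading term of $W$.

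\textbf{Your route.} You take the global null sequence $u_\epsilon=\phi w_\epsilon$ from the criticality proof and compare two quantities:
\begin{itemize}
\item the uniform bound $\int R|u_\epsilon|^p\le\int R\phi^p<\infty$;
\item the Fatou divergence $\int|\nabla u_\epsilon|^p\to\infty$, which holds because $\phi\notin D^{1,p}$.
\end{itemize}
Your end asymptotics are right: $R\phi^p=O(r^{2-n})$ at a critical pole, $O(r^{-n-2})$ at infinity, and integrable at strict ends. This route makes transparent why exactly the non-attainment cases are sharp. For $p>2$ on the attained part of $\Psi_{h/s}$, the same sequence converges in $D^{1,p}$ and only gives the limiting quotient $\int|\nabla\phi|^p/\bigl(\int|\nabla\phi|^p-\int R\phi^p\bigr)>1$.

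\textbf{What each buys.} The paper's argument is shorter and purely local. It needs neither criticality of the full operator nor integrability of $R\phi^p$. It yields $C_*=1$ as soon as one end has cost $H_p$, whatever the other weights are. Yours relies on $\mu\in\Psi_{h/s}$ through the null-sequence estimate. In exchange it produces an explicit minimizing sequence built from the full ground state.
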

\begin{proof}
For $p\ge2$, $R\ge0$, so the full Hardy inequality gives
$C_*\ge1$. In the stated non-attainment cases one concentration
coefficient equals $H_p$, hence $\Lambda=1$.
Lemma~\ref{lem:concentration-upper} gives $1\le C_*\le\Lambda=1$.
\end{proof}

\begin{theorem}[Strict multiplicative improvement in the strict part]
\label{tmHSW3}
Let $2<p<n$, $h<s\le s_{\max}$, and
$\mu\in(\Psi_{h/s})^\circ$. Then
\begin{equation}\label{strict-Cstar}
C_*(p,s,\mu)>1.
\end{equation}
\end{theorem}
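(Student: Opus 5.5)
We argue by contradiction and reduce to the attainment dichotomy of Theorem~\ref{prop:compactness-threshold}. Since $p>2$, $R\ge0$, so the full Hardy inequality \eqref{MHp1} gives $C_*\ge1$. Suppose $C_*=1$. In the strict part $s>h$, $s\mu_\alpha<h$ for all $\alpha$, the remark after \eqref{eq:threshold-at-least-one} gives $\Lambda(p,s,\mu)>1$. Hence $C_*=1<\Lambda$, and Theorem~\ref{prop:compactness-threshold} yields a nonnegative normalized minimizer $u_*\in D^{1,p}$. This minimizer is positive a.e. and satisfies
\[
\int|\nabla u_*|^p\,dx=\int W u_*^p\,dx .
\]

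Next we compare with the full inequality. Since $0\le R$, we have
\[
\int|\nabla u_*|^p-\int Vu_*^p=-\int R\,u_*^p\le0 .
\]
Combined with \eqref{MHp1}, this forces $\int R\,u_*^p=0$. It also shows that $u_*$ attains equality in the full Hardy inequality.

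From here there are two routes to a contradiction. The first uses attainment: by Theorem~\ref{thmopt}(iii), we are in the attainment region, so $u_*=c\phi_{m,s,\mu}$ with $c\neq0$. The second avoids identifying $u_*$: since $u_*>0$ a.e., the vanishing $\int R u_*^p=0$ already gives $R=0$ a.e. Either way, we need the observation preceding Theorem~\ref{tmHSW1}. For $p\ne2$, $|R|$ is positive on some ball avoiding the poles and $Z(v)$. On that ball $u_*>0$ a.e., or $\phi>0$ for the first route, so $\int R u_*^p>0$, a contradiction. Hence $C_*>1$.

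The main obstacle is making sure the identity $\int|\nabla u_*|^p=\int Vu_*^p-\int Ru_*^p$ is legitimate for $u_*\in D^{1,p}$, which requires $\int R u_*^p<\infty$. This follows from $0\le R\le V$ and the extension of \eqref{MHp1} to $D^{1,p}$ by density and Fatou's lemma, so every term is finite. A second check is that the strict-gap hypothesis genuinely holds, namely $\Lambda>1$ strictly. This follows because $g$ attains its maximum $H_p$ only at $h$, while $s\ne h$ and $s\mu_\alpha\ne h$ for every active pole. If one also tracks the case $s=s_{\max}$, where $g(s)=0$, the convention $H_p/0=+\infty$ handles infinity.
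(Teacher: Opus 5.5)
Your proposal is correct and follows the paper's proof: $C_*\ge1$ from $R\ge0$ and the full Hardy inequality, $\Lambda>1$ in the strict part, attainment by a positive minimizer via Theorem~\ref{prop:compactness-threshold} if $C_*=1$, and then a contradiction from $\int Ru_*^p>0$, using that $R$ is positive on a ball and $u_*>0$ a.e. Your first route, which identifies $u_*$ with $c\phi_{m,s,\mu}$ via Theorem~\ref{thmopt}(iii), is also valid but not needed.
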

\begin{proof}
All concentration coefficients are strictly below $H_p$, so
$\Lambda>1$. If $C_*=1$, Theorem~\ref{prop:compactness-threshold}
gives a positive normalized minimizer $u_*$. But $R\ge0$ and is
positive on a ball, while the full form is nonnegative. Hence
\[
0=\int|\nabla u_*|^p-\int Wu_*^p
=\left(\int|\nabla u_*|^p-\int Vu_*^p\right)+\int Ru_*^p>0,
\]
a contradiction. Together with $C_*\ge1$, this proves the claim.
\end{proof}

\begin{corollary}[Strictness of the explicit subquadratic comparison]
\label{cor:strict-subquad-comparison}
Under the assumptions of Theorem~\ref{tmHSW2},
\begin{equation}\label{eq:global-strict-subquad-bound}
c_0\le\underline C_p(s)<C_*(p,s,\mu)\le\Lambda(p,s,\mu).
\end{equation}
\end{corollary}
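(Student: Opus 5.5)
The chain $c_0\le\underline C_p(s)<C_*(p,s,\mu)\le\Lambda(p,s,\mu)$ splits into three inequalities; two are already available. The first, $c_0\le\underline C_p(s)$, is part of Theorem~\ref{tmHSW2}: it comes from $\tau_s\ge s$. The last, $C_*\le\Lambda$, is Lemma~\ref{lem:concentration-upper}. Theorem~\ref{tmHSW2} also gives $\underline C_p(s)\le C_*$, so only strictness of the middle inequality remains. I would prove it by splitting according to whether the infimum $C_*$ is attained, as in Theorem~\ref{tmHSW3}.

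\emph{Attained case.} Suppose $C_*=\underline C_p(s)$ is attained. By Theorem~\ref{prop:compactness-threshold}, which holds without the gap assumption once attainment is known, there is a normalized minimizer $u_*>0$ a.e. Using the identity \eqref{eq:subquad-comparison} and the full Hardy inequality at parameter $\tau_s\in[s,s_{\max}]$, I would write
\[
0=\int|\nabla u_*|^p-\underline C_p(s)\int Wu_*^p
=\Bigl(\int|\nabla u_*|^p-\int V_{\tau_s}u_*^p\Bigr)
+d\tau_s^{p-1}\int|v|^{p-4}(M-|v|^4)u_*^p .
\]
Both terms are nonnegative. It then suffices to show that $M-|v|^4>0$ on a set of positive measure. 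Equality in the weighted Cauchy--Schwarz step $M\ge|v|^4$ forces $v\cdot z_\alpha$ to be the same for all active $\alpha$. This is a nontrivial analytic condition, violated off a null set: near an active pole, $v\cdot z_\alpha\sim\mu_\alpha|y|^{-2}$ blows up, while $v\cdot z_\beta$ for $\beta\neq\alpha$ grows only like $|y|^{-1}$. Since $u_*>0$ a.e., the second term is positive, a contradiction. (For $s=s_{\max}$ we have $\tau_s=s$, and the argument is unchanged.)

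\emph{Non-attained case.} If the infimum is not attained, Theorem~\ref{prop:compactness-threshold} forces $C_*=\Lambda$; otherwise the strict gap would give attainment. It then suffices to show $\underline C_p(s)<\Lambda$, that is, $\underline C_p(s)<H_p/g(s\mu_\alpha)$ for each active pole and $\underline C_p(s)<H_p/g(s)$. These follow from $\underline C_p(s)<1\le H_p/g(t)$, because $g\le H_p$ everywhere; the strict bound $\underline C_p(s)<1$ is in Theorem~\ref{tmHSW2}. Thus in either case $\underline C_p(s)<C_*$.

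The only real obstacle is the measure-positivity of $M-|v|^4$. I would handle it with the pole asymptotics \eqref{poleexp-v}, which show that $M-|v|^4\gtrsim|y|^{-4}$ on a punctured neighbourhood of an active pole whenever at least two poles are active. That gives an open set of positive measure where the remainder is strictly positive, which is exactly what the attained case needs.
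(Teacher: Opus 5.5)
Your proposal is correct and is essentially the paper's argument. The paper skips your case split: it observes directly that $C_*=\underline C_p(s)<1\le\Lambda$ gives the strict gap and hence a positive minimizer, which is the contrapositive of your non-attained case. It then reaches the same contradiction from $M-|v|^4=\mu_\alpha^3(1-\mu_\alpha)r^{-4}+O(r^{-3})>0$ near an active pole together with the full Hardy inequality at $\tau_s$.
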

\begin{proof}
Only strictness requires proof. If $C_*=\underline C_p(s)<1\le\Lambda$,
Theorem~\ref{prop:compactness-threshold} supplies a positive normalized
minimizer $u_*$. At any active pole, the notation of
\eqref{eq:subquad-comparison} gives, for $r=|x-a_\alpha|\downarrow0$,
\[
M-|v|^4=\mu_\alpha^3(1-\mu_\alpha)r^{-4}+O(r^{-3})>0.
\]
Thus \eqref{eq:subquad-comparison} is strict on a set of positive
measure, and the full Hardy inequality implies
\[
C_*=\int|\nabla u_*|^p
\ge\int V_{\tau_s}u_*^p
>\underline C_p(s)\int W_su_*^p=\underline C_p(s),
\]
a contradiction. All integrals are finite by the full and truncated
Hardy inequalities.
\end{proof}
In particular, the explicit comparison form is coercive with constant
$1-\underline C_p(s)/C_*>0$, including at $s=s_{\max}$, where
$\underline C_p(s)=c_0$. By the Sobolev inequality it therefore admits
a nonzero compactly supported nonnegative improvement.

\begin{theorem}[Subquadratic attainment on the full critical set]
\label{thm:subquad-sharp}
Let $1<p<2$ and $\mu\in\Psi_{h/s}$. Then
\begin{equation}\label{eq:subquad-sharp-interval}
\underline C_p(s)<C_*(p,s,\mu)<1\le\Lambda(p,s,\mu).
\end{equation}
The infimum has a positive normalized minimizer satisfying
\eqref{eq:subquad-extremizer-equation}, normalized minimizing sequences
are relatively compact in $D^{1,p}$, and the optimally rescaled
operator is critical.
\end{theorem}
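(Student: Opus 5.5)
The plan is to prove the three inequalities in \eqref{eq:subquad-sharp-interval} separately, and then read off the remaining assertions from Theorem~\ref{prop:compactness-threshold}.

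\emph{The outer inequalities.} The lower bound $\underline C_p(s)<C_*$ is exactly Corollary~\ref{cor:strict-subquad-comparison}. The bound $\Lambda\ge1$ is \eqref{eq:threshold-at-least-one}.

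\emph{The strict inequality $C_*<1$.} This is the substantive step. Since $\mu\in\Psi_{h/s}$, Theorem~\ref{thmopt}(ii) says the full operator is critical with ground state $\phi=\phi_{m,s,\mu}$. I will use the logarithmic null family $u_\epsilon=\phi w_\epsilon\in C_c^\infty$ from the proof of criticality. It satisfies $\mathcal Q[u_\epsilon]\to0$, $u_\epsilon\to\phi$ a.e., $0\le u_\epsilon\le\phi$, and $u_\epsilon=\phi$ on the region where $w_\epsilon=1$. For $1<p<2$ we have $R\le0$. By the remark preceding Theorem~\ref{tmHSW1}, $|R|>0$ on some ball $B_0$ avoiding the poles and $Z(v)$. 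Write
\[
\int|\nabla u_\epsilon|^p-\int W|u_\epsilon|^p=\mathcal Q[u_\epsilon]+\int R|u_\epsilon|^p\le\mathcal Q[u_\epsilon]-\int_{B_0}|R|\phi^p
\]
for small $\epsilon$. The right-hand side is eventually $\le-c<0$. Hence the quotient $\int|\nabla u_\epsilon|^p/\int W|u_\epsilon|^p$ is strictly below one; note that $\int W|u_\epsilon|^p>0$ because the difference is negative. This gives $C_*<1$, with an explicit negative direction for $Q_1$. It does not even need the limit of the numerator, only that the numerator is positive.

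\emph{Attainment, compactness, criticality.} Since $C_*<1\le\Lambda$, the strict gap $C_*<\Lambda$ holds. Theorem~\ref{prop:compactness-threshold} then applies directly and yields four things:
\begin{itemize}
\item strong relative compactness of normalized minimizing sequences in $D^{1,p}$;
\item a minimizer, which can be taken nonnegative by replacing $u$ with $|u|$, since the quotient is unchanged;
\item positivity a.e. and the Euler--Lagrange equation \eqref{eq:subquad-extremizer-equation};
\item criticality of $-\Delta_p-C_*W|\cdot|^{p-2}\cdot$.
\end{itemize}

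\emph{Main obstacle.} The only delicate point is the strict inequality $C_*<1$. One must ensure that the negative contribution $\int R|u_\epsilon|^p$ stays bounded away from zero while $\mathcal Q[u_\epsilon]\to0$. This is guaranteed by the fixed ball $B_0$ on which $w_\epsilon\equiv1$ for small $\epsilon$. The critical case, including the boundary $\max_\alpha s\mu_\alpha=h$ or $s=h$, where $\phi\notin D^{1,p}$, is handled uniformly because only compactly supported tests are used.
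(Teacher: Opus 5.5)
Your proposal is correct and follows the paper's proof. There too, $C_*<1$ comes from criticality of the full form combined with $W-V=-R>0$ on a ball away from the singular sets; the lower bound is Corollary~\ref{cor:strict-subquad-comparison}, and the remaining assertions follow from Theorem~\ref{prop:compactness-threshold} via $C_*<1\le\Lambda$. The only cosmetic difference is that the paper applies the definition of criticality to a bump $0\le\eta\le W-V$, whereas you use the explicit null family $u_\epsilon=\phi w_\epsilon$, which equals $\phi$ on that ball for small $\epsilon$.
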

\begin{proof}
Here $W-V=-R\ge0$ and is positive on a ball away from the singular
sets. Choose a nonzero $\eta\in C_c^\infty$ supported there with
$0\le\eta\le W-V$. Criticality of the full form
(Theorem~\ref{thmopt}) gives a smooth compactly supported test $u$
such that
\[
\int|\nabla u|^p-\int V|u|^p
<\int\eta|u|^p\le\int(W-V)|u|^p.
\]
Thus $C_*<1\le\Lambda$. Theorem~\ref{prop:compactness-threshold}
gives attainment, compactness, and criticality; the strict lower
bound is Corollary~\ref{cor:strict-subquad-comparison}.
\end{proof}

This conclusion includes the boundary parameters at which the full
ground state has infinite energy. It characterizes $C_*$ variationally,
without asserting a closed formula for the coefficient or its extremizer.

\section{Parameter transitions and concentration phases}
\label{sec:parameter-phases}

We apply Section~\ref{sec:variational} to weight degeneration,
the transition through $p=2$, and the superquadratic concentration
and compactness phases. Pole configurations are arbitrary unless
otherwise specified.

\subsection{Multipolar degeneration and coexistence of variational regimes}
\label{sec:multipolar-regimes}
To prove Theorem~\ref{thm:multipolar-three-regimes}, we connect
$C_*<1$ on the full critical boundary to a coercive region near a
simplex vertex, using a one-pole limit and continuity in the weights.
The poles remain fixed: only the weights degenerate, staying
positive before the vertex is reached.

In this subsection, let $n\ge3$, $1<p<2$, and recall
\[
h=\frac{n-p}{p},\qquad H_p=h^p,\qquad
s_{\max}=\frac{n-p}{p-1},\qquad h\le s\le s_{\max}.
\]
We use the Hardy profile $g$ in \eqref{eq:hardy-profile}, whose
unique maximum on $[0,s_{\max}]$ is $g(h)=H_p$.
The next result allows arbitrary fixed positive proportions:
the ratio bound in Theorem~\ref{thm:multipolar-three-regimes}
is needed only to place its starting weights in the full critical set.

\begin{proposition}[Degeneration to a single pole]
\label{prop:multipolar-one-pole}
Fix distinct poles $a_1,\ldots,a_m$, with $m\ge2$, and any
index $i\in\{1,\ldots,m\}$. Let $\rho_\alpha^{(i)}>0$,
$\alpha\ne i$, be fixed proportions satisfying
$\sum_{\alpha\ne i}\rho_\alpha^{(i)}=1$. For $0<\varepsilon<1$, set
\[
\mu^\varepsilon_\alpha:=
\begin{cases}
1-\varepsilon,&\alpha=i,\\[1mm]
\varepsilon\rho_\alpha^{(i)},&\alpha\ne i,
\end{cases}
\qquad
W_\varepsilon:=W_{m,s,p}^{\mu^\varepsilon},\quad
C_\varepsilon:=C_*(p,s,\mu^\varepsilon).
\]
There exist constants $B>0$ and $\varepsilon_0\in(0,1)$,
depending on $n,p,s$, the poles, the chosen index $i$,
and the fixed proportions, such that
for $0<\varepsilon<\varepsilon_0$ and
$u\in D^{1,p}(\mathbb R^n)$,
\begin{equation}\label{eq:multipolar-vertex-form}
\int_{\mathbb R^n}
\left|W_\varepsilon-g(s)|x-a_i|^{-p}\right||u|^p\,dx
\le \frac{B}{H_p}\varepsilon^{p-1}
\int_{\mathbb R^n}|\nabla u|^p\,dx.
\end{equation}
In particular,
\begin{equation}\label{eq:multipolar-vertex-bounds}
\frac{H_p}{g(s)+B\varepsilon^{p-1}}
\le C_\varepsilon
\le\frac{H_p}{g(s(1-\varepsilon))}.
\end{equation}
If $s<s_{\max}$, then also $C_\varepsilon\le H_p/g(s)$.
Consequently,
\begin{equation}\label{eq:multipolar-vertex-limit}
\lim_{\varepsilon\downarrow0}C_\varepsilon
=
\begin{cases}
\displaystyle\frac{H_p}{s^{p-1}(n-p-(p-1)s)},
&h\le s<s_{\max},\\[2mm]
+\infty,&s=s_{\max}.
\end{cases}
\end{equation}
The finite limit is strictly larger than $1$ whenever $s>h$.
\end{proposition}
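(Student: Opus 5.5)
The plan is to compare $W_\varepsilon$ pointwise with the one-pole weight $g(s)|x-a_i|^{-p}$ and to feed the error into the classical Hardy inequality centered at each pole. Write $y=x-a_i$ and $z_\alpha=(x-a_\alpha)/|x-a_\alpha|^2$. Then
\[
v=z_i+\varepsilon\sum_{\alpha\ne i}\rho_\alpha^{(i)}(z_\alpha-z_i)=:z_i+\varepsilon e,
\qquad
S=|z_i|^2+\varepsilon\sum_{\alpha\ne i}\rho_\alpha^{(i)}\bigl(|z_\alpha|^2-|z_i|^2\bigr).
\]
At $\varepsilon=0$ we have $W_0=s^{p-1}|z_i|^{p-2}\bigl((n-p)-(p-1)s\bigr)|z_i|^2=g(s)|y|^{-p}$, by \eqref{eq:W-compact-form}. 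So the task is to bound $|W_\varepsilon-W_0|$ by $\varepsilon^{p-1}$ times a finite sum of Hardy weights $|x-a_\beta|^{-p}$. Once that is done, summing the Hardy inequalities $\int|x-a_\beta|^{-p}|u|^p\le H_p^{-1}\int|\nabla u|^p$ over $\beta$ gives \eqref{eq:multipolar-vertex-form}.

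For the pointwise bound I would split $\mathbb R^n$ into three pieces. The first is the good region, where $|\varepsilon e|\le\frac12|z_i|$. There $|v|\asymp|z_i|$, and since $p-2<0$ the mean value theorem gives
\[
\bigl||v|^{p-2}-|z_i|^{p-2}\bigr|\lesssim\varepsilon|e||z_i|^{p-3}.
\]
Also $|e|\lesssim|z_i|+\sum_\alpha|z_\alpha|$, and the $S$-correction is $\varepsilon\sum_\alpha\rho_\alpha^{(i)}\bigl||z_\alpha|^2-|z_i|^2\bigr|$. Away from $a_i$ and near infinity $|z_\alpha|\asymp|z_i|$, so the error there is $O(\varepsilon)|y|^{-p}$. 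Near the other poles $a_\alpha$ the terms are of size $\varepsilon|z_\alpha|^2|z_i|^{p-2}$, but in the good region $\varepsilon|z_\alpha|\lesssim|z_i|$ holds, so
\[
\varepsilon|z_\alpha|^2|z_i|^{p-2}\le\varepsilon^{p-1}|z_\alpha|^p(\varepsilon|z_\alpha|/|z_i|)^{2-p}\cdot C\lesssim\varepsilon^{p-1}|x-a_\alpha|^{-p}.
\]
The second piece is the bad region, which is small balls around $a_\alpha$, $\alpha\ne i$, of radius comparable to $\varepsilon$. Here I would not expand at all. I would use the crude bound $W_\varepsilon\le C s^{p-1}|v|^{p-2}S$, and there $|v|\asymp\varepsilon|z_\alpha|$ when $|x-a_\alpha|\ll\varepsilon$. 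The third piece is the intermediate zone $|x-a_\alpha|\sim\varepsilon$, which contains the zero set $Z(v)$.

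The intermediate zone is the main obstacle. The factor $|v|^{p-2}$ is singular on $Z(v)$, so no pointwise Hardy-weight bound is possible there, and a pointwise bound like the one above fails. I would handle it by rescaling around $a_\alpha$ at scale $\varepsilon$. Setting $x=a_\alpha+\varepsilon\xi$, the field becomes $\varepsilon^{-1}$ times a fixed two-pole-like field plus an $O(\varepsilon)$ error. The weight on that annulus is then $\varepsilon^{-p}\widetilde W(\xi)$ with $\widetilde W\lesssim|\tilde v|^{p-2}\,\widetilde S$, and it is integrable uniformly by the graph argument of Lemma~\ref{lem:weighted-local-compactness}. This yields a form bound $\int_{\text{annulus}}W_\varepsilon|u|^p\lesssim\varepsilon^{p-1}\int|\nabla u|^p$ via scaling, weighted local compactness/trace estimate \eqref{eq:zero-set-layer}, and Sobolev. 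The factor $\varepsilon^{p-1}$ appears because the mass prefactor $\mu_\alpha=\varepsilon\rho_\alpha$ enters $W$ as $(\varepsilon\rho)^{p-1}$ after scaling, in the same way that $A_\alpha=g(s\mu_\alpha)\sim(s\mu_\alpha)^{p-1}$.

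The remaining conclusions then follow in order. By \eqref{eq:multipolar-vertex-form},
\[
\int W_\varepsilon|u|^p\le(g(s)+B\varepsilon^{p-1})H_p^{-1}\int|\nabla u|^p,
\]
which is the lower bound in \eqref{eq:multipolar-vertex-bounds}. The upper bounds are Lemma~\ref{lem:concentration-upper}. Concentration at $a_i$ costs $H_p/g(s(1-\varepsilon))$, and concentration at infinity costs $H_p/g(s)$ when $g(s)>0$, that is when $s<s_{\max}$. Letting $\varepsilon\downarrow0$ and using continuity of $g$ gives \eqref{eq:multipolar-vertex-limit}. When $s=s_{\max}$ we have $g(s)=0$, so the lower bound blows up. Finally, for $s>h$ the limit exceeds $1$ because $g$ has its unique maximum $H_p$ at $h$.
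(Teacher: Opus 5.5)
Your decomposition works and reaches the same conclusions, but you handle the zero set of $v$ differently from the paper.

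\textbf{How the paper handles the inner ball.} The paper does not split $\{|x-a_\alpha|\le K\varepsilon\}$ into a ``bad region'' and an ``intermediate zone''. It writes $v=q+\varepsilon\rho_\alpha z_\alpha$, with $q$ bounded away from zero on a fixed ball $B_\delta(a_\alpha)$. It then uses the exact inversion identity $|v|=|q|\,|F|/|x-a_\alpha|$, where $F(x)=x-a_\alpha+\varepsilon\rho_\alpha q/|q|^2$ is a $C\varepsilon$-perturbation of the identity. A contraction argument shows that $v$ has exactly one zero $b_{\alpha,\varepsilon}$ in that ball, at distance $\asymp\varepsilon$ from $a_\alpha$, and that $|v|\asymp|x-b_{\alpha,\varepsilon}|/|x-a_\alpha|$. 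This yields the pointwise bound $W_\varepsilon\le C\varepsilon^{p-1}\bigl(|x-a_\alpha|^{-p}+|x-b_{\alpha,\varepsilon}|^{-p}\bigr)$ on the whole inner ball. Hardy's inequality centered at the moving point $b_{\alpha,\varepsilon}$, whose constant does not depend on the center, then closes the estimate.

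\textbf{Your remark about $Z(v)$ is mistaken.} You say no pointwise Hardy-weight bound is possible near $Z(v)$. In fact, near $a_\alpha$ the zero set is an isolated nondegenerate point, not a hypersurface. The resulting singularity $|x-b_{\alpha,\varepsilon}|^{p-2}$ is dominated by $\varepsilon^{2p-2}|x-b_{\alpha,\varepsilon}|^{-p}$ at distances $\lesssim\varepsilon$.

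\textbf{What your route costs.} Rescaling spares you from locating the zero. In exchange you need a local zero-set estimate that is uniform in $\varepsilon$. Lemma~\ref{lem:weighted-local-compactness} does not provide this, because it concerns a single fixed field. You would have to prove $C^1$ convergence of the rescaled field to $z_i(a_\alpha)+\rho_\alpha\xi/|\xi|^2$ on the annulus, and run the uniform-transversality box argument of Lemma~\ref{lem:weight-continuity}. Alternatively, use the nondegeneracy of the limiting zero together with H\"older and Sobolev. You also get only a Sobolev-type bound on that zone rather than a Hardy-type one; that is harmless for \eqref{eq:multipolar-vertex-form}.

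\textbf{Scaling bookkeeping.} As written, your scaling is inconsistent. On $|x-a_\alpha|\sim\varepsilon$ one has $v=z_i(a_\alpha)+\rho_\alpha\xi/|\xi|^2+O(\varepsilon)$, which is of order one, not $\varepsilon^{-1}$ times an $\varepsilon$-independent field. Correspondingly, $W_\varepsilon\approx\varepsilon^{-1}\Theta(\xi)$ for a fixed profile $\Theta$. The factor $\varepsilon^{p-1}$ is the product of $\varepsilon^{-1}$ (size of the weight), $\varepsilon^{n}$ (volume), and $\varepsilon^{p-n}$ (Sobolev scaling). With ``$\varepsilon^{-p}\widetilde W$'' and $\widetilde W$ genuinely fixed, you would only get a bound of order one.

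\textbf{Missing $W_0$ term.} On the small balls you bound $W_\varepsilon$ alone, but you need $|W_\varepsilon-W_0|$. There $W_0=g(s)|x-a_i|^{-p}$ is bounded, so it contributes only $O(\varepsilon^p)\int|\nabla u|^p$.
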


\begin{proof}
Relabeling the poles and the corresponding proportions, we may
assume that the distinguished index is $1$. Write
$\rho_\alpha:=\rho_\alpha^{(1)}$ for $\alpha\ge2$.
Set $r_i=|x-a_i|$, $z_i=(x-a_i)/r_i^2$, and
\[
v_\varepsilon=(1-\varepsilon)z_1+
\varepsilon\sum_{i=2}^m\rho_i z_i,\qquad
S_\varepsilon=(1-\varepsilon)r_1^{-2}+
\varepsilon\sum_{i=2}^m\rho_i r_i^{-2}.
\]
Then
\[
W_\varepsilon=s^{p-1}|v_\varepsilon|^{p-2}
\bigl((n-p)S_\varepsilon-(p-1)s|v_\varepsilon|^2\bigr).
\]
All estimates below hold a.e., uniformly for small $\varepsilon$
with the proportions fixed.

Choose $\delta>0$ so that the balls $B_\delta(a_i)$,
$i=2,\ldots,m$, are pairwise disjoint and avoid $a_1$.
In each such ball set
\[
q_{i,\varepsilon}=(1-\varepsilon)z_1+
\varepsilon\sum_{j\ne1,i}\rho_jz_j.
\]
After reducing $\delta$ and the upper bound on $\varepsilon$,
these vector fields satisfy
\[
0<c\le |q_{i,\varepsilon}|\le C,\qquad
|Dq_{i,\varepsilon}|\le C.
\]
Since $q_{i,\varepsilon}$ stays away from zero, the map
$q_{i,\varepsilon}/|q_{i,\varepsilon}|^2$ has a uniformly bounded
$C^1$ norm. Hence
\[
F_{i,\varepsilon}(x)
=x-a_i+\varepsilon\rho_i
\frac{q_{i,\varepsilon}(x)}{|q_{i,\varepsilon}(x)|^2}
\]
satisfies $\|DF_{i,\varepsilon}-I\|_{L^\infty}\le C\varepsilon$.
Integrating along segments in the convex ball $B_\delta(a_i)$ gives
\[
(1-C\varepsilon)|x-y|
\le |F_{i,\varepsilon}(x)-F_{i,\varepsilon}(y)|
\le(1+C\varepsilon)|x-y|.
\]
In particular, $F_{i,\varepsilon}$ is injective there for small
$\varepsilon$.

To locate its zero, set
$T_{i,\varepsilon}(x):=x-F_{i,\varepsilon}(x)$.
Choose $M>\max_{i\ge2}\rho_i/c$. For sufficiently small
$\varepsilon$, we have $M\varepsilon<\delta$, and
$T_{i,\varepsilon}$ maps
$\overline B_{M\varepsilon}(a_i)\subset B_\delta(a_i)$ into itself
and has Lipschitz constant at most $C\varepsilon<1$.
The contraction theorem gives a fixed point $b_{i,\varepsilon}$
with $|b_{i,\varepsilon}-a_i|\le M\varepsilon$; injectivity of
$F_{i,\varepsilon}$ makes it the unique zero in the whole ball.
Moreover,
$q_{i,\varepsilon}(b_{i,\varepsilon})=z_1(a_i)+O(\varepsilon)$,
so the fixed-point equation yields
\[
b_{i,\varepsilon}
=a_i-\varepsilon\rho_i
\frac{z_1(a_i)}{|z_1(a_i)|^2}+O(\varepsilon^2),\qquad
|b_{i,\varepsilon}-a_i|\asymp\varepsilon,
\]
the last assertion using $\rho_i>0$ and $z_1(a_i)\ne0$.

Taking $y=b_{i,\varepsilon}$ in the two-sided estimate and using
the exact identity
\[
|v_\varepsilon(x)|
=\frac{|q_{i,\varepsilon}(x)|}{r_i}
|F_{i,\varepsilon}(x)|
\]
gives
\begin{equation}\label{eq:multipolar-vertex-zero}
|v_\varepsilon(x)|\asymp
\frac{|x-b_{i,\varepsilon}|}{r_i}
\qquad (x\in B_\delta(a_i)\setminus\{a_i\}).
\end{equation}
In particular, almost everywhere in this ball,
\[
|v_\varepsilon(x)|^{p-2}
\asymp r_i^{\,2-p}|x-b_{i,\varepsilon}|^{p-2}.
\]
Fix a constant $K>2M$, independent of
$\varepsilon$. In the inner region $r_i\le K\varepsilon$,
we have $S_\varepsilon\le C\varepsilon r_i^{-2}$.
Since $W_\varepsilon\ge0$ and $p<2$,
\eqref{eq:multipolar-vertex-zero} gives
\begin{equation}\label{eq:multipolar-vertex-inner}
W_\varepsilon
\le C\varepsilon r_i^{-p}|x-b_{i,\varepsilon}|^{p-2}
\le C\varepsilon^{p-1}
\bigl(r_i^{-p}+|x-b_{i,\varepsilon}|^{-p}\bigr).
\end{equation}
For the second inequality, put
$d_i=|b_{i,\varepsilon}-a_i|\asymp\varepsilon$.
If $r_i\le d_i/2$, then $|x-b_{i,\varepsilon}|\ge d_i/2$,
which gives the first term on the right. Otherwise
$r_i\ge c\varepsilon$ and
$|x-b_{i,\varepsilon}|\le C\varepsilon$; using $2p-2>0$
gives the second term. The classical Hardy inequality with
centers $a_i$ and $b_{i,\varepsilon}$ thus bounds the integral
of $W_\varepsilon|u|^p$ over the inner region by
$C\varepsilon^{p-1}\int|\nabla u|^p$.
The limiting weight $W_0:=g(s)r_1^{-p}$ is bounded on
$B_\delta(a_i)$, and the Sobolev inequality bounds its
contribution on $B_{K\varepsilon}(a_i)$ by
$C\varepsilon^p\int|\nabla u|^p$.

On the outer annulus
$A_{i,\varepsilon}:=\{x:K\varepsilon\le r_i<\delta\}$,
the bound $|b_{i,\varepsilon}-a_i|\le M\varepsilon$ gives
$\frac12r_i\le|x-b_{i,\varepsilon}|\le\frac32r_i$.
Thus \eqref{eq:multipolar-vertex-zero} yields
$0<c\le|v_\varepsilon|\le C$; the same bounds hold for $|z_1|$
since $B_\delta(a_i)$ avoids $a_1$. Both $t^{p-2}$ and $t^p$
are therefore Lipschitz on the relevant fixed interval in
$(0,\infty)$, including the negative exponent $p-2$.
Using
\[
|v_\varepsilon-z_1|\le C\varepsilon(1+r_i^{-1}),\qquad
|S_\varepsilon-|z_1|^2|
\le C\varepsilon(1+r_i^{-2}),
\]
the formula for $W_\varepsilon$ gives
\[
\begin{aligned}
|W_\varepsilon-W_0|
&\le C\bigl(|S_\varepsilon-|z_1|^2|
             +|v_\varepsilon-z_1|\bigr)\\
&\le C\varepsilon(1+r_i^{-2})
\le C\varepsilon r_i^{-2},
\end{aligned}
\]
where the last step uses $r_i<\delta$. Since $2-p>0$,
\[
\varepsilon r_i^{-2}
=\varepsilon^{p-1}r_i^{-p}
 \left(\frac{\varepsilon}{r_i}\right)^{2-p}
\le K^{p-2}\varepsilon^{p-1}r_i^{-p}.
\]
Hardy's inequality centered at $a_i$ consequently yields
\[
\int_{A_{i,\varepsilon}}|W_\varepsilon-W_0||u|^p\,dx
\le C\varepsilon^{p-1}
\int_{\mathbb R^n}|\nabla u|^p\,dx,
\]
uniformly for $u\in D^{1,p}(\mathbb R^n)$.

On the complement of the fixed balls $B_\delta(a_i)$,
$i=2,\ldots,m$, we have, uniformly up to $a_1$ and infinity,
\[
r_1|v_\varepsilon-z_1|\le C\varepsilon,
\qquad r_1^2S_\varepsilon=1+O(\varepsilon).
\]
Substitution into the expression for $W_\varepsilon$ yields
\[
|W_\varepsilon-W_0|\le C\varepsilon r_1^{-p}.
\]
Hardy's inequality at $a_1$, summation, and density prove
\eqref{eq:multipolar-vertex-form}. Combining it with Hardy gives
the lower bound in \eqref{eq:multipolar-vertex-bounds};
Lemma~\ref{lem:concentration-upper} gives the upper bounds
at $a_1$ and infinity. These bounds imply
\eqref{eq:multipolar-vertex-limit}, including divergence when
$g(s_{\max})=0$. The finite limit exceeds one for $s>h$
because $g$ has its unique maximum at $h$.
\end{proof}

\paragraph{Quantitative consequences.}
For $s<s_{\max}$, the estimates give
\begin{equation}\label{eq:multipolar-vertex-error}
0\le\frac{H_p}{g(s)}-C_\varepsilon
\le
\frac{H_pB\varepsilon^{p-1}}
{g(s)\bigl(g(s)+B\varepsilon^{p-1}\bigr)}.
\end{equation}
Thus the error is $O(\varepsilon^{p-1})$ along each fixed choice
of proportions.
At $s=s_{\max}$, the bounds instead have orders
$\varepsilon^{-(p-1)}$ and $\varepsilon^{-1}$, since
$g(s_{\max}(1-\varepsilon))
=(n-p)s_{\max}^{p-1}(1-\varepsilon)^{p-1}\varepsilon$;
no precise divergence rate is asserted.
For $h<s\le s_{\max}$, the condition
\begin{equation}\label{eq:multipolar-coercive-neighborhood}
0<\varepsilon<
\min\left\{\varepsilon_0,\ 1-\frac hs,\
\left(\frac{H_p-g(s)}{B}\right)^{1/(p-1)}\right\}
\end{equation}
implies $s(1-\varepsilon)>h$ and $C_\varepsilon>1$.
The constants depend on the configuration and approach direction.
The full potential is subcritical by
Theorem~\ref{thmopt}, while the coefficient-one truncated form
satisfies
\begin{equation}\label{eq:multipolar-coercivity}
\int_{\mathbb R^n}|\nabla u|^p\,dx
-\int_{\mathbb R^n}W_\varepsilon|u|^p\,dx
\ge\left(1-\frac1{C_\varepsilon}\right)
\int_{\mathbb R^n}|\nabla u|^p\,dx.
\end{equation}
The Sobolev inequality turns this positive energy bound into a
nonzero nonnegative compactly supported improvement, proving
subcriticality of the truncated operator as well.

\begin{lemma}[Continuity in the multipolar weights]
\label{lem:weight-continuity}
Fix $n\ge3$, $1<p<2$, an admissible parameter $s$, and
$m\ge2$ distinct poles $a_1,\ldots,a_m$. Write
$W_\mu:=W_{m,s,p}^\mu$. The map
$\mu\mapsto C_*(p,s,\mu)$ is continuous on the open weight
simplex $\{\mu\in(0,1)^m:\sum_\alpha\mu_\alpha=1\}$.
More precisely, for every $\mu^0$ in this simplex,
\begin{equation}\label{eq:weight-form-continuity}
\sup_{\substack{u\in D^{1,p}(\mathbb R^n)\\
\int_{\mathbb R^n}|\nabla u|^p\,dx=1}}
\int_{\mathbb R^n}|W_\mu-W_{\mu^0}|\,|u|^p\,dx
\longrightarrow0
\qquad(\mu\to\mu^0).
\end{equation}
\end{lemma}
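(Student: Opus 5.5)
The plan is to prove the uniform form estimate \eqref{eq:weight-form-continuity} first. Continuity of $C_*$ then follows at once: $C_*(p,s,\mu)^{-1}=\sup\{\int W_\mu|u|^p:\int|\nabla u|^p=1\}$, so the reciprocals differ by at most the supremum in \eqref{eq:weight-form-continuity}. Since $0<C_*<\infty$ throughout, this gives continuity of $C_*$ itself.

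Fix $\mu^0$ in the open simplex; then all poles are active with $\mu^0_\alpha\ge 2c>0$, and we take $\mu$ with $|\mu-\mu^0|<c$. We split $\mathbb R^n$ into four kinds of region and estimate $\int|W_\mu-W_{\mu^0}||u|^p$ on each by $o(1)\int|\nabla u|^p$.
\begin{enumerate}
\item[(a)] \emph{Small pole balls} $B_r(a_\alpha)$. By the pole expansion in Proposition~\ref{prop:polecompare}, with constants uniform for $\mu$ near $\mu^0$ since $\mu_\alpha\ge c$, we have
\[
|W_\mu-W_{\mu^0}|\le |g(s\mu_\alpha)-g(s\mu^0_\alpha)|\,|y|^{-p}+C|y|^{1-p}.
\]
Hardy's inequality at $a_\alpha$ bounds the contribution by $\bigl(o(1)+Cr\bigr)\int|\nabla u|^p$.
\item[(b)] \emph{Exterior} $|x|>L$. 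The infinity expansion of Proposition~\ref{prop:inftycompare} gives $W_\mu=g(s)|x|^{-p}+O(|x|^{-p-2})$ uniformly. The barycenter depends on $\mu$, but an $O(|\mu-\mu^0|)$ shift only changes the $|x|^{-p}$ coefficient by $O(|\mu-\mu^0|)|x|^{-p-1}$. Hardy at $0$ therefore gives a bound $(o(1)+CL^{-1})\int|\nabla u|^p$.
\item[(c)] \emph{The compact remainder $K$ away from $Z(v_{\mu^0})$.} There $v_\mu\to v_{\mu^0}$ and $S_\mu\to S_{\mu^0}$ uniformly, and $|v_{\mu^0}|$ is bounded below, so $W_\mu\to W_{\mu^0}$ uniformly. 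Hölder with the Sobolev inequality bounds $\int_K|u|^p$ by $C_K\|\nabla u\|_p^p$, which gives $o(1)$.
\item[(d)] \emph{Tubular neighborhoods of $Z(v_{\mu^0})\cap K$} (the case $p<2$). This is the main obstacle, treated next.
\end{enumerate}

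For region (d), repeat the chart argument of Lemma~\ref{lem:weighted-local-compactness}, but uniformly in $\mu$. At $x_0\in Z(v_{\mu^0})$ some component $f_\mu=(v_\mu)_j$ has $|\nabla f_\mu|\ge c_1$ on a fixed cylinder $Q$ for all $\mu$ near $\mu^0$, because $v_\mu\to v_{\mu^0}$ in $C^1$ there. Its zero set is then a graph $t=\psi_\mu(y)$ that moves continuously with $\mu$. This yields, on $Q$,
\[
W_\mu\le C\,|x_n-\psi_\mu(x')|^{p-2},
\]
with $C$ uniform in $\mu$.

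The slice Sobolev estimate \eqref{eq:zero-set-layer}, applied with the moving center $\psi_\mu$ (it only needs $\sup_t|\zeta|$ on slices), bounds the integral over $\{|x_n-\psi_\mu|<\delta\}$ by $C\delta^{p-1}\|u\|_{W^{1,p}(Q)}^p$. The $L^p(Q)$ part of this norm is again controlled by Sobolev, so the bound is $C\delta^{p-1}\int|\nabla u|^p$. The same holds for $W_{\mu^0}$. Choosing $\delta$ small makes both layer contributions small uniformly. Outside the two $\delta$-layers, both $|v_\mu|$ and $|v_{\mu^0}|$ are bounded below for $\mu$ close to $\mu^0$, so uniform convergence of the weights returns us to case (c).

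To conclude, fix $\epsilon>0$. First choose $r$, $L^{-1}$ and $\delta$ small so that the errors not vanishing with $\mu$ (namely $Cr$ in (a), $CL^{-1}$ in (b), and the $\delta^{p-1}$ layer terms in (d)) total at most $\epsilon$. Then take $\mu$ close to $\mu^0$ to make the $o(1)$ terms small. This proves \eqref{eq:weight-form-continuity}, and hence continuity of $\mu\mapsto C_*(p,s,\mu)$ on the open simplex.
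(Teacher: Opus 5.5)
Your proposal is correct and follows essentially the same route as the paper: Hardy's inequality at each pole and at infinity for the singular ends, a uniform thin-layer slice estimate near $Z(v_{\mu^0})$ based on $W_\mu\le C|v_\mu|^{p-2}$, uniform convergence of the weights on the rest of the compact set, and the reciprocal formula $1/C_*=\sup\int W_\mu|u|^p$. The differences are cosmetic. The paper avoids your barycentric shift by using the uniform far-field bound $W_\mu=g(s)|x|^{-p}+O(|x|^{-p-1})$. Near the zero set it works with the fixed layer $\{|f_{\mu^0}|<\eta\}\subset\{|f_\mu|<3\eta/2\}$ and a monotone change of variables, rather than with a moving graph $\psi_\mu$.
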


\begin{proof}
Restrict $\mu$ to a compact neighborhood of $\mu^0$ in the
open simplex. The pole coefficients
$A_\alpha(\mu)=g(s\mu_\alpha)$ are continuous.
The expansions underlying
Propositions~\ref{prop:polecompare} and~\ref{prop:inftycompare}
give, uniformly in this parameter neighborhood,
\[
\begin{aligned}
W_\mu(x)&=A_\alpha(\mu)|x-a_\alpha|^{-p}
+O(|x-a_\alpha|^{1-p})&& (x\to a_\alpha),\\
W_\mu(x)&=g(s)|x|^{-p}+O(|x|^{-p-1})&& (|x|\to\infty).
\end{aligned}
\]
These expansions are uniform because the weights stay positive
and the poles are fixed; no barycentric normalization is needed.
For sufficiently small $\delta>0$, the classical Hardy inequality
therefore gives, at each pole,
\begin{align*}
&\int_{B_\delta(a_\alpha)}|W_\mu-W_{\mu^0}|\,|u|^p\,dx\\
&\quad\le
\frac{|A_\alpha(\mu)-A_\alpha(\mu^0)|+C\delta}{H_p}
\int_{\mathbb R^n}|\nabla u|^p\,dx.
\end{align*}
The leading far-field coefficient is independent of $\mu$.
Hence, for large $R$,
\[
\int_{|x|>R}|W_\mu-W_{\mu^0}|\,|u|^p\,dx
\le\frac{C}{H_pR}
\int_{\mathbb R^n}|\nabla u|^p\,dx.
\]
It remains to control the compact set
$K=\overline{B_R(0)}\setminus\bigcup_{\alpha=1}^m
B_\delta(a_\alpha)$, including the moving zero set of the vector
field. To display its dependence on the weights, write
\[
v_\mu(x)=\sum_\alpha\mu_\alpha
\frac{x-a_\alpha}{|x-a_\alpha|^2},
\qquad
S_\mu(x)=\sum_\alpha\frac{\mu_\alpha}{|x-a_\alpha|^2}.
\]
At every $x_0\in K$ with $v_{\mu^0}(x_0)=0$,
\[
\operatorname{tr}Dv_{\mu^0}(x_0)
=(n-2)S_{\mu^0}(x_0)>0.
\]
Thus some component $f_{\mu^0}=(v_{\mu^0})_i$ satisfies
$\partial_i f_{\mu^0}(x_0)>0$. Choose a fixed rectangular
box $Q=Q'\times I$ around $x_0$, with closure avoiding the
poles, and use $t$ for its $i$th coordinate. After shrinking
the box and the parameter neighborhood, we have
\[
\partial_t f_\mu\ge c>0\quad\text{on }Q,
\qquad f_\mu:=(v_\mu)_i,
\]
uniformly in $\mu$. Moreover, \eqref{eq:W-compact-form}
and $p<2$ give, almost everywhere on $Q$,
\[
0\le W_\mu\le C|v_\mu|^{p-2}\le C|f_\mu|^{p-2}.
\]
For almost every $y\in Q'$, the change of variable
$q=f_\mu(y,t)$ is monotone. Consequently,
\begin{align*}
&\int_{I\cap\{|f_\mu(y,t)|<\eta\}}
|f_\mu(y,t)|^{p-2}|u(y,t)|^p\,dt\\
&\quad\le c^{-1}\sup_{t\in I}|u(y,t)|^p
\int_{-\eta}^{\eta}|q|^{p-2}\,dq
\le C\eta^{p-1}\sup_{t\in I}|u(y,t)|^p.
\end{align*}
The one-dimensional Sobolev inequality on $I$, followed by
integration over $Q'$, yields the uniform thin-layer bound
\begin{equation}\label{eq:weight-uniform-layer}
\int_{Q\cap\{|f_\mu|<\eta\}}W_\mu|u|^p\,dx
\le C\eta^{p-1}\|u\|_{W^{1,p}(Q)}^p.
\end{equation}

To compare both weights on the same set, fix
$E_\eta=Q\cap\{|f_{\mu^0}|<\eta\}$.
For $\mu$ sufficiently close to $\mu^0$,
$\|f_\mu-f_{\mu^0}\|_{L^\infty(Q)}<\eta/2$, so
$E_\eta\subset\{|f_\mu|<3\eta/2\}$ and
\[
\int_{E_\eta}(W_\mu+W_{\mu^0})|u|^p\,dx
\le C\eta^{p-1}\|u\|_{W^{1,p}(Q)}^p.
\]
On $Q\setminus E_\eta$, the reverse triangle inequality gives
\[
|v_\mu|\ge|f_\mu|
\ge|f_{\mu^0}|-|f_\mu-f_{\mu^0}|\ge\eta/2.
\]
For each fixed $\eta>0$, this lower bound keeps the factor
$|v_\mu|^{p-2}$ away from its singularity. Since $v_\mu$ and
$S_\mu$ converge uniformly on $Q$, \eqref{eq:W-compact-form}
therefore yields $W_\mu\to W_{\mu^0}$ uniformly on
$Q\setminus E_\eta$.
Cover $K\cap Z(v_{\mu^0})$ by finitely many smaller boxes
whose closures lie in boxes of the preceding type. The remaining
compact subset of $K$ avoids $Z(v_{\mu^0})$, and the weights
converge uniformly on that set as well.
Global Sobolev controls each $\|u\|_{W^{1,p}(Q)}^p$ by the
Dirichlet energy. The finite covering and
\eqref{eq:weight-uniform-layer}, first with $\mu\to\mu^0$
and then $\eta\downarrow0$, therefore give uniform form
convergence on $K$, without any nondegeneracy assumption on
the zeros. Combining this with the pole and tail estimates,
then sending $\delta\downarrow0$ and $R\to\infty$, proves
\eqref{eq:weight-form-continuity}.
Finally,
\[
\frac1{C_*(p,s,\mu)}=
\sup_{\substack{u\in D^{1,p}(\mathbb R^n)\\
\int_{\mathbb R^n}|\nabla u|^p\,dx=1}}
\int_{\mathbb R^n}W_\mu|u|^p\,dx.
\]
The form estimate proves continuity of this positive finite
quantity, and hence of $C_*(p,s,\mu)$.
\end{proof}

\begin{proof}[Proof of Theorem~\ref{thm:multipolar-three-regimes}]
Fix an index $i$. Relabeling the poles, we may suppose $i=1$;
this changes no geometric assumption. Write
$\rho_j:=\rho_j^{(1)}$ for $j\ge2$, and abbreviate
$\mu(t)=\mu^{(1)}(t)$, $C(t)=C_1(t)$, and $Q_t=Q_{1,t}$.
At $t_0=h/s$, all pole exponents are at most $h$:
\[
s\mu_1(t_0)=h,\qquad
s\mu_j(t_0)=(s-h)\rho_j\le h\quad(j\ge2).
\]
The last inequality is the assumed ratio bound. Thus $\mu(t_0)$
belongs to the full critical set, and
Theorem~\ref{thm:subquad-sharp} gives $C(t_0)<1$.
All components of $\mu(t_0)$ are positive, so
Lemma~\ref{lem:weight-continuity} applies at this parameter
and along the path. It follows that $C(t)<1$ for
$t\in(t_0,t_0+\eta_-)$ with some $\eta_-\in(0,1-t_0)$.
Remark~\ref{cor:variational-spectral-threshold} then provides
a test function in $C_c^\infty(\mathbb R^n)$ with $Q_t[u]<0$
at every such parameter.

At the other end, apply Proposition~\ref{prop:multipolar-one-pole}
with $\varepsilon=1-t$, $\mu^\varepsilon=\mu(t)$, and the fixed
proportions $\rho_j$. This yields
\[
\lim_{t\uparrow1}C(t)=
\begin{cases}
H_p/g(s)>1,&s<s_{\max},\\
+\infty,&s=s_{\max}.
\end{cases}
\]
Hence $C(t)>1$ for $t\in(1-\eta_+,1)$ with some
$\eta_+>0$. Estimate \eqref{eq:multipolar-coercivity} and its
Sobolev consequence give coercivity and subcriticality.

For every $t\in(t_0,1)$,
\[
s\mu_1(t)=st>h,\qquad
0<s\mu_j(t)=s(1-t)\rho_j
<(s-h)\rho_j\le h\quad(j\ge2).
\]
Thus the full operator is subcritical by Theorem~\ref{thmopt}.
Every pole exponent lies in $(0,s_{\max})$ and differs from
$h$, while $s\in(h,s_{\max}]$. Consequently,
\[
g(s\mu_\alpha(t))<H_p\quad(1\le\alpha\le m),
\qquad g(s)<H_p,
\]
and hence $\Lambda(p,s,\mu(t))>1$. This also covers
$s=s_{\max}$, when the far-field quotient is $+\infty$.

By continuity, there exists $t_c\in(t_0,1)$ with $C(t_c)=1$.
Every such parameter satisfies
\[
C(t_c)=1<\Lambda(p,s,\mu(t_c)).
\]
Theorem~\ref{prop:compactness-threshold} supplies a positive
normalized minimizer, its weak equation, relative compactness
of normalized minimizing sequences in $D^{1,p}(\mathbb R^n)$,
and criticality of the coefficient-one operator.

Since $s\le mh$, the equal-ratio choice
$\rho_\alpha^{(i)}=1/(m-1)$ is admissible for every $i$,
including at $s=mh$. The argument applies separately to each
index and any admissible ratios; neither the transition parameters
nor the endpoint neighborhoods need agree for different choices.
Along the $i$th path with $t>t_0$, the
unique exponent exceeding $h$ is $s\mu_i^{(i)}(t)$. Thus critical
weights chosen from distinct paths cannot coincide, proving the
last assertion.
\end{proof}

The theorem gives coexistence, not monotonicity or uniqueness of
the crossing, nor distinct connected components for the selected
paths. The assumption $s\le mh$ supplies the nonempty full critical
set; it is not claimed necessary for coexistence. It covers every
admissible $s>h$ when $m\ge p/(p-1)$, but the case $mh<s\le s_{\max}$
and general weights with several strict poles remain open.

\subsection{Transition through the quadratic exponent}
Write $W_p:=W_{m,s,p}^{\mu}$, keeping $s$, $\mu$, and the poles fixed.

\begin{proof}[Proof of Theorem~\ref{thm:attainment-near-two}]
All inequalities in \eqref{eq:quadratic-strict-assumptions} are
strict, so the asserted admissibility conditions persist for
$p$ sufficiently close to $2$ on either side.
At $p=2$, the remainder vanishes and $W_2=V_{m,s,2}^{\mu}$.
By Theorem~\ref{thmopt}, the ground state
\[
\phi(x)=\prod_{\alpha=1}^m|x-a_\alpha|^{-s\mu_\alpha}
\]
belongs to $D^{1,2}(\mathbb R^n)$ and attains the full-potential
inequality. Hence $C_*(2,s,\mu)=1$.

Let $\delta>0$. We may choose a fixed
$f\in C_c^\infty(\Omega)$, with
$\Omega=\mathbb R^n\setminus\{a_1,\ldots,a_m\}$, such that
\[
F_f(2):=
\frac{\int_{\mathbb R^n}|\nabla f|^2\,dx}
{\int_{\mathbb R^n}W_2|f|^2\,dx}<1+\delta.
\]
To justify this choice, finite sets have zero $2$-capacity in
$\mathbb R^n$ for $n\ge3$, so $C_c^\infty(\Omega)$ is dense in
$D^{1,2}(\mathbb R^n)$. The Hardy inequality with weight $W_2$
also gives convergence in $L^2(W_2\,dx)$ for such approximations
of $\phi$.

Set $K:=\operatorname{supp}f\Subset\Omega$. On $K$, all pole
factors in the formula for $W_p$ are bounded. The vector field
$v$ is real analytic on the connected set $\Omega$ and
is not identically zero by its far-field expansion
\eqref{inftyexp-v}; hence $Z(v)$ has measure zero.
Consequently,
\[
|v(x)|^{p-2}\longrightarrow1
\quad\text{for a.e. }x\in K\quad\text{as }p\downarrow2.
\]
For any fixed sufficiently small $\eta>0$ and $2<p\le2+\eta$,
\[
|v(x)|^{p-2}
\le\max\{1,\|v\|_{L^\infty(K)}^\eta\}
\quad (x\in K).
\]
The remaining coefficients in $W_p$ depend continuously on $p$.
Dominated convergence therefore yields
\[
F_f(p):=
\frac{\int_{\mathbb R^n}|\nabla f|^p\,dx}
{\int_{\mathbb R^n}W_p|f|^p\,dx}
\longrightarrow F_f(2)\qquad(p\downarrow2),
\]
with a positive denominator for $p$ sufficiently close to $2$.
Since $R\ge0$ and $V=W_p+R$ for admissible $p\ge2$, the full
Hardy inequality gives $C_*(p,s,\mu)\ge1$. Thus
\[
1\le\liminf_{p\downarrow2}C_*(p,s,\mu)
\le\limsup_{p\downarrow2}C_*(p,s,\mu)
\le F_f(2)<1+\delta.
\]
Letting $\delta\downarrow0$ proves the right-hand limit.
On the other hand, for $p<2$ sufficiently close to $2$,
Theorem~\ref{thm:subquad-sharp} gives
\[
\frac{n+p-4}{n-p}<C_*(p,s,\mu)<1.
\]
The explicit lower bound tends to $1$ as $p\uparrow2$.
This proves the left-hand limit and hence
\eqref{eq:Cstar-limit-two}.

By \eqref{eq:quadratic-strict-assumptions},
$0<A_\infty(2,s)<H_2$ and
$0<A_\alpha(2,s,\mu)<H_2$ at every active pole.
All denominators in \eqref{def:concentration-threshold} are
therefore positive near $p=2$, and continuity gives
\[
\Lambda(p,s,\mu)\longrightarrow\Lambda(2,s,\mu)>1.
\]
Together with \eqref{eq:Cstar-limit-two}, this implies
$C_*(p,s,\mu)<\Lambda(p,s,\mu)$ for $p$ sufficiently close
to $2$ on either side. The comparison with $1$ follows from
Theorems~\ref{thm:subquad-sharp} and~\ref{tmHSW3}.
The positive eigenfunction and criticality assertions follow
directly from Theorem~\ref{prop:compactness-threshold}.
\end{proof}

For example, any distinct pair of poles with
$\mu_1=\mu_2=1/2$ and $s=3(n-2)/4$ satisfies
\eqref{eq:quadratic-strict-assumptions}, with
$\Lambda(2,s,\mu)=\min\{4/3,16/15\}=16/15$.
We only prove that the result holds for $p$ sufficiently close to $2$;
we do not give an explicit size of this neighborhood, nor do we prove
that the corresponding minimizers vary continuously with $p$.

\subsection{Superquadratic concentration and compactness phases}
\label{subsec:superquadratic-phases}

The improvement $C_*>1$ need not give a concentration gap.
An auxiliary variational constant locates an infinity-concentration
interval inside the full-potential attainment region and,
under strict separation, the compact phase following it.

Unless otherwise specified, throughout the remainder of this
section we fix $2<p<n$, $\mu\in\Psi$, and the pole configuration.
We abbreviate
\[
\begin{gathered}
H:=h^p,\qquad W_s:=W_{m,s,p}^{\mu},\qquad
R_s:=V_{m,s,p}^{\mu}-W_s,\\
\phi_s:=\prod_\alpha|x-a_\alpha|^{-s\mu_\alpha}.
\end{gathered}
\]
Recall the interaction weight
$G=|v|^{p-2}(S-|v|^2)$ from \eqref{eq:interaction-kernel}.
The variance identity gives
\[
S-|v|^2=\sum_{\alpha<\beta}
\frac{\mu_\alpha\mu_\beta|a_\alpha-a_\beta|^2}
{|x-a_\alpha|^2|x-a_\beta|^2}>0
\]
away from the active poles. Since $v$ is real analytic and not
identically zero, $G>0$ a.e. The truncated potential decomposes as
\begin{equation}\label{eq:superquad-two-kernels}
W_s=g(s)|v|^p+(n-p)s^{p-1}G,\qquad
W_h=H|v|^p+pHG.
\end{equation}
All unqualified integrals below are over $\mathbb R^n$.
We use the auxiliary form $Q_h$ and constant $\kappa_*$
defined in \eqref{eq:auxiliary-kappa}.

\begin{proposition}[An anchored interaction improvement]
\label{prop:positive-kappa}
The auxiliary constant satisfies
\begin{equation}\label{eq:kappa-gs-bounds}
0<\kappa_*<\kappa_{\mathrm{gs}}
:=\frac{\int R_h\phi_h^p\,dx}{\int G\phi_h^p\,dx}
<2(p-2)h^{p-1}.
\end{equation}
Both integrals defining $\kappa_{\mathrm{gs}}$ are finite and
strictly positive.
\end{proposition}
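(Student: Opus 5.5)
The plan has four steps: (a) finiteness and positivity of the two integrals defining $\kappa_{\mathrm{gs}}$; (b) the upper bound $\kappa_{\mathrm{gs}}<2(p-2)h^{p-1}$; (c) the strict inequality $\kappa_*<\kappa_{\mathrm{gs}}$ by a perturbed ground-state test; (d) $\kappa_*>0$ by contradiction. Step (d) is the hard one.

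\emph{Step (a).} I would use the asymptotics of Propositions~\ref{prop:polecompare} and~\ref{prop:inftycompare} at $s=h$. Near an active pole, $S-|v|^2\sim\mu_\alpha(1-\mu_\alpha)r^{-2}$ and $|v|^{p-2}\sim r^{2-p}$, so $G\asymp r^{-p}$. Also $R_h=O(r^{2-p})$ and $\phi_h^p\asymp r^{-(n-p)\mu_\alpha}$. The integrands are therefore $O(r^{-p-(n-p)\mu_\alpha})$, which is integrable because $\mu_\alpha<1$. At infinity, $\phi_h^p\asymp|x|^{-(n-p)}$ and both $G$ and $R_h$ are $O(|x|^{-p-2})$, so the integrands are $O(|x|^{-n-2})$. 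On compact sets away from the poles, local integrability follows from Lemma~\ref{lem:Vloc}. Positivity holds because $G>0$ a.e., while $R_h\ge0$ is positive on a ball (the remark before Theorem~\ref{tmHSW1}).

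\emph{Step (b).} In the proof of Proposition~\ref{propVpos}, Jensen gave $T\le|v|^2P$. Hence
\[
R_h=2(p-2)h^{p-1}|v|^{p-4}T\le 2(p-2)h^{p-1}G .
\]
Near a pole the inequality is strict, since $R_h/G\to0$ there. Dividing the integrals therefore gives the strict inequality $\kappa_{\mathrm{gs}}<2(p-2)h^{p-1}$.

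\emph{Step (c).} Since $s=h$, every $\mu\in\Psi$ is critical by Theorem~\ref{thmopt}. I would take the logarithmic cut-offs $w_\epsilon$ from the criticality proof and the tests $u=\phi_h w_\epsilon(1+\lambda\psi)$, where $\psi\ge0$ is a fixed bump supported in a small ball near an active pole on which $R_h<\tfrac12\kappa_{\mathrm{gs}}G$. By Lemma~\ref{lem:GSextension},
\[
Q_h[u]=\int C_p(\nabla u,\phi_h\nabla w)+\int R_h|u|^p .
\]
The upper bound \eqref{REgeq2} shows that the $C_p$ term is $o(1)+O(\lambda^2)$ as $\epsilon\to0$: the cut-off part vanishes exactly as in the criticality proof, and $\nabla\psi$ is bounded on a fixed compact set away from the poles. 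By step (a) and dominated convergence, the quotient tends to
\[
\frac{\int R_h\phi_h^p(1+\lambda\psi)^p+O(\lambda^2)}{\int G\phi_h^p(1+\lambda\psi)^p}.
\]
Its derivative at $\lambda=0$ is a positive multiple of $\int(R_h-\kappa_{\mathrm{gs}}G)\phi_h^p\psi<0$. Taking a small $\lambda>0$ and then a small $\epsilon$ gives $\kappa_*<\kappa_{\mathrm{gs}}$. These tests lie in $D^{1,p}$ because $w_\epsilon$ vanishes near the poles and at infinity.

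\emph{Step (d), the main obstacle.} I would argue by contradiction. Suppose there are $u_k\ge0$, smooth by density, with $\int G u_k^p=1$ and $Q_h[u_k]\to0$. Write $w_k=u_k/\phi_h$. The localized remainder $\mathcal R_D\le Q_h[u_k]\to0$ controls $\int_D\phi_h^p|\nabla w_k|^p$ via \eqref{REgeq2}.

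First, the $G$-mass cannot accumulate near an active pole or at infinity. Near a pole, $G\le Cr^{-p}$ and the weight $\phi_h^p\asymp r^{-(n-p)\mu_\alpha}$ has the strict exponent $(n-p)(1-\mu_\alpha)>0$. A weighted Hardy inequality on a cut-off ball then bounds $\int_{B_\delta}G u_k^p$ by $C\bigl(\int\phi_h^p|\nabla w_k|^p+\text{annulus terms}\bigr)$. At infinity, $G$ and $R_h$ are both of order $|x|^{-p-2}$, and I would compare them directly. I expect both $\int_{|x|>L}Gu_k^p\lesssim\int R_hu_k^p\le Q_h[u_k]$ and the pole estimate to need checking.

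Thus a fixed fraction of the $G$-mass stays on some compact set $K$ away from the poles. The rigidity argument of Proposition~\ref{thmNCnonlin}, in its simpler $p\ge2$ version with anchored Poincaré, then gives $w_k\to c$ in $L^p_{\rm loc}$. If $c>0$, then $\liminf\int_U R_h u_k^p\ge c^p\int_U R_h\phi_h^p>0$ on a ball $U$ where $R_h>0$, contradicting $Q_h[u_k]\to0$. If $c=0$, the $G$-mass on $K$ tends to zero, which contradicts the choice of $K$. Hence $\kappa_*>0$.
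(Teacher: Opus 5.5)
Your steps (a)--(c) are essentially the paper's argument. They use the same end asymptotics for finiteness, the Jensen bound $T\le|v|^2P$ with strictness near a pole where $R_h/G=O(r^2)$, and the same perturbed null-cutoff test $\phi_h\chi_\varepsilon(1+t\psi)$ with a negative first variation. The problem is in step (d), at exactly the tail estimate you flagged.

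\textbf{The tail comparison fails.} The inequality $\int_{|x|>L}G|u|^p\lesssim\int R_h|u|^p$ is false in general. Suppose the active poles are collinear, which includes every bipolar configuration. On the line $\ell$ through them, all vectors $x-a_\alpha$, and hence $v$, are parallel. So $v\wedge z_\alpha=0$ and $R_h\equiv0$ on $\ell$. On the other hand,
\[
G=|v|^{p-2}\sum_{\alpha<\beta}\frac{\mu_\alpha\mu_\beta|a_\alpha-a_\beta|^2}{|x-a_\alpha|^2|x-a_\beta|^2}>0
\]
there. Quantitatively, after barycentric translation one has $R_h\sim2(p-2)h^{p-1}|x|^{-p-2}\sum_\alpha\mu_\alpha|\omega\wedge a_\alpha|^2$. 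Its angular coefficient vanishes in the axis directions, whereas $G\asymp|x|^{-p-2}$ uniformly. Functions supported in thin far-out cones around $\ell$ make $\int R_h|u|^p/\int G|u|^p$ arbitrarily small. So no estimate without a gradient term can control the tail. Without it, the claim that a fixed fraction of the $G$-mass stays on a compact set is unsupported, and your contradiction does not close.

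\textbf{How to repair it.} Spend the gradient term at infinity, as the paper does. At infinity, $G\phi_h^p\,dx$ has radial density $O(r^{-3})$, while $\phi_h^p|\nabla w|^p\,dx\asymp r^{p-1}|\nabla w|^p\,dr\,d\theta$. For $f=\eta w$ vanishing at $r=R$, H\"older along rays gives
\[
|f(r)|^p\le(\log(r/R))^{p-1}\int_R^r t^{p-1}|f'(t)|^p\,dt.
\]
Integrating against $r^{-3}$ bounds the tail by $CR^{-2}\int\phi_h^p|\nabla w|^p$, plus an $L^p$ error on a fixed annulus.

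\textbf{A simplification.} With this bound and your pole estimate, the dichotomy $c>0$ versus $c=0$ is unnecessary. The bound $\int R_h|u_k|^p\le Q_h[u_k]\to0$ already forces $\int_B|w_k|^p\to0$ on a ball where $R_h>0$. Anchored Poincar\'e then kills every annulus error. Written directly, this is the paper's estimate
\[
\int G\phi_h^p|w|^p\,dx\le C\Bigl(\int\phi_h^p|\nabla w|^p\,dx+\int R_h\phi_h^p|w|^p\,dx\Bigr),
\]
which gives $\kappa_*>0$ with no compactness or contradiction argument.
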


\begin{proof}
Write $\phi=\phi_h$. For $u\in C_c^\infty(\Omega)$, set
$w=u/\phi$. The ground-state representation and the
superquadratic $C_p$ estimate imply
\begin{equation}\label{eq:kappa-remainder-control}
Q_h[u]\ge c_p\int\phi^p|\nabla w|^p\,dx
+\int R_h\phi^p|w|^p\,dx,
\qquad c_p>0.
\end{equation}
We first prove the anchored weighted estimate
\begin{equation}\label{eq:interaction-anchored}
\int G\phi^p|w|^p\,dx
\le C\left(\int\phi^p|\nabla w|^p\,dx
+\int R_h\phi^p|w|^p\,dx\right).
\end{equation}

Near an active pole, let $r=|x-a_\alpha|$ and
$\beta_\alpha=h\mu_\alpha<h$. Then
$\phi^p\asymp r^{-p\beta_\alpha}$ and $G\le Cr^{-p}$.
For a function $f$ compactly supported in a sufficiently small
punctured ball, the weighted Hardy inequality gives
\[
\int r^{-p\beta_\alpha-p}|f|^p\,dx
\le\left(\frac{p}{n-p-p\beta_\alpha}\right)^p
\int r^{-p\beta_\alpha}|\nabla f|^p\,dx.
\]
This follows by divergence and H\"older, since
$n-p-p\beta_\alpha=(n-p)(1-\mu_\alpha)>0$.
Apply this inequality to $f=\chi w$, where $\chi$ is supported
in that ball and equals one near the pole. The term
$w\nabla\chi$ is supported on a fixed annulus, where $\phi$
is bounded, and contributes only an $L^p$ error there.

At infinity, with $r=|x|$, we have
$\phi^p\asymp r^{-(n-p)}$ and $G=O(r^{-p-2})$.
Thus the polar-coordinate densities of $G\phi^p\,dx$ and
$\phi^p\,dx$ are bounded by $Cr^{-3}$ and comparable to
$r^{p-1}$, respectively. Choose a radial cut-off $\eta$ which
vanishes for $r\le R$ and equals one for $r\ge2R$.
Along each ray, $f=\eta w$ satisfies $f(R)=0$, and H\"older gives
\[
|f(r)|^p\le\bigl(\log(r/R)\bigr)^{p-1}
\int_R^r t^{p-1}|f'(t)|^p\,dt.
\]
Tonelli's theorem then yields
\begin{align*}
&\int_R^\infty r^{-3}|f(r)|^p\,dr\\
&\quad\le\int_R^\infty t^{p-1}|f'(t)|^p
\left[\int_t^\infty r^{-3}\bigl(\log(r/R)\bigr)^{p-1}\,dr\right]dt\\
&\quad\le\frac{\Gamma(p)}{2^pR^2}
\int_R^\infty t^{p-1}|f'(t)|^p\,dt.
\end{align*}
Indeed, for $t\ge R$, the substitution $y=\log(r/R)$ gives
\[
\int_t^\infty r^{-3}\bigl(\log(r/R)\bigr)^{p-1}\,dr
\le R^{-2}\int_0^\infty e^{-2y}y^{p-1}\,dy
=\frac{\Gamma(p)}{2^pR^2}.
\]
Integrating over the sphere and using
$f'=\eta\,\partial_rw+\eta'w$ and $|\partial_rw|\le|\nabla w|$
therefore controls the tail by $\int\phi^p|\nabla w|^p$ and
an $L^p$ error on the fixed annulus $R<|x|<2R$.

Place the remaining region and annuli in a connected Lipschitz
domain $U\Subset\Omega_\mu$, where $\phi\asymp1$ and $G$ is bounded.
Choose a ball $B\Subset U$ away from $Z(v)$ and the lines joining
active poles, so that $\inf_B R_h>0$. Hence
\[
\int_B|w|^p\,dx\le C\int_B R_h\phi^p|w|^p\,dx.
\]
Together with $\phi\asymp1$ on $U$, anchored Poincar\'e gives
\begin{align*}
\int_U|w|^p\,dx
&\le C\left(\int_U|\nabla w|^p\,dx+\int_B|w|^p\,dx\right)\\
&\le C\left(\int_U\phi^p|\nabla w|^p\,dx
+\int_B R_h\phi^p|w|^p\,dx\right).
\end{align*}
This controls both the remaining weighted integral and all
fixed-annulus errors. Combining the three regions proves
\eqref{eq:interaction-anchored}, hence $\kappa_*>0$ by
\eqref{eq:kappa-remainder-control}.
Density extends the estimate to $D^{1,p}$: finite sets have zero
$p$-capacity, and $G,W_h$ are form-bounded. A fixed test gives
$\kappa_*<\infty$.

To prove the upper bounds, write
$M=\sum_\alpha\mu_\alpha(v\cdot z_\alpha)^2$.
Jensen's inequality gives $M\ge|v|^4$, and hence
\[
0\le R_h
=2(p-2)h^{p-1}|v|^{p-4}(|v|^2S-M)
\le2(p-2)h^{p-1}G.
\]
The last inequality is strict on a set of positive measure.
Indeed, near an active pole with weight $\mu_\alpha$,
$M-|v|^4\sim\mu_\alpha^3(1-\mu_\alpha)r^{-4}>0$.
Moreover,
\[
G\phi^p\,dx\lesssim
r^{(n-p)(1-\mu_\alpha)-1}\,dr\,d\theta
\]
near that pole, whereas its radial upper bound at infinity is
$Cr^{-3}\,dr\,d\theta$. Thus $\int G\phi^p$ and
$\int R_h\phi^p$ are finite and positive, and
$\kappa_{\mathrm{gs}}<2(p-2)h^{p-1}$.

Let $(\chi_\varepsilon)$ be the smooth null cutoffs constructed in
Section~\ref{sec:groundstate}, with $0\le\chi_\varepsilon\le1$ and
$\chi_\varepsilon\to1$ locally on $\Omega$ and almost everywhere.
Set $u_\varepsilon=\phi_h\chi_\varepsilon\in C_c^\infty(\Omega)$.
The ground-state representation for the full potential
$V_h=W_h+R_h$ gives
\[
Q_h[u_\varepsilon]
=\int_{\mathbb R^n}
C_p(\nabla u_\varepsilon,\phi_h\nabla\chi_\varepsilon)\,dx
+\int_{\mathbb R^n}R_h\phi_h^p\chi_\varepsilon^p\,dx.
\]
The first integral tends to zero by the null-cutoff estimate in
Section~\ref{sec:groundstate}. For the two potential integrals,
dominated convergence applies since $R_h\phi_h^p,G\phi_h^p\in L^1$
and $0\le\chi_\varepsilon\le1$. Moreover, as proved above,
\[
0<\int_{\mathbb R^n}G\phi_h^p\,dx<\infty.
\]
Thus the denominator below is positive for small $\varepsilon$, and
\[
\frac{Q_h[u_\varepsilon]}
{\int_{\mathbb R^n}G|u_\varepsilon|^p\,dx}
\xrightarrow{\varepsilon\downarrow0}
\frac{\int_{\mathbb R^n}R_h\phi_h^p\,dx}
{\int_{\mathbb R^n}G\phi_h^p\,dx}
=\kappa_{\mathrm{gs}}.
\]
The variational definition \eqref{eq:auxiliary-kappa} therefore gives
$\kappa_*\le\kappa_{\mathrm{gs}}$.

To make this inequality strict, note that $R_h/G=O(r^2)$ near
every active pole. This follows because
$|v|^2S-M=\sum_\alpha\mu_\alpha|v\wedge z_\alpha|^2=O(r^{-2})$,
whereas $|v|^2(S-|v|^2)\asymp r^{-4}$.
Choose a nonzero $\psi\in C_c^\infty(\Omega)$, $\psi\ge0$,
supported in a small annulus where
$R_h<\kappa_{\mathrm{gs}}G$.
Set $K=\operatorname{supp}\psi$ and
$u_{\varepsilon,t}=\phi\chi_\varepsilon(1+t\psi)$, with $|t|$
small enough that $1+t\psi>0$. For sufficiently small
$\varepsilon$, $\chi_\varepsilon=1$ in a neighborhood of $K$.
The full ground-state remainder for $u_{\varepsilon,t}$ therefore
decomposes as $\mathcal R_{\varepsilon,t}=J(t)+E_\varepsilon$, where
\[
\begin{aligned}
J(t)&:=\int_K
C_p\bigl(\nabla[\phi(1+t\psi)],t\phi\nabla\psi\bigr)\,dx,\\
E_\varepsilon&:=\int_{\mathbb R^n\setminus K}
C_p\bigl(\nabla(\phi\chi_\varepsilon),
\phi\nabla\chi_\varepsilon\bigr)\,dx.
\end{aligned}
\]
Since $p>2$ and the coefficients are bounded on $K$, the Bregman
remainder estimate gives $J(t)=O(t^2)$ as $t\to0$.
The cutoff remainder $E_\varepsilon$, which is independent of $t$,
tends to zero by the null-cutoff estimate in
Section~\ref{sec:groundstate}. Consequently,
\[
\mathcal R_{\varepsilon,t}
=J(t)+E_\varepsilon
=O(t^2)+o_{\varepsilon\downarrow0}(1).
\]
For each fixed sufficiently small $t$, dominated convergence
applies to the two weighted potential integrals, so the auxiliary
quotients of $u_{\varepsilon,t}$ converge as $\varepsilon\downarrow0$ to
\[
F(t)=\frac{J(t)+\int R_h\phi^p(1+t\psi)^p\,dx}
{\int G\phi^p(1+t\psi)^p\,dx}.
\]
Therefore
$\kappa_*\le F(t)$, $F(0)=\kappa_{\mathrm{gs}}$, and
\[
F'(0)=\frac{p\int(R_h-\kappa_{\mathrm{gs}}G)\phi^p\psi\,dx}
{\int G\phi^p\,dx}<0.
\]
Taking $t>0$ sufficiently small proves the strict bound in
\eqref{eq:kappa-gs-bounds}.
\end{proof}

\begin{proof}[Proof of Theorem~\ref{thm:superquadratic-phases}]
For $h\le s<s_{\max}$, let
\begin{equation}\label{eq:superquad-b}
b(s):=H\left(\frac{n-p}{n-p-(p-1)s}-p\right).
\end{equation}
The decomposition \eqref{eq:superquad-two-kernels} gives the
exact identity
\begin{equation}\label{eq:superquad-exact-identity}
\int|\nabla u|^p\,dx-\Lambda_\infty(s)\int W_s|u|^p\,dx
=Q_h[u]-b(s)\int G|u|^p\,dx.
\end{equation}
The function $b$ increases strictly from zero to infinity,
and $b(s_c)=\kappa_*$. By the definition of $\kappa_*$ and
the concentration upper bound at infinity,
\begin{equation}\label{eq:infinity-branch-characterization}
\begin{aligned}
C_*(p,s,\mu)&=\Lambda_\infty(s)&& (h\le s\le s_c),\\
C_*(p,s,\mu)&<\Lambda_\infty(s)&& (s_c<s<s_{\max}).
\end{aligned}
\end{equation}
The strict inequality follows by testing an auxiliary quotient
below $b(s)$. To compare with the pole thresholds, direct algebra
gives $g(s)=g(sq)$ exactly at $s=s_\infty$ and
$h<s_\infty<\min\{s_{\max},h/q\}$: use
$1-pq^{p-1}+(p-1)q^p>0$ and $q^p-pq+p-1>0$ for $0<q<1$.
Since $g$ increases on $[0,h]$, for $h\le s\le s_\infty$,
\[
g(s)\ge g(sq)\ge g(s\mu_\alpha)
\quad\text{for every }\alpha,
\qquad \Lambda(p,s,\mu)=\Lambda_\infty(s).
\]
Finally, near an active pole,
\[
G(x)=\mu_\alpha^{p-1}(1-\mu_\alpha)
|x-a_\alpha|^{-p}(1+O(|x-a_\alpha|)).
\]
The concentrating Hardy tests used in
Lemma~\ref{lem:concentration-upper}, placed at a pole of weight
$q$, therefore give
\begin{equation}\label{eq:kappa-pole-bound}
\kappa_*\le\frac{H-g(hq)}{q^{p-1}(1-q)}
=H\frac{1-pq^{p-1}+(p-1)q^p}{q^{p-1}(1-q)}
=b(s_\infty).
\end{equation}
Together with $\kappa_*>0$, this proves
\eqref{eq:transition-parameter-order} and the value asserted
in (i). Strict inequality over one follows from $g(s)<H$ for
$s>h$.

For $h\le s<s_c$, \eqref{eq:superquad-exact-identity} gives
\begin{equation}\label{eq:superquad-optimal-remainder}
\int|\nabla u|^p\,dx-C_*(p,s,\mu)\int W_s|u|^p\,dx
\ge(\kappa_*-b(s))\int G|u|^p\,dx.
\end{equation}
This proves subcriticality and nonattainment, and gives
$\int G|u_k|^p\to0$ for normalized minimizing sequences.
Moreover, $W_s\le C_R G$ on each fixed ball because
\[
\frac{W_s}{G}=(n-p)s^{p-1}
+g(s)\frac{|v|^2}{S-|v|^2},
\]
and the last quotient is locally bounded: its limit at an
active pole is $\mu_\alpha/(1-\mu_\alpha)$, and its denominator
is strictly positive elsewhere, including at zeros of $v$.
This proves \eqref{eq:superquad-mass-escape} and (ii).

At $s=s_c$, the equality $C_*=\Lambda$ already proved in (i)
and Theorem~\ref{prop:compactness-threshold} exclude finite-energy
attainment. The exact identity
\eqref{eq:superquad-exact-identity} shows that attainment of
$C_*(p,s_c,\mu)$ is equivalent to attainment of $\kappa_*$.
Thus the auxiliary infimum is also not attained, completing (i).

If $s_c<s_\infty$, \eqref{eq:infinity-branch-characterization}
gives $C_*<\Lambda$ on $(s_c,s_\infty]$. To extend the gap,
\eqref{eq:superquad-two-kernels} and Hardy at $h$ give
\[
\int|v|^p|u|^p\,dx\le\frac1H\int|\nabla u|^p\,dx,
\qquad
\int G|u|^p\,dx\le\frac1{pH}\int|\nabla u|^p\,dx.
\]
Taking suprema over functions of unit energy yields
\begin{equation}\label{eq:superquad-s-continuity}
\left|\frac1{C_*(p,t,\mu)}-\frac1{C_*(p,s,\mu)}\right|
\le\frac{|g(t)-g(s)|}{H}
+\frac{n-p}{pH}|t^{p-1}-s^{p-1}|.
\end{equation}
Thus $C_*$ is continuous on the full interval $[h,s_{\max}]$.
So is $\Lambda$, since
\[
\Lambda(p,s,\mu)=
\frac{H}{\max\{g(s),\max_\alpha g(s\mu_\alpha)\}},
\]
with a positive denominator. Choose $\delta>0$ sufficiently
small that the gap persists and
$s_\infty+\delta<\min\{s_{\max},h/q\}$.
Theorem~\ref{tmHSW3} gives $C_*>1$ on this interval, and
Theorem~\ref{prop:compactness-threshold} gives all the
compactness and criticality assertions in (iii).

By \eqref{eq:kappa-gs-bounds} and \eqref{eq:kappa-pole-bound},
condition \eqref{eq:explicit-two-phase-condition} implies
$\kappa_*<b(s_\infty)$, hence $s_c<s_\infty$.
As $p\downarrow2$ with $n$ and $\mu$ fixed, its left-hand
side tends to zero and its right-hand side tends to
$(1-q)/q>0$. This proves the final assertion.
\end{proof}

At a separated endpoint, a compactly anchored comparison proves
criticality despite nonattainment.

\begin{proposition}[Criticality at a separated endpoint]
\label{prop:separated-endpoint-criticality}
Retain the assumptions and notation of
Theorem~\ref{thm:superquadratic-phases}, and set
\[
K_{\rm pole}:=\frac{H-g(hq)}{q^{p-1}(1-q)},
\qquad q=\max_\alpha\mu_\alpha.
\]
If $\kappa_*<K_{\rm pole}$, equivalently $s_c<s_\infty$, then the form
\[
Q_c[u]:=\int|\nabla u|^p\,dx-\int Z_c|u|^p\,dx,
\qquad
Z_c:=W_h+\kappa_*G=\Lambda_\infty(s_c)W_{s_c},
\]
is critical. Its positive ground state $\Phi_c$ on
$\Omega_\mu=\mathbb R^n\setminus\{a_\alpha:\mu_\alpha>0\}$ is unique
up to multiplication by a positive constant, and
\begin{equation}\label{eq:endpoint-groundstate-integrability}
0<\int G\Phi_c^p\,dx<\infty,
\qquad
\int W_{s_c}\Phi_c^p\,dx=\infty,
\qquad
\Phi_c\notin D^{1,p}(\mathbb R^n).
\end{equation}
Thus the endpoint is positive-critical relative to $G$ and null-critical
relative to $W_{s_c}$, where these terms refer to the finiteness or
divergence of the corresponding weighted ground-state integral.
\end{proposition}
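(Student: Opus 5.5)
The plan is to produce the ground state $\Phi_c$ as a limit of minimizers from the compact phase $s\downarrow s_c$, and to prove criticality with a null sequence built from $\Phi_c$ itself. First record the identity $Z_c=W_h+\kappa_*G=\Lambda_\infty(s_c)W_{s_c}$. It follows from \eqref{eq:superquad-two-kernels} and $b(s_c)=\kappa_*$, since $\Lambda_\infty(s)W_s=W_h+b(s)G$. Consequently $Q_c[u]=Q_h[u]-\kappa_*\int G|u|^p\ge0$. Nonnegativity is exactly the definition of $\kappa_*$, and the form is not coercive.

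For each $s\in(s_c,s_\infty)$, Theorem~\ref{thm:superquadratic-phases}(iii) gives a positive minimizer $u_s$ of $C_*(p,s,\mu)<\Lambda$. It solves $-\Delta_pu_s=C_*(s)W_su_s^{p-1}$, and $C_*(s)W_s=W_h+\beta(s)G$ with $\beta(s)\downarrow\kappa_*$. This uses the continuity \eqref{eq:superquad-s-continuity} and the identity $C_*(s)W_s=(C_*(s)/\Lambda_\infty(s))(W_h+b(s)G)$. Normalize $u_s$ on a fixed ball $B\Subset\Omega_\mu$. Harnack's inequality and $C^{1,\gamma}_{\rm loc}$ estimates, with $W_s$ locally bounded for $p>2$, give local uniform convergence on $\Omega_\mu$ along $s_j\downarrow s_c$. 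The limit is a positive solution $\Phi_c$ of $-\Delta_p\Phi_c=Z_c\Phi_c^{p-1}$ on $\Omega_\mu$.

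Criticality and uniqueness come from a null sequence, via Lemma~\ref{lem:crit} and the standard Pinchover--Tintarev theory for $p$-Laplacian forms. I would first try the truncations $\Phi_c\chi_\varepsilon$ with the logarithmic cut-offs of Section~\ref{sec:groundstate}. By the ground-state representation with respect to $\Phi_c$, $Q_c[\Phi_c\chi_\varepsilon]$ is controlled by $\int\Phi_c^p(|\nabla\chi_\varepsilon|+\chi_\varepsilon|\nabla\Phi_c|/\Phi_c)^{p-2}|\nabla\chi_\varepsilon|^2$. This tends to zero once we know the two-sided asymptotics below.

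The asymptotics are as follows. Near an active pole, $Z_c=(g(s_c\mu_\alpha)\Lambda_\infty(s_c))|x-a_\alpha|^{-p}+\dots$ with coefficient strictly below $H$. This is precisely the separation $s_c<s_\infty$, i.e.\ $\kappa_*<K_{\rm pole}$, since then $\Lambda_\infty(s_c)<\Lambda_\alpha$. So $\Phi_c\asymp|x-a_\alpha|^{-\gamma_\alpha}$ with $\gamma_\alpha<h$ the smaller root of the radial indicial equation, by the radial barrier/Picone comparison of Lemma~\ref{lem:critical-ends}. At infinity $Z_c=H|x|^{-p}+O(|x|^{-p-2})$ is a critical Hardy end, and the barriers of Lemma~\ref{lem:critical-ends}(ii) give $\Phi_c\gtrsim|x|^{-h}$. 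A matching upper bound $\Phi_c\lesssim|x|^{-h}$, possibly up to a logarithm, should follow from the uniform tail control of the $u_{s_j}$. Here one uses the anchored estimate \eqref{eq:interaction-anchored} and the vanishing of any tail mass via the $\Lambda$-gap, passing to the limit.

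With $\Phi_c\asymp|x|^{-h}$ at infinity and $\Phi_c\asymp r^{-\gamma_\alpha}$ near poles, the conclusions follow. The cut-off error at infinity is $O((\log1/\varepsilon)^{1-q})\to0$ as in Section~\ref{sec:groundstate}, so $Q_c$ is critical and $\Phi_c$ is its unique ground state. Then $G\Phi_c^p=O(r^{-p-2-(n-p)})$ gives a radial density $O(r^{-3})$, so $\int G\Phi_c^p<\infty$, and near poles $G\Phi_c^p\lesssim r^{-p-p\gamma_\alpha}$ is integrable. Meanwhile $W_{s_c}\Phi_c^p\asymp g(s_c)r^{-p-(n-p)}=r^{-n}$ at infinity diverges logarithmically, so $\int W_{s_c}\Phi_c^p=\infty$. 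Finally $\Phi_c\notin D^{1,p}$ by the last part of Lemma~\ref{lem:critical-ends}.

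The main obstacle is the upper bound $\Phi_c\lesssim|x|^{-h}$ at infinity, or equivalently proving the null-sequence estimate without it. I would first try a supersolution $r^{-h}(1+r^{-1})^{-1}\log r$-type barrier and comparison on exterior annuli. If that fails, I would instead show directly that $\Phi_c\chi_L$ with $\chi_L$ a logarithmic cut-off has vanishing $Q_c$. For that I would use only the lower bound together with the equation's Wronskian-type flux identity $r^{n-1}|\Phi_c'|^{p-2}\Phi_c'\Phi_c\to0$ in spherical averages.
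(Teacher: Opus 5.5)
Your argument has a genuine gap exactly where you flag it, and everything downstream rests on it. Criticality, uniqueness, and $\int G\Phi_c^p<\infty$ are all derived from the null sequence $\Phi_c\chi_\varepsilon$. That sequence needs minimal-growth upper bounds: $\Phi_c\lesssim|x|^{-h}$ and $|\nabla\Phi_c|\lesssim|x|^{-h-1}$ at infinity, and $\Phi_c\lesssim|x-a_\alpha|^{-\gamma_\alpha}$ with $\gamma_\alpha<h$ at the poles. None of these is established.

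\begin{itemize}
\item Lemma~\ref{lem:critical-ends} gives only \emph{lower} bounds at \emph{critical} ends, so it does not control the subcritical poles.
\item The tail control you invoke is not available. For $s\in(s_c,s_\infty)$ one has $\Lambda=\Lambda_\infty$, and $C_*(p,s,\mu)\to\Lambda_\infty(s_c)$ as $s\downarrow s_c$, so the concentration gap at infinity closes. Indeed, once $\Phi_c\notin D^{1,p}$, the energies $\int|\nabla u_{s_j}|^p$ must blow up under your ball normalization.
\item A logarithmic barrier is insufficient. From $\Phi_c\lesssim r^{-h}\log r$ alone, the outer cut-off error is of order $(\log1/\varepsilon)^{-2}\int_{1/\varepsilon}^{1/\varepsilon^2}(\log r)^p\,dr/r\asymp(\log1/\varepsilon)^{p-1}\to\infty$.
\end{itemize}
A minor slip: $C_*(s)W_s=\frac{C_*(s)}{\Lambda_\infty(s)}(W_h+b(s)G)$, not $W_h+\beta(s)G$. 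This still converges to $Z_c$, so your limit equation is unaffected.

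The missing idea is to let the extremality of $\kappa_*$ force criticality, so that no upper asymptotics of $\Phi_c$ are needed. The paper takes $\psi$ equal to $|x-a_\alpha|^{-h}$ near the active poles and $r^{-h}(1-r^{-1})$ at infinity.
\begin{itemize}
\item Near a pole, $-\Delta_p\psi/\psi^{p-1}-Z_c=(H-A_\alpha)r^{-p}+O(r^{1-p})$, with $A_\alpha<H$ exactly because $s_c<s_\infty$.
\item At infinity, the barrier gains $(p-1)h^{p-2}r^{-p-1}$ over $Z_c=Hr^{-p}+O(r^{-p-2})$.
\end{itemize}
Hence $-\Delta_p\psi/\psi^{p-1}-Z_c\ge\delta G-M\mathbf{1}_K$, and Picone gives \eqref{eq:endpoint-anchored-comparison}. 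If $Q_c$ were subcritical, the Pinchover--Tintarev alternative would supply a continuous $\omega>0$ with $Q_c\ge\int\omega|u|^p$. This absorbs $M\int_K|u|^p$ and yields $Q_c[u]\ge c\int G|u|^p$, contradicting the definition of $\kappa_*$. The same alternative then gives the unique ground state and a null sequence. Fatou applied to \eqref{eq:endpoint-anchored-comparison} along that sequence gives $\int G\Phi_c^p<\infty$, and Lemma~\ref{lem:critical-ends}(ii) gives the divergence and $\Phi_c\notin D^{1,p}$.

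Your limiting construction could in principle be rescued with the same barrier \emph{without} the logarithm. It is a supersolution for $C_*(p,s,\mu)W_s\le H|v|^p+CG$ on $|x|\ge R$, uniformly in $s$. You would still need per-$s$ decay $u_s=o(r^{-h})$, the analogous uniform pole bounds, and scaled gradient estimates; the proposal supplies none of these.
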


\begin{proof}
The equivalence of the two strict inequalities follows from
\eqref{eq:superquad-b} and \eqref{eq:kappa-pole-bound}.
At every active pole, Proposition~\ref{prop:polecompare} gives
\[
Z_c(x)=A_\alpha r^{-p}+O(r^{1-p}),\qquad
A_\alpha=\frac{H g(s_c\mu_\alpha)}{g(s_c)}<H,
\qquad r=|x-a_\alpha|.
\]
Indeed, $s_c<s_\infty$ implies
$g(s_c)>g(s_cq)\ge g(s_c\mu_\alpha)$.
Also
\[
G(x)=d_\alpha r^{-p}(1+O(r)),\qquad
d_\alpha=\mu_\alpha^{p-1}(1-\mu_\alpha)>0.
\]
Translate the weighted barycenter to the origin. At infinity,
\[
Z_c(x)=H|x|^{-p}+O(|x|^{-p-2}),
\qquad G(x)=O(|x|^{-p-2}).
\]

Choose a positive smooth function $\psi$ on $\Omega_\mu$ that equals
$|x-a_\alpha|^{-h}$ near each active pole and
$r^{-h}(1-r^{-1})$, $r=|x|$, for all sufficiently large $r$.
Join these prescriptions by positive smooth interpolation.
Near a pole,
\[
\frac{-\Delta_p\psi}{\psi^{p-1}}-Z_c
=(H-A_\alpha)r^{-p}+O(r^{1-p}),
\]
which dominates a fixed positive multiple of $G$.
At infinity, direct radial differentiation gives
\[
\frac{-\Delta_p\psi}{\psi^{p-1}}
=Hr^{-p}+(p-1)h^{p-2}r^{-p-1}+O(r^{-p-2}),
\]
so the same domination holds there. There are consequently
$\delta>0$, a compact set $K\Subset\Omega_\mu$, and $M\ge0$ such that
\[
\frac{-\Delta_p\psi}{\psi^{p-1}}-Z_c
\ge\delta G-M\mathbf1_K
\qquad\text{on }\Omega_\mu.
\]
The compact error is bounded because $\inf_K\psi>0$ and,
as $p>2$, $\Delta_p\psi$ remains bounded even at its gradient zeros.
Picone's inequality yields
\begin{equation}\label{eq:endpoint-anchored-comparison}
\delta\int G|u|^p\,dx
\le Q_c[u]+M\int_K|u|^p\,dx
\qquad(u\in C_c^\infty(\Omega_\mu)).
\end{equation}

Suppose that $Q_c$ were subcritical on $\Omega_\mu$. The
ground-state alternative \cite[Theorem~1.6]{PT} then supplies a
strictly positive continuous function $\omega$ such that
\[
Q_c[u]\ge\int\omega|u|^p\,dx.
\]
Its hypotheses hold since $Z_c$ is locally bounded on
$\Omega_\mu$ and $Q_c\ge0$. With $a_K=\min_K\omega>0$,
\eqref{eq:endpoint-anchored-comparison} gives
\[
Q_c[u]\ge\frac{\delta}{1+M/a_K}\int G|u|^p\,dx.
\]
Density extends this improvement to $D^{1,p}$, contradicting
the definition of $\kappa_*$; here the poles have zero capacity
and $W_h,G$ are form-bounded. Thus $Q_c$ is critical on
$\Omega_\mu$, and also on $\mathbb R^n$, since any whole-space
improvement would restrict to the punctured test class.

The same ground-state alternative gives a positive ground state
$\Phi_c$, unique up to scale, and a null sequence converging to it
locally in $L^p(\Omega_\mu)$. Apply
\eqref{eq:endpoint-anchored-comparison} to this sequence and then use
Fatou's lemma. Its right-hand side remains bounded, so
$\int G\Phi_c^p<\infty$; the integral is positive because
$G>0$ a.e. and $\Phi_c>0$.
Finally, Lemma~\ref{lem:critical-ends}(ii) gives
$\Phi_c(x)\ge c|x|^{-h}$ at infinity. Since
$W_{s_c}(x)\sim g(s_c)|x|^{-p}$ and $g(s_c)>0$,
\[
\int_{|x|>R}W_{s_c}\Phi_c^p\,dx
\ge c\int_R^\infty\frac{dr}{r}=\infty.
\]
The same lower bound excludes $\Phi_c\in D^{1,p}$.
\end{proof}

\begin{remark}[The coincident-threshold case]
\label{rem:coincident-endpoint-criticality}
Proposition~\ref{prop:separated-endpoint-criticality} does not settle
criticality when $\kappa_*=K_{\rm pole}$, or equivalently
$s_c=s_\infty$. At a maximal-weight pole the coefficient of $Z_c$
then equals $H$, and the anchored estimate
\eqref{eq:endpoint-anchored-comparison} cannot hold with $\delta>0$.
Indeed, the annular Hardy near-minimizers can be concentrated at that
pole and normalized by $\int G|u_k|^p=1$, while $Q_c[u_k]\to0$ and
their supports eventually avoid every fixed compact subset of
$\Omega_\mu$. If the coincident endpoint is critical, its ground
state instead satisfies $\int G\Phi_c^p=\infty$: apply
Lemma~\ref{lem:critical-ends}(i) at a maximal-weight pole and use
$G\asymp|x-a_\alpha|^{-p}$. Thus positive-criticality relative to
$G$ in \eqref{eq:endpoint-groundstate-integrability} genuinely uses
the strict separation assumption.
\end{remark}

\begin{remark}[An admissible coefficient region]
\label{rem:coefficient-region}
The auxiliary threshold also has a two-parameter description.
For
\[
\mathcal Q_{A,B}[u]:=\int|\nabla u|^p\,dx
-A\int|v|^p|u|^p\,dx-B\int G|u|^p\,dx,
\]
the set
\[
\mathcal K:=\{(A,B)\in\mathbb R^2:
\mathcal Q_{A,B}[u]\ge0\ \text{for all }u\in D^{1,p}\}
\]
is closed and convex, being an intersection of closed
half-planes. Concentration at infinity and at an active pole
gives, respectively,
\[
A\le H,\qquad
\mu_\alpha^p A+\mu_\alpha^{p-1}(1-\mu_\alpha)B\le H.
\]
Its vertical section is determined exactly:
\[
\mathcal K\cap\{A=H\}
=\{(H,B):B\le pH+\kappa_*\}.
\]
Thus $s_c$ is the upper endpoint of the admissible segment
traced by $(H,pH+b(s))$; the whole segment already lies on
the boundary of $\mathcal K$, since $A\le H$.

When a pole of maximal weight $q$ determines $\Lambda$, put
\[
A_q(s)=\frac{Hg(s)}{g(sq)},\qquad
B_q(s)=\frac{H(n-p)s^{p-1}}{g(sq)}.
\]
These coefficients lie on the local Hardy constraint line
\[
q^pA+q^{p-1}(1-q)B=H,
\]
and $C_* = H/g(sq)$ exactly when $(A_q(s),B_q(s))\in\mathcal K$.
Here $A_q$ decreases strictly and $B_q$ increases strictly
with $s$. Convexity therefore makes the equality set an
interval, possibly empty or a single point, on any interval
where this pole determines the threshold. Proposition~\ref{prop:pole-concentration-layer} below identifies
a nontrivial interval in this intersection when the maximal weight
is sufficiently large.
\end{remark}

\subsection{Strict separation and multipolar stability}
\label{subsec:strict-separation}

A critical single-pole test proves $s_c<s_\infty$ for all bipolar
configurations and several multipolar regimes.

\subsubsection{A single-pole test for strict separation}

Put $P:=|v|^p$ and $\omega_{n-1}:=|\mathbb S^{n-1}|$.
For an active pole $a=a_\alpha$ of weight $q=\mu_\alpha\in(0,1)$,
define
\[
c_q(p):=\frac{1-q^p}{q^{p-1}(1-q)},
\qquad
K(q):=H\bigl(c_q(p)-p\bigr).
\]
When $p$ is fixed, we write $c_q=c_q(p)$.
Thus
\[
Z_q:=W_h+K(q)G=H(P+c_qG).
\]
The identity $q^p+c_qq^{p-1}(1-q)=1$ makes the coefficient of
$Z_q$ at the selected pole equal to the single-pole Hardy
constant $H$.

\begin{lemma}[A single-pole integral criterion]\label{lem:pole-integral-test}
Let $2<p<n$ and let $a=a_\alpha$ be any active pole.  The integral
\begin{equation}\label{eq:pole-integral-test}
I_a:=
\int_{\mathbb R^n}
\left(P+c_qG-|x-a|^{-p}\right)|x-a|^{p-n}\,dx
\end{equation}
is absolutely convergent.  If $I_a\ge0$, then
\[
\kappa_*<K(q).
\]
In particular, if $q=\max_\beta\mu_\beta$ and $I_a\ge0$, then
$s_c<s_\infty$.
\end{lemma}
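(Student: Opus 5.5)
The plan is to test the auxiliary quotient \eqref{eq:auxiliary-kappa} with critical single-pole Hardy profiles centred at $a$. The identity $W_h+K(q)G=Z_q=H(P+c_qG)$ gives
\[
Q_h[u]-K(q)\int G|u|^p\,dx=\int|\nabla u|^p\,dx-H\int(P+c_qG)|u|^p\,dx=:\mathcal E[u].
\]
So it suffices to produce $u\in D^{1,p}$ with $\int G|u|^p>0$ and $\mathcal E[u]<0$. Then $\kappa_*<K(q)$ follows by definition, since $G>0$ a.e. makes the denominator positive.

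\textbf{Absolute convergence of $I_a$.} Write $r=|x-a|$ and check each region of $\mathbb R^n$ separately.
\begin{itemize}
\item Near $a$: the expansions of Proposition~\ref{prop:polecompare} give $P=q^pr^{-p}+O(r^{1-p})$ and $G=q^{p-1}(1-q)r^{-p}+O(r^{1-p})$. The normalization $q^p+c_qq^{p-1}(1-q)=1$ cancels the leading term, so the integrand is $O(r^{1-n})$.
\item Near the other active poles $a_\beta$: $P$ and $G$ are $O(|x-a_\beta|^{-p})$, which is integrable because $p<n$.
\item Near $Z(v)$: the weights are bounded, since $p>2$.
\item At infinity: Proposition~\ref{prop:inftycompare}, applied without recentring at $a$, gives $|v|^p-|x-a|^{-p}=O(|x|^{-p-1})$ and $G=O(|x|^{-p-2})$. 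Hence the integrand is $O(|x|^{-n-1})$.
\end{itemize}

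\textbf{Construction of the tests.} For a nonnegative $\chi\in C_c^\infty(-1,1)$ with $\chi(0)=1$, set $u_L=r^{-h}\chi(\log r/L)$. As in the proof of Lemma~\ref{lem:concentration-upper}, pass to the variable $\tau=\log r$ and expand $|-h\chi+L^{-1}\chi'|^p$. The first-order term is $-ph^{p-1}L^{-1}\int\chi^{p-1}\chi'\,d\tau=0$, and the second-order term integrates over a $\tau$-interval of length $O(L)$. This gives
\[
\int|\nabla u_L|^p-H\int r^{-p}|u_L|^p=O(L^{-1}).
\]
Since $|u_L|^p=r^{p-n}\chi^p$, dominated convergence, justified by the absolute convergence above, gives
\[
H\int(P+c_qG-r^{-p})|u_L|^p\to HI_a.
\]
Hence $\mathcal E[u_L]=-HI_a+o(1)$, and if $I_a>0$ we are done by taking $L$ large.

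\textbf{The borderline case $I_a=0$.} This is the main obstacle: the $O(L^{-1})$ cost is positive and cannot be beaten directly. I would perturb the profile. The integrand $P+c_qG-r^{-p}$ is real analytic off the singular sets and not identically zero, because $G>0$ forces a nonzero $r^{-n-2}$-type tail at infinity. Since $I_a=0$, it is therefore strictly positive on some small ball $B\Subset\Omega_\mu$. Choose $0\le\psi\in C_c^\infty(B)$ nonzero and put $u_{L,t}=r^{-h}(1+t\psi)\chi(\log r/L)$. For $L$ large, $\chi\equiv1$ on $B$.

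Now compute, using that $r^{-h}$ solves $-\Delta_p r^{-h}=Hr^{-p}r^{-h(p-1)}$ on $B$. The change in $\int|\nabla u|^p-H\int r^{-p}|u|^p$ caused by $t\psi$ is $O(t^2)$, by the $C_p$ bounds of Lemma~\ref{lemCp} applied on the fixed compact set $B$. The potential part contributes
\[
-H\int(P+c_qG-r^{-p})r^{p-n}(1+t\psi)^p=-tpH\int(P+c_qG-r^{-p})r^{p-n}\psi+O(t^2),
\]
using $I_a=0$. Therefore
\[
\mathcal E[u_{L,t}]\le -c\,t+Ct^2+O(L^{-1})+o_L(1)\qquad(c>0).
\]
Fix $t>0$ small and then let $L\to\infty$ to get $\mathcal E<0$. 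The same perturbation also handles $I_a>0$, so both cases give $\kappa_*<K(q)$.

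\textbf{Consequence for $s_c$ and $s_\infty$.} If $q=\max_\beta\mu_\beta$, direct algebra gives
\[
K(q)=H\frac{1-pq^{p-1}+(p-1)q^p}{q^{p-1}(1-q)}=b(s_\infty)=K_{\rm pole}.
\]
By the strict monotonicity of $b$ and $b(s_c)=\kappa_*$, established in the proof of Theorem~\ref{thm:superquadratic-phases}, $\kappa_*<K(q)$ is equivalent to $s_c<s_\infty$.
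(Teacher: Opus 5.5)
Your proof is correct and follows essentially the same route as the paper. Logarithmically cut-off single-pole Hardy profiles give $\mathcal E[u]\to-HI_a$; when $I_a=0$, a compactly supported multiplicative perturbation on a set where $Z_q>H|x-a|^{-p}$ has first variation $-pH\int(P+c_qG-|x-a|^{-p})|x-a|^{p-n}\psi<0$; and the identity $K(q)=b(s_\infty)$ together with the monotonicity of $b$ gives $s_c<s_\infty$.

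Two small repairs are needed. Your argument that the integrand is not identically zero via an $r^{-n-2}$ tail at infinity is imprecise, and it is unnecessary: the paper takes the perturbation ball near another active pole, where $P+c_qG-|x-a|^{-p}\to+\infty$. You should also choose $\chi\equiv1$ near $0$, not merely $\chi(0)=1$, so that the cutoff equals $1$ exactly on that ball.
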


\begin{proof}
Put $r=|x-a|$. The normalization of $c_q$ gives
$P+c_qG=r^{-p}+O(r^{1-p})$ at $a$, and the far-field expansion
gives $P+c_qG=r^{-p}+O(r^{-p-1})$ at infinity. Thus the
integrand is respectively $O(r^{1-n})$ and $O(r^{-n-1})$.
At another pole it is $O(|x-a_\beta|^{-p})$, while zeros of $v$
cause no singularity for $p>2$. This proves absolute convergence.

Let $\psi(x)=|x-a|^{-h}$ and let $\chi_j$ be smooth radial
logarithmic cutoffs satisfying $0\le\chi_j\le1$,
$\chi_j=1$ for $e^{-j}\le r\le e^j$,
$\chi_j=0$ outside $e^{-2j}<r<e^{2j}$, and
$|\nabla\chi_j|\le C/(jr)$.
For the classical Hardy form
\[
\mathcal H_a[u]:=
\int|\nabla u|^p\,dx-H\int r^{-p}|u|^p\,dx,
\]
the radial ground-state estimate yields
\[
0\le\mathcal H_a[\psi\chi_j]
\le C\bigl(j^{1-p}+j^{-1}\bigr)\longrightarrow0.
\]
Indeed, its two cutoff contributions are bounded by
$\int\psi^p|\nabla\chi_j|^p$ and
$\int\psi^2|\nabla\psi|^{p-2}
\chi_j^{p-2}|\nabla\chi_j|^2$, respectively.
The corresponding radial measures are multiples of $dr/r$.
Define
\[
\mathcal E_q[u]:=\int|\nabla u|^p\,dx-\int Z_q|u|^p\,dx
=Q_h[u]-K(q)\int G|u|^p\,dx.
\]
Since $\psi^p=r^{p-n}$, we have the exact decomposition
\[
\begin{aligned}
I_{a,j}&:=\int_{\mathbb R^n}
\left(P+c_qG-r^{-p}\right)r^{p-n}\chi_j^p\,dx,\\
\mathcal E_q[\psi\chi_j]
&=\mathcal H_a[\psi\chi_j]-HI_{a,j}.
\end{aligned}
\]
The Hardy remainder tends to zero by the logarithmic null-cutoff
estimate above. On the other hand, the absolute integrability
proved in \eqref{eq:pole-integral-test}, together with
$0\le\chi_j\le1$ and $\chi_j\to1$ almost everywhere, gives
$I_{a,j}\to I_a$ by dominated convergence. Combining these two
limits yields
\begin{equation}\label{eq:pole-test-limit}
\mathcal E_q[\psi\chi_j]\xrightarrow{j\to\infty}-HI_a.
\end{equation}
If $I_a>0$, then $H>0$ implies that, for all sufficiently large $j$,
\[
\mathcal E_q[\psi\chi_j]<-\frac{HI_a}{2}<0.
\]
The functions $u_j:=\psi\chi_j$ are smooth and vanish near $a$
and outside a large ball, so
$u_j\in C_c^\infty(\mathbb R^n)\subset D^{1,p}(\mathbb R^n)$.
They are therefore admissible negative-energy tests.

It remains to treat $I_a=0$. By \eqref{eq:pole-test-limit},
$\mathcal E_q[\psi\chi_j]\to-HI_a=0$.
There is another active pole $a_\beta\ne a$, near which $Z_q$
has a strictly positive $|x-a_\beta|^{-p}$ leading term,
whereas $Hr^{-p}$ remains bounded. Choose a nonempty open set
$U_\beta\Subset\Omega_\mu$ near $a_\beta$ on which $Z_q>Hr^{-p}$,
and $0\le\eta\in C_c^\infty(U_\beta)$, $\eta\not\equiv0$.
For $|t|$ small enough that $1+t\eta>0$, define the
multiplicatively perturbed Hardy ground state and the test function by
\[
\psi_t(x):=\psi(x)\bigl[1+t\eta(x)\bigr],
\qquad
u_{j,t}(x):=\chi_j(x)\psi_t(x).
\]
For all sufficiently large $j$, $\chi_j=1$ near
$\operatorname{supp}\eta$, so
$\mathcal E_q[u_{j,t}]-\mathcal E_q[\psi\chi_j]=D(t)$, where
\[
\begin{aligned}
D(t)&:=\int_{U_\beta}
\bigl(|\nabla\psi_t|^p-|\nabla\psi|^p\bigr)\,dx\\
&\quad-\int_{U_\beta}Z_q\bigl(\psi_t^p-\psi^p\bigr)\,dx.
\end{aligned}
\]
Both integrands are compactly supported in $U_\beta$.
Thus $D(t)$ is independent of $j$, and
$E(t):=\lim_{j\to\infty}\mathcal E_q[u_{j,t}]=D(t)$, with $E(0)=0$.
Since $\left.\partial_t\psi_t\right|_{t=0}=\psi\eta$,
differentiation and the identity $-\Delta_p\psi=Hr^{-p}\psi^{p-1}$,
tested against the compactly supported function $\psi\eta$, give
\[
\begin{aligned}
E'(0)
&=p\int_{U_\beta}|\nabla\psi|^{p-2}\nabla\psi
\cdot\nabla(\psi\eta)\,dx
-p\int_{U_\beta}Z_q\psi^p\eta\,dx\\
&=p\int_{U_\beta}(Hr^{-p}-Z_q)\psi^p\eta\,dx<0.
\end{aligned}
\]
Hence some fixed $t_0>0$ satisfies $E(t_0)<0$, and
$\mathcal E_q[u_{j,t_0}]<0$ for all sufficiently large $j$.
For such an admissible test $u=u_{j,t_0}$,
$0<\int G|u|^p\,dx<\infty$, since $G>0$ a.e. and $G$ is
form-bounded. Therefore
\[
\kappa_*\le\frac{Q_h[u]}{\int G|u|^p\,dx}
=K(q)+\frac{\mathcal E_q[u]}{\int G|u|^p\,dx}<K(q).
\]
If $q=\max_\alpha\mu_\alpha$, the definition of $s_\infty$ gives
\[
n-p-(p-1)s_\infty
=\frac{(n-p)q^{p-1}(1-q)}{1-q^p}.
\]
Consequently, by \eqref{eq:superquad-b},
\[
b(s_\infty)
=H\left(\frac{1-q^p}{q^{p-1}(1-q)}-p\right)=K(q).
\]
Thus $b(s_c)=\kappa_*<K(q)=b(s_\infty)$.
Since $b$ is strictly increasing on $(h,s_{\max})$, we conclude
that $s_c<s_\infty$.
\end{proof}

For completeness, the integral criterion admits a spherical representation.
Fix an active pole $a$ and write $x=a+r\theta$, with $r>0$ and
$\theta\in\mathbb S^{n-1}$. Define
\[
N_a(r,\theta):=r\,v(a+r\theta),
\qquad S_a(r,\theta):=r^2S(a+r\theta),
\]
so that $Z_q(a+r\theta)=Hr^{-p}F_a(r,\theta)$, where
\[
F_a(r,\theta):=
|N_a|^{p-2}
\bigl(c_qS_a-(c_q-1)|N_a|^2\bigr).
\]
Indeed, this follows from $Z_q=H(P+c_qG)$ and
$G=|v|^{p-2}(S-|v|^2)$. Since $\psi(x)=r^{-h}$ and $ph=n-p$,
\[
r^{-p}\psi(x)^p\,dx
=r^{-p-ph+n-1}\,dr\,d\theta
=\frac{dr}{r}\,d\theta.
\]
Consequently, \eqref{eq:pole-integral-test} becomes
\begin{equation}\label{eq:pole-test-spherical-form}
I_a=\int_0^\infty\int_{\mathbb S^{n-1}}
\bigl(F_a(r,\theta)-1\bigr)\,d\theta\,\frac{dr}{r}.
\end{equation}
The uniform endpoint expansions $F_a=1+O(r)$ as $r\downarrow0$
and $F_a=1+O(r^{-1})$ as $r\to\infty$ ensure convergence at both
ends. At the other poles, local integrability follows from $p<n$,
as shown in Lemma~\ref{lem:pole-integral-test}. Thus the double
integral is absolutely convergent, although the inner spherical
integral need not be finite at the finitely many radii passing
through another pole.

\begin{proposition}[Strictness for two poles]
\label{prop:bipolar-integral-positive}
Let $n\ge3$, $2<p<n$, $0<q<1$, and let $\mathbf{e}$ be a unit vector.
Define
\[
F_q(x):=
\frac{|x-q\mathbf{e}|^{p-2}}{|x-\mathbf{e}|^p}
\bigl(|x|^2-2q\,x\cdot\mathbf{e}+q^{2-p}\bigr).
\]
Then
\begin{equation}\label{eq:bipolar-positive-sphere-average}
\frac1{\omega_{n-1}}\int_{\mathbb S^{n-1}}\log F_q(r\theta)\,d\theta>0,
\qquad
\frac1{\omega_{n-1}}\int_{\mathbb S^{n-1}}F_q(r\theta)\,d\theta>1
\end{equation}
for every $r>0$, with the second average allowed to be infinite.
In particular, the absolutely convergent integral
\begin{equation}\label{eq:bipolar-positive-pole-integral}
I(q):=\int_{\mathbb R^n}|x|^{-n}(F_q(x)-1)\,dx
\end{equation}
is strictly positive. Consequently, for every bipolar
configuration and every $2<p<n$, taking
$q=\max\{\mu_1,\mu_2\}$ gives
\begin{equation}\label{eq:bipolar-strict-kappa}
\kappa_*<K(q),
\qquad s_c<s_\infty.
\end{equation}
\end{proposition}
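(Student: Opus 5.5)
The plan is to reduce the bipolar case of Lemma~\ref{lem:pole-integral-test} to the normalized integral $I(q)$, and then to prove positivity of $I(q)$ one sphere at a time. For two poles, put the selected pole $a$ of weight $q=\max\{\mu_1,\mu_2\}$ at the origin and the other pole at $\mathbf e$, using translation, rotation and dilation. The quantities $P=|v|^p$ and $c_qG$ transform homogeneously under these maps, and $|x-a|^{p-n}dx$ and $dr/r$ are dilation invariant. So the spherical form \eqref{eq:pole-test-spherical-form} gives $I_a=\int(F_a-1)\,d\theta\,dr/r$ with $F_a$ an explicit function of $x$. Next I would compute $|N_a|^2=r^2|v|^2$ and $S_a$ for $v=qx/|x|^2+(1-q)(x-\mathbf e)/|x-\mathbf e|^2$. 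I expect $F_a$ to coincide with $F_q$, possibly only after composing with the inversion $x\mapsto x/|x|^2$, which preserves $d\theta\,dr/r$. The ingredients are the bipolar identity $|v|^2=|x-\bar a|^2/(|x|^2|x-\mathbf e|^2)$, with $\bar a=(1-q)\mathbf e$ playing the role of the point $q\mathbf e$ after inversion, and the explicit value $c_q-1=(1-q^{p-1})/(q^{p-1}(1-q))$. Granting $F_a=F_q$, one has $I_a=I(q)$, and \eqref{eq:bipolar-strict-kappa} follows from Lemma~\ref{lem:pole-integral-test} once $I(q)>0$. Absolute convergence of $I(q)$ is already part of that lemma.

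Assuming \eqref{eq:bipolar-positive-sphere-average}, positivity of $I(q)$ is immediate. Jensen's inequality on normalized surface measure gives that the mean of $F_q$ over each sphere is at least $\exp$ of the mean of $\log F_q$, which is $>1$. Integrating $\int_{\mathbb S^{n-1}}(F_q-1)\,d\theta>0$ against $dr/r$ then yields $I(q)>0$. At the finitely many radii through $\mathbf e$ the inner integral may be $+\infty$, and this only helps.

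The main work, and the step I expect to be the obstacle, is the strict log-average inequality on each sphere of radius $r$. Write
\[
\log F_q=(p-2)\log|x-q\mathbf e|-p\log|x-\mathbf e|+\log N,\qquad N=|x|^2-2q\,x\cdot\mathbf e+q^{2-p}.
\]
I would use two facts about spherical means. Since $\log|x|$ is subharmonic in $\mathbb R^n$ for $n\ge3$, the mean of $\log|r\theta-y|$ is at least $\log\max\{r,|y|\}$; the bound by $\log r$ uses the symmetry $|r\theta-y|\overset{d}{=}\bigl||y|\theta-r\mathbf e\bigr|$. In the other direction, Jensen gives the upper bound $\tfrac12\log(r^2+1)$ for the mean of $\log|x-\mathbf e|$. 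For $N$, I would decompose, as in the case $p=2$,
\[
N=q|x-\mathbf e|^2+(1-q)\bigl(|x|^2+d_q\bigr),\qquad d_q=\frac{q^{2-p}-q}{1-q}>1,
\]
and apply concavity of $\log$. This gives $\log N\ge q\log|x-\mathbf e|^2+(1-q)\log(r^2+d_q)$, which partly cancels the negative $-p\log|x-\mathbf e|$ term.

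What remains is a scalar inequality in $r$ and $q$ that must be strict for all $r>0$. If the crude bounds lose too much near $r\approx1$ or $r\approx q$, I would instead expand the log-means exactly in terms of $x\cdot\mathbf e$. One can integrate in the single variable $\theta\cdot\mathbf e$ and compare the integrand with its $p=2$ analogue, where $F_q-1$ is an explicit positive combination. Strictness should come from $d_q>1$, equivalently $q^{1-p}>1$.
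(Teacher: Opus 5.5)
\textbf{There is a genuine gap.} Your reduction to $I(q)$ and the final Jensen-plus-$dr/r$ step are fine. In fact no inversion is needed: with weight $q$ at the origin, the weighted center is $q\mathbf e$, not $(1-q)\mathbf e$. So $|v|^2=|x-q\mathbf e|^2/(|x|^2|x-\mathbf e|^2)$, and $c_q\,q(1-q)=q^{2-p}-q^2$ gives $|x|^p(P+c_qG)=F_q$ directly.

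The gap is the step you flag yourself: the strict inequality for the spherical mean of $\log F_q$ is never proved, and the chain of estimates you propose cannot prove it. Since $F_q(0)=1$, that mean tends to $0$ as $r\downarrow0$, so any usable bound must be exact to leading order there. Your concavity step for $\log N$ is not. At $x=0$ it replaces $\log N(0)=(2-p)\log q$ by $(1-q)\log d_q$, which is strictly smaller because $d_q\ne1$. Combining this with your subharmonic lower bound and Jensen upper bound gives
\[
(p-2)\log\max\{r,q\}-\tfrac{p-2q}{2}\log(1+r^2)+(1-q)\log(r^2+d_q).
\]
As $r\downarrow0$ this tends to $(p-2)\log q+(1-q)\log d_q<(p-2)\log q+(2-p)\log q=0$, for every $q\in(0,1)$ and $p>2$. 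For $q$ near $1$ it also fails as $r\to\infty$, where its $r^{-2}$ coefficient is $q^{2-p}-p/2<0$.

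Your fallback (exact expansion in $\theta\cdot\mathbf e$ and comparison with $p=2$) is not an argument. Moreover, at $p=2$ one has $F_q-1=2(1-q)\,x\cdot\mathbf e/|x-\mathbf e|^2$, which changes sign, so it is not a pointwise positive combination.

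\textbf{The missing idea.} Treat all three logarithms as log-distances to points on the ray $\mathbb R_+\mathbf e$, and use convexity in the logarithmic position of the point.
\begin{enumerate}
\item Define
\[
\mathcal L_r(u)=\omega_{n-1}^{-1}\int_{\mathbb S^{n-1}}\log|r\theta-e^u\mathbf e|^2\,d\theta .
\]
With $t=\theta\cdot\mathbf e$ and $c=\cosh(u-\log r)$, for $u\ne\log r$ one gets $\mathcal L_r''(u)=(n-2)\,\omega_{n-1}^{-1}\int t/(c-t)\,d\theta>0$. This follows by integrating by parts against the density $(1-t^2)^{(n-3)/2}$ and then pairing $t$ with $-t$. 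Hence $\mathcal L_r$ is convex.
\item Put $a=(p-2)/2$. Then $N=r^2-2qrt+q^{-2a}$ differs from $|r\theta-q^{-a}\mathbf e|^2$ only by a smaller coefficient of $t$, since $q<q^{-a}$. The spherical mean of $\log(C-bt)$ strictly decreases in $b\ge0$ when $C>b$. Hence the mean of $\log N$ exceeds $\mathcal L_r(-a\log q)$.
\item Since $\log F_q=a\log|x-q\mathbf e|^2-(a+1)\log|x-\mathbf e|^2+\log N$, the mean of $\log F_q$ exceeds $a\mathcal L_r(\log q)+\mathcal L_r(-a\log q)-(a+1)\mathcal L_r(0)$. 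This is $\ge0$ by convexity, because the weights $a$ and $1$ sit at $\log q$ and $-a\log q$, whose weighted average is $0$.
\end{enumerate}
Neither step loses anything at leading order as $r\downarrow0$. That is why this route works where termwise bounds cannot.
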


\begin{proof}
Fix $r>0$ and introduce the logarithmic spherical mean
\[
\mathcal L_r(u):=
\frac1{\omega_{n-1}}\int_{\mathbb S^{n-1}}\log|r\theta-e^u\mathbf{e}|^2\,d\theta,
\qquad u\in\mathbb R.
\]
We first show that $\mathcal L_r$ is convex. Put
$t=\theta\cdot\mathbf{e}$ and $c=\cosh(u-\log r)$. For
$u\ne\log r$, we have $c>1$. Differentiation under the integral
sign and integration by parts against the density proportional
to $(1-t^2)^{(n-3)/2}$ give
\begin{align*}
\mathcal L_r''(u)
&=\frac1{\omega_{n-1}}\int_{\mathbb S^{n-1}}\frac{1-ct}{(c-t)^2}\,d\theta\\
&=(n-2)\frac1{\omega_{n-1}}\int_{\mathbb S^{n-1}}\frac{t}{c-t}\,d\theta>0.
\end{align*}
The second identity follows by integrating the derivative of
$\frac{(1-t^2)^{(n-1)/2}}{c-t}$ over $(-1,1)$; the boundary terms
vanish since $c>1$. Strict positivity follows by pairing $t$
with $-t$. Moreover, $\mathcal L_r$ and $\mathcal L_r'$ extend
continuously across $u=\log r$, with $\mathcal L_r'(\log r)=1$.
Consequently, $\mathcal L_r$ is convex on all of $\mathbb R$.

Let $a=(p-2)/2>0$. The spherical mean of
$\log(C-bt)$ decreases strictly with $b\ge0$ whenever
$C>b$, by the same reflection pairing. Since $q^{-a}>q$,
it follows, also at the integrable limiting endpoint, that
\[
\frac1{\omega_{n-1}}\int_{\mathbb S^{n-1}}
\log(r^2-2qr t+q^{-2a})\,d\theta
>\mathcal L_r(-a\log q).
\]
Consequently,
\begin{align*}
\frac1{\omega_{n-1}}\int_{\mathbb S^{n-1}}\log F_q(r\theta)\,d\theta
&>a\mathcal L_r(\log q)+\mathcal L_r(-a\log q)
 -(a+1)\mathcal L_r(0)\\
&\ge0,
\end{align*}
where the last step is convexity and
$a\log q+(-a\log q)=0$. Jensen's inequality for the
exponential proves the second assertion in
\eqref{eq:bipolar-positive-sphere-average}. Logarithmic
singularities at $r=q$ or $r=1$ are integrable on the sphere;
if the mean of $F_q$ is infinite, the same assertion follows
by truncation.

To justify integration in $r$, observe that
\[
F_q(x)-1=O(|x|)\quad(x\to0),\qquad
F_q(x)-1=O(|x|^{-1})\quad(|x|\to\infty).
\]
Near $\mathbf{e}$ the only unbounded term is $O(|x-\mathbf{e}|^{-p})$,
which is locally integrable because $p<n$; near $q\mathbf{e}$,
$F_q$ is bounded. Thus \eqref{eq:bipolar-positive-pole-integral}
is absolutely convergent. Polar coordinates and
\eqref{eq:bipolar-positive-sphere-average} show that it is
strictly positive.

Finally, translate, rotate and dilate a bipolar configuration
so that its poles are $0$ and $\mathbf{e}$, with weight $q$ at $0$.
These transformations preserve the sign of the pole test
integral. The bipolar vector identities yield
\[
|v|^2=\frac{|x-q\mathbf{e}|^2}{|x|^2|x-\mathbf{e}|^2},\qquad
S-|v|^2=\frac{q(1-q)}{|x|^2|x-\mathbf{e}|^2}.
\]
Writing $\beta_q=(1-q^p)/[q^{p-1}(1-q)]$, we therefore have
\[
|x|^p\bigl(P+\beta_qG\bigr)=F_q(x).
\]
This identifies $I(q)$ with the integral in
Lemma~\ref{lem:pole-integral-test}. When $q$ is the larger
weight, that lemma proves the first inequality in
\eqref{eq:bipolar-strict-kappa}; the identity
\eqref{eq:kappa-pole-bound} then gives $s_c<s_\infty$.
\end{proof}

\subsubsection{Strict separation in multipolar parameter regimes}

\begin{proposition}[Multipolar strict-separation regimes]
\label{prop:multipolar-strictness}
\label{prop:strictness-near-upper-exponent}
\label{prop:strictness-near-vertices-uniform}
\label{prop:strictness-near-bipolar-configurations}
The inequality $s_c<s_\infty$ holds in each of the following regimes.
\begin{enumerate}[(i)]
\item Fix $n$, distinct poles, and $\mu\in\Psi$.
Then strict separation holds for $p<n$ sufficiently close to $n$.
More precisely, at a maximal-weight pole $a=a_\alpha$, put
$q=\mu_\alpha$ and
$A_\beta(p)=\mu_\beta^p+c_q(p)\mu_\beta^{p-1}(1-\mu_\beta)$. Then
\begin{equation}\label{eq:pole-test-upper-exponent-limit}
\lim_{p\uparrow n}(n-p)I_a(p)
=\omega_{n-1}
\sum_{\substack{\beta\ne\alpha\\ \mu_\beta>0}}A_\beta(n)>0.
\end{equation}

\item Fix $n\ge3$, distinct poles, and an index $\alpha$.
There exists $\lambda_0\in(0,1/2)$, depending only on $n$ and
these poles, such that strict separation holds whenever
\[
\mu_\alpha=1-\lambda,\qquad
\mu_\beta=\lambda\nu_\beta\quad(\beta\ne\alpha),\qquad
0<\lambda<\lambda_0,
\]
uniformly for $2<p<n$ and all $\nu_\beta\ge0$ with
$\sum_{\beta\ne\alpha}\nu_\beta=1$.

\item Fix $2<p<n$, $\mu\in\Psi$, a maximal-weight index $\alpha$
with $q=\mu_\alpha$, and distinct points $a,b$.
There exists $\varepsilon_0>0$ such that strict separation holds if
$a_\alpha=a$, $|a_\beta-b|<\varepsilon_0$ for $\beta\ne\alpha$,
and the active poles remain distinct. The perturbation radius
$\varepsilon_0$ can be chosen uniformly for $p$ in any fixed
compact subinterval of $(2,n)$.
Figure~\ref{fig:pole-clustering} illustrates this geometry.
\end{enumerate}
\end{proposition}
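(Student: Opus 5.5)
All three regimes will be reduced to Lemma~\ref{lem:pole-integral-test}. At a maximal-weight pole $a=a_\alpha$ with $q=\mu_\alpha$ it suffices to show $I_a\ge0$; the lemma then gives $\kappa_*<K(q)=b(s_\infty)$, hence $s_c<s_\infty$. Each regime comes down to a sign of $I_a$ in a degenerating limit. I will work with the spherical form \eqref{eq:pole-test-spherical-form}, with integrand $(F_a-1)\,d\theta\,dr/r$. Recall that $F_a=r^p(P+c_qG)$ is normalized to $1$ at $a$, and $F_a=1+O(r^{-1})$ at infinity.

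For (i), write $I_a(p)=\int(P+c_qG)|x-a|^{p-n}\,dx-\int|x-a|^{-n}\,dx$, understood as the absolutely convergent combination. As $p\uparrow n$, near each other active pole $a_\beta$ we have
\[
P+c_qG=A_\beta(p)|x-a_\beta|^{-p}(1+O(|x-a_\beta|)),
\]
by the pole asymptotics of Proposition~\ref{prop:polecompare} applied to $P=|v|^p$ and $G$. The weight $|x-a|^{p-n}$ tends to $1$ there. Integrating over a small ball gives $\omega_{n-1}A_\beta(p)\delta^{n-p}/(n-p)$, which blows up like $(n-p)^{-1}$. All other contributions stay bounded uniformly in $p$ near $n$: at $a$ and at infinity the integrand is $O(r^{1-n})$ and $O(r^{-n-1})$ with constants uniform in $p$, and the intermediate region has bounded integrand. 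Multiplying by $n-p$ and letting $p\uparrow n$ gives \eqref{eq:pole-test-upper-exponent-limit}. Positivity holds because $A_\beta(n)>0$ for active $\beta$ and at least one other pole is active. So $I_a>0$ for $p$ near $n$.

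For (ii), the mechanism is the vertex degeneration of Proposition~\ref{prop:multipolar-one-pole}. With $\mu_\alpha=1-\lambda$, the pole $a_\alpha$ is maximal once $\lambda<1/2$. I would show $\lambda^{-1}I_a$ has a positive lower bound uniform in $\nu$ and $p\in(2,n)$. Expand $F_a-1$ to first order in $\lambda$ away from the balls $B_{K\lambda}(a_\beta)$; the first-order term is an explicit integral involving $z_\beta-z_\alpha$. Inside those balls the contribution is nonnegative up to $O(\lambda)$ errors controlled as in \eqref{eq:multipolar-vertex-inner}. The difficulty is that $c_q(p)\approx p/\lambda$ blows up as $q\to1$, so $c_qG$ is of order one, not small. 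I would therefore organize the computation as $c_qG\approx p\,|v|^{p-2}\sum_\beta\nu_\beta|z_\alpha-z_\beta|^2$, which has a positive leading term, and $P-|z_\alpha|^p=O(\lambda)$ with indefinite sign. The main obstacle is this step: proving the positive $c_qG$ term dominates the linear part of $P$ uniformly in $p$ up to $n$ and down to $2$. Near $p=2$ this needs care with the factor $p-2$, and I would use the bipolar convexity argument of Proposition~\ref{prop:bipolar-integral-positive} pole by pole. Near $p=n$ I would combine the estimate with regime (i), made uniform through the explicit blow-up.

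For (iii), as the other poles cluster at $b$ with fixed weights, $v$ and $S$ converge to the bipolar fields with poles $a,b$ and weights $q,1-q$, locally uniformly away from $b$. By Proposition~\ref{prop:bipolar-integral-positive} the limit pole integral $I(q)$ is strictly positive. I would prove $I_a\to I(q)$ by dominated convergence outside $B_r(b)$. Inside $B_r(b)$, the integrand is bounded by $C|x-a_\beta|^{-p}$ summed over the clustered poles, which is uniformly integrable since $p<n$. Choosing $r$ small makes that contribution small, uniformly for $p$ in a compact subinterval of $(2,n)$. Continuity then gives $I_a>0$ for small $\varepsilon_0$, with the required uniformity.
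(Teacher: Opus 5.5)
Your arguments for (i) and (iii) are essentially the paper's. In (i) you isolate the positive $(n-p)^{-1}$ blow-up at the other active poles and keep everything else bounded uniformly for $p$ near $n$. In (iii) you pass to the bipolar limit, get uniform integrability near $b$ from a bound $0\le P+c_qG\le c_q\sum_\beta\mu_\beta|x-a_\beta|^{-p}$, and invoke Proposition~\ref{prop:bipolar-integral-positive}.

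Part (ii), however, has a genuine gap, and it starts with a wrong scaling. With $q=1-\lambda$,
\[
c_{1-\lambda}(p)=\frac{1-(1-\lambda)^p}{(1-\lambda)^{p-1}\lambda}\longrightarrow p\qquad(\lambda\downarrow0),
\]
as the normalization $q^p+c_qq^{p-1}(1-q)=1$ forces; it does not behave like $p/\lambda$. Translate $a_\alpha$ to the origin and put $z=x/|x|^2$, $v_\nu=\sum_{\beta\ne\alpha}\nu_\beta z_\beta$, $S_\nu=\sum_{\beta\ne\alpha}\nu_\beta|z_\beta|^2$. Then $S_\lambda-|v_\lambda|^2=\lambda\sum_\beta\nu_\beta|z-z_\beta|^2-\lambda^2|z-v_\nu|^2$, so $c_qG$ is $O(\lambda)$, the same order as $P-|x|^{-p}$. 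There is no order-one positive term to dominate anything. The two first-order contributions must be added, and together they give the density
\[
p\,|x|^{2-n}\bigl(S_\nu-z\cdot v_\nu\bigr).
\]
This density in general changes sign: near the origin it equals $-p|x|^{1-n}\,\theta\cdot v_\nu(0)+O(|x|^{2-n})$, with $\theta=x/|x|$. So no pointwise domination can work. The idea you are missing is that this density equals $\frac{p}{n-2}\operatorname{div}\bigl(|x|^{2-n}v_\nu\bigr)$. It is integrable, the fluxes through small spheres around the origin and around the other poles are $O(\varepsilon)$ and $O(\varepsilon^{n-2})$, and the flux at infinity gives
\[
\int_{\mathbb R^n}p\,|x|^{2-n}\bigl(S_\nu-z\cdot v_\nu\bigr)\,dx=\frac{p\,\omega_{n-1}}{n-2}\ge\frac{2\,\omega_{n-1}}{n-2}.
\]
This value is independent of $\nu$, of the pole positions, and of $p$.

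The second missing ingredient is uniformity up to $p=n$. A Taylor expansion cannot be used near the other poles, because their contribution to $I_a$ blows up as $p\uparrow n$. Regime (i) gives no uniform control either, since its leading coefficients satisfy $A_\beta(n)=O(\lambda^{n-1})$. The paper proceeds as follows.
\begin{enumerate}
\item It fixes balls $E_\varepsilon$ of radius $\varepsilon$ around the other poles and expands to first order outside them, with an $L^1$ error $C_\varepsilon\lambda^2$ uniform in $p\in[2,n]$ and $\nu$.
\item It notes that the first-order density has $L^1(E_\varepsilon)$ norm at most $C\varepsilon^{n-2}$.
\item Inside $E_\varepsilon$ it uses only a one-sided bound on the integrand $J$ of $I_a$,
\[
J\ge p\lambda\,|x|^{2-n}\,z\cdot(v_\nu-z),
\]
which follows from convexity of $|\cdot|^p$, $G\ge0$ and $c_q>0$, and gives $\int_{E_\varepsilon}J\ge-C\lambda\varepsilon^{n-1}$.
\item Sending $\lambda\downarrow0$ before $\varepsilon\downarrow0$ yields $\liminf I_a/\lambda\ge 2\omega_{n-1}/(n-2)$, uniformly in $p$ and $\nu$.
\end{enumerate}
The inner error must be $o(\lambda)$, not merely $O(\lambda)$, because the main term is itself of order $\lambda$. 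Also, \eqref{eq:multipolar-vertex-inner} concerns $p<2$ and the zero set of $v$, so it is irrelevant here.

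Your fallback of applying the bipolar convexity argument pole by pole does not close the gap. The integrand is nonlinear in the configuration. Even at first order, where it is linear in $\nu$, positivity of the bipolar integral at $q=1-\lambda$ only yields a nonnegative derivative at $\lambda=0$, not a positive lower bound uniform in $p$. Finally, your concern about a factor $p-2$ near $p=2$ is unfounded, since $I_a$ contains no such factor.
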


\begin{figure}[!htbp]
\centering
\definecolor{poleblue}{RGB}{39,87,119}
\definecolor{guidegray}{RGB}{119,125,132}
\resizebox{\linewidth}{!}{%
\begin{tikzpicture}[
  x=1cm,y=1cm,
  font=\small,
  >=Stealth,
  pole/.style={circle,fill=poleblue,draw=poleblue,inner sep=0pt,
    minimum size=5.2pt},
  fixedpole/.style={circle,fill=black,draw=black,inner sep=0pt,
    minimum size=5.2pt},
  guide/.style={draw=guidegray,line width=.45pt},
  heading/.style={font=\small,align=center}
]
\path[use as bounding box] (-.15,-2.80) rectangle (13.25,2.45);

\node[heading] at (2.65,2.13) {(a) Nearby multipolar configuration};
\node[heading] at (10.55,2.13) {(b) Bipolar coalescence limit};

\coordinate (a-left) at (.55,0);
\coordinate (b-center) at (3.90,0);
\draw[draw=poleblue!80,dashed,line width=.7pt,fill=poleblue!5]
  (b-center) circle[radius=1.04];
\node[text=poleblue] at (3.90,1.36) {$B_{\varepsilon_0}(b)$};

\node[fixedpole] at (a-left) {};
\node[below=9pt] at (a-left) {$a_\alpha=a$};
\node[below=27pt] at (a-left) {weight $q$};

\draw[guide,fill=white] (b-center) circle[radius=2.0pt];
\node[below=3pt,text=guidegray] at (b-center) {$b$};
\draw[guide,->,shorten <=3pt] (b-center) -- ++(1.04,0)
  node[midway,above=3pt,text=guidegray] {$\varepsilon_0$};

\node[pole] at (3.45,.42) {};
\node[pole] at (4.23,.60) {};
\node[pole] at (4.35,-.43) {};
\node[pole] at (3.59,-.57) {};
\node at (3.90,-1.40) {$a_\beta,\quad\beta\ne\alpha$};
\node at (3.90,-1.95) {$\sum_{\beta\ne\alpha}\mu_\beta=1-q$};

\draw[->,draw=black!70,line width=.75pt]
  (5.62,.12) -- (7.75,.12);
\node[align=center,above=8pt] at (6.69,.12)
  {$a_\beta\longrightarrow b$\\[-1pt]$\beta\ne\alpha$};

\coordinate (a-right) at (8.95,0);
\coordinate (b-right) at (12.30,0);
\node[fixedpole] at (a-right) {};
\node[pole] at (b-right) {};
\node[below=9pt] at (a-right) {$a$};
\node[below=27pt] at (a-right) {weight $q$};
\node[below=9pt] at (b-right) {$b$};
\node[below=27pt] at (b-right) {weight $1-q$};

\node[text=guidegray,font=\footnotesize] at (6.55,-2.65)
  {Filled dots: active poles.\quad Hollow circle: reference center $b$.};
\end{tikzpicture}
}
\caption{Schematic bipolar clustering in
Proposition~\ref{prop:multipolar-strictness}(iii).
The pole $a_\alpha=a$, of maximal weight $q=\mu_\alpha$, remains
fixed. The other active poles lie in $B_{\varepsilon_0}(b)$;
their coalescence at $b$ produces the bipolar limit with weights
$q$ and $1-q$. The hollow marker denotes the reference center
$b$, which need not be a pole before coalescence.}
\label{fig:pole-clustering}
\end{figure}

\begin{proof}
\emph{(i) Upper exponent.}
Fix $p_0\in(2,n)$ and consider $p\in[p_0,n)$.
Use disjoint fixed small balls around the active poles.
Writing $r=|x-a|$, cancellation of the leading singularity at $a$
and the far-field expansion give
\[
\bigl(P(x)+c_q(p)G(x)-r^{-p}\bigr)r^{p-n}
=\begin{cases}
O(r^{1-n}),&r\downarrow0,\\
O(r^{-n-1}),&r\to\infty,
\end{cases}
\]
with constants uniform for $p\in[p_0,n)$.
Near another active pole $a_\beta$, write
$\rho=|x-a_\beta|$ and $L_\beta=|a_\beta-a|$.  The integrand
in \eqref{eq:pole-integral-test} has the expansion
\[
A_\beta(p)L_\beta^{p-n}\rho^{-p}
+O(\rho^{1-p})+O(1),
\]
with constants uniform for $p\in[p_0,n)$.
The remainders have uniformly bounded integrals.
For a fixed small radius $\varepsilon$,
\[
(n-p)\int_{B_\varepsilon(a_\beta)}
A_\beta(p)L_\beta^{p-n}\rho^{-p}\,dx
=\omega_{n-1}A_\beta(p)
L_\beta^{p-n}\varepsilon^{n-p}
\xrightarrow{p\uparrow n}\omega_{n-1}A_\beta(n).
\]
The integral over the remaining compact set is uniformly bounded.
This proves \eqref{eq:pole-test-upper-exponent-limit}.
At least one summand is positive, so $I_a(p)>0$ near $n$.
Lemma~\ref{lem:pole-integral-test} completes the proof.

\emph{(ii) Uniform vertex neighborhood.}
Translate $a_\alpha$ to the origin and write
\[
z=\frac{x}{|x|^2},\qquad
v_\nu=\sum_{\beta\ne\alpha}\nu_\beta z_\beta,\qquad
S_\nu=\sum_{\beta\ne\alpha}\nu_\beta|z_\beta|^2.
\]
Thus
\[
v_\lambda=(1-\lambda)z+\lambda v_\nu,
\qquad S_\lambda=(1-\lambda)|z|^2+\lambda S_\nu.
\]
In this proof only, let $J_{\lambda,p,\nu}(x)$ denote the
integrand defining $I_0$ in \eqref{eq:pole-integral-test}, with
$q=1-\lambda$ and these weights.
Extend $c_{1-\lambda}(p)$ smoothly to $\lambda=0$ by
$c_1(p)=p$.  Direct differentiation at $\lambda=0$ yields
\begin{equation}\label{eq:vertex-first-variation-density}
\left.\partial_\lambda J_{\lambda,p,\nu}(x)\right|_{\lambda=0}
=D_{p,\nu}(x):=
p|x|^{2-n}\bigl(S_\nu-z\cdot v_\nu\bigr).
\end{equation}
Moreover,
\[
\operatorname{div}\bigl(|x|^{2-n}v_\nu\bigr)
=(n-2)|x|^{2-n}(S_\nu-z\cdot v_\nu).
\]
To justify integration of this identity, note that
\[
D_{p,\nu}(x)=
\begin{cases}
O(|x|^{1-n}),&x\to0,\\
O(|x-a_\beta|^{-2}),&x\to a_\beta,\quad\beta\ne\alpha,\\
O(|x|^{-n-1}),&|x|\to\infty.
\end{cases}
\]
Since $n>2$, these bounds imply $D_{p,\nu}\in L^1(\mathbb R^n)$.
Apply the divergence theorem to $B_R(0)$ with disjoint
$\varepsilon$-balls around the origin and the other poles removed.
The inner boundary fluxes of $|x|^{2-n}v_\nu$ are
$O(\varepsilon)$ at the origin and $O(\varepsilon^{n-2})$
at the other poles, and therefore vanish as $\varepsilon\downarrow0$.
Moreover, $\sum_{\beta\ne\alpha}\nu_\beta=1$ gives
$v_\nu(x)=x/|x|^2+O(|x|^{-2})$ at infinity, so
\[
\int_{\partial B_R(0)}
|x|^{2-n}v_\nu(x)\cdot\frac{x}{R}\,d\sigma
=\omega_{n-1}+O(R^{-1}).
\]
Letting first $\varepsilon\downarrow0$ and then $R\to\infty$ yields
\begin{equation}\label{eq:vertex-first-variation-integral}
\int_{\mathbb R^n}D_{p,\nu}\,dx
=\frac{p\omega_{n-1}}{n-2}.
\end{equation}
Thus the nonzero integral comes entirely from the flux at infinity
and is independent of the remaining pole positions and weight distribution.

To obtain a lower bound uniform for $2<p<n$, we apply a Taylor
estimate only outside fixed neighborhoods of the other poles
and use a one-sided estimate inside them.
Let
\[
E_\varepsilon
:=
\bigcup_{\beta\ne\alpha}B_\varepsilon(a_\beta),
\]
where $\varepsilon$ is small enough that these balls are disjoint
and their closures avoid the origin.
On $\mathbb R^n\setminus(E_\varepsilon\cup\{0\})$, we have
$|v_\nu|\le M_\varepsilon|z|$, uniformly over the closed simplex
of $\nu$. For
$0\le\lambda\le\lambda_\varepsilon:=[2(1+M_\varepsilon)]^{-1}$,
\[
|v_\lambda|
\ge (1-\lambda)|z|-\lambda|v_\nu|
\ge\bigl[1-\lambda(1+M_\varepsilon)\bigr]|z|
\ge\tfrac12|z|.
\]
Taylor expansion then gives, uniformly for $p\in[2,n]$ and
$\nu$ in that simplex,
\begin{equation}\label{eq:vertex-uniform-exterior-expansion}
\int_{\mathbb R^n\setminus E_\varepsilon}
\left|J_{\lambda,p,\nu}-\lambda D_{p,\nu}\right|\,dx
\le C_\varepsilon\lambda^2.
\end{equation}
For completeness, fix a compact set
$K\subset\mathbb R^n\setminus(E_\varepsilon\cup\{0\})$.
The preceding lower bound keeps $|v_\lambda|$ uniformly away
from zero on $K$. Since $v_\lambda$ and $S_\lambda$ depend
affinely on $\lambda$, and the extension $c_1(p)=p$ makes
$c_{1-\lambda}(p)$ jointly smooth in $(\lambda,p)$ on
$[0,\lambda_\varepsilon]\times[2,n]$, direct differentiation yields
\[
\sup_{\substack{x\in K,\ p\in[2,n],\ 0\le\lambda\le\lambda_\varepsilon\\
\nu_\beta\ge0,\ \sum_{\beta\ne\alpha}\nu_\beta=1}}
\bigl|\partial_\lambda^2J_{\lambda,p,\nu}(x)\bigr|
\le C_{K,\varepsilon}.
\]
This controls compact sets away from the origin; the remaining
regions require the following weighted estimates.
Put $r=|x|$ and $\theta=x/r$. The scaled fields satisfy
\[
(rv_\lambda,r^2S_\lambda)=
\begin{cases}
((1-\lambda)\theta,1-\lambda)+O(r),&r\downarrow0,\\
(\theta,1)+O(r^{-1}),&r\to\infty,
\end{cases}
\]
uniformly together with their first two $\lambda$-derivatives.
After extracting the factor $r^{-n}$ from $J_{\lambda,p,\nu}$,
the limiting expression at the origin vanishes by
\[
(1-\lambda)^p+
c_{1-\lambda}(p)(1-\lambda)^{p-1}\lambda=1
\]
and the limiting expression at infinity vanishes by total-mass
normalization. Both cancellations hold identically in $\lambda$.
Since $|rv_\lambda|\ge1/2$, smooth dependence of the rescaled
integrand therefore gives, for fixed sufficiently small $r_0$
and sufficiently large $R$,
\[
\bigl|\partial_\lambda^2J_{\lambda,p,\nu}(x)\bigr|
\le C
\begin{cases}
r^{1-n},&0<r<r_0,\\
r^{-n-1},&r>R.
\end{cases}
\]
Together with the compact-set bound, these integrable estimates
justify \eqref{eq:vertex-uniform-exterior-expansion} for each
fixed $\varepsilon$, uniformly in $p\in[2,n]$, $\nu$, and
$0\le\lambda\le\lambda_\varepsilon$.
No uniform Taylor expansion of the full integral $I_0$ is asserted:
the excluded neighborhoods may contribute unbounded positive
mass as $p\uparrow n$.

The omitted first variation satisfies
\[
\int_{E_\varepsilon}|D_{p,\nu}|\,dx
\le C\varepsilon^{n-2},
\]
uniformly for $2\le p\le n$ and $\nu$.
Inside $E_\varepsilon$, we need only a lower bound.
Convexity of $|\cdot|^p$, together with
$G_\lambda\ge0$ and $c_{1-\lambda}(p)>0$, gives the lower bound
\[
J_{\lambda,p,\nu}(x)
\ge
p\lambda |x|^{2-n}z\cdot(v_\nu-z).
\]
Because the right-hand side has at worst an inverse-distance
singularity at the other poles,
\begin{equation}\label{eq:vertex-uniform-interior-bound}
\int_{E_\varepsilon}J_{\lambda,p,\nu}\,dx
\ge-C\lambda\varepsilon^{n-1}.
\end{equation}
All constants in the last two estimates are independent of
$p\in(2,n)$ and $\nu$.

Combining \eqref{eq:vertex-first-variation-integral}--
\eqref{eq:vertex-uniform-interior-bound}, we obtain
\[
\frac{I_0(p,\lambda,\nu)}{\lambda}
\ge
\frac{p\omega_{n-1}}{n-2}
-C\varepsilon^{n-2}
-C\varepsilon^{n-1}
-C_\varepsilon\lambda.
\]
First let $\lambda\downarrow0$ and then
$\varepsilon\downarrow0$.  This proves
\[
\liminf_{\lambda\downarrow0}
\inf_{\substack{2<p<n\\
\nu_\beta\ge0,\ \sum_{\beta\ne\alpha}\nu_\beta=1}}
\frac{I_0(p,\lambda,\nu)}{\lambda}
\ge\frac{2\omega_{n-1}}{n-2}>0.
\]
Hence $I_0>0$ in one vertex neighborhood, uniformly in $p$
and $\nu$.  Taking $\lambda_0<1/2$ makes $1-\lambda$ the
maximal weight, so Lemma~\ref{lem:pole-integral-test} applies.

\emph{(iii) Bipolar clustering.}
As the other poles converge to $b$, the fields converge locally
uniformly away from $\{a,b\}$ to the bipolar fields with
weights $(q,1-q)$ at $(a,b)$.
The selected-pole and far-field remainder estimates in
Lemma~\ref{lem:pole-integral-test} are uniform during this
coalescence.  It remains to control a small ball around $b$.
By H\"older's inequality,
\[
|v|^{p-2}S
\le\sum_\beta\mu_\beta|z_\beta|^p,
\]
and, since $c_q>1$,
\[
0\le P+c_qG
=c_q|v|^{p-2}S-(c_q-1)|v|^p
\le c_q\sum_\beta\mu_\beta|z_\beta|^p.
\]
For all poles in the cluster contained in $B_{\delta/2}(b)$,
the integral of the last expression over $B_\delta(b)$
is bounded by $C\delta^{n-p}$, up to the bounded contribution
of the pole at $a$.  The factor $|x-a|^{p-n}$ is bounded on
this ball.  The resulting uniform integrability, combined with
local convergence away from the two limiting points, gives
\[
I_a\longrightarrow I_{\mathrm{bip}}(n,p,q).
\]
Translation and dilation do not change this integral.
Proposition~\ref{prop:bipolar-integral-positive} gives
$I_{\mathrm{bip}}(n,p,q)>0$ for every $q\in(0,1)$;
no ordering of the two limiting weights is needed.
Lemma~\ref{lem:pole-integral-test} proves the assertion.

If $p$ ranges in a compact interval $J\Subset(2,n)$,
the same estimates are uniform, with a positive exponent
$n-\max J$ in the small-ball bound.
The bipolar integral is continuous and strictly positive on
$J$, so its minimum there is positive.
This proves the final uniformity statement.
\end{proof}

\subsection{An open pole-concentration layer}
\label{subsec:pole-layer}

Cancellation along a pole's Hardy constraint line gives another
open non-attainment region when its weight is sufficiently large.

\begin{proposition}[A pole-concentration layer]
\label{prop:pole-concentration-layer}
Fix $2<p<n$, a genuinely multipolar weight vector and distinct
active poles. Suppose that
\[
\begin{gathered}
q:=\max_\alpha\mu_\alpha\ge\frac{p-1}{p},
\qquad s_0:=\frac hq\le s_{\max},\\
D_q:=G-\frac{1-q}{q}P,\qquad P=|v|^p.
\end{gathered}
\]
There exist $\varepsilon,c>0$ such that, for
\[
s\in J:=(s_0-\varepsilon,s_0+\varepsilon)
\cap[h,s_{\max}],
\]
one has
\begin{equation}\label{eq:pole-layer-remainder}
\int|\nabla u|^p\,dx-\frac{H}{g(sq)}\int W_s|u|^p\,dx
\ge c\int|D_q||u|^p\,dx
\qquad(u\in D^{1,p}(\mathbb R^n)).
\end{equation}
Consequently,
\[
C_*(p,s,\mu)=\Lambda(p,s,\mu)=\frac{H}{g(sq)}
\qquad(s\in J),
\]
the optimum is not attained, and the optimally rescaled operator
is subcritical. If $q>(p-1)/p$, the interval contains parameters
on both sides of $s_0$. If $q=(p-1)/p$, then $s_0=s_{\max}$
and the assertion is one-sided within the admissible range.
\end{proposition}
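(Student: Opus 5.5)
The plan is to rewrite the rescaled truncated form so that its $s$-dependence enters only through a multiple of $\int D_q|u|^p$. Then I would show that the full ground-state form at $s_0=h/q$ controls $\int|D_q||u|^p$ in an anchored way. By \eqref{eq:superquad-two-kernels}, $\frac{H}{g(sq)}W_s=A_q(s)P+B_q(s)G$, with $A_q,B_q$ as in Remark~\ref{rem:coefficient-region}, and these lie on the line $q^pA+q^{p-1}(1-q)B=H$. Substituting $G=D_q+\frac{1-q}{q}P$ gives the identity
\[
\frac{H}{g(sq)}W_s=\frac{H}{q^p}P+B_q(s)D_q ,
\]
whose $P$-coefficient does not depend on $s$. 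At $s=s_0$ we have $g(s_0q)=g(h)=H$, so $\frac{H}{g(s_0q)}W_{s_0}=W_{s_0}$. Writing $V_{s_0}=W_{s_0}+R_{s_0}$ then yields
\[
\int|\nabla u|^p-\frac{H}{g(sq)}\int W_s|u|^p
=\mathcal Q_{s_0}[u]+\int R_{s_0}|u|^p-\bigl(B_q(s)-B_q(s_0)\bigr)\int D_q|u|^p ,
\]
where $\mathcal Q_{s_0}$ is the full form at $s_0$. Since $B_q$ is continuous in $s$, \eqref{eq:pole-layer-remainder} reduces to a single anchored inequality
\begin{equation*}
\int|D_q||u|^p\,dx\le C\Bigl(\mathcal Q_{s_0}[u]+\int R_{s_0}|u|^p\,dx\Bigr)
\qquad(u\in C_c^\infty(\Omega_\mu)).
\end{equation*}
Given this, I would take $c=1/(2C)$ and $\varepsilon$ so small that $|B_q(s)-B_q(s_0)|\le 1/(2C)$ on $J$, and then extend to $D^{1,p}$ by zero capacity of the poles and form-boundedness of $P$ and $G$.

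To prove the anchored inequality I would follow the proof of Proposition~\ref{prop:positive-kappa}. Put $\phi=\phi_{s_0}$ and $w=u/\phi$, and use Lemma~\ref{lem:GSextension} together with the superquadratic $C_p$ bound to obtain $\mathcal Q_{s_0}[u]\ge c_p\int\phi^p|\nabla w|^p$. The analysis then splits by region.
\begin{enumerate}
\item[(a)] \emph{Maximal pole $a_\alpha$.} Here $s_0q=h$ is the critical exponent. Since $q\ge(p-1)/p>1/2$, this pole is the unique one of maximal weight. The leading terms of $P$ and $G$ cancel in $D_q$, so $D_q=O(r^{1-p})$ and $|D_q|\phi^p\,dx\lesssim dr\,d\theta$. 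The ray estimate $|f(r)|^p\le(\log(r_0/r))^{p-1}\int_r^{r_0}t^{p-1}|f'|^p\,dt$, applied to $f=\chi w$, controls this region, because $\int_0^{r_0}(\log(r_0/r))^{p-1}dr<\infty$.
\item[(b)] \emph{Other active poles.} There $s_0\mu_\beta\le s_0(1-q)<h$, because $1-q\le 1/p<q$. The weighted Hardy inequality with exponent $n-p-ps_0\mu_\beta>0$ then bounds $\int r^{-p}\phi^p|w|^p$ by $\int\phi^p|\nabla w|^p$ up to annulus errors.
\item[(c)] \emph{Infinity.} Here $\phi^p\asymp r^{-ps_0}$ with $ps_0>n-p$, and $|D_q|\lesssim r^{-p}$. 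The Hardy inequality in the supercritical direction, with $ps_0-(n-p)>0$, again bounds the tail by the gradient term of $w$ plus an annulus error.
\item[(d)] \emph{Bounded remainder region.} The remaining bounded connected Lipschitz region is handled by anchored Poincar\'e on a ball where $\inf R_{s_0}>0$, exactly as in Proposition~\ref{prop:positive-kappa}. Zeros of $v$ cause no singularity, since $p>2$.
\end{enumerate}

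The consequences then follow from results already proved. Inequality \eqref{eq:pole-layer-remainder} gives $C_*\ge H/g(sq)$, while Lemma~\ref{lem:concentration-upper} gives $C_*\le\Lambda\le H/g(sq)$, so all three quantities are equal. For nonattainment, a normalized minimizer would force $\int|D_q||u|^p=0$. But $D_q$ is real analytic off the poles and $Z(v)$, and it is nonzero near infinity, where $D_q\sim-\frac{1-q}{q}|x|^{-p}$. Hence $D_q\ne0$ a.e., which would force $u=0$, a contradiction. Subcriticality follows because a nonzero continuous compactly supported $0\le Q\le c|D_q|$ exists on a ball avoiding the zeros of $D_q$.

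For the two-sidedness, $q>(p-1)/p$ is equivalent to $s_0=h/q<s_{\max}$, so $J$ contains points on both sides of $s_0$. When $q=(p-1)/p$ we have $s_0=s_{\max}$, and $J$ lies only on one side. I expect the main obstacle to be the maximal-pole region (a), where $\phi$ sits exactly at the critical Hardy exponent. Everything there depends on the exact cancellation $D_q=O(r^{1-p})$ and on the logarithmic ray estimate being integrable against the density $dr$. I would first verify this expansion carefully from \eqref{poleexp-v}, checking that the $r^{-p}$ coefficients $q^{p-1}(1-q)$ of $G$ and $\frac{1-q}{q}q^p$ of $\frac{1-q}{q}P$ coincide.
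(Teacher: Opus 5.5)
Your proposal is correct and follows essentially the paper's proof. It uses the same identity $\frac{H}{g(sq)}W_s-W_{s_0}=[B_q(s)-B_q(s_0)]D_q$ and the same four-region anchored estimate, and your sandwich $H/g(sq)\le C_*\le\Lambda\le H/g(sq)$ identifies $\Lambda$ more quickly than the paper's comparison of pole thresholds and its shrinking of $J$ via $s_\infty<s_0$. One small repair is needed: analyticity of $D_q$ off $Z(v)$ plus nonvanishing near infinity gives $D_q\ne0$ a.e.\ only if $\Omega_\mu\setminus Z(v)$ is connected, so instead factor $D_q=|v|^{p-2}\bigl(S-|v|^2/q\bigr)$, use that the second factor is real analytic on the connected set $\Omega_\mu$ and negative at infinity, and recall $|Z(v)|=0$, as the paper does.
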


\begin{proof}
The largest weight is unique, since $q>1/2$. Translate its pole
to the origin and set $\phi=\phi_{s_0}$. For
$w\in C_c^\infty(\Omega)$, we first prove
\begin{equation}\label{eq:pole-layer-anchor}
\int|D_q|\phi^p|w|^p\,dx
\le C\left(\int\phi^p|\nabla w|^p\,dx
+\int R_{s_0}\phi^p|w|^p\,dx\right).
\end{equation}
At the origin, the leading singularities in $D_q$ cancel:
\[
S-\frac{|v|^2}{q}=O(r^{-1}),\qquad
|D_q|=O(r^{1-p}),\qquad \phi^p\asymp r^{-(n-p)}.
\]
Thus $|D_q|\phi^p\,dx\le C\,dr\,d\theta$, whereas
the radial gradient term has weight $r^{p-1}$. If $f(R)=0$,
H\"older's inequality gives
\[
|f(r)|^p\le\left(\log\frac Rr\right)^{p-1}
\int_r^R t^{p-1}|f'(t)|^p\,dt.
\]
Integrating and changing the order of integration yields
\begin{equation}\label{eq:critical-pole-lower-order-anchor}
\int_0^R|f(r)|^p\,dr
\le R\Gamma(p)\int_0^Rr^{p-1}|f'(r)|^p\,dr.
\end{equation}
Apply this estimate to radial slices of $f=\chi w$, where
$\chi$ is one near the origin and vanishes before radius $R$.
The derivative of $\chi$ contributes only on a fixed annulus.

At every other active pole, with $\gamma_\alpha=s_0\mu_\alpha<h$,
\[
|D_q|\le C r_\alpha^{-p},\qquad
\phi^p\asymp r_\alpha^{-p\gamma_\alpha}.
\]
The weighted Hardy inequality applies because
$n-p-p\gamma_\alpha>0$. At infinity,
\[
|D_q|\le Cr^{-p},\qquad \phi^p\asymp r^{-ps_0},
\qquad n-p-ps_0<0.
\]
Both regions use the weighted Hardy inequality
\[
\int_0^\infty r^{b-1}|f|^p\,dr
\le\left(\frac p{|b|}\right)^p
\int_0^\infty r^{b+p-1}|f'|^p\,dr,
\qquad b\ne0,
\]
for functions compactly supported away from the endpoints,
by integration by parts and H\"older. Local and exterior
cutoffs leave only fixed-annulus errors.

Choose a connected bounded Lipschitz set $U\Subset\Omega$
containing these annuli and a ball $B\Subset U$ on which
$R_{s_0}\ge c_B>0$. Such a ball exists because $p>2$ and at
least two poles are active. On $U$, the weight $\phi$ is bounded
above and below, and anchored Poincar\'e controls all remaining
terms by the right-hand side of \eqref{eq:pole-layer-anchor}.
This proves that estimate.

For $u=\phi w$, the ground-state representation and the
superquadratic remainder estimate now imply
\begin{equation}\label{eq:pole-layer-base-improvement}
\int|\nabla u|^p\,dx-\int W_{s_0}|u|^p\,dx
\ge c_0\int|D_q||u|^p\,dx.
\end{equation}
Initially this holds on $C_c^\infty(\Omega)$. It extends to
$D^{1,p}$ by density: the poles have zero $p$-capacity, and
$|D_q|\le G+(1-q)P/q$ is form-bounded by the Hardy inequality
for $W_h=HP+pHG$.

Write
\[
Z_s:=\frac H{g(sq)}W_s=A_q(s)P+B_q(s)G,
\qquad
B_q(s)=\frac{H(n-p)}
{q^{p-1}[n-p-(p-1)qs]}.
\]
The identity
$q^pA_q(s)+q^{p-1}(1-q)B_q(s)=H$ and $g(s_0q)=H$
give
\begin{equation}\label{eq:pole-line-perturbation}
Z_s-W_{s_0}=[B_q(s)-B_q(s_0)]D_q.
\end{equation}
The denominator defining $B_q$ stays positive on the admissible
range. Hence continuity and \eqref{eq:pole-layer-base-improvement}
prove \eqref{eq:pole-layer-remainder}, after reducing
$\varepsilon$ so that $|B_q(s)-B_q(s_0)|<c_0/2$.

For any other weight $t\le q$, we have $t+q\le1$ and
$q(1-q)-t(1-t)=(q-t)(1-q-t)\ge0$. Since $p>2$, this gives
$q^{p-1}(1-q)\ge t^{p-1}(1-t)$.
The quantity $[g(sq)-g(st)]/s^{p-1}$ is affine decreasing
in $s$ and remains nonnegative at $s=s_{\max}$. Thus the
largest weight determines the smallest pole threshold throughout
the admissible range. Since $s_\infty<s_0$, we may further
shrink $J$ so that
$\Lambda=H/g(sq)$ there. The concentration upper bound at the
maximal pole and \eqref{eq:pole-layer-remainder} give the
claimed value of $C_*$.

Finally, $|D_q|>0$ a.e.: its possible zeros are contained in
the zero sets of the nontrivial real-analytic functions $v$
and $S-|v|^2/q$, the latter being negative at infinity.
Thus \eqref{eq:pole-layer-remainder} excludes a nonzero
finite-energy extremizer. Its nontrivial nonnegative remainder
also proves subcriticality.
\end{proof}

\paragraph{Consequences and remaining questions.}
The superquadratic picture separates three mechanisms:
an infinity-concentration interval inside the full-potential
attainment region, a non-attained critical endpoint followed
by a compact phase when $s_c<s_\infty$, and a subcritical
pole-concentration layer for large maximal weight.
Theorem~\ref{prop:compactness-threshold} characterizes attainment
by the strict gap for $p\ge2$.

Universal strict separation, criticality at a possible coincident
endpoint, and all boundaries of pole-dominated phases remain open.
In particular, no general sign assertion for $I_a$ or its spherical
integrand is used; a negative test integral alone would not imply
$s_c=s_\infty$. For $1<p<2$, the general closed coefficient formula
and classification beyond the coexistence paths also remain open.

\raggedbottom

\end{document}